\documentclass[11pt,reqno]{amsart}

\usepackage{geometry}
\geometry{margin=1.5in}

\usepackage{amsmath, amssymb, amsthm}
\usepackage{mathtools} %
\usepackage{bm} %
\usepackage{physics}
\usepackage{datetime}
\usepackage{microtype}

\usepackage{xcolor} %
\usepackage{graphicx} %
\usepackage{soul} %
\usepackage[colorlinks=true]{hyperref} %
\usepackage[capitalise]{cleveref} %

\DeclareMathOperator{\Prob}{\mathbb{P}}
\DeclareMathOperator{\E}{\mathbb{E}}
\DeclareMathOperator{\Var}{Var}
\DeclareMathOperator{\Cov}{Cov}

\DeclareMathOperator{\lin}{lin}
\DeclareMathOperator{\argmin}{argmin}

\DeclareMathOperator{\supp}{supp}

\newcommand{\R}{\mathbb{R}}
\newcommand{\N}{\mathbb{N}}

\newcommand{\gwass}{\mathsf{W}^{(\sigma)}_1}
\newcommand{\Wp}{\mathsf{W}_p}
\newcommand{\GWp}{\mathsf{W}_p^{(\sigma)}}

\newcommand{\SWp}{\mathsf{S}^{(\sigma)}_p}

\newcommand{\cA}{\mathcal{A}}
\newcommand{\cB}{\mathcal{B}}
\newcommand{\cC}{\mathcal{C}}

\newcommand{\cF}{\mathcal{F}}

\newcommand{\cP}{\mathcal{P}}

\newcommand{\cU}{\mathcal{U}}

\newcommand{\cX}{\mathcal{X}}

\newcommand{\EE}{\mathbb{E}}

\newcommand{\RR}{\mathbb{R}}

\newcommand{\bunderline}[1]{\underline{#1\mkern-3mu}\mkern 3mu }
\newcommand{\opt}{^{\star}}
\newcommand{\Hu}{\dot H^{-1,p}\left(\mu*\gamma_{\sigma}\right)}
\newcommand{\inp}[2]{\left\langle #1,#2\right\rangle}
\newcommand{\dn}{\downarrow}

\theoremstyle{plain} %
\newtheorem{theorem}{Theorem}[section]
\newtheorem{lemma}{Lemma}[section]
\newtheorem{proposition}{Proposition}[section]
\newtheorem{corollary}{Corollary}[section]
\newtheorem{assumption}{Assumption}

\theoremstyle{definition}
\newtheorem{definition}{Definition}[section]
\newtheorem{remark}{Remark}[section]
\newtheorem{example}{Example}[section]

\graphicspath{ {./images/} }

\usepackage{etoolbox}
\usepackage{paralist}
\patchcmd{\subsubsection}{\itshape}{\bfseries}{}{}
\usepackage{bbm}

\begin{document}

\title[Limit distribution theory for smooth Wasserstein distances]{Limit distribution theory for smooth $p$-Wasserstein distances}

\thanks{
Z. Goldfeld is supported by the NSF CRII grant CCF-1947801, in part by the 2020 IBM Academic Award, and in part by the NSF CAREER Award CCF-2046018. 
K. Kato is partially supported by the  NSF grants DMS-1952306 and DMS-2014636.
S. Nietert is supported by the NSF Graduate Research Fellowship under Grant DGE-1650441. }

\author[Z. Goldfeld]{Ziv Goldfeld}

\address[Z. Goldfeld]{
School of Electrical and Computer Engineering, Cornell University.
}
\email{goldfeld@cornell.edu}

\author[K. Kato]{Kengo Kato}

\address[K. Kato]{
Department of Statistics and Data Science, Cornell University.
}
\email{kk976@cornell.edu}

\author[S. Nietert]{Sloan Nietert}

\address[S. Nietert]{
Department of Computer Science, Cornell Universty.}
\email{nietert@cs.cornell.edu}

\author[G. Rioux]{Gabriel Rioux}

\address[G. Rioux]{
Center for Applied Mathematics, Cornell University.}
\email{ger84@cornell.edu}

\begin{abstract}
The Wasserstein distance is a metric on a space of probability measures that has seen a surge of applications in statistics, machine learning, and applied mathematics. However, statistical aspects of Wasserstein distances are bottlenecked by the curse of dimensionality, whereby the number of data points needed to accurately estimate them grows exponentially with dimension. Gaussian smoothing was recently introduced as a means to alleviate the curse of dimensionality, giving rise to a parametric convergence rate in any dimension, while preserving the Wasserstein metric and topological structure. To facilitate valid statistical inference, in this work, we develop a comprehensive limit distribution theory for the empirical smooth Wasserstein distance. The limit distribution results leverage the functional delta method after embedding the domain of the Wasserstein distance into a certain dual Sobolev space, characterizing its Hadamard directional derivative for the dual Sobolev norm, and establishing weak convergence of the smooth empirical process in the dual space. To estimate the distributional limits, we also establish consistency of the nonparametric bootstrap. 
Finally, we use the limit distribution theory to study applications to generative modeling via minimum distance estimation with the smooth Wasserstein distance, showing asymptotic normality of optimal solutions for the quadratic cost. 
\end{abstract}

\keywords{Dual Sobolev space, Gaussian smoothing, functional delta method, limit distribution theory, Wasserstein distance}
\subjclass{60F05, 62E20, and 62G09}

\maketitle

\section{Introduction}

\subsection{Overview}
The Wasserstein distance is an instance of the Kantorovich optimal transport problem \cite{kantorovich1942translocation}, which defines a metric on a space of probability measures. Specifically, for $1 \le p < \infty$, the $p$-Wasserstein distance between two Borel probability measures $\mu$ and $\nu$ on $\R^d$ with finite $p$th moments is defined by 
\begin{equation}\label{eq:OT-problem}
\Wp(\mu,\nu) = \inf_{\pi \in \Pi(\mu,\nu)} \left [ \int_{\R^d \times \R^d} |x-y|^p \, d \pi(x,y) \right]^{1/p},
\end{equation}
where  $\Pi(\mu,\nu)$ is the set of couplings (or transportation plans) of $\mu$ and $\nu$. The Wasserstein distance has seen a surge of applications in statistics, machine learning, and applied mathematics, ranging from generative modeling \cite{arjovsky_wgan_2017, gulrajani2017improved, tolstikhin2018wasserstein}, image recognition \cite{rubner2000earth,sandler2011nonnegative}, and domain adaptation \cite{courty2014domain,courty2016optimal} to robust optimization \cite{gao2016distributionally,mohajerin_robust_2018,blanchet2018optimal} and partial differential equations \cite{jordan1998variational,santambrogio2017}. The widespread applicability of the Wasserstein distance is driven by an array of desirable properties, including its metric structure ($\Wp$ metrizes weak convergence plus convergence of $p$th moments), a convenient dual form, robustness to support mismatch, and a rich geometry it induces on a space of probability measures. We refer to \cite{villani2003,villani2008optimal,ambrosio2005,santambrogio2015} as standard references on optimal transport theory.

However, statistical aspects of Wasserstein distances are bottlenecked by the curse of dimensionality, whereby the number of data points needed to accurately estimate them grows exponentially with dimension. Specifically, for the empirical distribution $\hat{\mu}_n$ of $n$ independent observations from a distribution $\mu$ on $\R^d$, it is known that $\E\big[\Wp(\hat{\mu}_n,\mu)\big]$ scales as $n^{-1/d}$ for $d > 2p$ under moment conditions  \cite{dereich2013,boissard2014mean,fournier2015,weed2019rate, lei2020convergence}. This slow rate renders performance guarantees in terms of $\Wp$ all but vacuous when $d$ is large. It is also a  roadblock towards a more delicate statistical analysis concerning limit distributions, bootstrap, and valid inference. %

Gaussian smoothing was recently introduced as a means to alleviate the curse of dimensionality of empirical $\Wp$ \cite{goldfeld2019,Goldfeld2020GOT,goldfeld2020asymptotic,sadhu2021,nietert21}. For $\sigma > 0$, the smooth $p$-Wasserstein distance is defined as $\GWp(\mu,\nu) := \Wp(\mu*\gamma_\sigma,\nu*\gamma_\sigma)$, where $*$ denotes convolution and $\gamma_\sigma = N(0,\sigma^2 I_d)$ is the isotropic Gaussian distribution with variance parameter $\sigma^2$. For sufficiently sub-Gaussian $\mu$, \cite{goldfeld2019} showed that the expected smooth distance between $\hat{\mu}_n$ and $\mu$ exhibits the parametric convergence rate, i.e., $\E\big[\gwass(\hat{\mu}_n,\mu)\big] = O(n^{-1/2})$ in any dimension. This is a significant departure from the $n^{-1/d}$ rate in the unsmoothed case. \cite{Goldfeld2020GOT} further showed that $\gwass$ maintains the metric and topological structure of $\mathsf{W}_1$ and is able to approximate it within a $\sigma\sqrt{d}$ gap. The structural properties and fast empirical convergence rates were later extended to $p>1$ in \cite{nietert21}. Other follow-up works explored relations between $\GWp$ and maximum mean discrepancies \cite{zhang2021convergence}, analyzed its rate of decay as $\sigma\to \infty$ \cite{chen2021asymptotics}, and adopted it as a performance metric for nonparametric mixture model estimation \cite{han2021nonparametric}.

A limit distribution theory for $\gwass$ was developed in \cite{goldfeld2020asymptotic,sadhu2021}, where the scaled empirical distance $\sqrt{n}\,\gwass(\hat{\mu}_n,\mu)$ was shown to converge in distribution to the supremum of a tight Gaussian process in every dimension $d$ under mild moment conditions. This result relies on the dual formulation of $\mathsf{W}_1$ as an integral probability metric (IPM) over the class of 1-Lipschitz functions. Gaussian smoothing shrinks the function class to that of 1-Lipschitz functions convolved with a Gaussian density, which is shown to be $\mu$-Donsker in every dimension, thereby yielding the limit distribution. Extending these results to empirical $\GWp$ with $p>1$, however, requires substantially new ideas due to the lack of an IPM structure. Consequently, works exploring $\GWp$ with $p>1$, such as \cite{zhang2021convergence,nietert21}, did not contain limit distribution results for it and this question remained largely open. %

The present paper closes this gap and provides a comprehensive limit distribution theory for empirical $\GWp$ with $p > 1$. Our main limit distribution results are summarized in the following theorem, where the `null' refers to when $\mu=\nu$, while `alternative' corresponds to $\mu\neq\nu$. In all what follows, the dimension $d \ge 1$ is arbitrary.

\begin{theorem}[Main results]
\label{thm: main theorem}
Let $1 < p < \infty$, and $\mu,\nu$ be Borel probability measures on~$\R^d$ with finite $p$th moments. Let $\hat{\mu}_n = n^{-1}\sum_{i=1}^n \delta_{X_i}$ and $\hat{\nu}_n = n^{-1}\sum_{i=1}^n \delta_{Y_i}$ be the empirical distributions of independent observations $X_1,\ldots,X_n \sim \mu$ and $Y_1,\ldots,Y_n\sim\nu$. Suppose that~$\mu$ satisfies Condition \eqref{eq:moment-condition} ahead (which requires $\mu$ to be sub-Gaussian). 

\begin{enumerate}[(i)]
    \item (One-sample null case) We have
    \[
    \sqrt{n}\GWp (\hat{\mu}_n,\mu) \stackrel{d}{\to} \sup_{\substack{\varphi \in C_0^{\infty}:\\ \| \varphi \|_{\dot{H}^{1,q}(\mu*\gamma_{\sigma})} 
    \le 1}} G_{\mu}(\varphi), 
    \]
    where $G_{\mu}=(G_{\mu}(\varphi))_{\varphi \in C_{0}^\infty}$ is a centered Gaussian process whose paths are linear and continuous with respect to (w.r.t.) the Sobolev seminorm $\| \varphi \|_{\mspace{-1mu}\dot{H}^{1,q}(\mu*\gamma_{\sigma})}\mspace{-5mu} :=\mspace{-3mu} \| \nabla \varphi \|_{L^q(\mu*\gamma_{\sigma};\R^d)}$. Here $q$ is the conjugate index of $p$, i.e., $1/p+1/q=1$. 
    
    \vspace{2mm}
    \item (Two-sample null case) If $\nu = \mu$, then we have 
    \[
    \sqrt{n}\GWp (\hat{\mu}_n,\hat{\nu}_n) \stackrel{d}{\to}\sup_{\substack{\varphi \in C_0^{\infty}:\\ \| \varphi \|_{\dot{H}^{1,q}(\mu*\gamma_{\sigma})}
    \le 1}} \big [ G_{\mu}(\varphi)-G_{\mu}'(\varphi) \big],
    \]
    where $G_{\mu}'$ is an independent copy of $G_\mu$. 
    \vspace{2mm}
    \item (One-sample alternative case) If $\nu \ne \mu$ and $\nu$ is sub-Weibull, then we have 
    \[
    \sqrt{n}\big(\GWp(\hat{\mu}_n,\nu) - \GWp(\mu,\nu)\big) \stackrel{d}{\to} N \left ( 0, \frac{\Var_{\mu}(g*\phi_{\sigma})}{p^2\big[\GWp(\mu,\nu)\big]^{2(p-1)}} \right ),
    \]
    where $g$ is an optimal transport potential from $\mu*\gamma_{\sigma}$ to $\nu*\gamma_{\sigma}$ for $\Wp^p$, and $\phi_\sigma(x) = (2\pi \sigma^2)^{-d/2}e^{-|x|^2/(2\sigma^2)}$ is the Gaussian density.
    \vspace{2mm}
    \item[(iv)] (Two-sample alternative case)  If $\nu \ne \mu$  and $\nu$ satisfies Condition \eqref{eq:moment-condition}, then we have 
    \[
    \sqrt{n}\big(\GWp(\hat{\mu}_n,\hat{\nu}_n) - \GWp(\mu,\nu)\big) \stackrel{d}{\to} N \left ( 0, \frac{\Var_{\mu}(g*\phi_{\sigma})+\Var_{\nu}(g^c*\phi_{\sigma})}{p^2\big[\GWp(\mu,\nu)\big]^{2(p-1)}} \right ),
    \]
    where $g^c$ is the $c$-transform of $g$ for the cost function $c(x,y)=|x-y|^p$. 
\end{enumerate}
\end{theorem}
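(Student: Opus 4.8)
The plan is to realize $\GWp$ as a composition of a (Gaussian-mollified) empirical-measure map into a dual Sobolev space with a Wasserstein functional on that space, and then invoke the functional delta method. Throughout write $\tilde\mu := \mu*\gamma_\sigma$, $\tilde\nu := \nu*\gamma_\sigma$, $\hat{\tilde\mu}_n := \hat\mu_n*\gamma_\sigma$, $\hat{\tilde\nu}_n := \hat\nu_n*\gamma_\sigma$, all of which have smooth, strictly positive densities. Since $\GWp(\rho_1,\rho_2)=\Wp(\rho_1*\gamma_\sigma,\rho_2*\gamma_\sigma)$, each of (i)--(iv) is a statement about $\Wp$ between such mollified measures. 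The ambient Banach space is $\Hu$ -- the dual of the homogeneous Sobolev space $\dot H^{1,q}(\mu*\gamma_\sigma)$, obtained as the completion of $C_0^\infty$ under $\varphi\mapsto\|\nabla\varphi\|_{L^q(\mu*\gamma_\sigma;\R^d)}$, which is separable and reflexive for $1<q<\infty$ -- where a finite signed measure $\chi$ with $\chi(\R^d)=0$ is identified with the continuous linear functional $\varphi\mapsto\int\varphi\,d\chi$. The elementary identity $\int\varphi\,d(\hat\mu_n*\gamma_\sigma)=n^{-1}\sum_{i=1}^n(\varphi*\phi_\sigma)(X_i)$ is what converts the resulting pairings into ordinary empirical averages.

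\textbf{Step 1: weak convergence of the smooth empirical process.} The first step is to prove that $\GG_n := \sqrt n\,(\hat\mu_n*\gamma_\sigma-\mu*\gamma_\sigma)\rightsquigarrow G_\mu$ as random elements of $\Hu$, where $G_\mu$ is the centered Gaussian process in the statement; restricting the limit to $C_0^\infty$ recovers the process $(G_\mu(\varphi))_{\varphi\in C_0^\infty}$ with linear paths continuous in $\|\cdot\|_{\dot H^{1,q}(\mu*\gamma_\sigma)}$, and marginally $G_\mu(\varphi)\sim N(0,\Var_\mu(\varphi*\phi_\sigma))$. By the identification above this reduces to the $\mu$-Donsker property of the class $\cF_\sigma := \{\varphi*\phi_\sigma : \varphi\in C_0^\infty,\ \|\nabla\varphi\|_{L^q(\mu*\gamma_\sigma)}\le 1\}$ together with finiteness of $\|\GG_n\|_{\Hu}$ and asymptotic tightness in $\Hu$. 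Here Gaussian convolution is decisive: it upgrades a mere $L^q$-gradient bound to uniform control of $\varphi*\phi_\sigma$ and all of its derivatives on compact sets (via $\nabla^k(\varphi*\phi_\sigma)=(\nabla\varphi)*\nabla^{k-1}\phi_\sigma$ and Hölder), while the sub-Gaussianity of $\mu$ from Condition \eqref{eq:moment-condition} controls the tails, yielding finite bracketing entropy of $\cF_\sigma$ in $L^2(\mu)$ -- the $L^q$-Sobolev-ball analogue of the $1$-Lipschitz argument used for $p=1$ in \cite{goldfeld2020asymptotic}. The same argument applied to $\nu$ gives $\sqrt n(\hat\nu_n*\gamma_\sigma-\nu*\gamma_\sigma)\rightsquigarrow G_\nu$, and by independence of the two samples the pair converges jointly to $(G_\mu,G_\nu)$ with independent coordinates.

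\textbf{Step 2: Hadamard directional derivative of the Wasserstein functional.} Next I would differentiate, at $\chi=0$ and tangentially to the support of the limit process, the map $\Phi_\nu:\chi\mapsto\Wp(\tilde\mu+\chi,\tilde\nu)$ on a neighborhood of $0$ in $\Hu$ (together with its two-argument version $\Psi:(\chi_1,\chi_2)\mapsto\Wp(\tilde\mu+\chi_1,\tilde\nu+\chi_2)$). In the \emph{null case} $\nu=\mu$ the value at $0$ vanishes and I claim the derivative is the dual Sobolev norm itself, $\chi\mapsto\|\chi\|_{\Hu}$ (resp.\ $(\chi_1,\chi_2)\mapsto\|\chi_1-\chi_2\|_{\Hu}$): the upper bound $\Wp(\tilde\mu+t\chi,\tilde\mu)\le t\|\chi\|_{\Hu}+o(t)$ comes from the Benamou--Brenier formula and an explicit displacement-interpolation construction, the matching lower bound from Kantorovich duality, and the key point is that the $o(t)$ be uniform over $\chi$ in compact subsets of $\Hu$ -- a Peyre-type local comparison of $\Wp$ with the weighted negative Sobolev norm, made available by the smoothness and positivity of the density of $\tilde\mu$. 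In the \emph{alternative case} $\nu\ne\mu$ (so $\GWp(\mu,\nu)>0$) I would instead differentiate $\Wp^p$ via Kantorovich duality, $\Wp^p(\tilde\mu+\chi_1,\tilde\nu+\chi_2)=\sup_g\{\int g\,d(\tilde\mu+\chi_1)+\int g^c\,d(\tilde\nu+\chi_2)\}$: an envelope (Danskin) argument -- legitimate because the optimal potential $g$ is unique up to additive constants (the density of $\tilde\mu$ is positive) and because $g\in\dot H^{1,q}(\tilde\mu)$, $g^c\in\dot H^{1,q}(\tilde\nu)$ (from the potential/optimal-map relation $p|x-T(x)|^{p-2}(x-T(x))=\nabla g(x)$, which gives $\|\nabla g\|_{L^q(\tilde\mu)}^q = p^q\,\Wp^p(\tilde\mu,\tilde\nu)<\infty$) -- yields the linear derivative $(\chi_1,\chi_2)\mapsto\langle g,\chi_1\rangle+\langle g^c,\chi_2\rangle$. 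Composing with the differentiable map $x\mapsto x^{1/p}$ gives Hadamard differentiability of $\Wp$ with derivative $\tfrac{1}{p[\GWp(\mu,\nu)]^{p-1}}\big(\langle g,\chi_1\rangle+\langle g^c,\chi_2\rangle\big)$.

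\textbf{Step 3: assembling the limits, and the main obstacle.} Combining Steps 1 and 2 through the functional delta method for Hadamard directionally differentiable maps yields all four claims. For (i), $\sqrt n\,\GWp(\hat\mu_n,\mu)=\sqrt n\,[\Phi_\mu(n^{-1/2}\GG_n)-\Phi_\mu(0)]\rightsquigarrow\|G_\mu\|_{\Hu}=\sup_{\varphi\in C_0^\infty,\,\|\varphi\|_{\dot H^{1,q}(\mu*\gamma_\sigma)}\le1}G_\mu(\varphi)$, the absolute value being unnecessary since the constraint set is symmetric; (ii) is the same computation applied to $\Psi$ with the difference $G_\mu-G_\mu'$ of two independent smooth empirical limits, giving $\|G_\mu-G_\mu'\|_{\Hu}$. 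For (iii), the linear derivative maps $G_\mu$ to $\tfrac{1}{p[\GWp(\mu,\nu)]^{p-1}}\langle g,G_\mu\rangle$, and since $\langle g,\GG_n\rangle=\tfrac{1}{\sqrt n}\sum_{i=1}^n\big((g*\phi_\sigma)(X_i)-\E(g*\phi_\sigma)(X)\big)$ this limit is $N\big(0,\Var_\mu(g*\phi_\sigma)/(p^2[\GWp(\mu,\nu)]^{2(p-1)})\big)$ -- finiteness of the variance is precisely where the sub-Weibull hypothesis on $\nu$ enters, as it bounds the polynomial growth of $g$ (hence of $g*\phi_\sigma$) against the sub-Gaussian $\mu$. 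Statement (iv) is identical but with the joint limit $(G_\mu,G_\nu)$, independence producing the sum of variances, and the sub-Gaussian hypothesis on $\nu$ guaranteeing both that $G_\nu$ is a genuine element of $\dot H^{-1,p}(\nu*\gamma_\sigma)$ and that $g^c*\phi_\sigma\in L^2(\nu)$. I expect the main obstacle to be Step 1 -- establishing weak convergence of the smooth empirical process in the dual Sobolev space -- which demands quantitative entropy estimates trading the Gaussian smoothing against the sub-Gaussian tails while handling the $L^q$ (rather than uniform) Sobolev constraint; a close second is securing the \emph{uniform} (Hadamard, not merely Gâteaux) local comparison of $\Wp$ with $\|\cdot\|_{\Hu}$ in the null case, and rigorously justifying the envelope-theorem differentiation -- uniqueness/stability of Kantorovich potentials and their membership in the homogeneous Sobolev space -- in the alternative case.
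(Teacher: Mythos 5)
Your proposal is correct and follows essentially the same route as the paper: weak convergence of $\sqrt{n}(\hat{\mu}_n-\mu)*\gamma_\sigma$ in the dual Sobolev space via a Donsker argument for the smoothed Sobolev ball, Hadamard directional differentiability of $\Wp$ (norm derivative at the null via Benamou--Brenier plus duality, linear derivative under the alternative via the envelope/duality argument with regularity and stability of optimal potentials), and the extended functional delta method. The points you flag as obstacles --- uniformity of the $o(t)$ remainder via the $\Wp$-versus-dual-Sobolev-norm comparison, and membership of $g$ in the completion $\dot{H}^{1,q}(\mu*\gamma_\sigma)$ (which indeed requires more than finiteness of $\|\nabla g\|_{L^q}$, namely a cutoff approximation) --- are exactly the steps the paper devotes its technical lemmas to.
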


Parts (i) and (ii) show that the null limit distributions are non-Gaussian. On the other hand, Parts (iii) and (iv) establish asymptotic normality of empirical $\GWp$ under the alternative. Notably, these result have the correct centering, $\GWp(\mu,\nu)$, which enables us to construct confidence intervals for $\GWp(\mu,\nu)$. 

The proof strategy for \cref{thm: main theorem} differs from existing approaches to limit distribution theory for empirical $\Wp$ for general distributions. In fact, an analog of \cref{thm: main theorem} is not known to hold for classic $\Wp$ in this generality, except for the special case where $\mu,\nu$ are discrete (see a literature review below for details). The key insight is to regard $\GWp$ as a functional defined on a subset of a certain dual Sobolev space. We show that the smooth empirical process converges weakly in the dual Sobolev space and that $\GWp$ is Hadamard (directionally) differentiable w.r.t. the dual Sobolev norm. We then employ the extended functional delta method \cite{shapiro1990,romisch2004} to obtain the limit distribution of one- and two-sample empirical $\GWp$ under both the null and the alternative.

The limit distributions in Theorem \ref{thm: main theorem} are non-pivotal in the sense that they depend on the population distributions $\mu$ and $\nu$, which are unknown in practice. To facilitate statistical inference using $\GWp$, we employ the bootstrap to estimate the limit distributions and prove its consistency for each case of \cref{thm: main theorem}. Under the alternative, the consistency follows from the linearity of the Hadamard derivative. Under the null, where the Hadamard (directional) derivative is nonlinear, the bootstrap consistency is not obvious but still holds. This is somewhat surprising in light of \cite{dumbgen1993,fang2019}, where it is demonstrated that the bootstrap, in general, fails to be consistent for functionals whose Hadamard directional derivatives are nonlinear (cf. Proposition 1 in \cite{dumbgen1993} or Corollary 3.1 in \cite{fang2019}). Nevertheless, our application of the bootstrap differs from \cite{dumbgen1993,fang2019} so there is no contradiction, and the specific structure of the Hadamard derivative of $\GWp$ allows to establish consistency under the null (see the discussion after \cref{prop: bootstrap} for more details). These bootstrap consistency results enable constructing confidence intervals for $\GWp$ and using it to test the equality of distributions. 

As an application of the limit theory, we study implicit generative modeling under the minimum distance estimation (MDE) framework \cite{wolfowitz1957minimum,pollard_min_dist_1980,parr1980minimum}. MDE extends the maximum-likelihood principle beyond the KL divergence and applies to models supported on low-dimensional manifolds \cite{arjovsky_wgan_2017} (whence the KL divergence is not well-defined), as well as to cases when the likelihood function is intractable \cite{gourieroux1993indirect}. For MDE with $\GWp$, we establish limit distribution results for the optimal solution and the smooth $p$-Wasserstein error. Our results hold for arbitrary dimension, again contrasting the classic case where analogous distributional limits for MDE with $\Wp$ are known only for $p=d=1$ \cite{bernton2019}. Remarkably, when $p=2$, the Hilbertian structure of the underlying dual Sobolev space allows showing asymptotic normality of the MDE solution.

\subsection{Literature review}
Analysis of empirical Wasserstein distances, or more generally empirical optimal transport distances, has been an active research area in the statistics and probability theory literature. In particular, significant attention was devoted to rates of convergence and exact asymptotics 
\cite{dudley1969speed,ajtai1984optimal,talagrand1992matching,talagrand1994transportation,dobric1995asymptotics,de2002almost,bolley2007quantitative,dereich2013,barthe2013combinatorial,boissard2014mean,fournier2015,weed2019rate,ambrosio2019pde,ledoux2019,bobkov2019simple,lei2020convergence, chizat2020faster, manole2021, deb2021, manole2021plugin}. As noted before, the empirical Wasserstein distance suffers from the curse of dimensionality, namely, $\E[\Wp(\hat{\mu}_n,\mu)] = O(n^{-1/d})$ whenever $d > 2p$. This rate is known to be sharp in general \cite{dudley1969speed}. 
The recent work by \cite{chizat2020faster, manole2021} discovered that the rate can be improved under the alternative, namely, $\E\big[\big|\Wp(\hat{\mu}_n,\nu)-\Wp(\mu,\nu)\big|\big] = O(n^{-\alpha/d})$  for $d \ge 5$ if $\nu \ne \mu$, where $\alpha = p$ for $1 \le p < 2$ and $\alpha = 2$ for $2 \le p < \infty$. Their insight is to use the duality formula for $\Wp^p$ and exploit regularity of optimal transport potentials. \cite{manole2021} also derive matching minimax lower bounds up to log factors under some technical conditions.

Another central problem that has seen a rapid development is limit distribution theory for empirical Wasserstein distances. However, except for the two special cases discussed next, to the best of our knowledge, there is no proven analog of our Theorem \ref{thm: main theorem} for classic Wasserstein distances, i.e., a comprehensive limit distribution theory for empirical $\Wp$ that holds for general $d$ and $p$. The first case for which the limit distribution is well understood is when $d=1$. Then, $\Wp$ reduces to the $L^p$ distance between quantile functions for $1 \leq p < \infty$, and further simplifies to the $L^1$ distance between distribution functions when $p=1$. %
Building on such explicit expressions, \cite{delbarrio1999} and \cite{delbarrio2005} derived null limit distributions in $d=1$ for $p=1$ and $p=2$, respectively. More recently, under the alternative ($\mu \ne \nu$), \cite{delbarrio2019} derived a central limit theorem (CLT) when $d=1$ and $p \ge 2$. The second case where a limit distribution theory for empirical $\Wp$ is available is when $\mu,\nu$ are discrete. If the distributions are finitely discrete, i.e., $\mu = \sum_{j=1}^m r_j \delta_{x_j}$ and $\nu = \sum_{j=1}^{k}s_j \delta_{y_j}$ for two simplex vectors $r = (r_1,\dots,r_m)$ and $s=(s_1,\dots,s_k)$, then $\Wp(\mu,\nu)$ can be seen as a function of those simplex vectors $r$ and~$s$. Leveraging this, \cite{sommerfeld2018} applied the delta method to obtain limit distributions for empirical $\Wp$ in the finitely discrete case. An extension to countably infinite supports was provided in \cite{tameling2019}, while \cite{delbarrio2021semi} treated the semidiscrete case where $\mu$ is finitely discrete but $\nu$ is general. 

Except for these two special cases, limit distributions for Wasserstein distances are less understood. To avoid repetitions, we focus here our discussion on the one sample case. In \cite{delbarrio2019central}, a CLT for $\sqrt{n}\big(\mathsf{W}_2^2(\hat{\mu}_n,\nu)-\E\big[\mathsf{W}_2^2(\hat{\mu}_n,\nu)\big]\big)$ is derived in any dimension, but the limit Gaussian distribution degenerates to $0$ when $\mu = \nu$; see also \cite{delbarrio2021} for an extension to general $1 < p < \infty$. 
Notably, the centering constant there is the expected empirical Wasserstein distance $\E\big[\mathsf{W}_2^2(\hat{\mu}_n,\nu)\big]$, which in general can not be replaced with the (more natural) population distance $\mathsf{W}_2^2(\mu,\nu)$. The recent preprint \cite{manole2021plugin} addressed this gap and established a CLT for $\sqrt{n}\big(\mathsf{W}_2^2(\tilde{\mu}_n,\nu)-\mathsf{W}_2^2(\mu,\nu)\big)$ for a wavelet-based estimator $\tilde{\mu}_n$ of $\mu$, assuming that the ambient space is $[0,1]^d$ and that $\mu,\nu$ are absolutely continuous w.r.t. the Lebesgue measure with smooth and strictly positive densities. Following arguments similar to  \cite{delbarrio2019central}, they first derive a CLT for $\sqrt{n}\big(\mathsf{W}_2^2(\tilde{\mu}_n,\nu)-\E\big[\mathsf{W}_2^2(\tilde{\mu}_n,\nu)\big]\big)$ and then use the strict positivity of the densities and higher order regularity of optimal transport potentials to control the bias term as  $\E\big[\mathsf{W}_2^2(\tilde{\mu}_n,\nu)\big] - \mathsf{W}_2^2(\mu,\nu) =o(n^{-1/2})$.   

Finally, we note that our proof techniques differ from the aforementioned arguments for classic $\Wp$. %
Specifically, as opposed to the two-step approach of \cite{manole2021plugin} described above, we directly prove asymptotic normality for $\sqrt{n}\big(\GWp(\hat{\mu}_n,\nu) - \GWp(\mu,\nu)\big)$ under the alternative. Their derivation does not apply to our case even when $p=2$ since their bias bound requires that the densities of $\mu$ and $\nu$ be bounded away from zero on their (compact) supports, which fails to hold after the Gaussian convolution. %
Our argument also differs from that of \cite{sommerfeld2018,tameling2019}, even though they also rely on the functional delta method. Specifically, since we do not assume that $\mu,\nu$ are discrete, $\GWp$ can not be parameterized by simplex vectors, and hence the application of the functional delta method is nontrivial. Very recently, an independent work \cite{hundrieser2022} used the extended functional delta method for the supremum functional \cite{carcamo2020directional} to derive limit distributions for classic $\Wp$, with $p\geq 2$, for compactly supported distributions under the alternative in dimensions $d\leq 3$.

\subsection{Organization}
The rest of the paper is organized as follows. 
In Section \ref{sec:background}, we collect background material on Wasserstein distances, smooth Wasserstein distances, and dual Sobolev spaces. In Section \ref{sec: main results}, we prove Theorem \ref{thm: main theorem} and explore the validity of the bootstrap for empirical $\GWp$. \cref{sec: mde} presents applications of our limit distribution theory to MDE with $\GWp$. Proofs for Section \ref{sec: main results} and \ref{sec: mde} can be found in \cref{sec: proofs}. \cref{sec:summary} provides concluding remarks and discusses future research directions. Finally, the Appendix contains additional proofs.

\subsection{Notation}
Let $| \cdot |$ and $\langle \cdot, \cdot \rangle$ denote the Euclidean norm and inner product, respectively. Let $B(x,r) = \{ y \in \R^{d} : |y-x| \le r \}$ denote the closed ball with center $x$ and radius $r$. We use $dx$ for the Lebesgue measure on $\RR^d$, while $\delta_x$ denotes the Dirac measure at $x \in \R^d$. Given a finite signed Borel measure $\ell$ on $\R^d$, we identify $\ell$ with the linear functional $f \mapsto \ell(f) := \int f d \ell$. Let $\lesssim$ denote inequalities up to some numerical constants. For any $a,b \in \R$, we use the shorthands $a \vee b = \max \{ a,b \}$ and $a \land b = \min \{ a,b \}$. 

For a topological space $S$, $\cB(S)$ and $\cP(S)$ denote, respectively, the Borel $\sigma$-field on $S$ and the class of Borel probability measures on $S$. We write $\cP := \cP(\R^d)$ and for $1 \le p < \infty$, use $\cP_p$ to denote the subset of $\mu \in \cP$ with finite $p$th moment $\int_{\R^d} |x|^{p} d \mu(x) < \infty$. For $\mu,\nu \in \cP$, we write $\mu\ll\nu$ for the absolute continuity of $\mu$ w.r.t. $\nu$, and use $d\mu/d\nu$ for the corresponding Radon-Nikodym derivative. Let $\mu*\nu$ denote the convolution of $\mu,\nu \in \cP$. Likewise, the convolution of two measurable functions $f,g: \R^d \to \R$ is denoted by $f*g$. We write $\gamma_{\sigma} = N(0,\sigma^2 I_d)$ for the centered Gaussian distribution on $\R^d$ with covariance matrix $\sigma^2 I_d$, and use $\phi_{\sigma}(x) = (2\pi\sigma^2)^{-d/2}e^{-|x|^2/(2\sigma^2)}$ ($x \in \R^d$) for the corresponding density. We write $\mu \otimes \nu$ for the product measure of $\mu,\nu \in \cP$. Let $\smash{\stackrel{w}{\to}, \stackrel{d}{\to}}$, and $\smash{\stackrel{\Prob}{\to}}$ denote weak convergence of probability measures, convergence in distribution of random variables, and convergence in probability, respectively. When necessary, convergence in distribution is understood in the sense of Hoffmann-J{\o}rgensen (cf. Chapter 2 in \cite{vandervaart1996book}).

Throughout the paper, we assume that $(X_1,Y_1),(X_2,Y_2),\dots$ are the coordinate projections of the product probability space $\prod_{i=1}^{\infty} \big(\R^{2d}, \cB(\R^{2d}),\mu \otimes \nu\big)$. To generate auxiliary random variables, we extend the probability space as $(\Omega, \cA, \Prob) = \left [ \prod_{i=1}^{\infty} \big(\R^{2d}, \cB(\R^{2d}),\mu \otimes \nu\big)\right] \times \big([0,1],\cB([0,1]), \mathrm{Leb}\big)$, where $\mathrm{Leb}$ denotes the Lebesgue measure on $[0,1]$. For $\beta \in (0,2]$, let $\psi_{\beta}(t) = e^{t^\beta}-1$ for $t \ge 0$, and recall that the corresponding Orlicz (quasi-)norm of a real-valued random variable $\xi$ is defined as $\| \xi \|_{\psi_{\beta}}:= \inf \{ C>0 : \E[\psi_{\beta}(|\xi|/C)] \le 1 \}$. A Borel probability measure $\mu\in\cP$ is called \textit{$\beta$-sub-Weibull} if $\| |X| \|_{\psi_{\beta}} < \infty$ for $X \sim \mu$. We say that $\mu$ is sub-Weibull if it is $\beta$-sub-Weibull for some $\beta \in (0,2]$. Finally, $\mu$ is \textit{sub-Gaussian} if it is $2$-sub-Weibull.

For an open set $\mathcal{O}$ in a Euclidean space,  $C_{0}^{\infty} (\mathcal{O})$  denotes the space of compactly supported, infinitely differentiable, real functions on $\mathcal{O}$. We write $C_0^{\infty} = C_0^{\infty}(\R^d)$ and define $\dot{C}_0^\infty = \{  f+a : f \in C_0^{\infty}, a \in \R \}$.
For any $p \in [1,\infty)$ and $\mu\in\cP(\RR^d)$, let $L^p(\mu;\R^k)$ denote the space of measurable maps $f: \R^d \to \R^k$ such that $\|f\|_{L^p(\mu;\R^k)} = (\int_{\R^d}|f|^p d \mu)^{1/p} < \infty$; when $d=1$ we use the shorthand $L^p(\mu) = L^p(\mu;\R^1)$. Recall that $\big(L^p(\mu;\R^k), \| \cdot \|_{L^p(\mu;\R^k)}\big)$ is a Banach space. Finally, for a subset $A$ of a topological space~$S$, let $\overline{A}^S$ denote the closure of $A$; if the space $S$ is clear from the context, then we simply write $\overline{A}$ for the closure.

\section{Background}
\label{sec:background}

\subsection{Wasserstein distances and their smooth variants}

Recall that, for $1 \le p < \infty$, the $p$-Wasserstein distance $\Wp(\mu,\nu)$ between $\mu,\nu \in \cP_p$ is defined in \eqref{eq:OT-problem}. Some basic properties of $\Wp$ are (cf. e.g., \cite{villani2003,ambrosio2005,villani2008optimal,santambrogio2015}):    
(i) the $\inf$ is attained in the definition of $\Wp$, i.e., there exists a coupling $\pi^\star \in \Pi(\mu,\nu)$ such that $\Wp^{p}(\mu,\nu) = \int_{\R^d \times \R^d} |x-y|^pd\pi^\star(x,y)$, and the optimal coupling $\pi^\star$ is unique when $p>1$ and $\mu \ll dx$;
(ii) $\Wp$ is a metric on $\cP_p$; and
(iii) convergence in $\Wp$ is equivalent to weak convergence plus convergence of $p$th moments: $\Wp(\mu_{n},\mu) \to 0$ if and only if $\mu_{n} \stackrel{w}{\to} \mu$ and $\int |x|^{p} d \mu_{n}(x) \to \int |x|^{p} d \mu(x)$. 

The proof of the limit distribution for empirical $\GWp$ under the alternative hinges on duality theory for $\Wp$, which we summarize below. For a function $g: \R^d \to [-\infty,\infty)$ and a cost function $c: \R^d \times \R^d \to \R$, the \textit{$c$-transform} of $g$ is defined by
\[
g^c(y) = \inf_{x \in \R^d}\big [ c(x,y) -g(x) \big], \quad y \in \R^d.
\]
A function $g: \R^d \to [-\infty,\infty)$, not identically $-\infty$, is called \textit{$c$-concave} if $g = f^c$ for some function $f: \R^d \to [-\infty,\infty)$.

\begin{lemma}[Duality for $\Wp$]
\label{lem: duality}
Let $1 \le p < \infty$, $\mu,\nu \in \cP_p$, and set the cost function to $c(x,y) = |x-y|^p$. 
\begin{enumerate}[(i)]
    \item(Theorem 5.9 in \cite{villani2008optimal}; Theorem 6.1.5 in \cite{ambrosio2005}) The following duality holds, 
\begin{equation}
\label{eq: duality}
\Wp^p(\mu,\nu) = \sup_{g \in L^1(\mu)} \left [ \int_{\R^d} g d\mu + \int_{\R^d} g^c d\nu \right ],
\end{equation}
and there is at least one $c$-concave function $g \in L^1(\mu)$ that attains the supremum in~\eqref{eq: duality}; we call this $g$ an \textit{optimal transport potential} from $\mu$ to $\nu$ for $\Wp^p$.
\item(Theorem 3.3 in \cite{gangbo1996}) Let $1 < p < \infty$, suppose that $g:\R^d \to [-\infty,\infty)$ is $c$-concave, and take $K$ as the convex hull of $\{ x : g (x) > -\infty \}$. Then $g$ is locally Lipschitz on the interior~of~$K$. 
\item(Corollary 2.7 in \cite{delbarrio2021}) If $1 < p < \infty$ and $\mu \ll dx$ is supported on an open connected set~$A$, then the optimal transport potential from $\mu$ to $\nu$ for $\Wp^p$ is unique on $A$ up to additive constants, i.e., if $g_1$ and $g_2$ are optimal transport potentials, then there exists $C \in \R$ such that $g_1(x) = g_2(x)+C$ for all $x \in A$. 
\end{enumerate}
\end{lemma}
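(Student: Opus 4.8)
The three parts of the lemma are classical facts of Kantorovich duality, so the plan is to reduce each to the cited source while recording the mechanism that makes it work. For Part (i), I would begin from the general Kantorovich duality for the lower semicontinuous cost $c(x,y)=|x-y|^p$, which expresses $\Wp^p(\mu,\nu)$ as the supremum of $\int_{\R^d}\phi\,d\mu+\int_{\R^d}\psi\,d\nu$ over admissible pairs $(\phi,\psi)$ with $\phi\in L^1(\mu)$, $\psi\in L^1(\nu)$, and $\phi(x)+\psi(y)\le c(x,y)$. The essential reduction is the double $c$-transform: for any admissible pair, replacing $\psi$ by $\phi^c$ and then $\phi$ by $\phi^{cc}$ keeps the pair admissible and does not decrease the objective, since $\phi\le\phi^{cc}$ pointwise and $(\phi^{cc})^c=\phi^c$; this shows the supremum is unchanged when restricted to $c$-concave $g:=\phi^{cc}$ paired with $g^c$, which is \eqref{eq: duality}. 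To produce an actual maximizer I would take a maximizing sequence of $c$-concave functions, fix the free additive constant (e.g.\ by prescribing the value at a base point), and pass to a locally uniform limit using the uniform local Lipschitz control from Part (ii); finiteness of $\int g\,d\mu$ and $\int g^c\,d\nu$ then follows from the envelope $c(x,y)\le 2^{p-1}(|x|^p+|y|^p)$ and $\mu,\nu\in\cP_p$. This is Theorem 5.9 in \cite{villani2008optimal} and Theorem 6.1.5 in \cite{ambrosio2005}.

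For Part (ii), writing $g=f^c$, i.e.\ $g(x)=\inf_y[\,|x-y|^p-f(y)\,]$, I would fix a compact set $Q\subset\interior K$ and show that on a neighborhood of $Q$ the infimum is effectively restricted to $y$ ranging over a bounded set (outside it the bracket tends to $+\infty$, using boundedness of $x$ and the fact that $f$ is bounded above near the relevant points). Because $1<p<\infty$, each map $x\mapsto|x-y|^p$ is $C^1$ with gradient $p|x-y|^{p-2}(x-y)$, which is continuous (including at $x=y$) and bounded uniformly for $x$ near $Q$ and $y$ in that bounded set; an infimum of functions sharing a common Lipschitz constant is Lipschitz with that constant, so $g$ is locally Lipschitz on $\interior K$. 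This is Theorem 3.3 in \cite{gangbo1996}.

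For Part (iii), let $g_1,g_2$ be optimal transport potentials from $\mu$ to $\nu$. By Part (ii) both are locally Lipschitz on the open connected set $A$, hence differentiable Lebesgue-a.e.\ on $A$ by Rademacher's theorem, and $\mu\ll dx$. For $1<p<\infty$ with $\mu\ll dx$ the optimal coupling is unique and induced by a transport map $T$; at $\mu$-a.e.\ $x$ the equality $g_i(x)+g_i^c(T(x))=|x-T(x)|^p$, together with the fact that $x'\mapsto g_i(x')-|x'-T(x)|^p$ is maximized at $x'=x$, forces $\nabla g_i(x)=p|x-T(x)|^{p-2}(x-T(x))$. Since $z\mapsto p|z|^{p-2}z$ is injective for $p>1$, this recovers $T(x)$ from $\nabla g_i(x)$, so $\nabla g_1=\nabla g_2$ $\mu$-a.e., hence Lebesgue-a.e.\ on $A$. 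A locally Lipschitz function with a.e.\ vanishing gradient on a connected open set is constant, so $g_1-g_2$ is constant on $A$. This is Corollary 2.7 in \cite{delbarrio2021}.

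If one insisted on proving the lemma from scratch rather than citing, I expect Part (iii) to be the main obstacle: one must carefully transfer ``$\nabla g_1=\nabla g_2$ $\mu$-a.e.'' to ``Lebesgue-a.e.\ on $A$'' (using $\mu\ll dx$ and positivity of $\mu$ on $A$) while $T$ is only defined $\mu$-a.e., and then upgrade an a.e.\ gradient identity for merely locally Lipschitz functions to a genuine pointwise identity up to an additive constant, which is exactly where connectedness of $A$ enters. The attainment claim in Part (i), with the $L^1$ integrability built into the class of admissible potentials, is the second most delicate point.
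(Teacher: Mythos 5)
This lemma is stated in the paper purely as a citation of classical results (Theorem 5.9 in Villani, Theorem 6.1.5 in Ambrosio--Gigli--Savar\'e, Theorem 3.3 in Gangbo--McCann, Corollary 2.7 in del Barrio et al.), with no proof given, and your proposal reduces to exactly the same citations while adding faithful sketches of the standard mechanisms (double $c$-transform, uniform local Lipschitz bounds for $c$-concave functions, and recovery of the optimal map from $\nabla g$). Your sketches are correct in outline, and you rightly flag the one genuinely delicate point in Part (iii) --- upgrading ``$\nabla g_1=\nabla g_2$ $\mu$-a.e.'' to a Lebesgue-a.e.\ statement on $A$, which needs more than $\mu\ll dx$ --- which is precisely what the cited Corollary 2.7 is invoked to handle.
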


The \textit{smooth Wasserstein distance} convolves the distributions with an isotropic Gaussian kernel. Gaussian convolution levels out local irregularities in the distributions, while largely preserving the structure of classic $\Wp$. Recalling that $\gamma_{\sigma} = N(0,\sigma^2I_d)$, the smooth $p$-Wasserstein distance is defined as follows.

\begin{definition}[Smooth Wasserstein distance]
Let $1 \le p < \infty$ and $\sigma \ge 0$. For $\mu,\nu \in \cP_{p}$,
the \emph{smooth $p$-Wasserstein distance} between $\mu$ and $\nu$ with smoothing parameter $\sigma$ is%
\[
\GWp(\mu,\nu) := \Wp(\mu*\gamma_{\sigma}, \nu*\gamma_{\sigma}).
\]
\end{definition}

The smooth Wasserstein distance was studied in \cite{goldfeld2019,Goldfeld2020GOT,goldfeld2020asymptotic,sadhu2021,nietert21} for structural properties and empirical convergence rates. We recall two basic properties: (i) $\GWp$ is a metric on $\cP_p$ that generates the same topology as classic $\Wp$; (ii) for $\mu,\nu \in \cP_p$ and $0 \le \sigma_1 \leq \sigma_2 < \infty$, we have $\Wp^{(\sigma_2)}(\mu,\nu) \leq \Wp^{(\sigma_1)}(\mu,\nu) \leq \Wp^{(\sigma_2)}(\mu,\nu) +C_{p,d}\sqrt{\sigma_2^2 - \sigma_1^2}$ for a constant $C_{p,d}$ that depends only on $p,d$. In particular, $\GWp(\mu,\nu)$ is continuous and monotonically non-increasing in $\sigma \in [0,+\infty)$ with $\lim_{\sigma \downarrow 0} \GWp(\mu,\nu) = \Wp(\mu,\nu)$. See \cite{nietert21} for additional structural results, including an explicit expression for $C_{p,d}$ and weak convergence of smooth optimal couplings. %
For empirical convergence, it was shown in \cite{nietert21} that under appropriate moment assumptions $\EE\big[\GWp(\hat{\mu}_n,\mu)\big]=O(n^{-1/2})$ for $p >1$ in any dimension $d$. Versions of this result for $p=1$ and $p=2$ were derived earlier in \cite{goldfeld2019,goldfeld2020asymptotic,sadhu2021}.

\subsection{Sobolev spaces and their duals}

Our proof strategy for the limit distribution results is to regard $\Wp$ as a functional defined on a subset of a certain dual Sobolev space. We will show that the smooth empirical process converges weakly in the dual Sobolev space and that $\Wp$ is Hadamard (directionally) differentiable w.r.t. the dual Sobolev norm. Given these, the limit distributions in \cref{thm: main theorem} follow via the functional delta method. Here we briefly discuss (homogeneous) Sobolev spaces and their duals.

\begin{definition}[Homogeneous Sobolev spaces and their duals]\label{def:Sobolev}
Let $\rho\mspace{-1.5mu}\in\mspace{-1.5mu}\cP$ and $1\mspace{-1.5mu}\le\mspace{-1.5mu} p\mspace{-1.5mu}<\mspace{-1.5mu}\infty$. 
\begin{enumerate}[(i)]
\item For a differentiable function $f:\R^{d}\to\R$, let
\[
\| f \|_{\dot{H}^{1,p}(\rho)}:= \| \nabla f \|_{L^{p}(\rho;\R^d)} = \left (\int_{\R^{d}} |\nabla f |^p  d\rho \right)^{1/p}
\]
be the \textit{Sobolev seminorm}. 
We define the \textit{homogeneous Sobolev space} $\dot{H}^{1,p}(\rho)$ by the completion of $\dot{C}_{0}^\infty$ w.r.t. $\| \cdot \|_{\dot{H}^{1,p}(\rho)}$. 
\item Let $q$ be  the conjugate index of $p$, i.e., $1/p+1/q = 1$.
Let $\dot{H}^{-1,p}(\rho)$ denote the topological dual of $\dot{H}^{1,q}(\rho)$.
The \emph{dual Sobolev norm} $\| \cdot \|_{\dot{H}^{-1,p}(\rho)}$ (dual to $\| \cdot \|_{\dot{H}^{1,q}(\rho)}$) of a continuous linear functional $\ell:\dot{H}^{1,q}(\rho) \to \R$ is defined by
\[
\| \ell \|_{\dot{H}^{-1,p}(\rho)} = \sup \left\{  \ell (f) : f \in \dot{C}_{0}^{\infty}, \| f \|_{\dot{H}^{1,q}(\rho)} \le 1 \right\}.
\]
\end{enumerate}
\end{definition}

The restriction $f \in \dot{C}_0^\infty$ can be replaced with $f \in C_0^\infty$ in the definition of the dual norm $\| \cdot \|_{\dot{H}^{-1,p}(\rho)}$ since $\ell (f+a) = \ell (f)$ for any $\ell \in \dot{H}^{-1,p}(\rho)$.

\medskip

We have defined the homogeneous Sobolev space $\dot{H}^{1,p}(\rho)$ as the completion of $\dot{C}_{0}^\infty$ w.r.t. $\| \cdot \|_{\dot{H}^{1,p}(\rho)}$. It is not immediately clear that the so-constructed space is a function space over $\R^d$. Below we present an explicit construction of $\dot{H}^{1,p}(\rho)$ %
when $d\rho/d\kappa$ is bounded away from zero for some reference measure $\kappa \gg dx$ satisfying the $p$-Poincar\'{e} inequality. To that end, we first define the Poincar\'e inequality.

\begin{definition}[Poincar\'{e} inequality]
\label{def: Poincare}
For $1 \le p < \infty$, a probability measure $\kappa \in \cP$ is said to satisfy the \textit{$p$-Poincar\'{e} inequality} if there exists a finite constant $\mathsf{C}$ such that 
\[
\| \varphi - \kappa(\varphi) \|_{L^{p}(\kappa)} \le \mathsf{C} \| \nabla \varphi \|_{L^{p}(\kappa; \R^d)}, \quad \forall \varphi \in C_{0}^{\infty}.
\label{eq:Poincare}
\]
The smallest constant satisfying the above is denoted by $\mathsf{C}_{p}(\kappa)$. 
\end{definition}

The standard Poincar\'{e} inequality refers to the $2$-Poincar\'{e} inequality. It is known that any log-concave distribution (i.e., a distribution $\kappa$ of the form $d\kappa = e^{-V} d x$ for some convex function $V:\R^d \to \R \cup \{ +\infty \}$; cf. \cite{lovasz2007,saumard2014}) satisfies the $p$-Poincar\'e inequality for any $1\leq p<\infty$ \cite{bobkov1999,milman2009}. In particular, the Gaussian distribution $\gamma_{\sigma}$ satisfies every $p$-Poincar\'e inequality (see also \cite[Corollary 1.7.3]{bogachev1998}).

\begin{remark}[Explicit construction of $\dot{H}^{1,p}(\rho)$]
\label{rem:explicit-construction}

Suppose that there exists a reference measure $\kappa \in \cP$, with $\kappa \gg dx$, that satisfies the $p$-Poincar\'{e} inequality.
Assume that $d\rho/d\kappa \ge c$ for some constant $c > 0$ (in our applications, $\rho = \gamma_{\sigma}$ or $\mu*\gamma_{\sigma}$ for some $\mu \in \cP_p$; in either case, the stated assumption is satisfied with $\kappa=\gamma_\sigma$ or $\gamma_{\sigma/\sqrt{2}}$).
Let $\cC=\{ f \in \dot{C}_{0}^{\infty} : \kappa(f) = 0 \}$. Then, $\| \cdot \|_{\dot{H}^{1,p}(\rho)}$ is a proper norm on $\cC$, and the map $\iota: f \mapsto \nabla f$ is an isometry from $(\cC,\| \cdot \|_{\dot{H}^{1,p}(\rho)})$ into $(L^{p}(\rho;\R^d),\| \cdot \|_{L^{p}(\rho;\R^d)})$. Let $V$ be the closure of $\iota \cC$ in $L^{p}(\rho;\R^d)$ under $\| \cdot \|_{L^{p}(\rho;\R^d)}$. The inverse map $\iota^{-1}: \iota \cC \to \cC$ can be extended to $V$ as follows. For any $g \in V$, choose $f_n \in \cC$ such that $\| \nabla f_n - g \|_{L^p(\rho;\R^d)} \to 0$. Since $\nabla f_n$ is Cauchy in $L^p(\rho;\R^d)$ and thus in $L^p(\kappa;\R^d)$ (as $\| \cdot \|_{L^p(\kappa;\R^d)} \lesssim \| \cdot \|_{L^p(\rho;\R^d)}$), $f_n$ is Cauchy in $L^{p}(\kappa)$ by the $p$-Poincar\'{e} inequality, so $\| f_n - f \|_{L^p(\kappa)} \to 0$ for some $f \in L^p(\kappa)$. Set $\iota^{-1} g = f$ and extend $\| \cdot \|_{\dot{H}^{1,p}(\rho)}$ by $\| f \|_{\dot{H}^{1,p}(\rho)} = \lim_{n \to \infty} \| f_n \|_{\dot{H}^{1,p}(\rho)} = \| g \|_{L^p(\rho;\R^d)}$.  The space $(\iota^{-1}V,\|\cdot \|_{\dot{H}^{1,p}(\rho)})$ is a Banach space of functions over $\R^d$.
Finally, the homogeneous Sobolev space $\dot{H}^{1,p}(\rho)$ can be constructed as $\dot{H}^{1,p}(\rho) = \big\{ f+a : a \in \R, f \in \iota^{-1}V \big\}$ with $\| f+a \|_{\dot{H}^{1,p}(\rho)} = \| f \|_{\dot{H}^{1,p}(\rho)}$. 
\end{remark}

The next lemma summarizes some basic results about the space $\dot{H}^{-1,p}(\rho)$ and $\dot{H}^{-1,p}(\rho)$-valued random variables that we use in the sequel. The proof can be found in \cref{sec: dual space}. 

\begin{lemma}
\label{lem: dual space}
Let $1 < p < \infty$ and $\rho \in \cP$. 
The dual space $\dot{H}^{-1,p}(\rho)$ is a separable Banach space. The Borel $\sigma$-field on $\dot{H}^{-1,p}(\rho)$ coincides with the cylinder $\sigma$-field (the smallest $\sigma$-field that makes the coordinate projections, $\dot{H}^{-1,p}(\rho)\ni \ell \mapsto \ell (f)\in \R$, measurable).
\end{lemma}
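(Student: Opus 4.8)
Here is how I would approach the proof.

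The statement splits into (a) $\dot{H}^{-1,p}(\rho)$ is a separable Banach space, and (b) its Borel $\sigma$-field coincides with its cylinder $\sigma$-field. The plan is to transport everything to the Lebesgue space $L^q(\rho;\R^d)$ via the gradient embedding, so that separability and (for the conjugate exponent $q$) reflexivity come essentially for free, and then to run the standard argument that the Borel structure of a separable Banach space is generated by any countable norming family of continuous linear functionals. To begin, $\dot{H}^{-1,p}(\rho)$ is by definition the topological dual of the normed space $\dot{H}^{1,q}(\rho)$ (itself a Banach space, being a completion), so it is automatically complete in the operator norm, hence a Banach space. For its structure, recall from \cref{def:Sobolev} that $\dot{H}^{1,q}(\rho)$ is the completion of $\big(\dot{C}_0^{\infty},\|\cdot\|_{\dot{H}^{1,q}(\rho)}\big)$ and that $f\mapsto\nabla f$ is linear on $\dot{C}_0^{\infty}$ with $\|f\|_{\dot{H}^{1,q}(\rho)}=\|\nabla f\|_{L^q(\rho;\R^d)}$; this identifies $\dot{H}^{1,q}(\rho)$ isometrically with the closed subspace $W:=\overline{\{\nabla f:f\in C_0^{\infty}\}}$ of $L^q(\rho;\R^d)$, and correspondingly $\dot{H}^{-1,p}(\rho)\cong W'$.

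For separability, note that since $\rho$ is a Borel probability measure on the separable metric space $\R^d$ and $1\le q<\infty$, the space $L^q(\rho;\R^d)$ is separable; moreover, as $1<q<\infty$ — this is the only place the hypothesis $p>1$ is used — it is reflexive, hence so is its closed subspace $W\cong\dot{H}^{1,q}(\rho)$. The dual of a separable reflexive Banach space is separable, so $\dot{H}^{-1,p}(\rho)\cong W'$ is separable. Equivalently and more explicitly, Hahn--Banach yields the isometric quotient representation $\dot{H}^{-1,p}(\rho)\cong L^p(\rho;\R^d)/W^{\perp}$, where $W^{\perp}=\{h\in L^p(\rho;\R^d):\int\langle h,\nabla f\rangle\,d\rho=0\ \text{for all }f\in C_0^{\infty}\}$ and we used $L^q(\rho;\R^d)'=L^p(\rho;\R^d)$; this exhibits $\dot{H}^{-1,p}(\rho)$ as a quotient of the separable space $L^p(\rho;\R^d)$, hence separable.

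For part (b), one inclusion is immediate: for each $f$ the coordinate projection $\ell\mapsto\ell(f)$ is Lipschitz on $\dot{H}^{-1,p}(\rho)$ with constant $\|f\|_{\dot{H}^{1,q}(\rho)}$, hence continuous and Borel, so the cylinder $\sigma$-field is contained in the Borel one. For the converse I would use separability twice. First, by separability of $L^q(\rho;\R^d)$ the seminorm-unit ball $\{f\in\dot{C}_0^{\infty}:\|f\|_{\dot{H}^{1,q}(\rho)}\le 1\}$ admits a countable subset $\{g_m\}_{m\ge1}$ that is $\|\cdot\|_{\dot{H}^{1,q}(\rho)}$-dense in it; since each $\ell\in\dot{H}^{-1,p}(\rho)$ is continuous for that seminorm, the definition of the dual Sobolev norm gives $\|\ell-\ell_0\|_{\dot{H}^{-1,p}(\rho)}=\sup_{m\ge1}(\ell-\ell_0)(g_m)$ for every $\ell_0\in\dot{H}^{-1,p}(\rho)$, a countable supremum of cylinder-measurable functions of $\ell$; hence every closed ball of $\dot{H}^{-1,p}(\rho)$ lies in the cylinder $\sigma$-field. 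Second, by separability of $\dot{H}^{-1,p}(\rho)$ every open set is a countable union of closed balls centered at a fixed countable dense set with rational radii, so the Borel $\sigma$-field is contained in the cylinder $\sigma$-field, and the two coincide. A descriptive-set-theoretic alternative: fix $\{f_n\}\subseteq\dot{C}_0^{\infty}$ dense in $\dot{H}^{1,q}(\rho)$; then $\ell\mapsto(\ell(f_n))_{n}$ is a continuous injection of the Polish space $\dot{H}^{-1,p}(\rho)$ into $\R^{\N}$, hence by the Lusin--Souslin theorem a Borel isomorphism onto a Borel subset, which forces $\cB(\dot{H}^{-1,p}(\rho))=\sigma(\ell\mapsto\ell(f_n):n\in\N)$, a $\sigma$-field squeezed between the cylinder and Borel ones.

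The off-the-shelf inputs — dual of a normed space is Banach, $L^q$ of a Borel probability on $\R^d$ is separable, $L^q$ with $1<q<\infty$ is reflexive, and the duality $L^q{}'=L^p$ — make part (a) almost formal. The only step requiring genuine care is part (b): the assertion that a countable separating family of continuous linear functionals generates the Borel $\sigma$-field is false without separability, so one genuinely needs the separability established in (a) together with the fact that $\dot{C}_0^{\infty}$ is a norm-dense norming subspace of $\dot{H}^{1,q}(\rho)$ — precisely what the gradient picture and \cref{def:Sobolev} provide. I expect this coincidence of $\sigma$-fields to be the main, though still standard, obstacle; everything else is bookkeeping.
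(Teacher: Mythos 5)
Your proposal is correct, and the $\sigma$-field part is essentially the paper's argument: both show closed balls are cylinder-measurable by reducing the supremum/intersection over the Sobolev unit ball to a countable family via separability of $\dot{H}^{1,q}(\rho)$ inside $L^q(\rho;\R^d)$, and then invoke separability of $\dot{H}^{-1,p}(\rho)$ to pass from balls to all open sets. Where you genuinely diverge is the separability of $\dot{H}^{-1,p}(\rho)$ itself. The paper derives it from \cref{lem:vector-field}: the (generally nonlinear, merely homogeneous) vector-field map $h\mapsto E(h)$ into $L^p(\rho;\R^d)$ satisfies $\|h_1-h_2\|_{\dot{H}^{-1,p}(\rho)}\le\|E(h_1)-E(h_2)\|_{L^p(\rho;\R^d)}$, so the inverse of $E$ on its range is $1$-Lipschitz and transports a countable dense subset of $L^p(\rho;\R^d)$ back to one of $\dot{H}^{-1,p}(\rho)$. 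You instead identify $\dot{H}^{1,q}(\rho)$ with the closed subspace $W=\overline{\{\nabla f: f\in C_0^\infty\}}$ of $L^q(\rho;\R^d)$ and use reflexivity of $L^q$ for $1<q<\infty$ (dual of a separable reflexive space is separable), or equivalently the Hahn--Banach quotient representation $W'\cong L^p(\rho;\R^d)/W^{\perp}$. Your route is more off-the-shelf and avoids the variational construction of $E$; the paper's route is self-contained given that \cref{lem:vector-field} is needed elsewhere anyway, and yields the slightly stronger quantitative fact that $E^{-1}$ is a metric contraction. Both are valid, and your Lusin--Souslin alternative for the $\sigma$-field coincidence is also fine though not needed.
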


Consider a stochastic process $Y = (Y(f))_{f \in \dot{H}^{1,q}(\rho)}$ indexed by $\dot{H}^{1,q}(\rho)$, i.e.,  $\omega \mapsto Y(f,\omega)$ is measurable for each $f \in \dot{H}^{1,q}(\rho)$. The process can be thought of as a map from $\Omega$ into $\dot{H}^{-1,p}(\rho)$ as long as $Y$ has paths in $\dot{H}^{-1,p}(\rho)$, i.e., for each fixed $\omega \in \Omega$, the map $f \mapsto Y(f,\omega)$ is continuous and linear. The fact that the Borel $\sigma$-field on $\dot{H}^{-1,p}(\rho)$ coincides with the cylinder $\sigma$-field guarantees that a stochastic process indexed by $\dot{H}^{1,q}(\rho)$ with paths in $\dot{H}^{-1,p}(\rho)$ is Borel measurable as a map from $\Omega$ into $\dot{H}^{-1,p}(\rho)$.

\subsection{\(\Wp\) and dual Sobolev norm}

In \cref{sec: main results}, we will explore limit distributions for empirical $\GWp$. One of the key technical ingredients there is a comparison of the Wasserstein distance with a certain dual Sobolev norm, which we present next. 
\begin{proposition}[Comparison between $\Wp$ and dual Sobolev norm; Theorem 5.26 in \cite{dolbeault2009}]
\label{prop:duality-in-Wp}
Let $1 < p  < \infty$, and suppose that $\mu_0, \mu_1 \in \cP_p$ with $\mu_0,\mu_1 \ll \rho$ for some reference measure $\rho \in \cP$.
Denote their respective densities by $f_i = d \mu_i/d \rho$, $i=0,1$. 
If $f_0$ or $f_1$ is bounded from below by some $c>0$, then
\begin{equation}
\label{eq:Wp-Sobolev-comparison}
\Wp(\mu_0,\mu_1) \leq p\, c^{-1/q} \, 
\| \mu_{1} - \mu_{0} \|_{\dot{H}^{-1,p}(\rho)}.
\end{equation}
\end{proposition}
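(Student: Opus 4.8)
The plan is to combine the dynamical (Benamou--Brenier) formulation of $\Wp$ with a momentum-field representation of the dual Sobolev norm along the linear interpolation between $\mu_0$ and $\mu_1$. We may assume $\|\mu_1-\mu_0\|_{\dot H^{-1,p}(\rho)}<\infty$, since otherwise there is nothing to prove (note $\Wp(\mu_0,\mu_1)<\infty$ because $\mu_0,\mu_1\in\cP_p$). The first step is to represent the dual norm by a vector field. The functional $\ell\colon f\mapsto\int_{\R^d}f\,d(\mu_1-\mu_0)$ factors through the map $f\mapsto\nabla f$, which is linear and injective on $C_0^\infty$ (a compactly supported constant vanishes); hence $\ell$ induces a linear functional on the subspace $\{\nabla f:f\in C_0^\infty\}\subseteq L^q(\rho;\R^d)$ whose operator norm equals $\|\mu_1-\mu_0\|_{\dot H^{-1,p}(\rho)}$ by \cref{def:Sobolev} (recall the dual norm may be computed over $C_0^\infty$ rather than $\dot{C}_0^\infty$). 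Extending it by density and then by Hahn--Banach to all of $L^q(\rho;\R^d)$ without increasing the norm, and using $(L^q(\rho;\R^d))^\ast=L^p(\rho;\R^d)$ (valid since $1<q<\infty$), I obtain $h\in L^p(\rho;\R^d)$ with $\|h\|_{L^p(\rho;\R^d)}=\|\mu_1-\mu_0\|_{\dot H^{-1,p}(\rho)}$ and
\[
\int_{\R^d}\nabla f\cdot h\,d\rho=\int_{\R^d}f\,d(\mu_1-\mu_0)\qquad\text{for all }f\in C_0^\infty,
\]
that is, $-\nabla\!\cdot(h\rho)=\mu_1-\mu_0$ in the distributional sense.

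Next I would build an admissible curve. Set $\rho_t:=(1-t)\mu_0+t\mu_1\in\cP_p$, with $\rho$-density $f_t:=(1-t)f_0+tf_1$, and $v_t:=h/f_t$. Assuming (without loss of generality) $f_0\ge c$, we get $f_t\ge(1-t)c>0$ $\rho$-a.e.\ for every $t<1$, so $v_t$ is well defined and $\rho_t v_t=h\rho$, independent of $t$. A routine integration by parts in time against smooth test functions, using $\partial_t\rho_t=\mu_1-\mu_0$ together with the identity from the previous step, shows that $(\rho_t,v_t)_{t\in[0,1]}$ solves the continuity equation $\partial_t\rho_t+\nabla\!\cdot(\rho_t v_t)=0$ weakly. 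Invoking the $L^p$ Benamou--Brenier inequality --- equivalently, the fact that a distributional solution of the continuity equation with $\int_0^1\|v_t\|_{L^p(\rho_t;\R^d)}\,dt<\infty$ is an absolutely continuous curve in $(\cP_p,\Wp)$ with metric derivative at most $\|v_t\|_{L^p(\rho_t;\R^d)}$ (cf.\ \cite{dolbeault2009}) --- yields
\[
\Wp(\mu_0,\mu_1)\le\int_0^1\|v_t\|_{L^p(\rho_t;\R^d)}\,dt=\int_0^1\Big(\int_{\R^d}|h|^p f_t^{1-p}\,d\rho\Big)^{1/p}dt.
\]

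Finally I would estimate the time integral. Using $f_t\ge(1-t)c$ and $\|h\|_{L^p(\rho;\R^d)}=\|\mu_1-\mu_0\|_{\dot H^{-1,p}(\rho)}$,
\[
\Wp(\mu_0,\mu_1)\le c^{-(p-1)/p}\,\|\mu_1-\mu_0\|_{\dot H^{-1,p}(\rho)}\int_0^1(1-t)^{-(p-1)/p}\,dt,
\]
and since $(p-1)/p=1/q$ and $\int_0^1(1-t)^{-1/q}\,dt=(1-1/q)^{-1}=p$, this is precisely \eqref{eq:Wp-Sobolev-comparison}. If instead $f_1\ge c$, one uses $f_t\ge tc$ and $\int_0^1 t^{-1/q}\,dt=p$.

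I expect the main obstacle to be the step in the middle: establishing that the pair $(\rho_t,v_t)$, with $v_t$ only $L^p(\rho_t;\R^d)$-integrable, genuinely bounds $\Wp$ rather than merely a dynamical surrogate. The cleanest route is to quote the equivalence between absolutely continuous curves in $(\cP_p,\Wp)$ and distributional solutions of the continuity equation with integrable metric derivative. A related technical point, resolved exactly by the hypothesis $f_0\ge c$ (or $f_1\ge c$), is the integrability of $t\mapsto\|v_t\|_{L^p(\rho_t;\R^d)}$ near the endpoint $t=1$ (resp.\ $t=0$): the integral $\int_0^1(1-t)^{-1/q}\,dt$ converges precisely because $q>1$, and it is exactly this integral that produces the sharp constant $p$ in \eqref{eq:Wp-Sobolev-comparison}.
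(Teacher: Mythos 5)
Your proof is correct and follows essentially the same route as the paper's: represent $\mu_1-\mu_0$ by a vector field $h\in L^p(\rho;\R^d)$ with $\|h\|_{L^p(\rho;\R^d)}=\|\mu_1-\mu_0\|_{\dot{H}^{-1,p}(\rho)}$, drive the continuity equation along the linear interpolation with $v_t=h/f_t$, invoke Theorem 8.3.1 of \cite{ambrosio2005} (the dynamic/Benamou--Brenier bound), and integrate $(1-t)^{-1/q}$ to produce the constant $p$. The only difference is in how the vector field is obtained --- you use Hahn--Banach plus $L^q$--$L^p$ duality, whereas the paper's \cref{lem:vector-field} minimizes a strictly convex functional over the closure of gradients --- and both constructions deliver the norm identity needed here.
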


\cref{prop:duality-in-Wp} follows directly from Theorem 5.26 of \cite{dolbeault2009}. Similar comparison inequalities appear in \cite{peyre2018, ledoux2019, weed2019}. We include a self-contained proof of \cref{prop:duality-in-Wp} in \cref{app: proof of comparison} as some elements of the proof are key to our derivation of the null limit distribution for empirical $\GWp$. The proof builds on the Benamou-Brenier dynamic formulation of optimal transport \cite{benamou2000}, which shows that $\Wp(\mu_0,\mu_1)$ is bounded from above by the length of any absolutely continuous path from $\mu_0$ to $\mu_1$ in $(\cP_p,\Wp)$. The dual Sobolev norm emerges as a bound on the length of the linear interpolation $t \mapsto t\mu_1 + (1-t)\mu_0$.

\section{Limit distribution theory}
\label{sec: main results}

The goal of this section is to establish \cref{thm: main theorem}. The proof relies on two key steps: (i) establish weak convergence of the smooth empirical process $\sqrt{n} (\hat{\mu}_n - \mu)*\gamma_{\sigma}$ in the dual Sobolev space $\dot{H}^{-1,p}(\mu*\gamma_{\sigma})$; and (ii) regard $\GWp$ as a functional defined on a subset of $\dot{H}^{-1,p}(\mu*\gamma_{\sigma})$ and characterize its Hadamard directional derivative w.r.t. the corresponding dual Sobolev norm. Given (i) and (ii), the limit distribution results follow from the functional delta method, and the asymptotic normality under the alternative further follows from linearity of the Hadamard directional derivative.

\subsection{Preliminaries}

Throughout this section, we fix  $1 < p < \infty$, take $q$ as the conjugate index of $p$, and let $\sigma>0$. %
For $\mu,\nu\in\cP_p$, let $X_1,\ldots,X_n \sim \mu$ and $Y_1,\ldots,Y_n \sim \nu$ be independent observations  and denote the associated empirical distributions by $\hat{\mu}_n:= n^{-1}\sum_{i=1}^n \delta_{X_i}$ and $\hat{\nu}_n:= n^{-1}\sum_{i=1}^n \delta_{Y_i}$, respectively. %

\subsubsection{Weak convergence of smooth empirical process in dual Sobolev spaces}
\label{sec: weak convergence}

The first building block of our limit distribution results is the following weak convergence of the smoothed empirical process $\sqrt{n} (\hat{\mu}_n - \mu)*\gamma_{\sigma}$ in $\dot{H}^{-1,p}(\gamma_{\sigma})$ and $\dot{H}^{-1,p}(\mu*\gamma_{\sigma})$. 

\begin{proposition}[Weak convergence of smooth empirical process]
\label{thm:limit-distribution}
\hspace{-2mm}Suppose that  $X\sim\mu$ satisfies
\begin{equation}
\label{eq:moment-condition}
\int_{0}^{\infty} e^{\frac{\theta r^2}{2\sigma^2}} \sqrt{\Prob (|X| > r)} d r < \infty \quad \text{for some $\theta > p-1$}. 
\end{equation}
Then, the smoothed empirical process $\sqrt{n} (\hat{\mu}_n - \mu)*\gamma_{\sigma}$ converges in distribution as $n \to \infty$ both in $\dot{H}^{-1,p}(\gamma_{\sigma})$ and $\dot{H}^{-1,p}(\mu*\gamma_{\sigma})$. The limit process in each case is a centered Gaussian process, indexed by $\dot{H}^{1,q}(\gamma_\sigma)$ or $\dot{H}^{1,q}(\mu*\gamma_\sigma)$, respectively, with covariance function $(f_1,f_2) \mapsto \Cov_{\mu} (f_1*\phi_{\sigma},f_2*\phi_{\sigma})$. Here $\Cov_{\mu}$ denotes the covariance under $\mu$. 
\end{proposition}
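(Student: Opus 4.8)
The plan is to regard $\sqrt n(\hat{\mu}_n-\mu)*\gamma_\sigma$ as a random element of the separable Banach space $\dot{H}^{-1,p}(\rho)$, for $\rho\in\{\gamma_\sigma,\,\mu*\gamma_\sigma\}$, and to deduce its weak convergence from (a) convergence of finite-dimensional distributions and (b) asymptotic tightness in $\dot{H}^{-1,p}(\rho)$. The starting point is that, by Fubini, for every $\varphi\in\dot{C}_0^{\infty}$,
\[
\sqrt{n}\,\big[(\hat{\mu}_n-\mu)*\gamma_\sigma\big](\varphi)\;=\;\sqrt{n}\,(\hat{\mu}_n-\mu)(\varphi*\phi_\sigma)\;=\;\frac{1}{\sqrt n}\sum_{i=1}^n\Big[(\varphi*\phi_\sigma)(X_i)-\mu(\varphi*\phi_\sigma)\Big],
\]
so that, introducing the class $\cG_\sigma(\rho):=\{\varphi*\phi_\sigma:\varphi\in\dot{C}_0^{\infty},\ \|\varphi\|_{\dot{H}^{1,q}(\rho)}\le 1\}$, the smoothed empirical process seen through the coordinate functionals $\ell\mapsto\ell(\varphi)$ is precisely the $\mu$-empirical process $\sqrt n(\hat{\mu}_n-\mu)$ indexed by $\cG_\sigma(\rho)$. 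By \cref{lem: dual space} the Borel and cylinder $\sigma$-fields on $\dot{H}^{-1,p}(\rho)$ coincide; hence $\sqrt n(\hat{\mu}_n-\mu)*\gamma_\sigma$ is Borel measurable as a map into $\dot{H}^{-1,p}(\rho)$ (each coordinate projection is a fixed linear functional of $X_1,\dots,X_n$), and, since the coordinate functionals separate points and generate the Borel $\sigma$-field, (a)+(b) imply weak convergence in $\dot{H}^{-1,p}(\rho)$ by the usual Prokhorov argument. The whole proof thus reduces to showing that $\cG_\sigma(\rho)$ is $\mu$-Donsker.

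Step (a) is easy: since $\varphi\in C_0^{\infty}$ makes $\varphi*\phi_\sigma$ bounded, $\cG_\sigma(\rho)\subseteq L^2(\mu)$, and the multivariate central limit theorem gives, for any $\varphi_1,\dots,\varphi_k\in\dot{C}_0^{\infty}$, joint convergence of $\big(\sqrt n(\hat{\mu}_n-\mu)(\varphi_j*\phi_\sigma)\big)_{j\le k}$ to a centered Gaussian vector with covariance $\big(\Cov_{\mu}(\varphi_i*\phi_\sigma,\varphi_j*\phi_\sigma)\big)_{i,j\le k}$. The envelope bound established in the next step also shows that $\varphi\mapsto\varphi*\phi_\sigma$ extends to a bounded operator from $\dot{H}^{1,q}(\rho)$ into $L^2(\mu)$, so the covariance functional $(f_1,f_2)\mapsto\Cov_{\mu}(f_1*\phi_\sigma,f_2*\phi_\sigma)$ is well defined on all of $\dot{H}^{1,q}(\rho)$ and identifies the candidate limit as a centered Gaussian process with linear paths, continuous for $\|\cdot\|_{\dot{H}^{1,q}(\rho)}$ once the Donsker conclusion is in hand.

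The technical heart is the Donsker property, which I would obtain via a bracketing-entropy bound. First, the \emph{envelope}: one has $d\gamma_\sigma/dx=\phi_\sigma$, and for $\rho=\mu*\gamma_\sigma$ an elementary argument using $\mu*\gamma_\sigma=(\mu*\gamma_{\sigma'})*\gamma_{\sqrt{\sigma^2-\sigma'^2}}$ with $\mu*\gamma_{\sigma'}$ placing mass $\ge 1/2$ on a large enough ball gives, for every $\eta>0$, a pointwise lower bound $d(\mu*\gamma_\sigma)/dx(y)\ge c_\eta\,e^{-(1+\eta)|y|^2/(2\sigma^2)}$. Writing $g=\varphi*\phi_\sigma$ with $\|\varphi\|_{\dot{H}^{1,q}(\rho)}\le 1$ and applying H\"older's inequality to $\partial^\alpha g=(\partial_i\varphi)*\partial^{\alpha-e_i}\phi_\sigma$ (with $\partial^{\alpha-e_i}\phi_\sigma$ a Hermite-polynomial multiple of $\phi_\sigma$), a Gaussian integral computation yields
\[
|\partial^\alpha g(x)|\;\le\;C_{\alpha,\eta}\,(1+|x|)^{|\alpha|}\,e^{a_\eta|x|^2/(2\sigma^2)},\qquad|\alpha|\ge 1,
\]
with $a_\eta=\tfrac{(p-1)(1+\eta)}{1-(p-1)\eta}\downarrow p-1$ as $\eta\downarrow 0$, and an analogous bound for $g$ modulo an additive constant (annihilated by the coordinate functionals). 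Thus $\cG_\sigma(\rho)$ consists of smooth functions whose restriction to any ball $B(0,R)$ has $C^m$-norm $\lesssim_m e^{a_\eta R^2/(2\sigma^2)}$. Decomposing $\R^d$ into $B(0,R_0)$ and dyadic annuli, combining the standard bracketing numbers of $C^m$-balls on bounded sets (polynomial in $1/\epsilon$ for $m$ large) with the envelope $\Psi(x):=C_\eta e^{a_\eta|x|^2/(2\sigma^2)}$ on the tails, and choosing $\eta$ small so that $a_\eta<\theta$, Condition \eqref{eq:moment-condition} delivers both $\Psi\in L^2(\mu)$ and $\int_0^1\sqrt{\log N_{[]}(\epsilon,\cG_\sigma(\rho),L^2(\mu))}\,d\epsilon<\infty$; the bracketing central limit theorem (see, e.g., \cite{vandervaart1996book}) then shows $\cG_\sigma(\rho)$ is $\mu$-Donsker.

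Finally, a standard argument transfers the Donsker property to asymptotic tightness of $\sqrt n(\hat{\mu}_n-\mu)*\gamma_\sigma$ in $\dot{H}^{-1,p}(\rho)$ — approximate the unit ball of $\dot{H}^{1,q}(\rho)$ by a finite $L^2(\mu)$-net of images $\varphi*\phi_\sigma$, control the remainder by the Donsker asymptotic equicontinuity, and use that $\|(\hat{\mu}_n-\mu)*\gamma_\sigma\|_{\dot{H}^{-1,p}(\rho)}=\sup_{\|\varphi\|_{\dot{H}^{1,q}(\rho)}\le 1}(\hat{\mu}_n-\mu)(\varphi*\phi_\sigma)<\infty$ a.s.\ by the envelope bound — so that together with (a) and \cref{lem: dual space} one gets the claimed weak convergence, for $\rho=\gamma_\sigma$ and $\rho=\mu*\gamma_\sigma$ alike. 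The main obstacle is the envelope-and-entropy estimate of the previous paragraph: one must extract from the single constraint $\|\nabla\varphi\|_{L^q(\rho)}\le 1$ pointwise control of $\varphi*\phi_\sigma$ and all its derivatives with the essentially sharp exponential rate $e^{(p-1)|x|^2/(2\sigma^2)}$ (up to an arbitrarily small loss), this being exactly the rate that the hypothesis $\theta>p-1$ is calibrated to absorb; the case $\rho=\mu*\gamma_\sigma$ is the delicate one, since there $d\rho/dx$ is controlled from below only by a slightly degraded Gaussian rather than by $\phi_\sigma$ exactly, which is what forces the $\eta$-room and hence the strict inequality $\theta>p-1$. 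The remaining pieces — the multivariate CLT, the bracketing CLT, and the tightness transfer — are routine given these estimates.
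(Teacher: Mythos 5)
Your proposal is correct and its technical core coincides with the paper's: both proofs reduce the problem to showing that the class $\{\varphi*\phi_\sigma\}$ over a Sobolev ball is $\mu$-Donsker by (1) extracting pointwise bounds on $\varphi*\phi_\sigma$ and its derivatives of the essentially sharp rate $e^{(p-1)(1+o(1))|x|^2/(2\sigma^2)}$ from the single constraint $\|\nabla\varphi\|_{L^q(\rho)}\le 1$ via H\"older against the density of $\rho$ and a $q$-Poincar\'e inequality for the zeroth-order term, (2) feeding these into a smoothness-on-partitions/bracketing Donsker criterion over dyadic annuli, which is exactly where Condition \eqref{eq:moment-condition} with $\theta>p-1$ enters, and (3) identifying weak convergence in $\ell^\infty$ of the index class with weak convergence in the dual Sobolev space through the isometry $\|\ell\|_{\dot H^{-1,p}(\rho)}=\sup_\varphi \ell(\varphi)$ (the paper does this via Nickl's Lemma 1; your Prokhorov-style argument in the separable Banach space is an acceptable substitute given \cref{lem: dual space}). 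The one genuine divergence is how the case $\rho=\mu*\gamma_\sigma$ is treated: the paper proves the Donsker property and weak convergence only for $\rho=\gamma_\sigma$, then transfers to $\dot H^{-1,p}(\mu*\gamma_\sigma)$ by first centering $\mu$ (so that Jensen gives $d(\mu^{-a}*\gamma_\sigma)/d\gamma_\sigma\ge c>0$, hence a continuous embedding $\dot H^{-1,p}(\gamma_\sigma)\hookrightarrow\dot H^{-1,p}(\mu^{-a}*\gamma_\sigma)$) and then translating back by an isometry. You instead attack $\rho=\mu*\gamma_\sigma$ directly, replacing the exact Gaussian reference by the lower bound $d(\mu*\gamma_\sigma)/dx\ge c_\eta e^{-(1+\eta)|y|^2/(2\sigma^2)}$ and absorbing the resulting loss $a_\eta\downarrow p-1$ into the slack $\theta>p-1$; your computation of $a_\eta=\tfrac{(p-1)(1+\eta)}{1-(p-1)\eta}$ is correct. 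Your route avoids the centering/translation step entirely at the cost of redoing the envelope estimate with the degraded density; the paper's route reuses the $\gamma_\sigma$ computation verbatim but needs the extra reduction. Both are valid, and the resulting covariance identification is the same.
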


The proof of \cref{thm:limit-distribution} relies on the prior work \cite{nietert21} by a subset of the authors, where it was shown that the smoothed function class $\cF * \phi_\sigma = \{ f* \phi_{\sigma} : f \in \cF \}$ with $\cF =  \{  f \in \dot{C}_0^\infty : \|f\|_{\dot{H}^{1,q}(\gamma_{\sigma})} \leq 1 \}$ is $\mu$-Donsker. The weak convergence in $\dot{H}^{-1,p}(\gamma_\sigma)$ then follows from a similar argument to Lemma 1 in \cite{nickl2009}. This, in turn, implies weak convergence in $\dot{H}^{-1,p}(\mu*\gamma_\sigma)$ when $\mu$ has mean zero, since in that case $\dot{H}^{-1,p}(\gamma_{\sigma})$ is continuously embedded into $\dot{H}^{-1,p}(\mu*\gamma_\sigma)$. To account for non-centered distributions, we use a reduction to the mean zero case via translation. See also \cref{rem: alternative} for an alternative proof for $p=2$ that relies on the CLT in the Hilbert space.
\medskip

Inspection of the proof of \cref{thm:limit-distribution} shows that Condition \eqref{eq:moment-condition} implies 
\[
\int_{\R^d} e^{(p-1)|x|^2/\sigma^2} d\mu(x) < \infty, 
\]
which requires $\mu$ to be sub-Gaussian. It is not difficult to see that Condition \eqref{eq:moment-condition} is satisfied if $\mu$ is compactly supported or sub-Gaussian with $\| |X| \|_{\psi_2} < \sigma/\sqrt{p-1}$ for $X \sim \mu$.

A natural question is whether a condition in the spirit of \eqref{eq:moment-condition} is necessary for the conclusion of \cref{thm:limit-distribution} to hold. Indeed, we show that some form of sub-Gaussianity is necessary for the smooth empirical process to converge to zero in $\dot{H}^{-1,p}(\gamma_{\sigma})$.

\begin{proposition}[Necessity of sub-Gaussian condition]
\label{prop:glivenko-cantelli}
The following hold. 
\begin{enumerate}[(i)]
    \item If $(\hat{\mu}_n - \mu)*\gamma_{\sigma} \to 0$ in $\dot{H}^{-1,p}(\gamma_{\sigma})$ as $n \to \infty$ a.s., then 
$\int_{\R^d} e^{\theta |x|^2/(2\sigma^2)} d \mu(x) < \infty$
for any $\theta < p-1$.
\item Conversely, if  $\int_{\R^d} e^{(p-1) |x|^2/(2\sigma^2)} d \mu(x) < \infty$, then $(\hat{\mu}_n - \mu)*\gamma_{\sigma} \to 0$ in $\dot{H}^{-1,p}(\gamma_{\sigma})$ as $n \to \infty$ a.s.
\end{enumerate} 
\end{proposition}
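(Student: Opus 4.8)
The plan is to work directly with the dual Sobolev norm, which we can bound below using a single well-chosen test function rather than needing the full supremum. The key elementary fact is that for $1<q<\infty$, the function $f(x) = e^{q^{-1}\theta'|x|^2/(2\sigma^2)}$ (suitably truncated and mollified to lie in $\dot C_0^\infty$) has gradient $\nabla f = (q^{-1}\theta'/\sigma^2)\,x\,f(x)$, so that $|\nabla f|^q \asymp |x|^q e^{\theta'|x|^2/(2\sigma^2)}$. Since $\gamma_\sigma$ has density proportional to $e^{-|x|^2/(2\sigma^2)}$, we get $\|\nabla f\|_{L^q(\gamma_\sigma;\R^d)}^q \asymp \int |x|^q e^{(\theta'-1)|x|^2/(2\sigma^2)}\,dx$, which is finite precisely when $\theta' < 1$. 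Thus for any $\theta' \in (0,1)$ such a test function $f$ lies in $\dot H^{1,q}(\gamma_\sigma)$ (after the truncation/mollification, whose error I would control by dominated convergence). Pairing $(\hat\mu_n-\mu)*\gamma_\sigma$ against $f$ gives $\big((\hat\mu_n-\mu)*\gamma_\sigma\big)(f) = (\hat\mu_n - \mu)(f*\phi_\sigma)$, and one computes $f*\phi_\sigma(x) \asymp e^{c|x|^2}$ with an exponent $c$ that, as $\sigma' \downarrow$, can be made close to $q^{-1}\theta'/(2\sigma^2) \cdot (\text{rescaling factor})$; a careful Gaussian convolution computation (completing the square) gives $f*\phi_\sigma(x) = C_{\sigma,\theta'} \exp\!\big(\tfrac{\theta'|x|^2}{2\sigma^2(q-\theta')}\big)$ up to the truncation.

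For part (i): if $(\hat\mu_n-\mu)*\gamma_\sigma \to 0$ in $\dot H^{-1,p}(\gamma_\sigma)$ a.s., then in particular $\big((\hat\mu_n-\mu)*\gamma_\sigma\big)(f)\to 0$ a.s. for the fixed admissible test function $f$ above, i.e.\ $\hat\mu_n(f*\phi_\sigma) \to \mu(f*\phi_\sigma)$ a.s. But $\hat\mu_n(f*\phi_\sigma) = n^{-1}\sum_{i=1}^n (f*\phi_\sigma)(X_i)$ is an average of i.i.d.\ nonnegative terms; by the (converse direction of the) strong law of large numbers — e.g.\ the Marcinkiewicz–Kolmogorov-type fact that if $n^{-1}\sum_{i=1}^n Z_i$ converges a.s.\ with $Z_i\ge 0$ i.i.d.\ then $\E[Z_1]<\infty$ — we conclude $\E_\mu[(f*\phi_\sigma)(X)] < \infty$, which is exactly $\int e^{\frac{\theta'|x|^2}{2\sigma^2(q-\theta')}}\,d\mu(x) < \infty$. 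Now given any target $\theta < p-1$, I would choose $\theta'\in(0,1)$ so that $\frac{\theta'}{q-\theta'} \ge \frac{\theta}{q-1} \cdot$ (the right constant); since $\frac{\theta'}{q-\theta'}$ ranges over $(0, \frac{1}{q-1})$ as $\theta'$ ranges over $(0,1)$, and $\frac{1}{q-1} = p-1$, every $\theta < p-1$ is attained, yielding $\int e^{\theta|x|^2/(2\sigma^2)}\,d\mu(x) < \infty$ as claimed. (One must handle the truncation of $f$ to a compact support: replace the exponential by a smoothly cut-off version at radius $R$, derive the bound with an $R$-dependent constant, and let $R\to\infty$ using monotone convergence on the $\mu$-integral, noting the cut-off only decreases $f*\phi_\sigma$.)

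For part (ii): this is the converse, and here I would invoke the sufficiency machinery already in place. Assuming $\int e^{(p-1)|x|^2/(2\sigma^2)}\,d\mu(x) < \infty$, I claim the smoothed class $\cF*\phi_\sigma$ with $\cF = \{f\in\dot C_0^\infty : \|f\|_{\dot H^{1,q}(\gamma_\sigma)}\le 1\}$ is $\mu$-Glivenko–Cantelli, which gives $\sup_{f\in\cF}\big|\big((\hat\mu_n-\mu)*\gamma_\sigma\big)(f)\big| = \|(\hat\mu_n-\mu)*\gamma_\sigma\|_{\dot H^{-1,p}(\gamma_\sigma)} \to 0$ a.s. The Glivenko–Cantelli property follows from the envelope bound for $\cF*\phi_\sigma$ established in the analysis underlying \cref{thm:limit-distribution}: each $f*\phi_\sigma$ with $f\in\cF$ is dominated by a fixed function of the form $x\mapsto A + B e^{(p-1-\epsilon)|x|^2/(2\sigma^2)}$ for suitable constants (this is precisely the envelope computation from \cite{nietert21} used to prove the Donsker property, but now only the integrability of the envelope — guaranteed by the hypothesis — is needed, not the entropy bound), combined with the uniform entropy/pointwise-compactness of the class; then the standard Glivenko–Cantelli theorem for classes with an integrable envelope applies. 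Actually, the cleanest route is to note that the hypothesis here is weaker than Condition \eqref{eq:moment-condition} only at the boundary, and one can directly quote that the Donsker conclusion of \cref{thm:limit-distribution} (or its Glivenko–Cantelli precursor) holds under the strict integrability $\int e^{(p-1)|x|^2/(2\sigma^2)}d\mu < \infty$ — indeed, inspecting that proof, the integral moment condition \eqref{eq:moment-condition} is used only to pass from Glivenko–Cantelli to Donsker, and a.s.\ convergence to $0$ in $\dot H^{-1,p}(\gamma_\sigma)$ is exactly the Glivenko–Cantelli statement.

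The main obstacle I anticipate is part (i): making rigorous the reduction from "convergence in the dual norm" to "convergence of a single pairing" is immediate, but one must (a) verify carefully that the truncated–mollified exponential test function genuinely lies in $\dot H^{1,q}(\gamma_\sigma)$ with controlled norm, (b) track the exact exponent produced by the Gaussian convolution $f*\phi_\sigma$ (completing the square correctly so that the factor $q-\theta'$ appears, whence the sharp threshold $p-1 = 1/(q-1)$), and (c) invoke the converse strong law for i.i.d.\ nonnegative summands while correctly dealing with the compact-support truncation via a monotone limit. Part (ii) is comparatively routine given the machinery of \cite{nietert21}.
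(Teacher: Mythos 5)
Your proof is correct in substance, and your Part (i) takes a genuinely different route from the paper's. The paper proves Part (i) by observing that a.s.\ convergence to $0$ in $\dot H^{-1,p}(\gamma_\sigma)$ is equivalent to a Glivenko--Cantelli property for the class $\mathfrak F$ of smoothed Sobolev-ball functions, invoking the \emph{necessity} of envelope integrability for Glivenko--Cantelli classes (Theorem 3.7.14 in Gin\'e--Nickl), and lower-bounding the envelope at each point $x$ via the normalized test function $\varphi_x(y)\propto e^{(p-1)\langle x,y\rangle/\sigma^2}$ --- an exponential of a \emph{linear} function of $y$ --- whose normalization costs a factor $(1+|x|)^{-1}$ and is why only $\theta<p-1$ is obtained. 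You instead use a single \emph{quadratic}-exponential test function $e^{\theta'|y|^2/(2q\sigma^2)}$ and compute its Gaussian convolution exactly; your exponent $\theta'/(q-\theta')\uparrow 1/(q-1)=p-1$ as $\theta'\uparrow 1$ is correct and recovers the same range of $\theta$, with the endpoint lost because $\theta'<1$ is required for membership in $\dot H^{1,q}(\gamma_\sigma)$. Both arguments work; yours bypasses the envelope machinery entirely, at the price of needing one admissible test function per target $\theta$.

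One step in your Part (i) should be tightened. The converse strong law applied to the truncated functions $f_R$ is vacuous ($f_R*\phi_\sigma$ is bounded, so its $\mu$-integral is trivially finite), while applying it to the untruncated $f$ presupposes that the abstract pairing $\big((\hat\mu_n-\mu)*\gamma_\sigma\big)(f)$ equals the concrete integral $(\hat\mu_n-\mu)(f*\phi_\sigma)$, which is precisely what is unclear when $\mu(f*\phi_\sigma)$ might be infinite. The clean mechanism is: the cutoffs $f_R=\zeta_R f$ have \emph{uniformly bounded} $\dot H^{1,q}(\gamma_\sigma)$-seminorm (since both $f$ and $\nabla f$ lie in $L^q(\gamma_\sigma)$ when $\theta'<1$) and converge to $f$ in that seminorm, so for one fixed large $n$ and a.e.\ $\omega$ the hypothesis yields $\mu(f_R*\phi_\sigma)\le \hat\mu_n(f_R*\phi_\sigma)+M\le \hat\mu_n(f*\phi_\sigma)+M<\infty$ \emph{uniformly in} $R$, and monotone convergence finishes; equivalently, $f\in\dot H^{1,q}(\gamma_\sigma)$ together with the finiteness of the functional $(\hat\mu_n-\mu)*\gamma_\sigma$ on it already forces $\lim_R\mu(f_R*\phi_\sigma)<\infty$. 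Your Part (ii) follows the paper's route (integrable envelope plus truncation to reduce to a Donsker, hence Glivenko--Cantelli, class), though note that the correct envelope is $\mathsf C_q(\gamma_\sigma)\,e^{(p-1)|x|^2/(2\sigma^2)}$ with the full exponent $p-1$, not $p-1-\epsilon$; this is exactly why the hypothesis $\int e^{(p-1)|x|^2/(2\sigma^2)}\,d\mu<\infty$ is the one needed.
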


\subsubsection{Functional delta method}

Another ingredient of our limit distribution results is the (extended) functional delta method \cite{shapiro1991,dumbgen1993,romisch2004,fang2019}. 
Let $D$ be a normed space and $\Phi: \Xi \subset D \to \R$ be a function. Following \cite{shapiro1990,romisch2004}, we say that $\Phi$ is 
\textit{Hadamard directionally differentiable}  at $\theta \in \Xi$ if there exists a map $\Phi'_{\theta}: T_{\Xi}(\theta) \to \R$ such that 
\[
\lim_{n \to \infty} \frac{\Phi(\theta + t_n h_n) - \Phi(\theta)}{t_n} = \Phi'_{\theta}(h)
\]
for any $h \in T_{\Xi}(\theta)$, $t_n \downarrow 0$, and 
$h_n \to h$ in $D$ such that $\theta + t_n h_n \in \Xi$. Here $T_{\Xi}(\theta)$ is the \textit{tangent cone} to $\Xi$ at $\theta$ defined as 
\[
T_{\Xi}(\theta):= \left \{ h \in D : h = \lim_{n \to \infty} \frac{\theta_n - \theta}{t_n}  \ \text{for some $\theta_n \to \theta$ in $\Xi$ and $t_n \downarrow 0$} \right \}. 
\]
The tangent cone $T_{\Xi}(\theta)$ is closed, and if $\Xi$ is convex, then $T_{\Xi}(\theta)$ coincides with the closure in $D$ of $\{ (\tilde{\theta} - \theta)/t: \tilde{\theta} \in \Xi, t > 0 \}$ (cf. Proposition 4.2.1 in \cite{aubin2009}). The derivative $\Phi_{\theta}'$ is positively homogeneous (i.e., $\Phi_{\theta}'(ch) = c\Phi_{\theta}'(h)$ for any $c\geq 0$) and continuous, but need not be linear.

\begin{lemma}[Extended functional delta method \cite{shapiro1991,dumbgen1993,romisch2004,fang2019}]
\label{lem: functional delta method}
Let $D$ be a normed space and $\Phi: \Xi \subset D \to \R$ be a function that is Hadamard directionally differentiable at $\theta \in \Xi$ with derivative $\Phi_{\theta}': T_{\Xi}(\theta) \to \R$. Let $T_n: \Omega \to \Xi$ be maps such that $r_n (T_n - \theta) \stackrel{d}{\to} T$ for some $r_n \to \infty$ and Borel measurable map $T: \Omega \to D$ with values in $T_{\Xi}(\theta)$. Then, $r_n\big(\Phi(T_n) - \Phi(\theta)\big) \stackrel{d}{\to} \Phi_{\theta}'(T)$. Further, if $\Xi$ is convex, then we have the expansion  $r_n\big(\Phi(T_n) - \Phi(\theta)\big) = \Phi_{\theta}'\big(r_n(T_n-\theta)\big)   + o_{\Prob}(1)$. 
\end{lemma}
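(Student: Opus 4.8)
The plan is to follow the now-standard argument for Hadamard directionally differentiable maps, reducing the conclusion to an application of the extended continuous mapping theorem. The main point is that Hadamard directional differentiability at $\theta$ says precisely that the family of difference quotients $g_t(h) := \big(\Phi(\theta + t h) - \Phi(\theta)\big)/t$, defined for $h$ with $\theta + th \in \Xi$, converges to $\Phi'_\theta$ in the sense required by the extended continuous mapping theorem: whenever $t_n \downarrow 0$ and $h_n \to h \in T_\Xi(\theta)$ with $\theta + t_n h_n \in \Xi$, one has $g_{t_n}(h_n) \to \Phi'_\theta(h)$. Since $r_n(T_n - \theta) \stackrel{d}{\to} T$ and $T$ takes values in the closed set $T_\Xi(\theta)$, and since $T_n$ takes values in $\Xi$ so that $\theta + r_n^{-1}\cdot r_n(T_n - \theta) = T_n \in \Xi$, the extended continuous mapping theorem (e.g. Theorem 1.11.1 in \cite{vandervaart1996book}, in the Hoffmann--J\o rgensen sense) applied to the maps $g_{r_n^{-1}}$ and the sequence $r_n(T_n - \theta)$ yields
\[
r_n\big(\Phi(T_n) - \Phi(\theta)\big) = g_{r_n^{-1}}\big(r_n(T_n - \theta)\big) \stackrel{d}{\to} \Phi'_\theta(T).
\]
Here one uses that $\Phi'_\theta$ is continuous on $T_\Xi(\theta)$ (so the limit map is continuous) and that $\Phi'_\theta(T)$ is Borel measurable as the composition of a continuous map with a Borel measurable map.

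For the second assertion, assume $\Xi$ is convex. Then the key improvement is that Hadamard directional differentiability can be \emph{upgraded to uniformity on compacts}: for every compact $K \subset T_\Xi(\theta)$,
\[
\sup_{h \in K}\ \left| \frac{\Phi(\theta + t h) - \Phi(\theta)}{t} - \Phi'_\theta(h) \right| \longrightarrow 0 \quad\text{as } t \downarrow 0,
\]
where for $t$ small enough $\theta + th \in \Xi$ uniformly over $h \in K$ by convexity of $\Xi$ together with the characterization of $T_\Xi(\theta)$ as the closure of $\{(\tilde\theta - \theta)/s : \tilde\theta \in \Xi,\, s>0\}$. This uniform-on-compacts statement is a routine compactness argument: if it failed there would be $t_n \downarrow 0$ and $h_n \in K$ with the quotient staying bounded away from $\Phi'_\theta(h_n)$; passing to a convergent subsequence $h_n \to h \in K$ contradicts the defining limit of Hadamard directional differentiability (using also continuity of $\Phi'_\theta$). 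Given this, one writes
\[
r_n\big(\Phi(T_n) - \Phi(\theta)\big) - \Phi'_\theta\big(r_n(T_n - \theta)\big) = R_n,
\]
and shows $R_n = o_\Prob(1)$ by an asymptotic-tightness argument: since $r_n(T_n - \theta) \stackrel{d}{\to} T$, the sequence $r_n(T_n - \theta)$ is asymptotically tight, so for any $\eps>0$ there is a compact $K$ with $\liminf_n \Prob^*\big(r_n(T_n-\theta)\in K^{\eps}\big)$ close to $1$; on that event the uniform-on-compacts bound with $t = r_n^{-1}$ forces $|R_n|$ small. (A clean way to package this, avoiding measurability nuisances, is to apply the extended continuous mapping theorem to the $D\times\R$-valued maps $h \mapsto \big(h,\, g_{r_n^{-1}}(h) - \Phi'_\theta(h)\big)$ and read off that the second coordinate converges in distribution to $0$.)

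The main obstacle is purely the bookkeeping around outer probability and measurability in the Hoffmann--J\o rgensen framework: $\Phi(T_n)$ need not be Borel measurable, so the continuous mapping theorem must be invoked in its version for possibly non-measurable maps, and the domains of the difference quotients $g_t$ are only subsets of $D$ that depend on $t$, which must be handled via the convexity of $\Xi$ and the structure of $T_\Xi(\theta)$ recalled before the lemma. None of this is deep, but it is where care is needed; the analytic content — that directional differentiability is automatically uniform on compacts — is the one genuinely substantive step, and it is elementary given continuity and positive homogeneity of $\Phi'_\theta$.
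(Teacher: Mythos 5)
The paper does not prove this lemma at all --- it is imported verbatim from the cited references (\cite{shapiro1991,dumbgen1993,romisch2004,fang2019}), so there is no in-paper argument to compare against. Your route is the standard one and the first claim is handled correctly: the defining property of Hadamard directional differentiability is exactly the hypothesis of the extended continuous mapping theorem applied to the difference-quotient maps $g_{r_n^{-1}}$ on the domains $\{h: \theta + r_n^{-1}h \in \Xi\}$, and measurability of $\Phi_\theta'(T)$ follows from continuity of $\Phi_\theta'$ on the closed set $T_\Xi(\theta)$.

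For the second claim, the main argument you write down has two soft spots. First, for a general compact $K\subset T_\Xi(\theta)$ it is \emph{not} true that $\theta+th\in\Xi$ for all $h\in K$ and small $t$, even when $\Xi$ is convex: $T_\Xi(\theta)$ is the \emph{closure} of $\{(\tilde\theta-\theta)/s\}$, and a limit direction need not be realizable inside $\Xi$ at any positive scale. So the supremum in your uniform-on-compacts statement must be restricted to those $h\in K$ with $\theta+th\in\Xi$ (with that restriction, the compactness argument is the standard equivalence of Hadamard and compact directional differentiability and is fine). Second, the tightness step only places $r_n(T_n-\theta)$ in an enlargement $K^{\eps}$, where the uniform bound established on $K$ does not apply, and neither $g_t$ nor $\Phi$ is assumed Lipschitz, so you cannot transfer the bound by continuity. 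Both issues are resolved by the argument you relegate to a parenthesis: apply the extended continuous mapping theorem directly to $h\mapsto g_{r_n^{-1}}(h)-\Phi_\theta'(h)$ on $\{h:\theta+r_n^{-1}h\in\Xi\}$ (which is contained in $T_\Xi(\theta)$ by convexity, so $\Phi_\theta'$ is defined and continuous there), and conclude that this quantity evaluated at $r_n(T_n-\theta)$ converges in distribution, hence in outer probability, to $0$. That parenthetical should be the proof, not an aside; as written it is, and it is essentially the argument in \cite{romisch2004,fang2019}.
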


\begin{remark}[Choice of domain $\Xi$] \hspace{-2mm}The domain $\Xi$ is arbitrary as long as it contains the ranges of $T_n$ for all $n$, and the tangent cone $T_{\Xi}(\theta)$ contains the range of the limit variable~$T$.
\end{remark}

\subsection{Limit distributions under the null (\(\mu = \nu\))}
\label{sec: null case}

We shall apply the extended functional delta method to derive the limit distributions of $\sqrt{n}\GWp(\hat{\mu}_n,\mu)$ and $\sqrt{n}\GWp(\hat{\mu}_n,\hat{\nu}_n)$ as $n \to \infty$, namely, proving Parts (i) and (ii) of Theorem \ref{thm: main theorem}. To set up the problem over a (real) vector space, we regard $\rho \mapsto \GWp(\rho,\mu) = \Wp(\rho*\gamma_\sigma,\mu*\gamma_\sigma)$ as a map $h \mapsto \Wp(\mu*\gamma_\sigma+h,\mu*\gamma_{\sigma})$ defined on a set of finite signed Borel measures. %
The comparison result from \cref{prop:duality-in-Wp} implies that the latter map is Lipschitz in $\| \cdot \|_{\dot{H}^{-1,p}(\mu*\gamma_\sigma)}$, and \cref{thm:limit-distribution} shows that $\sqrt{n}(\hat{\mu}_n-\mu)*\gamma_\sigma$ is weakly convergent in $\dot{H}^{-1,p}(\mu*\gamma_\sigma)$. These suggest choosing the ambient space to be $\dot{H}^{-1,p}(\mu*\gamma_\sigma)$.

To cover the one- and two-sample cases in a unified manner, consider the same map but in two variables. Take $D_{\mu} = \dot{H}^{-1,p}(\mu*\gamma_{\sigma})$, set $\Xi_{\mu}:= D_{\mu} \cap \big \{ h = (\rho - \mu)*\gamma_\sigma : \rho \in \cP_p \big\}$, and define the function $\Phi: \Xi_{\mu} \times \Xi_\mu \subset D_{\mu} \times D_\mu \to \R$ as
\[
\Phi (h_1,h_2):= \Wp (\mu*\gamma_{\sigma}+h_1,\mu*\gamma_{\sigma}+h_2), \quad (h_1,h_2) \in \Xi_{\mu} \times \Xi_\mu.
\]
We endow $D_\mu \times D_\mu$ with a product norm (e.g., $\| h_1 \|_{\dot{H}^{-1,p}(\mu*\gamma_\sigma)} + \| h_2 \|_{\dot{H}^{-1,p}(\mu*\gamma_\sigma)}$). 
Since the set $\Xi_{\mu}$ (and thus $\Xi_\mu \times \Xi_\mu$) is convex, the tangent cone $T_{\Xi_{\mu}\times \Xi_\mu}(0,0)$ coincides with the closure in $D_{\mu} \times D_\mu$ of $\{ (h_1,h_2)/t : (h_1,h_2) \in \Xi_{\mu} \times \Xi_\mu, t > 0 \}$. We next verify that $\Phi$ is Hadamard directionally differentiable at $(0,0)$.

\begin{proposition}[Hadamard directional derivative of $\Wp$ under the null]
\label{prop: Haradard derivative null}
Let $1 < p < \infty$ and $\mu \in \cP_p$. %
Then, the map $\Phi: (h_1,h_2) \mapsto \Wp (\mu*\gamma_{\sigma}+h_1,\mu*\gamma_{\sigma}+h_2), \Xi_{\mu} \times \Xi_\mu \subset D_{\mu} \times D_\mu \to \R$, is Hadamard directionally differentiable at $(h_1,h_2)=(0,0)$ with derivative $\Phi'_{(0,0)}(h_1,h_2) = \| h_1-h_2 \|_{\dot{H}^{-1,p}(\mu*\gamma_{\sigma})}$, i.e., 
for any $(h_1,h_2) \in T_{\Xi_{\mu} \times \Xi_\mu}(0)$, $t_n \downarrow 0$ and $(h_{n,1},h_{n,2}) \to (h_1,h_2)$ in $D_{\mu}\times D_\mu$ such that $ (t_nh_{n,1},t_nh_{n,2}) \in \Xi_{\mu} \times \Xi_\mu$, we have 
\[
\lim_{n \to \infty} \frac{\Phi(t_n h_{n,1},t_nh_{n,2})}{t_n} = \| h_1-h_2 \|_{\dot{H}^{-1,p}(\mu*\gamma_{\sigma})}.
\]
\end{proposition}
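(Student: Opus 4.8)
The plan is to prove the equivalent statement that, writing $\rho := \mu*\gamma_\sigma$ and $\mu_i^{(n)} := \rho + t_n h_{n,i}$, one has $\Phi(t_nh_{n,1},t_nh_{n,2})/t_n = \Wp(\mu_1^{(n)},\mu_2^{(n)})/t_n \to \|h_1-h_2\|_{\dot{H}^{-1,p}(\rho)}$. Since $t_n h_{n,i}\in\Xi_\mu$, we may write $t_n h_{n,i} = (\rho_{n,i}-\mu)*\gamma_\sigma$ with $\rho_{n,i}\in\cP_p$, so $\mu_i^{(n)} = \rho_{n,i}*\gamma_\sigma\in\cP_p$. Applying \cref{prop:duality-in-Wp} with reference measure $\rho$ (whose density relative to itself is $\equiv 1$) gives $\Wp(\mu_i^{(n)},\rho)\le p\,t_n\|h_{n,i}\|_{\dot{H}^{-1,p}(\rho)}\to 0$, so $\mu_i^{(n)}\to\rho$ in $\Wp$ (hence weakly, and $\rho_{n,i}\to\mu$ weakly by the topological equivalence of $\GWp$ and $\Wp$, or by deconvolution) and $\Wp(\mu_1^{(n)},\mu_2^{(n)})\to 0$. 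Put $\ell_n := h_{n,1}-h_{n,2}$ and $\ell := h_1-h_2$; since $\dot{H}^{-1,p}(\rho)$ is complete and $h_{n,i}\to h_i$, we have $\ell\in\dot{H}^{-1,p}(\rho)$, $\ell_n\to\ell$, and $\mu_1^{(n)}-\mu_2^{(n)} = t_n\ell_n$ as signed measures. The claim follows from matching lower and upper bounds.

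\emph{Lower bound.} Fix $\varphi\in C_0^\infty$ with $\ell(\varphi)>0$ (flip the sign otherwise), let $\pi_n$ be an optimal coupling for $\Wp(\mu_1^{(n)},\mu_2^{(n)})$, and set $\rho_t^{(n)} := (e_t)_\#\pi_n$ with $e_t(x,y):=(1-t)x+ty$. Writing $\varphi(x)-\varphi(y) = \int_0^1\nabla\varphi(e_t(x,y))\cdot(x-y)\,dt$, integrating against $\pi_n$, and applying Hölder w.r.t.\ $\pi_n\otimes dt$,
\[
t_n\ell_n(\varphi) = \int\big(\varphi(x)-\varphi(y)\big)\,d\pi_n \;\le\; \Big(\int_0^1\!\!\int|\nabla\varphi|^q\,d\rho_t^{(n)}\,dt\Big)^{1/q}\Wp(\mu_1^{(n)},\mu_2^{(n)}).
\]
Because $\Wp(\rho_t^{(n)},\mu_1^{(n)})\le t\,\Wp(\mu_1^{(n)},\mu_2^{(n)})$, we get $\sup_{t\in[0,1]}\Wp(\rho_t^{(n)},\rho)\to 0$; as $|\nabla\varphi|^q$ is bounded and Lipschitz, Kantorovich--Rubinstein yields $\sup_t\big|\!\int|\nabla\varphi|^q d\rho_t^{(n)}-\|\nabla\varphi\|_{L^q(\rho)}^q\big|\to 0$, hence $\int_0^1\!\int|\nabla\varphi|^q d\rho_t^{(n)}dt\to\|\varphi\|_{\dot{H}^{1,q}(\rho)}^q$. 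Therefore $\liminf_n\Wp(\mu_1^{(n)},\mu_2^{(n)})/t_n\ge\ell(\varphi)/\|\varphi\|_{\dot{H}^{1,q}(\rho)}$, and the supremum over $\varphi\in C_0^\infty$ is exactly $\|\ell\|_{\dot{H}^{-1,p}(\rho)}$.

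\emph{Upper bound.} Consider the linear interpolation $\nu_t^{(n)}:=(1-t)\mu_1^{(n)}+t\mu_2^{(n)} = \rho + t_n\big((1-t)h_{n,1}+th_{n,2}\big) = \lambda_{n,t}*\gamma_\sigma$, $\lambda_{n,t}:=(1-t)\rho_{n,1}+t\rho_{n,2}$, an absolutely continuous curve with everywhere-positive smooth density. By the Benamou--Brenier formulation underlying \cref{prop:duality-in-Wp}, $\Wp(\mu_1^{(n)},\mu_2^{(n)})$ is at most the $\Wp$-length of this curve, whose metric speed at time $t$ is $\|\partial_t\nu_t^{(n)}\|_{\dot{H}^{-1,p}(\nu_t^{(n)})} = t_n\|\ell_n\|_{\dot{H}^{-1,p}(\nu_t^{(n)})}$, so
\[
\frac{\Wp(\mu_1^{(n)},\mu_2^{(n)})}{t_n}\;\le\;\int_0^1\|\ell_n\|_{\dot{H}^{-1,p}(\nu_t^{(n)})}\,dt .
\]
It thus suffices to show $\limsup_n\int_0^1\|\ell_n\|_{\dot{H}^{-1,p}(\nu_t^{(n)})}\,dt\le\|\ell\|_{\dot{H}^{-1,p}(\rho)}$. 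Here I would use that $\lambda_{n,t}\to\mu$ weakly uniformly in $t$, so the Lebesgue densities of $\nu_t^{(n)} = \lambda_{n,t}*\phi_\sigma$ converge to that of $\rho$ uniformly on compacts (equicontinuity via $\|\nabla\phi_\sigma\|_\infty$) and in $L^1$ (Scheff\'e), with common $L^\infty$ bound $\|\phi_\sigma\|_\infty$; together with $\ell_n\to\ell$ in $\dot{H}^{-1,p}(\rho)$. Approximating a near-minimal velocity field for $(\ell,\rho)$ by $\nabla\varphi$ with $\varphi\in C_0^\infty$ (compactly supported), transporting it to $\nu_t^{(n)}$ through the ratio $d\rho/d\nu_t^{(n)}$ (which tends to $1$ on $\supp\nabla\varphi$), and correcting the discrepancy $\ell_n-\ell$, yields the bound for a.e.\ $t$ with enough uniformity to pass to the limit under the integral. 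Combined with the lower bound, this proves the proposition.

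\emph{Main obstacle.} The crux is this last $\limsup$. The dual Sobolev norm is sensitive to its base measure, and the interpolants $\nu_t^{(n)}$ can disagree with $\rho$ substantially in the tails (e.g.\ when the $\rho_{n,i}$ are compactly supported while $\mu$ is heavy-tailed), so one cannot merely dominate $\|\cdot\|_{\dot{H}^{-1,p}(\nu_t^{(n)})}$ by a fixed multiple of $\|\cdot\|_{\dot{H}^{-1,p}(\rho)}$; the naive linear-interpolation bound is valid but potentially wasteful there. Making the transfer argument rigorous requires exploiting the Gaussian-smoothing structure both of the base measures and of $\ell_n$ itself -- whose density, being a smoothed difference of probability measures, decays at least as fast as the densities of the $\nu_t^{(n)}$, confining the effective test functions to regions where $\nu_t^{(n)}$ and $\rho$ are comparable -- and doing so uniformly in $t$. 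An alternative is a direct perturbative argument in the Kantorovich dual: use potentials $s_n\varphi$ with $\varphi\in C_0^\infty$ and $s_n\downarrow 0$ of order $t_n^{p-1}$, with the expansion $\inf_x[\,|x-y|^p-s_n\varphi(x)\,] = -s_n\varphi(y)-(p-1)p^{-q}s_n^q|\nabla\varphi(y)|^q+o(s_n^q)$ valid for smooth compactly supported $\varphi$, optimize over $s_n$, and separately bound the contribution of $y\notin\supp\varphi$.
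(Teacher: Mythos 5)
Your lower bound is correct and is a clean variant of the paper's: where the paper uses the pointwise estimate $\varphi(x)-\varphi(y)\le\langle\nabla\varphi(y),x-y\rangle+C|x-y|^{2\wedge p}$ and absorbs the error into $\Wp^{2\wedge p}=o(t_n)$, you integrate $\nabla\varphi$ along the displacement interpolation and use uniform $\mathsf{W}_1$-convergence of $\rho_t^{(n)}$ to $\rho$; both routes work. The upper bound, however, has a genuine gap, and it is exactly the one you flag: you reduce to showing $\limsup_n\int_0^1\|\ell_n\|_{\dot{H}^{-1,p}(\nu_t^{(n)})}\,dt\le\|\ell\|_{\dot{H}^{-1,p}(\rho)}$, but the interpolants $\nu_t^{(n)}$ need not have densities bounded below (or above) relative to $\rho=\mu*\gamma_\sigma$ by constants tending to $1$, so there is no uniform comparison of the two dual Sobolev norms, and neither of your sketched repairs is carried out. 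The second alternative (plugging potentials $s_n\varphi$ into Kantorovich duality and expanding the $c$-transform) in fact produces \emph{lower} bounds on $\Wp^p$, since the dual is a supremum; it cannot by itself deliver the upper bound without controlling the \emph{optimal} potential.

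The paper closes precisely this gap by a two-step argument that avoids ever comparing $\dot{H}^{-1,p}(\nu_t^{(n)})$ with $\dot{H}^{-1,p}(\rho)$ for the actual perturbations. First (Lemma \ref{lem: Haradard derivative null}), it computes the exact G\^ateaux derivative only along directions $\tilde h_i=c(\rho_i-\mu)*\gamma_\sigma$ with \emph{fixed} $\rho_i\in\cP_p$: along the ray $t\mapsto\mu*\gamma_\sigma+t\tilde h_i=((1-ct)\mu+ct\rho_i)*\gamma_\sigma$ the relevant densities satisfy $f^i_t\ge 1-ct\ge 1/2$, so in the Benamou--Brenier bound one can use the single vector field $E(h)$ relative to the fixed base measure, divided by $(1-s)f_t^1+sf_t^2\ge 1/2$, and pass to the limit by dominated convergence. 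Second, such $\tilde h_i$ are dense in the tangent cone (by convexity of $\Xi_\mu$), and the global Lipschitz estimate of Proposition \ref{prop:duality-in-Wp} (with lower bound $1/2$ on the density) transfers the limit from $(\tilde h_1,\tilde h_2)$ to arbitrary $(h_{n,1},h_{n,2})\to(h_1,h_2)$, costing only $O(\epsilon)$. If you want to complete your direct approach, you would essentially have to rediscover this reduction: restrict to perturbations for which the linear interpolant's density relative to $\rho$ is bounded below, then appeal to Lipschitz continuity in $\|\cdot\|_{\dot{H}^{-1,p}(\mu*\gamma_\sigma)}$ for the rest.
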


Proposition \ref{prop: Haradard derivative null} follows from the next G\^{a}teaux differentiability result for $\Wp$, which may be of independent interest, combined with Lipschitz continuity of $\Phi$ w.r.t. $\| \cdot \|_{\dot{H}^{-1,p}(\mu*\gamma_{\sigma})}$ (cf. Proposition \ref{prop:duality-in-Wp}).

\begin{lemma}[G{\^a}teaux directional derivative of $\Wp$]
\label{lem: Haradard derivative null} 
Let $\mu \in \cP_p$ and $h_i \in \dot{H}^{-1,p}(\mu), i=1,2$ be finite signed Borel measures with total mass $0$ such that $h_i \ll \mu$ and $\mu+h_i \in \cP_p$. Then,
\[
\frac{d}{dt^+}\Wp(\mu+th_1,\mu+th_2) \Big|_{t=0} = \| h_1-h_2 \|_{\dot{H}^{-1,p}(\mu)},
\]
where $d/dt^+$ denotes the right derivative. 
\end{lemma}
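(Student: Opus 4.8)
\textbf{Proof plan for Lemma \ref{lem: Haradard derivative null}.}

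The plan is to compute the one-sided derivative of $t \mapsto \Wp(\mu + th_1, \mu + th_2)$ at $t = 0$ by combining an upper bound coming from the Benamou--Brenier dynamic formulation (as used in the proof of \cref{prop:duality-in-Wp}) with a matching lower bound coming from the dual formulation \eqref{eq: duality}. For the upper bound, note that $\mu + th_i \geq 0$ for $t$ small (since $h_i \ll \mu$, the Radon--Nikodym derivative $dh_i/d\mu$ is bounded below $\mu$-a.e.\ by some finite quantity, and for $t$ small enough $1 + t\, dh_i/d\mu > 0$), so both $\mu + th_1$ and $\mu + th_2$ are genuine probability measures in $\cP_p$ with densities bounded away from zero w.r.t.\ $\mu$. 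The curve $s \mapsto (1-s)(\mu + th_1) + s(\mu + th_2) = \mu + th_1 + s t(h_2 - h_1)$, $s \in [0,1]$, is a linear interpolation between them, and the Benamou--Brenier length estimate used to prove \eqref{eq:Wp-Sobolev-comparison} bounds $\Wp(\mu + th_1, \mu + th_2)$ by $t \, p \, c_t^{-1/q}\| h_2 - h_1\|_{\dot H^{-1,p}(\mu + th_1)}$-type quantities; more carefully, tracking the argument of that proof with $\rho$-dependence along the curve, one gets $\Wp(\mu + th_1, \mu + th_2) \le t \int_0^1 \| h_2 - h_1 \|_{\dot H^{-1,p}(\mu + th_1 + st(h_2-h_1))}\, ds$ up to the usual density-lower-bound factor, which I would organize so that dividing by $t$ and letting $t \downarrow 0$ yields $\limsup_{t\downarrow 0} t^{-1}\Wp(\mu+th_1,\mu+th_2) \le \| h_1 - h_2\|_{\dot H^{-1,p}(\mu)}$, using continuity of the dual Sobolev norm in the base measure as $t \to 0$ (the densities converge uniformly to $1$).

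For the matching lower bound, I would use duality: fix a function $\varphi \in C_0^\infty$ with $\|\varphi\|_{\dot H^{1,q}(\mu)} \le 1$, and use it (after an appropriate rescaling, since $\varphi$ need not be $c$-concave) as a competitor in \eqref{eq: duality} for the cost $c(x,y) = |x-y|^p$. The standard linearization argument — for a smooth $\varphi$, the pair $(\varepsilon\varphi, (\varepsilon\varphi)^c)$ gives $(\varepsilon\varphi)^c(y) = \varepsilon\varphi(y) - \varepsilon^{?}(\cdots)$ with the correction of higher order, so that $\Wp^p(\mu + th_1, \mu + th_2) \ge \varepsilon t \int \varphi \, d(h_1 - h_2) + O((\varepsilon^{p/(p-1)} + \varepsilon t)\, t)$ — lets one extract, after optimizing over $\varepsilon \sim t^{q-1}$ and then over $\varphi$, the bound $\liminf_{t \downarrow 0} t^{-1}\Wp(\mu + th_1, \mu+th_2) \ge \sup_{\|\varphi\|_{\dot H^{1,q}(\mu)}\le 1} (h_1 - h_2)(\varphi) = \| h_1 - h_2\|_{\dot H^{-1,p}(\mu)}$, where the last equality is the definition of the dual Sobolev norm (recalling that $(h_1 - h_2)(\varphi)$ is unchanged under adding constants since $h_1 - h_2$ has mass zero). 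The hypothesis $h_i \in \dot H^{-1,p}(\mu)$ guarantees this supremum is finite and that $h_1 - h_2$ is a bona fide continuous linear functional on $\dot H^{1,q}(\mu)$.

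Combining the two one-sided estimates gives the existence of the right derivative and its value $\| h_1 - h_2 \|_{\dot H^{-1,p}(\mu)}$. I expect the main obstacle to be the lower bound: making the dual/linearization argument rigorous requires a careful choice of admissible test function (a generic $\varphi \in C_0^\infty$ is not $c$-concave, so one cannot directly write $g = \varphi$ in \eqref{eq: duality}; one works with small multiples $\varepsilon\varphi$ and controls $(\varepsilon\varphi)^c$ via the inf defining the $c$-transform, using compact support and smoothness of $\varphi$ to bound the minimizing $x$ and hence the correction term uniformly), and getting the error terms to be genuinely $o(t)$ after the joint optimization over $\varepsilon$ and the scaling in $t$. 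A secondary technical point is ensuring the density lower bounds $c_t$ along the interpolating curve stay bounded below uniformly in $t$ near $0$ (which follows since $1 + t\,dh_i/d\mu \to 1$ in, say, $L^\infty(\mu)$ when $dh_i/d\mu \in L^\infty(\mu)$, and more generally by a truncation argument), so that the Benamou--Brenier upper bound does not blow up. The positive homogeneity and the passage from this G\^ateaux statement to the Hadamard directional derivative in \cref{prop: Haradard derivative null} are then routine given Lipschitz continuity of $\Phi$ in $\|\cdot\|_{\dot H^{-1,p}(\mu*\gamma_\sigma)}$.
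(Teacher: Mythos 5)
Your upper bound (the $\limsup$ direction) is essentially the paper's argument: Benamou--Brenier applied to the linear interpolation $s\mapsto \mu+th_1+st(h_2-h_1)$, with the density lower bound along the curve and dominated convergence as $t\downarrow 0$. One small simplification you are missing: no truncation or $L^\infty$ control of $dh_i/d\mu$ is needed, since $\mu+h_i\ge 0$ forces $dh_i/d\mu\ge -1$ and hence $d(\mu+th_i)/d\mu\ge 1-t\ge 1/2$ for $t\le 1/2$; the paper then writes the velocity field as $E(-th)/f_s$ with $E$ the vector field of \cref{lem:vector-field} for the \emph{fixed} reference measure $\mu$, so that only this one-sided bound and dominated convergence are used.

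The $\liminf$ direction is where you genuinely diverge from the paper, and where your sketch has a real gap. The paper does \emph{not} use the Kantorovich duality \eqref{eq: duality}: it takes the optimal coupling $\pi_t$ of $(\mu+th_1,\mu+th_2)$, uses the pointwise Taylor-type bound $\varphi(x)-\varphi(y)\le \langle\nabla\varphi(y),x-y\rangle + C|x-y|^{2\wedge p}$ for $\varphi\in C_0^\infty$, and then H\"older's inequality to get $h(\varphi)\le t^{-1}\|\nabla\varphi\|_{L^q(\mu+th_2;\R^d)}\Wp(\mu+th_1,\mu+th_2)+t^{-1}O(\Wp^{2\wedge p})$, with the remainder $o(t)/t\to 0$ by \cref{prop:duality-in-Wp}. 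Your duality route with test potentials $\varepsilon\varphi$ can in principle be made to work, but not as written. First, the correction term is $O(\varepsilon^q)$, not $O(\varepsilon^q t)$: the crude bound is $(\varepsilon\varphi)^c(y)\ge -\varepsilon\varphi(y)-c_p(\varepsilon\|\nabla\varphi\|_\infty)^q$, so the dual competitor gives $\Wp^p\ge \varepsilon t\,h(\varphi)-C_\varphi\varepsilon^q$, and the balancing scale is $\varepsilon\sim t^{p-1}$ (so that $\varepsilon t\sim\varepsilon^q\sim t^p$), not $\varepsilon\sim t^{q-1}$ (these agree only when $p=2$). Second, and more seriously, optimizing $\varepsilon\mapsto \varepsilon t\,h(\varphi)-C_\varphi\varepsilon^q$ yields only $\Wp^p\gtrsim (t\,h(\varphi))^p/C_\varphi^{p-1}$, i.e.\ $\liminf t^{-1}\Wp\ge c_\varphi\,h(\varphi)$ with a constant $c_\varphi<1$ depending on $\|\nabla\varphi\|_\infty$ --- this does not recover the dual Sobolev norm. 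Because the main and correction terms are of the \emph{same} order $t^p$ at the optimal $\varepsilon$, the sharp constant requires the second-order pointwise expansion $(\varepsilon\varphi)^c(y)=-\varepsilon\varphi(y)-(p-1)p^{-q}\varepsilon^q|\nabla\varphi(y)|^q+o(\varepsilon^q)$, so that the correction integrates to $(p-1)p^{-q}\varepsilon^q\|\nabla\varphi\|_{L^q(\mu+th_2;\R^d)}^q$; only then does the optimization over $\varepsilon$ produce exactly $t\,h(\varphi)/\|\nabla\varphi\|_{L^q(\mu;\R^d)}$ in the limit. You would need to justify this expansion uniformly enough to integrate against $\nu_t$. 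The paper's coupling-plus-H\"older argument sidesteps all of this, which is why it is the cleaner route here.
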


\begin{remark}[Comparison with Exercise 22.20 in \cite{villani2008optimal}]
Exercise 22.20 in \cite{villani2008optimal} states that (in our notation)
\begin{equation}
\lim_{\epsilon \downarrow 0} \frac{\mathsf{W}_2\big((1+\epsilon h)\mu,\mu\big)}{\epsilon} = \| h\mu \|_{\dot{H}^{-1,2}(\mu)},
\label{eq: derivative dual}
\end{equation}
for any sufficiently regular function $h$ with $\int h d\mu = 0$ ($h\mu$ is understood as a signed measure $hd\mu$). Theorem 7.26 in  \cite{villani2003} provides a proof of the one-sided inequality that the liminf of the left-hand side above is at least $\| h \mu \|_{\dot{H}^{-1,2}(\mu)}$, when $\mu \in \cP_2$ satisfies $\mu \ll dx$ and $h$ is bounded. The subsequent Remark 7.27 states that ``We shall not consider the converse of this inequality, which requires more assumptions and more effort." However, we could not find references that establish rigorous conditions applicable to our problem under which the derivative formula \eqref{eq: derivative dual} holds. Lemma \ref{lem: Haradard derivative null} provides a rigorous justification for this formula and extends it to general $p > 1$.
\end{remark}

Given these preparations, the proof of Theorem \ref{thm: main theorem} Parts (i) and (ii) is immediate. 

\begin{proof}[Proof of Theorem \ref{thm: main theorem}, Parts (i) and (ii)]
Let $G_{\mu}$ denote the weak limit of $\sqrt{n}(\hat{\mu}_n-\mu)*\gamma_\sigma$ in $\Hu$; cf. \cref{thm:limit-distribution}. 
Recall that $D_\mu=\dot{H}^{-1,p}(\mu*\gamma_{\sigma})$ is separable (cf. Lemma \ref{lem: dual space}), so $(T_{n,1},T_{n,2}) :=\big((\hat{\mu}_n-\mu)*\gamma_{\sigma},(\hat{\nu}_n-\mu)*\gamma_{\sigma} \big)$ is a Borel measurable map from~$\Omega$ into the product space $D_\mu \times D_\mu$ \cite[Lemma 1.4.1]{vandervaart1996book}. Since $T_{n,1}$ and $T_{n,2}$ are independent, by Example 1.4.6 in \cite{vandervaart1996book} and  \cref{thm:limit-distribution}, $(T_{n,1},T_{n,2}) \stackrel{d}{\to} (G_\mu,G_\mu')$ in $D_\mu \times D_\mu$, where $G_\mu'$ is an independent copy of $G_\mu$. Since $(T_{n,1},T_{n,2}) \in T_{\Xi_\mu \times \Xi_\mu}(0,0)$ and $T_{\Xi_\mu \times \Xi_\mu}(0,0)$ is closed in $D_\mu \times D_\mu$, we see that $(G_\mu,G_\mu') \in T_{\Xi_\mu \times \Xi_\mu}(0,0)$ by the portmanteau theorem.

Applying the functional delta method (\cref{lem: functional delta method}) and \cref{prop: Haradard derivative null}, we conclude that 
\[
\sqrt{n}\GWp(\hat{\mu}_n,\hat{\nu}_n) = \sqrt{n}\big (\Phi (T_{n,1},T_{n,2})-\Phi(0,0)\big) \stackrel{d}{\to} \Phi_{(0,0)}'(G_{\mu},G_\mu') = \| G_{\mu}-G_\mu' \|_{\dot{H}^{-1,p}(\mu*\gamma_{\sigma})}.
\]
Likewise, we also have
\[
\sqrt{n}\GWp(\hat{\mu}_n,\mu) = \sqrt{n}\big (\Phi (T_{n,1},0)-\Phi(0,0)\big) \stackrel{d}{\to} \Phi_{(0,0)}'(G_{\mu},0) = \| G_{\mu} \|_{\dot{H}^{-1,p}(\mu*\gamma_{\sigma})}.
\]
This completes the proof. 
\end{proof}

\subsection{Limit distributions under the alternative \((\mu \ne \nu)\)}
\label{sec: alternative}

\subsubsection{One-sample case} We start from the simpler situation where $\nu$ is known and prove Part (iii) of Theorem \ref{thm: main theorem}. Our proof strategy is to first establish asymptotic normality of the $p$th power of $\GWp$, from which Part (iii) follows by applying the delta method for $s \mapsto s^{1/p}$. For notational convenience, define
\[
\SWp (\mu,\nu):= \big[\GWp(\mu,\nu)\big]^p,
\]
for which one-sample asymptotic normality under the alternative is stated next.

\begin{proposition}
\label{thm: limit distribution alternative}
Suppose that $\mu \in \cP$ satisfies Condition \eqref{eq:moment-condition}, $\nu \in \cP$ is sub-Weibull, and $\mu\neq\nu$. Let $g$ be an optimal transport potential from $\mu*\gamma_{\sigma}$ to $\nu*\gamma_{\sigma}$ for $\Wp^p$. Then, we have 
\[
\sqrt{n}\big(\SWp(\hat{\mu}_n,\nu) - \SWp(\mu,\nu)\big) \stackrel{d}{\to}  N\big(0,\Var_{\mu}(g*\phi_{\sigma})\big).
\]
\end{proposition}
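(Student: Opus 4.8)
The argument parallels the null case of \cref{sec: null case} but exploits $\mu \neq \nu$ to produce a \emph{linear} Hadamard directional derivative, so that the functional delta method yields asymptotic normality. Work on $D_\mu := \dot{H}^{-1,p}(\mu*\gamma_\sigma)$ with the convex domain $\Xi_\mu = D_\mu \cap \{(\rho - \mu)*\gamma_\sigma : \rho \in \cP_p\}$ and the functional
\[
\Phi(h) := \Wp^p\big(\mu*\gamma_\sigma + h,\, \nu*\gamma_\sigma\big), \qquad h \in \Xi_\mu,
\]
so that $\Phi\big((\hat\mu_n - \mu)*\gamma_\sigma\big) = \SWp(\hat\mu_n,\nu)$ and $\Phi(0) = \SWp(\mu,\nu)$; by the Kantorovich duality \eqref{eq: duality}, $\Phi$ is a supremum of affine maps of $h$, hence convex, and $\Xi_\mu$ is convex. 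The first task is to record the regularity of the population optimal potential: \cref{lem: duality} supplies a $c$-concave $g \in L^1(\mu*\gamma_\sigma)$ attaining \eqref{eq: duality} for $(\mu*\gamma_\sigma, \nu*\gamma_\sigma)$, which is locally Lipschitz and, since $\mu*\gamma_\sigma \ll dx$ has an everywhere-positive density on the connected set $\R^d$, is \emph{unique up to additive constants} (this uniqueness is what will make the derivative linear). Combining the standard growth bounds for $c$-concave functions with $c(x,y) = |x-y|^p$ with the sub-Gaussian hypothesis \eqref{eq:moment-condition} on $\mu$ and sub-Weibull-ness of $\nu$ — which bound the optimal transport displacement and hence yield $|\nabla g(x)| \lesssim 1 + |x|^{p-1}$ with $\mu*\gamma_\sigma$-integrable $q$-th power — one checks $g \in \dot{H}^{1,q}(\mu*\gamma_\sigma)$ and $g*\phi_\sigma \in L^2(\mu)$. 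In particular $\ell \mapsto \ell(g)$ is a continuous linear functional on $D_\mu$ and $\Var_\mu(g*\phi_\sigma) < \infty$.

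Next I would show that $\Phi$ is Hadamard directionally differentiable at $0$ with the linear derivative $\Phi'_0(h) = h(g)$. Fix $t_n \downarrow 0$, $h_n \to h$ in $D_\mu$ with $t_n h_n \in \Xi_\mu$, and write $\mu*\gamma_\sigma + t_n h_n = \rho_n*\gamma_\sigma$ with $\rho_n \in \cP_p$. Since $g$ is feasible (suboptimal) in the dual for $(\rho_n*\gamma_\sigma, \nu*\gamma_\sigma)$, we get $\Phi(t_n h_n) \ge \Phi(0) + t_n h_n(g)$, hence $\liminf_n t_n^{-1}\Phi(t_n h_n) \ge h(g)$; letting $g_n$ be a $c$-concave optimal potential for $(\rho_n*\gamma_\sigma, \nu*\gamma_\sigma)$ and using that $g_n$ is feasible for $(\mu*\gamma_\sigma, \nu*\gamma_\sigma)$ gives $\Phi(t_n h_n) - \Phi(0) \le t_n h_n(g_n)$, hence $\limsup_n t_n^{-1}\Phi(t_n h_n) \le \limsup_n h_n(g_n)$. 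By \cref{prop:duality-in-Wp} with reference measure $\mu*\gamma_\sigma$ (relative density $1$), $\Wp(\rho_n*\gamma_\sigma, \mu*\gamma_\sigma) \le p\,\|t_n h_n\|_{D_\mu} \to 0$. It then remains to establish the \emph{stability} statement: $\Wp$-convergence of the sources $\rho_n*\gamma_\sigma \to \mu*\gamma_\sigma$ forces, after fixing the additive constants (say $\int g_n\, d(\mu*\gamma_\sigma) = \int g\, d(\mu*\gamma_\sigma)$), convergence $g_n \to g$ in $\dot{H}^{1,q}(\mu*\gamma_\sigma)$. Given this, $|h_n(g_n) - h(g)| \le \|h_n - h\|_{D_\mu}\|g_n\|_{\dot{H}^{1,q}(\mu*\gamma_\sigma)} + \|h\|_{D_\mu}\|g_n - g\|_{\dot{H}^{1,q}(\mu*\gamma_\sigma)} \to 0$, so the two bounds coincide and $\Phi'_0(h) = h(g)$; being linear and continuous, $\Phi'_0$ extends to the closed tangent cone $T_{\Xi_\mu}(0)$ as $\ell \mapsto \ell(g)$.

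With the derivative in hand the proof closes exactly as for Parts (i) and (ii) of \cref{thm: main theorem}: using separability of $D_\mu$ (\cref{lem: dual space}), set $T_n := (\hat\mu_n - \mu)*\gamma_\sigma \in \Xi_\mu$ and $r_n = \sqrt n$; by \cref{thm:limit-distribution}, $\sqrt n\, T_n \stackrel{d}{\to} G_\mu$ in $D_\mu$, a centered Gaussian process with covariance $(f_1,f_2) \mapsto \Cov_\mu(f_1*\phi_\sigma, f_2*\phi_\sigma)$, and $G_\mu$ takes values in the closed cone $T_{\Xi_\mu}(0)$ by the portmanteau theorem. The functional delta method (\cref{lem: functional delta method}) then gives
\[
\sqrt n\big(\SWp(\hat\mu_n,\nu) - \SWp(\mu,\nu)\big) = \sqrt n\big(\Phi(T_n) - \Phi(0)\big) \stackrel{d}{\to} \Phi'_0(G_\mu) = G_\mu(g) \sim N\big(0, \Var_\mu(g*\phi_\sigma)\big),
\]
the variance being finite by the bound recorded above.

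The main obstacle is the stability claim $g_n \to g$ in $\dot{H}^{1,q}(\mu*\gamma_\sigma)$ under only the moment hypotheses, i.e.\ without compact supports or densities bounded away from zero — the setting for which the classical optimal-transport-potential stability results are tailored. I would split it into two stages. \emph{Qualitative convergence}: after normalizing the additive constants, the $g_n$ are uniformly locally Lipschitz and locally bounded, with uniform polynomial growth $|g_n(x)| \lesssim 1 + |x|^p$ and $|\nabla g_n(x)| \lesssim 1 + |x|^{p-1}$ (the implicit constants uniform in $n$, which is where the sub-Gaussianity of $\mu$ and sub-Weibull-ness of $\nu$ enter, bounding the transport displacements uniformly); Arzel\`{a}--Ascoli then extracts locally uniformly convergent subsequences whose limits are, by stability of the Kantorovich dual under $\Wp$-convergence of the sources, optimal potentials for $(\mu*\gamma_\sigma, \nu*\gamma_\sigma)$, and uniqueness up to constants (\cref{lem: duality}(iii)) identifies each such limit with $g$, which upgrades the statement to convergence of the whole sequence. \emph{Quantitative upgrade}: by semiconcavity of $c$-concave potentials, locally uniform convergence improves to a.e.\ convergence $\nabla g_n \to \nabla g$, and the uniform bound $|\nabla g_n(x)| \lesssim 1 + |x|^{p-1}$ provides an $L^q(\mu*\gamma_\sigma)$-integrable envelope, so dominated convergence yields $\nabla g_n \to \nabla g$ in $L^q(\mu*\gamma_\sigma;\R^d)$. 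An essentially equivalent route avoids the abstract delta method altogether: sandwich $\sqrt n\, T_n(g) \le \sqrt n\big(\SWp(\hat\mu_n,\nu) - \SWp(\mu,\nu)\big) \le \sqrt n\, T_n(g_n)$ with $g_n$ the empirical optimal potential, note that the left side is a centered i.i.d.\ sum with $N(0,\Var_\mu(g*\phi_\sigma))$ limit by the classical CLT, and control the gap $\sqrt n\, T_n(g_n - g) = \sqrt n\int\!\big((g_n - g)*\phi_\sigma\big)\, d(\hat\mu_n - \mu) = o_\Prob(1)$ via asymptotic equicontinuity of the empirical process over the $\mu$-Donsker class $\{f*\phi_\sigma : \|\nabla f\|_{L^q(\mu*\gamma_\sigma)} \le 1\}$ from \cite{nietert21}, together with $\|(g_n - g)*\phi_\sigma\|_{L^2(\mu)} \to 0$.
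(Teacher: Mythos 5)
Your architecture matches the paper's: the dual feasibility sandwich giving $t_nh_n(g)\le \Psi(t_nh_n)-\Psi(0)\le t_nh_n(g_n)$, the linear Hadamard derivative $h\mapsto h(g)$, and the functional delta method applied to the weak limit $G_\mu$ from \cref{thm:limit-distribution}. The divergence is in how the upper bound is closed, and there your primary route has a gap. You aim to prove $g_n\to g$ in $\dot H^{1,q}(\mu*\gamma_\sigma)$, i.e.\ $L^q$-convergence of \emph{gradients}, deducing a.e.\ convergence $\nabla g_n\to\nabla g$ from locally uniform convergence ``by semiconcavity of $c$-concave potentials.'' For $c(x,y)=|x-y|^p$ with $1<p<2$ the cost is not locally $C^{1,1}$ near the diagonal, so local semiconcavity of $c$-concave functions is not available in general, and locally uniform convergence of locally Lipschitz functions does not by itself give a.e.\ convergence of gradients. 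The paper sidesteps this entirely: it first proves only the G\^ateaux derivative along a \emph{fixed} signed-measure direction $h$ (Lemma \ref{lem: Hadamard derivative alternative}), for which it suffices that $g_{t_n}\to g_0$ \emph{pointwise} (via Theorem 3.4 of \cite{delbarrio2021}, after normalizing $g_t(0)=0$) together with the uniform polynomial growth bound of \cref{lem: potential} and dominated convergence of $\int g_{t_n}\,dh$; it then upgrades to Hadamard differentiability for varying $h_n\to h$ by approximating $h$ with $c(\rho-\mu)*\gamma_\sigma$ and using the Lipschitz estimate of \cref{prop:duality-in-Wp}. If you insert that reduction, your stronger stability claim becomes unnecessary. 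Your fallback ``sandwich plus equicontinuity'' route is essentially sound and close in spirit to the paper's argument, since it too only needs $\|(g_n-g)*\phi_\sigma\|_{L^2(\mu)}\to 0$, obtainable from pointwise convergence and domination.

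Two smaller points. First, your domain $\Xi_\mu$ built from $\rho\in\cP_p$ is too large: the growth of $g$ is $|g(x)-g(0)|\lesssim (1+|x|^{2p/\beta})|x|$ (not $1+|x|^{p}$; the exponent depends on the tail parameter $\beta$ of $\nu$ through the $c$-superdifferential bound), so the literal integral $\int g\,dh$ for $h=(\rho-\mu)*\gamma_\sigma$ need not converge unless $\rho$ has higher moments — the paper restricts to sub-Weibull $\rho$ (the set $\Lambda_\mu$ in \eqref{eq: domain alternative}) precisely for this reason. Second, your gradient bound $|\nabla g(x)|\lesssim 1+|x|^{p-1}$ presumes bounded displacement; the correct bound $|\nabla g(x)|\lesssim 1+|x|^{2p/\beta}$ still has $q$-th power integrable against the sub-Gaussian-tailed $\mu*\gamma_\sigma$, so the conclusion $g\in\dot H^{1,q}(\mu*\gamma_\sigma)$ survives, but the quantitative claim as stated is wrong.
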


We again use the functional delta method to prove this proposition, but with a slightly different setting. Set $D_{\mu}= \dot{H}^{-1,p}(\mu*\gamma_{\sigma})$ as before, and consider the function $\Psi: \Lambda_{\mu} \subset D_{\mu} \to \R$ defined by 
\[
\Psi (h) := \Wp^p(\mu*\gamma_{\sigma}+h,\nu*\gamma_{\sigma}), \quad h \in \Lambda_{\mu},
\]
where 
\begin{equation}
\Lambda_{\mu}:=D_{\mu} \cap  \big \{ h = (\rho-\mu)*\gamma_{\sigma} : \text{$\rho \in \cP$ is sub-Weibull} \big \}.
\label{eq: domain alternative}
\end{equation}
As long as $\mu$ is sub-Weibull (recall that Condition (\ref{eq:moment-condition}) requires $\mu$ to be sub-Gaussian), the set $\Lambda_{\mu}$ contains $0$. This set is also convex, and so the tangent cone $T_{\Lambda_{\mu}}(0)$ coincides with the closure in $D_{\mu}$ of $\{ h/t : h \in \Lambda_{\mu}, t > 0 \}$. The corresponding Hadamard directional derivative of $\Wp^p$ is given next.

\begin{proposition}[Hadamard directional derivative of $\Wp^p$ w.r.t. one argument]
\label{prop: Hadamard derivative alternative}
Let $1 < p < \infty$, and %
suppose that $\mu,\nu \in \cP$ are sub-Weibull. 
Let $g$ be an optimal transport potential from $\mu*\gamma_{\sigma}$ to $\nu*\gamma_{\sigma}$ for $\Wp^p$, which is uniquely determined up to additive constants (see Lemma~\ref{lem: duality}~(iii)). %
Then 
\begin{enumerate}[(i)]
    \item $g \in \dot{H}^{1,q}(\mu*\gamma_{\sigma})$, where $q$ is the conjugate index of $p$; and
    \item the map $\Psi: \Lambda_{\mu} \subset D_{\mu} \to \R, h \mapsto \Wp^p(\mu*\gamma_{\sigma}+h,\nu*\gamma_{\sigma})$, is Hadamard directionally differentiable at $h=0$  with derivative $\Psi_0'(h) = h(g)$, i.e., for any $h \in T_{\Lambda_{\mu}}(0), t_n \downarrow 0$, and $h_n \to h$ in $D_{\mu}$ such that $t_n h_n \in \Lambda_{\mu}$, we have 
\begin{equation}
\lim_{n \to \infty} \frac{\Psi(t_n h_n) - \Psi (0)}{t_n} = h(g).
\label{eq: derivative alternative}
\end{equation}
\end{enumerate}%
\end{proposition}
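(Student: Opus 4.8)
The plan is to prove parts (i) and (ii) essentially together, deriving the derivative formula directly from duality and using part (i) as a byproduct of the estimates needed to control the dual objective. First I would fix an optimal $c$-concave potential $g$ from $\mu*\gamma_\sigma$ to $\nu*\gamma_\sigma$ for $\Wp^p$, which exists and is unique up to additive constants by \cref{lem: duality}(i),(iii) (note $\mu*\gamma_\sigma \ll dx$ with density bounded below on every ball and supported on all of $\R^d$). To establish (i), I would use that $\nu*\gamma_\sigma$ and $\mu*\gamma_\sigma$ are both sub-Weibull (hence have light tails) and that $g$, being $c$-concave for $c(x,y)=|x-y|^p$, satisfies the pointwise bound $g(x) \le |x-y_0|^p - g^c(y_0)$ for any fixed $y_0$, while $g^c$ admits a symmetric bound; combining these with local Lipschitzness of $g$ on $\R^d$ (\cref{lem: duality}(ii)) and the standard fact that $\nabla g(x) = p|x-T(x)|^{p-2}(x-T(x))$ a.e.\ for the optimal map $T$, I would show $\int |\nabla g|^q \, d(\mu*\gamma_\sigma) < \infty$. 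The key quantitative input is that $|x - T(x)|$ grows at most polynomially in $|x|$ (it is controlled by $|x|$ plus the "radius" of the support of $\nu*\gamma_\sigma$ reached by the transport, which one bounds via moment estimates on $\nu*\gamma_\sigma$), so $|\nabla g(x)|^q \lesssim (1+|x|)^{q(p-1)} = (1+|x|)^p$, which is $\mu*\gamma_\sigma$-integrable since $\mu$ is sub-Gaussian. This gives $g \in \dot H^{1,q}(\mu*\gamma_\sigma)$, so $h \mapsto h(g)$ is a well-defined continuous linear functional on $D_\mu$.

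For part (ii), I would exploit the duality \eqref{eq: duality}: for any $h \in \Lambda_\mu$ write $h = (\rho - \mu)*\gamma_\sigma$ and use
\[
\Psi(h) = \sup_{f} \Big[ \int f \, d(\mu*\gamma_\sigma + h) + \int f^c \, d(\nu*\gamma_\sigma) \Big] \ge \int g \, d(\mu*\gamma_\sigma) + h(g) + \int g^c \, d(\nu*\gamma_\sigma) = \Psi(0) + h(g),
\]
where the interchange $h(g) = \int g\, d(\rho-\mu)*\gamma_\sigma = \int (g*\phi_\sigma)\, d(\rho - \mu)$ is justified by Fubini once we know $g*\phi_\sigma$ is $\rho$-integrable (again a tail estimate, using sub-Weibull $\rho$ and polynomial growth of $g$). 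This yields the lower bound $\liminf_n \big(\Psi(t_n h_n) - \Psi(0)\big)/t_n \ge h(g)$ after passing from $h_n$ to $h$ using continuity of the linear functional. For the matching upper bound I would let $g_n$ be an optimal potential from $\mu*\gamma_\sigma + t_n h_n$ to $\nu*\gamma_\sigma$, normalized so that $(\nu*\gamma_\sigma)(g_n^c) = 0$ say, and write
\[
\Psi(t_n h_n) - \Psi(0) \le \int g_n \, d(\mu*\gamma_\sigma + t_n h_n) + \int g_n^c \, d(\nu*\gamma_\sigma) - \int g_n \, d(\mu*\gamma_\sigma) - \int g_n^c\, d(\nu*\gamma_\sigma) = t_n \, h_n(g_n),
\]
so that $\big(\Psi(t_n h_n) - \Psi(0)\big)/t_n \le h_n(g_n)$. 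It then remains to show $h_n(g_n) \to h(g)$; since $\|h_n - h\|_{D_\mu}\to 0$ it suffices to bound $\|g_n\|_{\dot H^{1,q}(\mu*\gamma_\sigma)}$ uniformly in $n$ and argue $h(g_n) \to h(g)$. The former follows from the same moment/growth estimates as in part (i), now applied uniformly over the convergent sequence $\mu*\gamma_\sigma + t_n h_n \to \mu*\gamma_\sigma$ (which have uniformly light tails since $t_n \downarrow 0$); the latter from stability of optimal potentials under weak convergence of the marginals — $\mu*\gamma_\sigma + t_n h_n \stackrel{w}{\to} \mu*\gamma_\sigma$ and uniqueness up to constants (\cref{lem: duality}(iii)) force $g_n \to g$ locally uniformly (up to the chosen normalization), and uniform $\dot H^{1,q}$ bounds upgrade this to $h(g_n)\to h(g)$ via a uniform integrability argument.

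The main obstacle I anticipate is the \textbf{upper bound}, specifically establishing the uniform-in-$n$ control of $\|g_n\|_{\dot H^{1,q}(\mu*\gamma_\sigma)}$ together with the convergence $g_n \to g$ in a sense strong enough to pass $h(g_n) \to h(g)$. The delicate points are: (a) the perturbed measure $\mu*\gamma_\sigma + t_n h_n$ is only a signed measure a priori, so one must use that it equals $\rho_n * \gamma_\sigma$ for an honest sub-Weibull $\rho_n$ and track tail bounds on $\rho_n$ uniformly — this is where the structural constraint $h_n \in \Lambda_\mu$ and $t_n h_n \in \Lambda_\mu$ is essential; (b) the gradient bound $|\nabla g_n| \lesssim (1+|x|)^{p-1}$ must hold with a constant independent of $n$, which requires a uniform bound on how far the optimal transport map for the perturbed problem moves mass, obtained from uniform moment bounds on $\nu*\gamma_\sigma$ (fixed) and $\rho_n*\gamma_\sigma$; and (c) turning local-uniform convergence $g_n \to g$ into $h(g_n) \to h(g)$ needs a uniform integrability / dominated convergence step, using the uniform polynomial growth bound as the dominating envelope against the sub-Weibull measure associated with $h$. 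Once these uniform estimates are in place, combining the two one-sided bounds gives \eqref{eq: derivative alternative}, and Hadamard directional (as opposed to merely Gâteaux) differentiability follows automatically because the bounds are phrased in terms of the convergent sequence $h_n \to h$ rather than a fixed direction.
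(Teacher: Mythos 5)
Your overall strategy (duality for the lower bound, optimal potentials of the perturbed problem plus stability for the upper bound) matches the paper's, but there are two concrete gaps.

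First, in Part (i) you conclude $g \in \dot{H}^{1,q}(\mu*\gamma_{\sigma})$ from the bound $\int |\nabla g|^q \, d(\mu*\gamma_{\sigma}) < \infty$. This does not suffice: $\dot{H}^{1,q}(\mu*\gamma_{\sigma})$ is \emph{defined} as the completion of $\dot{C}_0^\infty$ under the Sobolev seminorm, so membership requires exhibiting smooth compactly supported functions whose gradients approximate $\nabla g$ in $L^q(\mu*\gamma_{\sigma};\R^d)$ — the paper explicitly notes that finiteness of $\|g\|_{L^q} \vee \|\nabla g\|_{L^q}$ "alone does not automatically guarantee $g \in \dot{H}^{1,q}(\mu*\gamma_{\sigma})$", and supplies a cutoff-and-mollification argument ($\varphi_j = \zeta_j g$, then density of $C_0^\infty$ gradients in the classical Sobolev space, using that $\mu*\gamma_{\sigma}$ has a bounded density). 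Without this step the dual pairing $h(g)$ is not even defined for abstract $h \in D_\mu$, so the gap propagates into Part (ii). (A smaller imprecision: the gradient growth is not $(1+|x|)^{p-1}$ in general but $1+|x|^{2p/\beta}$ for $\beta$-sub-Weibull $\nu$, and $\mu$ is only assumed sub-Weibull here, not sub-Gaussian; polynomial integrability still holds, so this is cosmetic.)

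Second, in the upper bound you need $h(g_n) \to h(g)$ for an \emph{abstract} tangent-cone element $h \in T_{\Lambda_\mu}(0)$, which is generally not a signed measure but only a limit in $\dot{H}^{-1,p}(\mu*\gamma_{\sigma})$ of scaled differences of measures. Local-uniform convergence $g_n \to g$ plus a polynomial envelope gives $\int g_n \, d\tilde{h} \to \int g \, d\tilde{h}$ only when $\tilde{h}$ is a measure (dominated convergence); for abstract $h$ you would need weak or norm convergence $g_n \to g$ in $\dot{H}^{1,q}(\mu*\gamma_{\sigma})$, and your "uniform integrability argument" does not supply the identification of the weak limit. The paper circumvents this entirely: it proves the G\^{a}teaux derivative only along directions $\tilde{h} = c(\rho-\mu)*\gamma_{\sigma}$ that are honest measures (Lemma~\ref{lem: Hadamard derivative alternative}), and then upgrades to Hadamard differentiability by approximating $h$ within $\epsilon$ by such a $\tilde{h}$ and using the \emph{Lipschitz continuity of $\Psi$ in the dual Sobolev norm}, which follows from the comparison inequality of Proposition~\ref{prop:duality-in-Wp} applied to the perturbed measures (whose relative density w.r.t.\ $\mu*\gamma_{\sigma}$ is bounded below by $1/2$ for small $t_n$). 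This Lipschitz estimate is the ingredient your proposal never invokes, and it is what makes the density reduction — and hence the passage from G\^{a}teaux to Hadamard — work without any weak-compactness argument on the potentials.
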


As in the null case, Part (ii) of Proposition \ref{prop: Hadamard derivative alternative} follows from the following G\^{a}teaux differentiability result for $\Wp^p$, %
combined with local Lipschitz continuity of $\Psi$ w.r.t. $\| \cdot \|_{\dot{H}^{-1,p}(\mu*\gamma_{\sigma})}$. 

\begin{lemma}[G\^{a}teaux directional derivative of $\Wp^p$ w.r.t. one argument]
\label{lem: Hadamard derivative alternative}
Let $1 < p < \infty$ and $\mu,\nu, \rho \in \cP$ be sub-Weibull. Let $g$ be an optimal transport potential from $\mu*\gamma_{\sigma}$ to $\nu$. Then
\[
\frac{d}{dt^+} \Wp^p\big((\mu+t(\rho-\mu))*\gamma_{\sigma},\nu\big)\Big|_{t=0} = \int_{\R^d} g d\big ((\rho-\mu)*\gamma_{\sigma} \big),
\]
where the integral on the right-hand side is well-defined and finite. 
\end{lemma}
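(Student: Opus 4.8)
The plan is to run a Danskin/envelope-type argument. Write $\mu_t := (1-t)\mu + t\rho$ and $W_t := \Wp^p(\mu_t*\gamma_\sigma,\nu)$ for $t \in [0,1]$, so that $\mu_t*\gamma_\sigma = \mu*\gamma_\sigma + t\,h$ with $h := (\rho-\mu)*\gamma_\sigma$. First I would record some preliminaries: since $\mu,\rho$ are sub-Weibull, $\mu*\gamma_\sigma,\rho*\gamma_\sigma\in\cP_p$, so $W_t<\infty$ for all $t\in[0,1]$ and the $p$th moments of $\mu_t*\gamma_\sigma$ are bounded uniformly in $t$; mixing the diagonal coupling with an arbitrary coupling of $\mu,\rho$ gives $\Wp^p(\mu_t,\mu)\le Ct$, and since convolution is a $\Wp$-contraction, $\Wp(\mu_t*\gamma_\sigma,\mu*\gamma_\sigma)\le\Wp(\mu_t,\mu)\to 0$; hence $W_t\to W_0$ as $t\downarrow 0$ and $W_t$ is bounded on $[0,\delta]$. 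The goal is then to sandwich the difference quotient $(W_t-W_0)/t$ between two expressions that both converge to $h(g)=\int_{\R^d} g\,dh$.

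For the lower bound I would use that, by duality (\cref{lem: duality}(i)), the fixed potential $g$ is dual-feasible against every target, so $W_t\ge \int g\,d(\mu_t*\gamma_\sigma)+\int g^c\,d\nu = W_0 + t\,h(g)$; dividing by $t>0$ and letting $t\downarrow 0$ yields $\liminf_{t\downarrow 0}(W_t-W_0)/t\ge h(g)$. For the upper bound I would, for each $t\in(0,\delta]$, pick an optimal potential $g_t$ from $\mu_t*\gamma_\sigma$ to $\nu$; dual feasibility of $g_t$ against $\mu*\gamma_\sigma$ gives $W_0\ge\int g_t\,d(\mu*\gamma_\sigma)+\int g_t^c\,d\nu$, while optimality gives $W_t=\int g_t\,d(\mu_t*\gamma_\sigma)+\int g_t^c\,d\nu$, and subtracting yields $(W_t-W_0)/t\le\int g_t\,dh$. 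Thus the whole problem reduces to showing $\limsup_{t\downarrow 0}\int g_t\,dh\le h(g)$; combined with the lower bound this gives existence of the right derivative and identifies its value as $h(g)$.

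The technical heart will be a stability statement for the potentials $g_t$ as $t\downarrow 0$. Using the regularity theory for $c$-concave Kantorovich potentials for the cost $|x-y|^p$, $p>1$ (\cref{lem: duality}(ii) for local Lipschitzness, together with the standard two-sided polynomial growth and modulus-of-continuity estimates, whose constants depend only on the $p$th moments of the marginals and on $W_t$, all uniformly bounded for $t\in[0,\delta]$), I would normalize each $g_t$ by an additive constant so that the family $\{g_t\}_{t\in(0,\delta]}$ is uniformly bounded and equicontinuous on every ball and satisfies $|g_t(x)|\le C(1+|x|^p)$ uniformly in $t,x$. Given any sequence $t_n\downarrow 0$, Arzel\`a--Ascoli and a diagonal argument then extract a subsequence with $g_{t_n}\to\tilde g$ locally uniformly, $\tilde g$ again $c$-concave with the same polynomial bound and the corresponding $c$-transforms $g_{t_n}^c\to\tilde g^c$ (with matching uniform bounds). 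Passing to the limit in the duality identity $W_{t_n}=\int g_{t_n}\,d(\mu_{t_n}*\gamma_\sigma)+\int g_{t_n}^c\,d\nu$ — legitimate by uniform integrability of $1+|x|^p$ along the $\Wp$-convergent measures $\mu_{t_n}*\gamma_\sigma$ and against the fixed $\nu$, plus a reverse-Fatou step for the $g_{t_n}^c$ term — and using $W_{t_n}\to W_0$, I would conclude $\int\tilde g\,d(\mu*\gamma_\sigma)+\int\tilde g^c\,d\nu=W_0$, i.e., $\tilde g$ is an optimal potential from $\mu*\gamma_\sigma$ to $\nu$. Since $\mu*\gamma_\sigma\ll dx$ with support all of $\R^d$ (open and connected), \cref{lem: duality}(iii) forces $\tilde g=g+c_0$ for a constant $c_0$; because $h$ has total mass zero, the constant drops out, and by the uniform polynomial bound and dominated convergence $\int g_{t_n}\,dh\to\int\tilde g\,dh=h(g)$. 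Arbitrariness of $t_n\downarrow 0$ finishes the upper bound, and the polynomial bound on $g$ against $\int(1+|x|^p)\,d(\rho*\gamma_\sigma)+\int(1+|x|^p)\,d(\mu*\gamma_\sigma)<\infty$ shows $h(g)$ is well-defined and finite.

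I expect the stability step to be the main obstacle — specifically, making the equicontinuity and $p$th-order growth bounds for the $g_t$ \emph{uniform} in $t$, and then correctly identifying the subsequential limit $\tilde g$ with $g$ (including the passage to the limit of the $c$-transforms inside the duality identity). The remaining ingredients — dual feasibility, $\Wp$-continuity along the path $t\mapsto\mu_t*\gamma_\sigma$, the mass-zero cancellation of additive constants, and dominated convergence — are routine.
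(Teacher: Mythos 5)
Your proposal follows the same overall architecture as the paper's proof: the identical duality sandwich (lower bound from feasibility of the fixed potential $g$ against $\mu_t*\gamma_\sigma$, upper bound from optimality of $g_t$ plus feasibility against $\mu*\gamma_\sigma$), reducing everything to showing $\int g_{t_n}\,dh \to \int g\,dh$ along $t_n\downarrow 0$. Where you differ is in how the stability of the potentials is obtained. The paper normalizes $g_t(0)=0$, cites Theorem 3.4 of \cite{delbarrio2021} to get pointwise convergence $g_{t_n}\to g$ (the normalization killing the additive constants), and then applies dominated convergence using the uniform polynomial growth bound of \cref{lem: potential}. You instead propose to re-derive the stability via Arzel\`a--Ascoli compactness, passage to the limit in the duality identity (including the $c$-transforms), and the uniqueness of potentials from \cref{lem: duality}(iii). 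That route is workable and essentially reproves the cited stability theorem; the cost is that you must carry the $c$-transforms and a reverse-Fatou step through the limit, which the citation lets the paper avoid.

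One substantive caveat: your assertion that the uniform equicontinuity and growth bounds for $g_t$ are ``standard'' estimates whose ``constants depend only on the $p$th moments of the marginals and on $W_t$'', with growth of order $|x|^p$, is not correct in this unbounded-support setting. Bounding $|\nabla g_t(x)|$ requires bounding how far the contact points $y\in\partial^c g_t(x)$ can lie from $x$, and for non-compactly supported marginals this needs the sub-Weibull tail of $\nu$ together with a Gaussian-type lower bound on the density of $\mu_t*\gamma_\sigma$ (uniform in $t$), exactly as in \cref{lem: potential} following \cite{manole2021}; the resulting growth is of order $|x|^{2p/\beta+1}$ rather than $|x|^p$, and the constants depend on $\||Y|\|_{\psi_\beta}$ and on a lower bound for $\int\phi_\sigma\,d\mu_t$, not merely on $p$th moments. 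This does not break your argument, since all polynomial moments of $h=(\rho-\mu)*\gamma_\sigma$ are finite under the sub-Weibull hypotheses, but the bounds are the genuinely nontrivial input (this is where Gaussian smoothing is essential), and the polynomial degree and constant dependence must be tracked as in \cref{lem: potential}. You correctly flag this step as the main obstacle, which is the right instinct; the rest of your plan matches the paper's proof.
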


\begin{remark}[Comparisons with Theorem 8.4.7 in \cite{ambrosio2005} and Theorem 5.24 in \cite{santambrogio2015}]
Theorem 8.4.7 in \cite{ambrosio2005} derives the following differentiabiliy result for $\Wp^p$. 
Let $\mu_t: I \to (\cP_p,\Wp)$ be an absolutely continuous curve for some open interval $I$, and let $v_t$ be an ``optimal'' velocity field satisfying the continuity equation for $\mu_t$ (see Theorem 8.4.7 in \cite{ambrosio2005} for the precise meaning). Then, for any $\nu \in \cP_p$, we have that 
\begin{equation}
\frac{d}{dt} \Wp^p(\mu_t,\nu) = \int_{\R^d \times \R^d} p |x-y|^{p-2} \langle x-y,v_t(x) \rangle d\pi_t(x,y)
\label{eq: generic differentiability}
\end{equation}
for almost every (a.e.) $t \in I$, where $\pi_t \in \Pi (\mu_t,\nu)$ is an optimal coupling for $\Wp(\mu_t,\nu)$. See also Theorem 5.24 in \cite{santambrogio2015}. Since %
\eqref{eq: generic differentiability} only holds for a.e. $t \in I$, while we need the (right) differentiability at a specific point, the result of \cite[Theorem 8.4.7]{ambrosio2005} (or  \cite[Theorem 5.24]{santambrogio2015}) does not directly apply to our problem. %
We overcome this difficulty by establishing regularity of optimal transport potentials (see \cref{lem: potential} ahead), for which Gaussian smoothing plays an essential role. 
\end{remark}

We are now ready to prove \cref{thm: limit distribution alternative} and obtain Part (iii) of Theorem \ref{thm: main theorem} combined with the delta method for the map $s \mapsto s^{1/p}$.

\begin{proof}[Proof of Proposition \ref{thm: limit distribution alternative}]
By \cref{thm:limit-distribution}, $T_n := (\hat{\mu}_n-\mu)*\gamma_{\sigma} \in \Lambda_{\mu}$ and $\sqrt{n}T_n \stackrel{d}{\to} G_{\mu}$ in~$D_{\mu}$. Also $G_{\mu} \in T_{\Lambda_{\mu}}(0)$ with probability one by the portmanteau theorem.  Applying the functional delta method (Lemma \ref{lem: functional delta method}) and Proposition \ref{prop: Hadamard derivative alternative}, we have
\[
\sqrt{n}\big(\SWp(\hat{\mu}_n,\nu) - \SWp(\mu,\nu)\big) = \sqrt{n}\big (\Psi (T_n)-\Psi(0)\big) \stackrel{d}{\to} 
G_{\mu}(g) \sim N\big (0,\Var_{\mu}(g*\phi_{\sigma}) \big),
\]
as desired. 
\end{proof}

\subsubsection{Two-sample case}
Finally, we consider the two-sample case and prove the following, from which Part (iv) of Theorem \ref{thm: main theorem} follows. 

\begin{proposition}
\label{thm: limit distribution alternative two sample}
Let $1 < p < \infty$. 
Suppose that $\mu,\nu \in \cP$ satisfy Condition (\ref{eq:moment-condition}) and $\nu \ne \mu$. Let $g$ be an optimal transport potential from $\mu*\gamma_{\sigma}$ to $\nu*\gamma_{\sigma}$ for $\Wp^p$. Then, we have 
    \[
\sqrt{n}\big(\SWp(\hat{\mu}_n,\hat{\nu}_n) - \SWp(\mu,\nu)\big) \stackrel{d}{\to} N\big( 0, \Var_{\mu}(g*\phi_{\sigma})+\Var_{\nu}(g^c*\phi_{\sigma})\big).
    \]
\end{proposition}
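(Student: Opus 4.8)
The plan is to mirror the one-sample argument of \cref{thm: limit distribution alternative} but now track the joint effect of perturbing both marginals. First I would set up the functional delta method over the product space. Take $D = \dot H^{-1,p}(\mu*\gamma_\sigma) \times \dot H^{-1,p}(\nu*\gamma_\sigma)$ and consider
\[
\Theta(h_1,h_2) := \Wp^p\big(\mu*\gamma_\sigma + h_1, \nu*\gamma_\sigma + h_2\big)
\]
on the convex domain $\Lambda_\mu \times \Lambda_\nu$ (the natural two-sided analogue of \eqref{eq: domain alternative}), equipped with the sum product norm. By \cref{thm:limit-distribution} applied to each marginal, $T_{n,1} := (\hat\mu_n - \mu)*\gamma_\sigma$ and $T_{n,2} := (\hat\nu_n - \nu)*\gamma_\sigma$ are independent, Borel measurable, and satisfy $\sqrt n(T_{n,1},T_{n,2}) \stackrel{d}{\to} (G_\mu, G_\nu)$ with $G_\mu, G_\nu$ independent centered Gaussians (using Example 1.4.6 in \cite{vandervaart1996book} as in the null-case proof), and the limit lies in the tangent cone by the portmanteau theorem.

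The heart of the matter is the Hadamard directional derivative of $\Theta$ at $(0,0)$, which I expect to be the linear map $\Theta'_{(0,0)}(h_1,h_2) = h_1(g) + h_2(g^c)$. To get this I would first record a G\^ateaux derivative: by symmetry of the cost, \cref{lem: Hadamard derivative alternative} applied to the second argument (with roles of $\mu,\nu$ swapped and using that $g^c$ is an optimal potential from $\nu*\gamma_\sigma$ to $\mu*\gamma_\sigma$, again unique up to constants via \cref{lem: duality}(iii) since $\nu*\gamma_\sigma \ll dx$) gives
\[
\frac{d}{dt^+}\Wp^p\big(\mu*\gamma_\sigma,\, (\nu + t(\tau-\nu))*\gamma_\sigma\big)\Big|_{t=0} = \int g^c\, d\big((\tau-\nu)*\gamma_\sigma\big).
\]
Combined with the analogous statement in the first argument, and using joint convexity of $(h_1,h_2)\mapsto\Theta(h_1,h_2)$ (inherited from convexity of $\Wp^p$ in the pair of measures, or more carefully from the dual formula \eqref{eq: duality} which exhibits $\Wp^p$ as a supremum of affine functions of $(\mu,\nu)$), one obtains that the directional derivative along any $(h_1,h_2)$ is at most $h_1(g)+h_2(g^c)$; the matching lower bound comes from plugging the fixed pair $(g,g^c)$ into \eqref{eq: duality}, exactly as in the proof of \cref{lem: Hadamard derivative alternative}. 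Then I would upgrade G\^ateaux to Hadamard directional differentiability using local Lipschitz continuity of $\Theta$ in $\|\cdot\|_{\dot H^{-1,p}(\mu*\gamma_\sigma)} + \|\cdot\|_{\dot H^{-1,p}(\nu*\gamma_\sigma)}$ — this follows from \cref{prop:duality-in-Wp} applied to each argument and the triangle inequality for $\Wp$, together with \cref{prop: Hadamard derivative alternative}(i) (and its $\nu$-analogue) ensuring $g \in \dot H^{1,q}(\mu*\gamma_\sigma)$, $g^c \in \dot H^{1,q}(\nu*\gamma_\sigma)$ so the limiting functional is continuous on $D$. The standard argument (cf. the derivation of \cref{prop: Haradard derivative null} from \cref{lem: Haradard derivative null}) that a locally Lipschitz, G\^ateaux directionally differentiable map with continuous derivative is Hadamard directionally differentiable then applies verbatim.

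With these two ingredients, \cref{lem: functional delta method} yields
\[
\sqrt n\big(\SWp(\hat\mu_n,\hat\nu_n) - \SWp(\mu,\nu)\big) \stackrel{d}{\to} G_\mu(g) + G_\nu(g^c),
\]
and by independence of $G_\mu$ and $G_\nu$ and the covariance formula in \cref{thm:limit-distribution}, the right-hand side is $N\big(0, \Var_\mu(g*\phi_\sigma) + \Var_\nu(g^c*\phi_\sigma)\big)$, which is the claim. The main obstacle is the derivative computation: verifying that $g^c$ is genuinely an optimal potential from $\nu*\gamma_\sigma$ to $\mu*\gamma_\sigma$ in the sense needed (not merely $c$-concave), that the uniqueness-up-to-constants of \cref{lem: duality}(iii) lets us pick a single representative $g$ with $g^c$ simultaneously regular, and that the G\^ateaux derivatives in the two arguments combine into a \emph{joint} Hadamard directional derivative rather than only separate partial ones — this is where joint convexity of $\Theta$ is essential, since it forces the directional derivative to be additive across the two coordinates. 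A secondary technical point is checking that Condition \eqref{eq:moment-condition} on both $\mu$ and $\nu$ (rather than the weaker sub-Weibull hypothesis of \cref{thm: limit distribution alternative}) is what is needed to invoke \cref{thm:limit-distribution} for \emph{both} empirical processes simultaneously, and that $g \in \dot H^{1,q}(\mu*\gamma_\sigma)$ together with $g^c \in \dot H^{1,q}(\nu*\gamma_\sigma)$ both hold under these hypotheses.
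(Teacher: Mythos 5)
Your proposal is correct and follows the paper's architecture: product space $D_\mu\times D_\nu$, joint Hadamard directional derivative $(h_1,h_2)\mapsto h_1(g)+h_2(g^c)$ (the paper's \cref{prop: Hadamard derivative alternative two sample}), the functional delta method, and independence of $G_\mu,G_\nu$ to sum the variances. The one place you genuinely diverge is in how the \emph{joint} derivative is extracted from the one-argument results. The paper simply asserts that \cref{prop: Hadamard derivative alternative} "immediately yields" the two-argument version via $g^{cc}=g$, which implicitly asks the reader to rerun the proof of \cref{lem: Hadamard derivative alternative} for the simultaneous perturbation (lower bound from duality with the fixed pair $(g,g^c)$; upper bound from the time-$t$ optimal pair $(g_t,g_t^c)$, stability of potentials, and a symmetric version of \cref{lem: potential} to control $g_t^c$). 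Your route — joint convexity of $\Wp^p$ in $(\mu,\nu)$ from the dual representation, hence sublinearity of the directional derivative in the direction, giving the upper bound $\le h_1(g)+h_2(g^c)$, matched by the duality lower bound — avoids redoing the stability argument for the joint perturbation and is arguably cleaner. One caveat on your closing gloss: joint convexity does \emph{not} by itself "force the directional derivative to be additive across the two coordinates" (e.g.\ $(s,t)\mapsto|s-t|$ is jointly convex with non-additive directional derivative at the origin); what convexity gives is only the sublinear upper bound, and additivity follows only once it is combined with the duality lower bound. Since your argument explicitly supplies both halves, this is a matter of phrasing rather than a gap. The remaining technical points you flag (that $g^c$ is optimal in the reverse direction because $g^{cc}=g$, the regularity $g\in\dot H^{1,q}(\mu*\gamma_\sigma)$ and $g^c\in\dot H^{1,q}(\nu*\gamma_\sigma)$, and Condition \eqref{eq:moment-condition} on both marginals to run \cref{thm:limit-distribution} twice) are exactly the ones the paper handles.
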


Set $D_{\mu} = \dot{H}^{-1,p}(\mu*\gamma_{\sigma})$ and $D_{\nu} = \dot{H}^{-1,p}(\nu*\gamma_{\sigma})$.  Consider the function $\Upsilon: \Lambda_{\mu} \times \Lambda_{\nu} \subset D_{\mu} \times D_{\nu} \to \R$ defined by   
\[
\Upsilon (h_1,h_2):= \Wp^p(\mu*\gamma_{\sigma}+h_1,\nu*\gamma_{\sigma}+h_2), \quad (h_1,h_2) \in \Lambda_{\mu} \times \Lambda_{\nu},
\]
where $\Lambda_{\mu}$ is given in (\ref{eq: domain alternative}) and $\Lambda_{\nu}$ is defined analogously. Here  we endow $D_{\mu} \times D_{\nu}$ with a product norm (e.g. $\| h_1 \|_{\dot{H}^{-1,p}(\mu*\gamma_\sigma)} + \| h_2 \|_{\dot{H}^{-1,p}(\nu*\gamma_\sigma)}$).

We note that if $g$ is an optimal transport potential from $\mu*\gamma_{\sigma}$ to $\nu*\gamma_{\sigma}$, then $g^c$ is an optimal transport potential from $\nu*\gamma_{\sigma}$ to $\mu*\gamma_{\sigma}$, as $g^{cc} = g$. With this in mind, Proposition \ref{prop: Hadamard derivative alternative} immediately yields the following proposition. %

\begin{proposition}[Hadamard directional derivative of $\Wp^p$ w.r.t. two arguments]
\label{prop: Hadamard derivative alternative two sample}
Let $1 < p < \infty$, and %
suppose that $\mu,\nu \in \cP$ are sub-Weibull.
Let $g$ be an optimal transport potential from $\mu*\gamma_{\sigma}$ to $\nu*\gamma_{\sigma}$ for $\Wp^p$. Then, $(g,g^c) \in \dot{H}^{1,q}(\mu*\gamma_\sigma) \times \dot{H}^{1,q}(\nu*\gamma_\sigma)$, and the map $\Upsilon: \Lambda_\mu \times \Lambda_\nu \subset D_\mu \times D_\nu \to \R, (h_1,h_2) \mapsto \Wp^p(\mu*\gamma_{\sigma}+h_1,\nu*\gamma_{\sigma}+h_2)$, is Hadamard directionally differentiable at $(h_1,h_2)=(0,0)$  with derivative $\Upsilon_{(0,0)}'(h_1,h_2) = h_1(g) + h_2(g^c)$ for $(h_1,h_2) \in T_{\Lambda_\mu \times \Lambda_\nu}(0,0)$. 
\end{proposition}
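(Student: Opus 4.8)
The plan is to prove \cref{prop: Hadamard derivative alternative two sample} by reducing it, via the Kantorovich duality \eqref{eq: duality}, to the one-argument analysis of \cref{prop: Hadamard derivative alternative} and \cref{lem: Hadamard derivative alternative}, now carried out with \emph{both} marginals perturbed simultaneously. The membership $(g,g^c)\in\dot{H}^{1,q}(\mu*\gamma_\sigma)\times\dot{H}^{1,q}(\nu*\gamma_\sigma)$ is immediate: part (i) of \cref{prop: Hadamard derivative alternative} applied to the ordered pair $(\mu,\nu)$ gives $g\in\dot{H}^{1,q}(\mu*\gamma_\sigma)$; since $g$ is $c$-concave, $g^{cc}=g$, so (using that $c(x,y)=|x-y|^p$ is symmetric) $g^c$ is an optimal transport potential from $\nu*\gamma_\sigma$ to $\mu*\gamma_\sigma$, and applying part (i) of \cref{prop: Hadamard derivative alternative} to $(\nu,\mu)$ yields $g^c\in\dot{H}^{1,q}(\nu*\gamma_\sigma)$. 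The additive-constant ambiguity in $g$ (hence $g^c$) is irrelevant because $h_1,h_2$ have total mass zero, so $h_1(g)+h_2(g^c)$ is well defined.

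For the directional derivative, mirroring the passage from \cref{lem: Hadamard derivative alternative} to \cref{prop: Hadamard derivative alternative}, it suffices to (a) compute the one-sided Gâteaux derivative of $\Upsilon$ along every ray $t\mapsto(th_1,th_2)$ with $h_1=(\rho_1-\mu)*\gamma_\sigma$, $h_2=(\rho_2-\nu)*\gamma_\sigma$ for sub-Weibull $\rho_1,\rho_2$, and (b) invoke local Lipschitz continuity of $\Upsilon$ near $(0,0)$ w.r.t. the product dual Sobolev norm (established exactly as in the one-argument case, using \cref{prop:duality-in-Wp}, the triangle inequality for $\Wp$, and local Lipschitzness of $s\mapsto s^p$ on bounded intervals), which lets an arbitrary admissible approximating sequence $(h_{n,1},h_{n,2})\to(h_1,h_2)$ be replaced by the fixed direction $(h_1,h_2)$. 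For (a), the lower bound $\liminf_{t\downarrow0}t^{-1}\big(\Upsilon(th_1,th_2)-\Upsilon(0,0)\big)\ge h_1(g)+h_2(g^c)$ follows by feeding the fixed optimal pair $(g,g^c)$ into \eqref{eq: duality} for $\Wp^p\big((\mu+t(\rho_1-\mu))*\gamma_\sigma,\,(\nu+t(\rho_2-\nu))*\gamma_\sigma\big)$, using that $g$ — being $c$-concave, hence of at most polynomial growth by \cref{lem: duality}(ii) — is integrable against the sub-Weibull measures $\rho_i*\gamma_\sigma$. For the matching upper bound, one feeds $g_t$, an optimal potential between the two perturbed measures, into \eqref{eq: duality} for the base problem $\Wp^p(\mu*\gamma_\sigma,\nu*\gamma_\sigma)$, obtaining $\Upsilon(th_1,th_2)-\Upsilon(0,0)\le t\big(\int g_t\,dh_1+\int g_t^c\,dh_2\big)$.

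Thus the crux — and the main obstacle — is to show that $\int g_t\,dh_1\to\int g\,dh_1$ and $\int g_t^c\,dh_2\to\int g^c\,dh_2$ as $t\downarrow0$; this is a stability statement for optimal transport potentials under simultaneous perturbation of \emph{both} marginals, strengthening the single-marginal stability underpinning \cref{lem: Hadamard derivative alternative}. I would establish it via the regularity of Gaussian-smoothed potentials (\cref{lem: potential}): since $(\mu+t(\rho_1-\mu))*\gamma_\sigma\to\mu*\gamma_\sigma$ and $(\nu+t(\rho_2-\nu))*\gamma_\sigma\to\nu*\gamma_\sigma$ weakly with uniformly controlled moments, the potentials $g_t$, suitably normalized, obey uniform-on-compacta Lipschitz and pointwise bounds, hence are precompact in the topology of local uniform convergence; any subsequential limit is an optimal potential from $\mu*\gamma_\sigma$ to $\nu*\gamma_\sigma$, so by essential uniqueness (\cref{lem: duality}(iii), applicable since $\mu*\gamma_\sigma\ll dx$ has full, connected support) it coincides with $g$ up to an additive constant, forcing $g_t\to g$ locally uniformly (up to constants, which are immaterial here). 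The uniform polynomial growth of $g_t$ together with the exponential-type tails of $\rho_i*\gamma_\sigma$ then permits passing to the limit by dominated convergence. Combining (a) and (b) yields $\Upsilon'_{(0,0)}(h_1,h_2)=h_1(g)+h_2(g^c)$; feeding this into the functional delta method (\cref{lem: functional delta method}), together with the joint weak limit of $\sqrt{n}\big((\hat\mu_n-\mu)*\gamma_\sigma,(\hat\nu_n-\nu)*\gamma_\sigma\big)$ from \cref{thm:limit-distribution}, gives \cref{thm: limit distribution alternative two sample} and, via the delta method for $s\mapsto s^{1/p}$, part (iv) of \cref{thm: main theorem}.
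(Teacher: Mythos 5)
Your proposal is correct and follows essentially the paper's route: the paper dispatches this proposition as an immediate consequence of \cref{prop: Hadamard derivative alternative} (after noting that $g^c$ is the optimal potential in the reverse direction since $g^{cc}=g$), and your argument is precisely the two-marginal version of the paper's proofs of \cref{lem: Hadamard derivative alternative} and \cref{prop: Hadamard derivative alternative}(ii) --- the same duality sandwich (fixed $(g,g^c)$ for the lower bound, perturbed $(g_t,g_t^c)$ for the upper bound), the same growth/dominated-convergence step via \cref{lem: potential}, and the same Lipschitz-approximation reduction from Hadamard to G\^{a}teaux via \cref{prop:duality-in-Wp}. The only substantive deviation is the potential-stability step, where the paper cites Theorem 3.4 of \cite{delbarrio2021} to get $g_{t_n}\to g$ pointwise while you propose a self-contained compactness (uniform local Lipschitz bounds from \cref{lem: potential} plus the normalization $g_t(0)=0$) and essential-uniqueness (\cref{lem: duality}(iii)) argument; both are valid, and yours sidesteps having to check that the cited stability theorem accommodates simultaneous perturbation of both marginals.
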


Given \cref{prop: Hadamard derivative alternative two sample}, the proof of \cref{thm: limit distribution alternative two sample} is analogous to that of \cref{thm: limit distribution alternative}, and is thus omitted for brevity. As before, Part (iv) of Theorem \ref{thm: main theorem} follows via the delta method for $s \mapsto s^{1/p}$.

\subsection{Bootstrap}
\label{sec: bootstrap}
The limit distributions in Theorem \ref{thm: main theorem} are non-pivotal, as they depend on the population distributions $\mu$ and/or $\nu$, which are unknown in practice. To overcome this and facilitate statistical inference using $\GWp$, we apply the bootstrap to estimate the limit distributions of empirical $\GWp$. 

We start from the one-sample case. Given the data $X_1,\dots,X_n$, let $X_1^B,\dots,X_n^B$ be an independent sample from $\hat{\mu}_n$, and set $\hat{\mu}_n^B:= n^{-1}\sum_{i=1}^n \delta_{X_i^B}$ as the bootstrap empirical distribution. Let $\Prob^B$ denote the conditional probability given $X_1,X_2,\dots$. The next proposition shows that the bootstrap consistently estimates the limit distribution of empirical $\GWp$ under both the null and the alternative.

\begin{proposition}[Bootstrap consistency: one-sample case]
\label{prop: bootstrap}
Suppose that $\mu$ satisfies Condition (\ref{eq:moment-condition}). 
\begin{enumerate}
    \item[(i)] (Null case) We have 
\begin{equation}
\sup_{t \ge 0} \left | \Prob^B \left ( \sqrt{n}\GWp(\hat{\mu}^B_n,\hat{\mu}_n) \le t \right ) - \Prob \left ( \| G_\mu \|_{\dot{H}^{-1,p}(\mu*\gamma_{\sigma})}\le t \right ) \right | \stackrel{\Prob}{\to} 0.
\label{eq: bootstrap consistency null}
\end{equation}
    \item[(ii)] (Alternative case) Assume in addition that $\nu$ is sub-Weibull with $\nu \ne \mu$. Let $\mathfrak{v}_1^2$ denote the asymptotic variance of $\sqrt{n}\big(\GWp(\hat{\mu}_n,\nu) - \GWp(\mu,\nu)\big)$ given in Part (iii) of Theorem~\ref{thm: main theorem}.  Then, we have
\[
\sup_{t \in \R} \left | \Prob^B \left ( \sqrt{n}\big(\GWp(\hat{\mu}^B_n,\nu) - \GWp(\hat{\mu}_n,\nu)\big) \le t \right ) - \Prob\big( N ( 0, \mathfrak{v}_1^2) \le t \big) \right | \stackrel{\Prob}{\to} 0.
\]
\end{enumerate}
\end{proposition}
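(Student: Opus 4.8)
My plan rests on a conditional counterpart of \cref{thm:limit-distribution}: \emph{conditionally on $X_1,X_2,\dots$ and in outer probability, the bootstrapped smooth empirical process $\mathbb{G}_n^B:=\sqrt{n}(\hat{\mu}_n^B-\hat{\mu}_n)*\gamma_{\sigma}$ converges in distribution to $G_{\mu}$ in $\dot{H}^{-1,p}(\mu*\gamma_{\sigma})$} (and in $\dot{H}^{-1,p}(\gamma_{\sigma})$). This is proven along the same lines as \cref{thm:limit-distribution}: the class $\cF*\phi_{\sigma}=\{f*\phi_{\sigma}:f\in\dot{C}_0^{\infty},\ \|f\|_{\dot{H}^{1,q}(\gamma_{\sigma})}\le 1\}$ is $\mu$-Donsker (from \cite{nietert21}), so the bootstrap empirical central limit theorem for Donsker classes (\cite[Section~3.6]{vandervaart1996book}) yields conditional weak convergence of $\sqrt{n}(\hat{\mu}_n^B-\hat{\mu}_n)$ in $\ell^{\infty}(\cF*\phi_{\sigma})$; one then pushes this through the embedding into $\dot{H}^{-1,p}(\gamma_{\sigma})$ as in \cite{nickl2009}, and into $\dot{H}^{-1,p}(\mu*\gamma_{\sigma})$ via the same translation reduction used for \cref{thm:limit-distribution}. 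Separability of $\dot{H}^{-1,p}(\mu*\gamma_{\sigma})$ (\cref{lem: dual space}) makes all the relevant maps Borel and lets the conditional convergence be phrased through the bounded-Lipschitz metric in the standard way.

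For Part~(ii) the relevant derivative is linear, so the argument is routine. The map $h\mapsto\big(\Wp^p(\mu*\gamma_{\sigma}+h,\nu*\gamma_{\sigma})\big)^{1/p}$ on $\Lambda_{\mu}$ --- which equals $\GWp(\rho,\nu)$ when $h=(\rho-\mu)*\gamma_{\sigma}$ --- is, by \cref{prop: Hadamard derivative alternative} and the chain rule (valid since $\GWp(\mu,\nu)>0$), Hadamard differentiable at $0$ tangentially to $T_{\Lambda_{\mu}}(0)$ with the \emph{linear} derivative $h\mapsto\tfrac{1}{p}[\GWp(\mu,\nu)]^{1-p}h(g)$. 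Combining $\sqrt{n}(\hat{\mu}_n-\mu)*\gamma_{\sigma}\stackrel{d}{\to}G_{\mu}$, the conditional convergence $\mathbb{G}_n^B\stackrel{d}{\to}G_{\mu}$, and the functional delta method for the bootstrap (\cite[Theorem~3.9.11]{vandervaart1996book}; linearity of the derivative is exactly what licenses the conditional ``$\stackrel{\Prob}{\to}$''-type conclusion, cf.\ \cite{fang2019}) gives $\sqrt{n}\big(\GWp(\hat{\mu}_n^B,\nu)-\GWp(\hat{\mu}_n,\nu)\big)\stackrel{d}{\to}N(0,\mathfrak{v}_1^2)$ conditionally, in probability. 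Pólya's theorem then upgrades this to the uniform-in-$t$ statement, the limit law being continuous when $\mathfrak{v}_1^2>0$ (the degenerate case is trivial).

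Part~(i) is the delicate one, since $\Phi'_{(0,0)}(h_1,h_2)=\|h_1-h_2\|_{\dot{H}^{-1,p}(\mu*\gamma_{\sigma})}$ is nonlinear and a naive bootstrap delta method fails here (cf.\ \cite{dumbgen1993,fang2019}). The resolution exploits that the bootstrap statistic is re-centered at $\hat{\mu}_n$, where $\GWp$ vanishes, and that $\Phi'_{(0,0)}$ is the \emph{norm of a linear map}: one has
\[
\sqrt{n}\,\GWp(\hat{\mu}_n^B,\hat{\mu}_n)=\sqrt{n}\,\Wp\big(\hat{\mu}_n*\gamma_{\sigma}+k_n^B,\ \hat{\mu}_n*\gamma_{\sigma}\big),\qquad k_n^B:=(\hat{\mu}_n^B-\hat{\mu}_n)*\gamma_{\sigma},
\]
with $\|k_n^B\|_{\dot{H}^{-1,p}(\mu*\gamma_{\sigma})}=O_{\Prob}(n^{-1/2})$ by the first step. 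The plan is to establish the expansion
\[
\sqrt{n}\,\GWp(\hat{\mu}_n^B,\hat{\mu}_n)=\big\|\mathbb{G}_n^B\big\|_{\dot{H}^{-1,p}(\mu*\gamma_{\sigma})}+o(1),
\]
with the remainder tending to $0$ in conditional probability. Granted this, the conditional continuous mapping theorem applied to the continuous functional $h\mapsto\|h\|_{\dot{H}^{-1,p}(\mu*\gamma_{\sigma})}$, together with $\mathbb{G}_n^B\stackrel{d}{\to}G_{\mu}$ conditionally, gives $\sqrt{n}\GWp(\hat{\mu}_n^B,\hat{\mu}_n)\stackrel{d}{\to}\|G_{\mu}\|_{\dot{H}^{-1,p}(\mu*\gamma_{\sigma})}$ conditionally, and Pólya's theorem yields \eqref{eq: bootstrap consistency null}; the limit CDF is continuous because $G_{\mu}$ is a non-degenerate centered Gaussian element of a separable Banach space (if $\mu$ is a point mass, the statement is trivial). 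For the expansion itself I would re-run the proofs of \cref{prop:duality-in-Wp} and \cref{lem: Haradard derivative null} with base point $\hat{\mu}_n*\gamma_{\sigma}$: the Benamou--Brenier upper bound \cite{benamou2000} along the linear path $s\mapsto\hat{\mu}_n*\gamma_{\sigma}+sk_n^B$ gives the upper estimate $(1+\eps_n)\|k_n^B\|_{\dot{H}^{-1,p}(\mu*\gamma_{\sigma})}$, and a near-optimal smooth Kantorovich potential expanded at the scale $O(n^{-1/2})$, as in the proof of \cref{lem: Haradard derivative null}, gives the matching lower estimate $(1-\eps_n')\|k_n^B\|_{\dot{H}^{-1,p}(\mu*\gamma_{\sigma})}$, with $\eps_n,\eps_n'\to 0$ in conditional probability.

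The main obstacle is making this last expansion rigorous. Unlike in \cref{lem: Haradard derivative null}, the base point $\hat{\mu}_n*\gamma_{\sigma}$ now varies with $n$ and converges to $\mu*\gamma_{\sigma}$ only weakly (not in total variation), so the path-length and Kantorovich estimates must be made uniform over base measures close to $\mu*\gamma_{\sigma}$, and one must reconcile the dual Sobolev norm with reference measure $\hat{\mu}_n*\gamma_{\sigma}$ against the one with reference measure $\mu*\gamma_{\sigma}$. The enabling ingredients are: (a) a law-of-large-numbers lower bound $\tfrac{d(\hat{\mu}_n*\gamma_{\sigma})}{d\gamma_{\sigma/\sqrt 2}}(x)\ge 2^{-d/2}\,\hat{\mu}_n(e^{-|\cdot|^2/\sigma^2})\to 2^{-d/2}\,\mu(e^{-|\cdot|^2/\sigma^2})>0$ almost surely, which makes the constant $c$ in \cref{prop:duality-in-Wp} and in the path-length bound uniform for large $n$; and (b) the moment condition \eqref{eq:moment-condition} together with the asymptotic tightness of $\mathbb{G}_n^B$, which lets one truncate to test functions supported in a fixed $\dot{H}^{1,q}$-ball --- where the smoothed densities $\hat{\mu}_n*\phi_{\sigma}\to\mu*\phi_{\sigma}$ converge uniformly (Glivenko--Cantelli for the class of smooth Gaussian bumps) --- while controlling the tail contribution. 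I would isolate this uniform expansion as a standalone lemma; the remaining steps (measurability, separability, and the passage from conditional weak convergence to the Kolmogorov-distance conclusion) are standard.
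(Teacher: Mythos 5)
Your Part~(ii) is essentially correct and close in spirit to the paper's argument: the derivative is linear, and whether one invokes the bootstrap delta method of \cite[Theorem~3.9.11]{vandervaart1996book}/\cite{fang2019} or (as the paper does) applies the unconditional expansion from \cref{lem: functional delta method} to $\Psi$ and converts the unconditional $o_{\Prob}(1)$ remainder into a conditional one via Markov's inequality, the conclusion follows. Your preliminary step (the conditional bootstrap CLT for $\sqrt{n}(\hat{\mu}_n^B-\hat{\mu}_n)*\gamma_{\sigma}$ in $\dot{H}^{-1,p}(\mu*\gamma_{\sigma})$ via Gin\'e--Zinn) is exactly the paper's \cref{lem: bootstrap}.

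Part~(i), however, has a genuine gap. You correctly identify the target expansion $\sqrt{n}\,\GWp(\hat{\mu}_n^B,\hat{\mu}_n)=\|\mathbb{G}_n^B\|_{\dot{H}^{-1,p}(\mu*\gamma_{\sigma})}+o(1)$, but the route you propose --- re-running \cref{prop:duality-in-Wp} and \cref{lem: Haradard derivative null} with the \emph{moving base point} $\hat{\mu}_n*\gamma_{\sigma}$ --- is precisely the hard part, and your sketch does not close it. Two specific obstructions: (a) \cref{prop:duality-in-Wp} with reference measure $\mu*\gamma_{\sigma}$ requires the density of one endpoint w.r.t.\ $\mu*\gamma_{\sigma}$ to be bounded below; $d(\hat{\mu}_n*\gamma_{\sigma})/d(\mu*\gamma_{\sigma})$ admits no such uniform lower bound when $\mu$ has unbounded support, and switching the reference measure to $\gamma_{\sigma/\sqrt{2}}$ (as in your item (a)) only yields an upper bound with a constant strictly larger than $1$ in the wrong dual norm, whereas the expansion needs the constant to tend to exactly $1$ in $\|\cdot\|_{\dot{H}^{-1,p}(\mu*\gamma_{\sigma})}$; (b) the matching lower bound in \cref{lem: Haradard derivative null} produces $\|\nabla\varphi\|_{L^q(\hat{\mu}_n*\gamma_{\sigma};\R^d)}$ rather than $\|\nabla\varphi\|_{L^q(\mu*\gamma_{\sigma};\R^d)}$, and making the ratio of these tend to $1$ uniformly over the unit ball of $\dot{H}^{1,q}(\mu*\gamma_{\sigma})$ is a uniform law of large numbers for an unbounded function class that your outline does not establish. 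The paper avoids all of this: it never moves the base point. Writing $\GWp(\hat{\mu}_n^B,\hat{\mu}_n)=\Phi(T_n^B,T_n)$ with $T_n^B=(\hat{\mu}_n^B-\mu)*\gamma_{\sigma}$ and $T_n=(\hat{\mu}_n-\mu)*\gamma_{\sigma}$ --- i.e., viewing \emph{both} measures as perturbations of the fixed $\mu*\gamma_{\sigma}$ --- the two-argument derivative $\Phi'_{(0,0)}(h_1,h_2)=\|h_1-h_2\|_{\dot{H}^{-1,p}(\mu*\gamma_{\sigma})}$ from \cref{prop: Haradard derivative null}, the joint unconditional convergence $(\sqrt{n}T_n^B,\sqrt{n}T_n)\stackrel{d}{\to}(G_\mu+G_\mu',G_\mu)$, and the second claim of \cref{lem: functional delta method} (convexity of $\Xi_\mu\times\Xi_\mu$) deliver the expansion with an unconditional $o_{\Prob}(1)$ remainder in one line; Markov's inequality then makes the remainder conditionally negligible, and your final continuous-mapping/P\'olya step finishes the proof. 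You should replace your moving-base-point lemma with this fixed-base-point, two-argument application of the delta method.
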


Part (ii) of the proposition is not surprising given that the Haramard directional derivative of the function $\Psi$ in Proposition \ref{prop: Hadamard derivative alternative} is $\Psi_0' (h) = h(g)$, which is linear in $h$. Part~(i)~is less obvious since the function $h_1 \mapsto \Phi (h_1,0)$ from Proposition \ref{prop: Haradard derivative null} has a nonlinear Hadamard directional derivative, $\Psi_{(0,0)}'(h_1,0) = \| h_1 \|_{\dot{H}^{-1,p}(\mu*\gamma_\sigma)}$. Recall that \cite[Proposition 1]{dumbgen1993} or \cite[Corollary 3.1]{fang2019} show that the bootstrap is inconsistent for functionals with nonlinear derivatives, but these results do not collide with Part (i) of \cref{prop: bootstrap} since our application of the bootstrap differs from theirs. For instance, \cite[Proposition 1]{dumbgen1993} specialized to our setting states that the conditional law of $\sqrt{n}\big(\Phi(\hat{\mu}_n^B-\mu,0) - \Phi(\hat{\mu}_n-\mu,0)\big) = \sqrt{n}\big(\GWp (\hat{\mu}_n^B,\mu) - \GWp (\hat{\mu}_n,\mu)\big)$ does not converge weakly to $\| G_\mu \|_{\dot{H}^{-1,p}(\mu*\gamma_\sigma)}$ in probability. Heuristically, $\sqrt{n}\GWp(\hat{\mu}_n,\mu)$ is nonnegative while $\sqrt{n}\big(\GWp (\hat{\mu}_n^B,\mu) - \GWp (\hat{\mu}_n,\mu)\big)$ can be negative, so the conditional law of the latter cannot mimic the distribution of the former. %
Further, when $\mu$ is unknown, the conditional law of $\sqrt{n}\big ( \GWp (\hat{\mu}_n^B,\mu) - \GWp (\hat{\mu}_n,\mu)\big)$ is infeasible. The correct bootstrap analog for $\GWp(\hat{\mu}_n,\mu)$ is $\GWp(\hat{\mu}_n^B,\hat{\mu}_n) = \Phi (\hat{\mu}_n^B-\mu,\hat{\mu}_n-\mu)$, and the proof of \cref{prop: bootstrap} shows that it can be approximated by $\| \hat{\mu}_n^B-\mu - (\hat{\mu}_n - \mu) \|_{\dot{H}^{-1,p}(\mu*\gamma_\sigma)} = \| \hat{\mu}_n^B-\hat{\mu}_n \|_{\dot{H}^{-1,p}(\mu*\gamma_\sigma)}$, whose conditional law (after scaling) converges weakly to $\| G_\mu \|_{\dot{H}^{-1,p}(\mu*\gamma_\sigma)}$ in probability.

\medskip
Next, consider the two-sample case. In addition to $X_1^B,\dots,X_n^B$ and $\hat{\mu}_n^B$, given $Y_1,\dots,Y_n$, let $Y_1^B,\dots,Y_n^B$ be an independent sample from $\hat{\nu}_n$, and set $\hat{\nu}_n^B:= n^{-1}\sum_{i=1}^n \delta_{Y_i^B}$. Slightly abusing notation, we reuse $\Prob^B$ for the conditional probability given $(X_1,Y_1),(X_2,Y_2),\dots$. 

\begin{proposition}[Bootstrap consistency: two-sample under the alternative]
\label{prop: bootstrap case 2}
Suppose that $\mu$ and $\nu$ satisfy Condition (\ref{eq:moment-condition}) and $\mu \ne \nu$.
 Let $\mathfrak{v}_2^2$ denote the asymptotic variance of $\sqrt{n}\big(\GWp(\hat{\mu}_n,\hat{\nu}_n) - \GWp(\mu,\nu)\big)$ given in Part (iv) of Theorem \ref{thm: main theorem}.  Then, we have
\[
\sup_{t \in \R} \left | \Prob^B \left ( \sqrt{n}\big(\GWp(\hat{\mu}^B_n,\hat{\nu}_n^B) - \GWp(\hat{\mu}_n,\hat{\nu}_n)\big) \le t \right ) - \Prob \left ( N( 0, \mathfrak{v}_2^2) \le t \right ) \right | \stackrel{\Prob}{\to} 0.
\]
\end{proposition}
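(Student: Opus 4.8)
\textbf{Proof proposal for Proposition~\ref{prop: bootstrap case 2}.}

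The plan is to mirror the structure of the functional delta method argument used to prove Part~(iv) of Theorem~\ref{thm: main theorem}, but applied conditionally to the bootstrap empirical processes, and then to invoke the linearity of the relevant Hadamard directional derivative to get the (conditional) delta method for the bootstrap. First I would record the bootstrap analog of the weak convergence in \cref{thm:limit-distribution}: conditionally on $(X_1,Y_1),(X_2,Y_2),\dots$, the processes $\sqrt{n}(\hat{\mu}_n^B-\hat{\mu}_n)*\gamma_\sigma$ and $\sqrt{n}(\hat{\nu}_n^B-\hat{\nu}_n)*\gamma_\sigma$ converge weakly in $D_\mu=\dot H^{-1,p}(\mu*\gamma_\sigma)$ and $D_\nu=\dot H^{-1,p}(\nu*\gamma_\sigma)$ respectively, in probability, to (independent copies of) the Gaussian limits $G_\mu$ and $G_\nu$. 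This follows because the smoothed function class $\mathcal F*\phi_\sigma$ (with $\mathcal F=\{f\in\dot C_0^\infty:\|f\|_{\dot H^{1,q}(\gamma_\sigma)}\le1\}$) is $\mu$- (resp.\ $\nu$-) Donsker by \cite{nietert21}, so the conditional multiplier/bootstrap CLT for Donsker classes (Theorem~3.6.1 in \cite{vandervaart1996book}) gives conditional weak convergence of the bootstrap empirical process in $\ell^\infty(\mathcal F)$ in probability, and the embedding argument from the proof of \cref{thm:limit-distribution} (restricting to $\mathcal F$, which is norm-determining for the dual Sobolev space, then passing to $\mu*\gamma_\sigma$ via translation) transfers this to convergence in $D_\mu\times D_\nu$. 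The $X$- and $Y$-bootstrap samples being drawn independently yields the joint conditional convergence to $(G_\mu,G_\nu')$ with $G_\nu'$ an independent copy.

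Next I would apply the delta method for the bootstrap (e.g.\ Theorem~23.9 in \cite{vandervaart1998} / the conditional version of \cref{lem: functional delta method}, which holds for Hadamard directionally differentiable maps provided the derivative is \emph{continuous and linear} — the linearity is what makes the bootstrap work here, cf.\ Theorem~3.9.11 in \cite{vandervaart1996book}). By \cref{prop: Hadamard derivative alternative two sample}, the map $\Upsilon:(h_1,h_2)\mapsto\Wp^p(\mu*\gamma_\sigma+h_1,\nu*\gamma_\sigma+h_2)$ is Hadamard directionally differentiable at $(0,0)$ with derivative $\Upsilon_{(0,0)}'(h_1,h_2)=h_1(g)+h_2(g^c)$, which is \emph{linear} in $(h_1,h_2)$. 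Writing $T_{n,1}^B=(\hat{\mu}_n^B-\mu)*\gamma_\sigma$, $T_{n,1}=(\hat{\mu}_n-\mu)*\gamma_\sigma$ and similarly for $\nu$, and using $\Upsilon(T_{n,1}^B,T_{n,2}^B)=\SWp(\hat{\mu}_n^B,\hat{\nu}_n^B)$, the bootstrap delta method gives that, conditionally in probability,
\[
\sqrt{n}\big(\SWp(\hat{\mu}_n^B,\hat{\nu}_n^B)-\SWp(\hat{\mu}_n,\hat{\nu}_n)\big)\stackrel{d}{\to}G_\mu(g)+G_\nu'(g^c)\sim N\big(0,\Var_\mu(g*\phi_\sigma)+\Var_\nu(g^c*\phi_\sigma)\big),
\]
where I use that $G_\mu(g)=\lim\sqrt{n}\big(T_{n,1}^B-T_{n,1}\big)(g)$ etc., and linearity lets the centering $\SWp(\hat{\mu}_n,\hat{\nu}_n)$ appear in place of $\SWp(\mu,\nu)$ without a remainder. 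Then I pass from $\SWp=\GWp^{\,p}$ to $\GWp$ via the delta method for $s\mapsto s^{1/p}$ at $s=\SWp(\mu,\nu)=[\GWp(\mu,\nu)]^p>0$ (which is strictly positive since $\mu\ne\nu$ and $\GWp$ is a metric), dividing the asymptotic variance by $p^2[\GWp(\mu,\nu)]^{2(p-1)}$ to recover $\mathfrak v_2^2$; the continuous differentiability of $s\mapsto s^{1/p}$ near a positive point makes this step valid for the bootstrap as well. Finally, since the limit $N(0,\mathfrak v_2^2)$ has a continuous (in fact, as $\mathfrak v_2>0$, strictly increasing) distribution function, conditional convergence in distribution upgrades to the uniform (Kolmogorov-distance) statement via Pólya's theorem, giving the claimed $\sup_{t\in\R}|\cdots|\stackrel{\Prob}{\to}0$.

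The main obstacle I anticipate is the first step: transferring the conditional bootstrap CLT from $\ell^\infty(\mathcal F)$ to genuine conditional weak convergence \emph{in the dual Sobolev space} $D_\mu\times D_\nu$, in probability. One has to verify that the bootstrap empirical process has paths in $\dot H^{-1,p}(\mu*\gamma_\sigma)$ (this is where measurability via \cref{lem: dual space} and the identification of $\mathcal F$ as a norming set enter), that the conditional tightness needed for weak convergence in the separable Banach space $D_\mu$ follows from the Donsker property (uniform equicontinuity of the bootstrap process w.r.t.\ the natural $L^2$ semimetric, which must be compatible with the $\dot H^{-1,p}$ topology), and that asymptotic measurability / outer-probability subtleties à la Hoffmann-Jørgensen are handled — all of this parallels the proof of \cref{thm:limit-distribution} but now requires the bootstrap-process versions of those lemmas. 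Once the conditional weak convergence in $D_\mu\times D_\nu$ is in hand, the remainder is a routine combination of the linear-derivative delta method and Pólya's theorem, exactly as in the alternative-case arguments already given. A secondary point to be careful about is that $g$ (and hence $g^c$) is fixed by $\mu,\nu$ (not random), so the linear functionals $h\mapsto h(g)$, $h\mapsto h(g^c)$ are genuinely the Hadamard derivatives at the deterministic base point $(0,0)$, and the bootstrap delta method applies with that same fixed derivative — no estimation of $g$ is needed for consistency.
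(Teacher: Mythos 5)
Your proposal is correct and follows essentially the same route as the paper: bootstrap weak convergence of the smoothed empirical processes in the dual Sobolev spaces (the paper's Lemma \ref{lem: bootstrap}, via Gin\'e--Zinn), linearity of the Hadamard directional derivative from \cref{prop: Hadamard derivative alternative two sample}, the delta method for $s\mapsto s^{1/p}$, and continuity of the Gaussian limit to upgrade to the Kolmogorov-distance statement. The only cosmetic difference is that the paper does not invoke an off-the-shelf conditional bootstrap delta method (which is stated for fully Hadamard differentiable maps, whereas $\Upsilon$ is only directionally differentiable on a tangent cone); instead it subtracts two unconditional expansions from the second claim of \cref{lem: functional delta method} and uses linearity of $\Upsilon_{(0,0)}'$ to identify the leading term, which is exactly the adjustment you would need to make your step two rigorous.
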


\begin{example}[Confidence interval for $\GWp$]
Consider constructing confidence intervals for $\Wp(\mu,\nu)$. For $\alpha \in (0,1)$, let $\hat{\zeta}_\alpha$ denote the conditional $\alpha$-quantile of $\GWp(\hat{\mu}^B_n,\hat{\nu}_n^B)$ given the data. Then, by \cref{prop: bootstrap case 2} above and Lemma 23.3 in \cite{vandervaart1998}, the interval
\[
\left[ 2\GWp(\hat{\mu}_n,\hat{\nu}_n) - \hat{\zeta}_{1-\alpha/2},2\GWp(\hat{\mu}_n,\hat{\nu}_n) - \hat{\zeta}_{\alpha/2} \right],
\]
contains $\GWp(\mu,\nu)$ with probability approaching $1-\alpha$.
\end{example}

For the two-sample case under the null, instead of separately sampling bootstrap draws from $\hat{\mu}_n$ and $\hat{\nu}_n$ (see \cref{REM:bootstrap_inconsistent} below), we use the pooled empirical distribution $\hat{\rho}_n = (2n)^{-1} \sum_{i=1}^n (\delta_{X_i} + \delta_{Y_i})$ (cf. Chapter 3.7 in \cite{vandervaart1996book}). Given $(X_1,Y_1),\dots,(X_n,Y_n)$, let $Z_1^B,\dots,$ $Z_{2n}^B$ be an independent sample from $\hat{\rho}_n$, and set 
\[
\hat{\rho}_{n,1}^B = \frac{1}{n}\sum_{i=1}^n \delta_{Z_i^B} \quad \text{and} \quad \hat{\rho}_{n,2}^B = \frac{1}{n}\sum_{i=n+1}^{2n} \delta_{Z_i^B}. 
\]
The following proposition shows that this two-sample bootstrap is consistent for the null limit distribution of empirical $\GWp$. 
\begin{proposition}[Bootstrap consistency: two-sample under the null]
\label{prop: bootstrap case 3}
Suppose that $\mu$ and $\nu$ satisfy Condition (\ref{eq:moment-condition}). 
Then, for $\rho = (\mu+\nu)/2$, we have
\[
\sup_{t \ge 0} \left | \Prob^B \left ( \sqrt{n}\GWp(\hat{\rho}^B_{n,1},\hat{\rho}_{n,2}^B)  \le t \right ) - \Prob \left ( \| G_{\rho}-G_\rho' \|_{\dot{H}^{-1,p}(\rho*\gamma_{\sigma})} \le t \right ) \right | \stackrel{\Prob}{\to} 0,
\]
where $G_\rho'$ is an independent copy of $G_\rho$. In particular, if $\mu = \nu$, then 
\[
\sup_{t \ge 0} \left | \Prob^B \left ( \sqrt{n}\GWp(\hat{\rho}^B_{n,1},\hat{\rho}_{n,2}^B)  \le t \right ) - \Prob \left ( \| G_{\mu}-G_\mu' \|_{\dot{H}^{-1,p}(\mu*\gamma_{\sigma})} \le t \right ) \right | \stackrel{\Prob}{\to} 0.
\]
\end{proposition}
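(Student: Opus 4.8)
The plan is to mimic the structure used for Parts (i)--(ii) of \cref{prop: bootstrap}, but working with the pooled empirical distribution and the two-variable map $\Phi$ from \cref{sec: null case}. First I would set $\rho = (\mu+\nu)/2$, note that $\rho$ satisfies Condition \eqref{eq:moment-condition} because both $\mu$ and $\nu$ do (sub-Gaussianity and the integrability in \eqref{eq:moment-condition} are preserved under mixing, since the relevant integral of $\rho$ is the average of those of $\mu$ and $\nu$), and then recall that $\hat{\rho}_n$ is the empirical distribution of the $2n$ i.i.d.\ points $Z_1,\dots,Z_{2n}$ drawn from $\rho$ (the pooled sample). The key observation is that, conditionally on the data, $Z_1^B,\dots,Z_{2n}^B$ is an i.i.d.\ sample from $\hat\rho_n$, so the bootstrapped pair $\big(\sqrt{n}(\hat\rho_{n,1}^B-\hat\rho_n)*\gamma_\sigma,\ \sqrt{n}(\hat\rho_{n,2}^B-\hat\rho_n)*\gamma_\sigma\big)$ is, up to splitting the $2n$-sample into two halves, a standard nonparametric bootstrap empirical process. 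I would invoke the conditional multiplier/bootstrap CLT for $\rho$-Donsker classes (the smoothed class $\cF*\phi_\sigma$ is $\rho$-Donsker by the argument behind \cref{thm:limit-distribution}, since $\rho$ satisfies \eqref{eq:moment-condition}), together with the Hilbert-space-type argument of \cite{nickl2009} used to prove \cref{thm:limit-distribution}, to conclude that each bootstrapped half converges weakly in $\dot{H}^{-1,p}(\rho*\gamma_\sigma)$, conditionally in probability, to a centered Gaussian with the covariance $\Cov_\rho(f_1*\phi_\sigma,f_2*\phi_\sigma)$ --- i.e.\ to $G_\rho$. Since the two halves are drawn from disjoint blocks of the $Z_i^B$, they are conditionally independent, so the joint bootstrap process converges to $(G_\rho,G_\rho')$ with $G_\rho'$ an independent copy of $G_\rho$; the relevant reference is Chapter~3.7 in \cite{vandervaart1996book} for the pooled two-sample bootstrap.

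Next I would transfer this weak convergence through the functional $\Phi$. Observe that
\[
\sqrt{n}\GWp(\hat\rho_{n,1}^B,\hat\rho_{n,2}^B) = \sqrt{n}\,\Phi_\rho\big((\hat\rho_{n,1}^B-\rho)*\gamma_\sigma,\ (\hat\rho_{n,2}^B-\rho)*\gamma_\sigma\big),
\]
where $\Phi_\rho(h_1,h_2)=\Wp(\rho*\gamma_\sigma+h_1,\rho*\gamma_\sigma+h_2)$ is exactly the map of \cref{prop: Haradard derivative null} centered at $\rho$ (which is legitimate because $\rho\in\cP_p$), and both bootstrap arguments lie in $\Xi_\rho\times\Xi_\rho$ since $\hat\rho_{n,i}^B\in\cP_p$. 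By the global Lipschitz bound from \cref{prop:duality-in-Wp}, $\Phi_\rho$ is $\big(p\,c^{-1/q}\big)$-Lipschitz w.r.t.\ the product dual Sobolev norm (with $c$ the lower bound on the density of $\rho*\gamma_\sigma$ w.r.t.\ $\gamma_{\sigma/\sqrt2}$, as in \cref{rem:explicit-construction}), so the conditional law of $\sqrt n\Phi_\rho$ of the bootstrap pair differs by $o_{\Prob}(1)$ (uniformly) from $\big\|\sqrt n(\hat\rho_{n,1}^B-\hat\rho_{n,2}^B)*\gamma_\sigma\big\|_{\dot{H}^{-1,p}(\rho*\gamma_\sigma)}$ via the Hadamard-derivative expansion; more precisely, I would apply the conditional/bootstrap version of the functional delta method for Hadamard directionally differentiable maps (see \cite{fang2019} or Theorem 23.9 in \cite{vandervaart1998}, using that $\Phi_{(0,0)}'(h_1,h_2)=\|h_1-h_2\|_{\dot{H}^{-1,p}(\rho*\gamma_\sigma)}$ from \cref{prop: Haradard derivative null}) to get that the conditional law of $\sqrt n\GWp(\hat\rho_{n,1}^B,\hat\rho_{n,2}^B)$ converges weakly in probability to the law of $\|G_\rho-G_\rho'\|_{\dot{H}^{-1,p}(\rho*\gamma_\sigma)}$. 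Because the limiting variable has a continuous distribution function on $[0,\infty)$ (it is the norm of a nondegenerate Gaussian, hence has no atoms except possibly at $0$, which does not occur when $G_\rho-G_\rho'\not\equiv 0$; and if it is degenerate at $0$ the statement is trivial), weak convergence upgrades to uniform convergence of distribution functions by Pólya's theorem, giving the first display. The second display follows since $\mu=\nu$ forces $\rho=\mu$.

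The main obstacle I anticipate is rigorously justifying the \emph{conditional} (bootstrap) version of the functional delta method in the directionally-differentiable setting for the pooled two-sample scheme: one must check that the bootstrap empirical process is asymptotically measurable and that the relevant continuous mapping / Skorokhod-type arguments go through conditionally in probability (in the sense of \cite{vandervaart1996book}, Chapter~3.6--3.7), and that the Lipschitz continuity of $\Phi_\rho$ from \cref{prop:duality-in-Wp} is enough to control the remainder uniformly rather than just pointwise. A second, more technical point is verifying that the pooled class $\cF*\phi_\sigma$ is $\rho$-Donsker and that the bootstrap CLT of \cite{nickl2009}-type transfers to the dual Sobolev space $\dot{H}^{-1,p}(\rho*\gamma_\sigma)$ with the correct (independent-copies) limit for the two disjoint blocks --- this is where the disjointness of the index sets $\{1,\dots,n\}$ and $\{n+1,\dots,2n\}$ in the definition of $\hat\rho_{n,1}^B,\hat\rho_{n,2}^B$ is crucial. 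Both obstacles are, however, of the same nature as arguments already carried out in the proofs of \cref{thm:limit-distribution} and \cref{prop: bootstrap}, so I expect the proof to be a careful assembly rather than requiring a genuinely new idea.
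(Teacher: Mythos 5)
Your proposal follows essentially the same route as the paper's proof: verify Condition \eqref{eq:moment-condition} for $\rho$, use the pooled-sample bootstrap CLT (Chapter 3.7 of \cite{vandervaart1996book}) lifted to $\dot H^{-1,p}(\rho*\gamma_\sigma)$ to get the conditionally independent limits $(G_\rho,G_\rho')$ for the two disjoint blocks, expand the two-variable map $\Phi_\rho$ around $(0,0)$ via \cref{prop: Haradard derivative null} together with the Lipschitz bound of \cref{prop:duality-in-Wp}, and finish with continuity of the limit distribution function. One caution on attribution: the step you source to a ``conditional functional delta method'' (Theorem 23.9 in \cite{vandervaart1998} or \cite{fang2019}) is precisely what fails for nonlinear Hadamard directional derivatives; the paper instead applies the \emph{unconditional} expansion $\sqrt{n}\,\Phi(T^B_{n,1},T^B_{n,2})=\Phi'_{(0,0)}(\sqrt{n}T^B_{n,1},\sqrt{n}T^B_{n,2})+o_{\Prob}(1)$ from the second claim of \cref{lem: functional delta method} (valid because the domain is convex) to the bootstrap pair centered at $\rho$, and then converts the unconditional $o_{\Prob}(1)$ remainder into a conditional one via Markov's inequality --- which is the mechanism you describe informally, so this is a citation slip rather than a gap in the argument.
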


\begin{remark}[Inconsistency of naive bootstrap]\label{REM:bootstrap_inconsistent}
One may consider using $\GWp(\hat{\mu}^B_{n},\hat{\nu}_{n}^B)$ (rather than $\GWp(\hat{\rho}^B_{n,1},\hat{\rho}_{n,2}^B)$) to approximate the distribution of $\GWp(\hat{\mu}_n,\hat{\nu}_n)$, but this bootstrap is not consistent. Indeed, from the proof of \cref{prop: bootstrap case 3}, we may deduce that, if $\mu = \nu$, then $\sqrt{n}\GWp(\hat{\mu}^B_{n},\hat{\nu}_{n}^B)$ is expanded as
\[
\big\| \sqrt{n}(\hat{\mu}_n^B-\hat{\mu}_n)*\gamma_\sigma - \sqrt{n}(\hat{\nu}_n^B-\hat{\nu}_n)*\gamma_\sigma + \sqrt{n}(\hat{\mu}_n-\hat{\nu}_n)*\gamma_\sigma \big\|_{\dot{H}^{-1,p}(\mu*\gamma_\sigma)} + o_{\Prob}(1), 
\]
which converges in distribution to $\| G_\mu^1-G_\mu^2+G_\mu^3-G_\mu^4 \|_{\dot{H}^{-1,p}(\mu*\gamma_{\sigma})}$ unconditionally, where $G_\mu^1,\dots,G_\mu^4$ are independent copies of $G_\mu$. This implies that the conditional law of $\sqrt{n}\GWp(\hat{\mu}^B_{n},\hat{\nu}_{n}^B)$ does not converge weakly to the law of $\| G_{\mu}-G_\mu' \|_{\dot{H}^{-1,p}(\mu*\gamma_{\sigma})}$ in probability. 
\end{remark}

\begin{example}[Testing the equality of distributions]
Consider testing the equality of distributions, i.e., $H_0: \mu = \nu$ against $H_1: \mu \ne \nu$. We shall use $\sqrt{n}\GWp(\hat{\mu}_n,\hat{\nu}_n)$ as a test statistic and reject $H_0$ if $\sqrt{n}\GWp(\hat{\mu}_n,\hat{\nu}_n) > c$ for some critical value $c$. \cref{prop: bootstrap case 3} implies that, if we choose $c = \hat{c}_{1-\alpha}$ to be the conditional $(1-\alpha)$-quantile of $\sqrt{n}\GWp(\hat{\rho}^B_{n,1},\hat{\rho}_{n,2}^B)$ given the data, then the resulting test is asymptotically of level $\alpha$, 
\[
\lim_{n \to \infty}\Prob\left(\sqrt{n}\GWp(\hat{\mu}_n,\hat{\nu}_n) > \hat{c}_{1-\alpha}\right) = \alpha \quad \text{if} \ \mu=\nu.
\]
Here $\alpha \in (0,1)$ is the nominal level. To see that the test is consistent, note that if $\mu \ne \nu$, then $\GWp(\hat{\mu}_n,\hat{\nu}_n) \ge \GWp(\mu,\nu)-\GWp(\hat{\mu}_n,\mu)-\GWp(\hat{\nu}_n,\nu) \ge \GWp(\mu,\nu)/2$ with probability approaching one, %
while $\hat{c}_{1-\alpha} = O_{\Prob}(1)$ by \cref{prop: bootstrap case 3}. 

Testing the equality of distributions using Wasserstein distances was considered in \cite{ramdas2017}, but their theoretical analysis is focused on the $d=1$ case, partly because of the lack of null limit distribution results for empirical $\Wp$ in higher dimensions. We overcome this obstacle by using the smooth Wasserstein distance. 
\end{example}

\section{Minimum distance estimation with \(\GWp\)}
\label{sec: mde}

We consider the application of our limit distribution theory to MDE with $\GWp$. Given an independent sample $X_1, \dots, X_n$ from a distribution $\mu \in \cP$, MDE aims to learn a generative model from a parametric family $\{\nu_\theta\}_{\theta \in \Theta} \subset \cP$ that approximates $\mu$ under some statistical divergence. We use $\GWp$ as the proximity measure and the empirical distribution $\hat{\mu}_n$ as an estimate for $\mu$, which leads to the following MDE problem
\[
\inf_{\theta \in \Theta} \GWp(\hat{\mu}_n,\nu_\theta).
\]
MDE with classic $\mathsf{W}_1$ is called the Wasserstein GAN, which continues to underlie state-of-the-art methods in generative modeling \cite{arjovsky_wgan_2017,gulrajani2017improved}. MDE with $\GWp$ was previously examined for $p=1$ in \cite{goldfeld2020asymptotic} and for $p > 1$ in \cite{nietert21}. Specifically, \cite{nietert21} established measurability, consistency, and parametric convergence rates for MDE with $\GWp$ for $p > 1$, but did not derive limit distribution results. We will expand on this prior work by providing limit distributions for the $\GWp$ MDE problem. %

\medskip
Analogously to the conditions of Theorem 4 in \cite{goldfeld2020asymptotic}, we assume the following. %

\begin{assumption} Let $1<p<\infty$, and assume that the following conditions hold. 
(i) The distribution $\mu$ satisfies Condition \eqref{eq:moment-condition}.
(ii) The parameter space $\Theta\subset \R^{d_0}$ is compact with nonempty interior.
(iii) The map $\theta\mapsto \nu_{\theta}$ is continuous w.r.t. the weak topology.
(iv) There exists a unique $\theta\opt$ in the interior of $\Theta$ such that $\nu_{\theta\opt}=\mu$.
(v) There exists a neighborhood $N_0$ of $\theta\opt$ such that $\left(\nu_{\theta}-\nu_{\theta\opt}\right)*\gamma_{\sigma}\in \Hu$ for every $\theta\in N_0$. 
(vi) The map $N_0\ni \theta\mapsto \left(\nu_{\theta}-\nu_{\theta\opt}\right)*\gamma_{\sigma}\in \Hu$ is norm differentiable with non--singular derivative $\mathfrak{D}$ at $\theta\opt$. That is, there exists $\mathfrak{D}=\left(\mathfrak{D}_1,\dots,\mathfrak{D}_{d_0}\right)\in \big(\Hu\big)^{d_0}$, where $\mathfrak{D}_1,\dots,\mathfrak{D}_{d_0}$ are linearly independent elements of $\Hu$, such that
            \[
                \norm{ (\nu_{\theta}-\nu_{\theta\opt})*\gamma_{\sigma}-\inp{\theta-\theta\opt}{\mathfrak{D}}}_{\Hu}=o(\abs{\theta-\theta\opt}),%
            \]   
            as $\theta\to \theta \opt$ in $N_0$, where $\inp{t}{\mathfrak{D}}=\sum_{i=1}^{d_0}t_i\mathfrak{D}_i$ for $t=(t_1,\dots, t_{d_0})\in\R^{d_0}$.
    \label{as:MSE}
\end{assumption}

We derive limit distributions for the optimal value function and MDE solution, following the methodology of \cite{pollard_min_dist_1980,bernton2019,goldfeld2020asymptotic}.

\begin{theorem}[Limit distributions for MDE with $\GWp$]
\label{thm: MDE limit}
Suppose that \cref{as:MSE} holds. Let $\mathbb{G}_n^{(\sigma)}:=\sqrt n(\hat \mu_n-\mu)*\gamma_{\sigma}$ be the smooth empirical process, and $G_{\mu}$ its weak limit in $\dot{H}^{-1,p}(\mu*\gamma_{\sigma})$; cf. \cref{thm:limit-distribution}. Then, the following hold. 
\begin{enumerate}
    \item[(i)] We have $\inf_{\theta\in \Theta}\sqrt n{\sf W}_p^{(\sigma)}(\hat\mu_n,\nu_{\theta}) \stackrel{d}{\to} \inf_{t\in\R^{d_0}}\norm{G_{\mu}-\inp{t}{\mathfrak{D}}}_{\dot H^{-1,p}(\mu*\gamma_{\sigma})}$. 
    \item[(ii)] Let $(\hat \theta_n)_{n\in\N}$ be a sequence of measurable estimators satisfying 
\[
{\sf W}_{p}^{(\sigma)}(\hat \mu_n,\nu_{\hat \theta_n})\leq \inf_{\theta\in \Theta}{\sf W}_{p}^{(\sigma)}(\hat \mu_n,\nu_{\theta})+o_{\mathbb{P}}(n^{-1/2}).
\]
Then, provided that $\argmin_{t\in\R^{d_0}}\norm{G_{\mu}-\inp{t}{\mathfrak{D}}}_{\dot H^{-1,p}(\mu*\gamma_{\sigma})}$ is almost surely unique, we have $\sqrt n(\hat \theta_n-\theta\opt) \stackrel{d}{\to} \argmin_{t\in\R^{d_0}}\norm{G_{\mu}-\inp{t}{\mathfrak{D}}}_{\dot H^{-1,p}(\mu*\gamma_{\sigma})}$. 
\end{enumerate}
\end{theorem}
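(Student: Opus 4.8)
The plan is to cast the MDE problem as an $M$-estimation problem on $\R^{d_0}$ after rescaling by $\sqrt n$ around $\theta\opt$, in the spirit of \cite{pollard_min_dist_1980,bernton2019,goldfeld2020asymptotic}, and then to invoke an argmin continuous mapping theorem (Theorem~3.2.2 in \cite{vandervaart1996book}). Write $h_n:=(\hat\mu_n-\mu)*\gamma_{\sigma}=n^{-1/2}\mathbb{G}_n^{(\sigma)}$ and, using $\nu_{\theta\opt}=\mu$ from \cref{as:MSE}(iv), $e(\theta):=(\nu_{\theta}-\nu_{\theta\opt})*\gamma_{\sigma}$, so that $\GWp(\hat\mu_n,\nu_{\theta})=\Phi\big(h_n,e(\theta)\big)$ for the functional $\Phi$ of \cref{sec: null case}. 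First I would record the \emph{consistency and $\sqrt n$-rate} of $\hat\theta_n$: consistency $\hat\theta_n\stackrel{\Prob}{\to}\theta\opt$ follows from compactness of $\Theta$, weak continuity of $\theta\mapsto\nu_{\theta}$, identifiability of $\theta\opt$, and $\GWp(\hat\mu_n,\mu)\stackrel{\Prob}{\to}0$ (a consequence of \cref{thm:limit-distribution} and \cref{prop:duality-in-Wp}), and was established in \cite{nietert21}. For the rate, $\inf_{\theta\in\Theta}\GWp(\hat\mu_n,\nu_{\theta})\le\GWp(\hat\mu_n,\mu)=O_{\mathbb{P}}(n^{-1/2})$, whereas \cref{as:MSE}(vi), the Hadamard directional differentiability of $\Phi$ at $(0,0)$ (\cref{prop: Haradard derivative null}), and the non-singularity of $\mathfrak{D}$ — which makes $c_0:=\min_{|u|=1}\|\inp{u}{\mathfrak{D}}\|_{\dot H^{-1,p}(\mu*\gamma_{\sigma})}$ positive — yield the local lower bound $\GWp(\mu,\nu_{\theta})=\Phi\big(0,e(\theta)\big)\ge\tfrac{c_0}{2}|\theta-\theta\opt|$ for $\theta$ near $\theta\opt$. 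Combining this with the triangle inequality $\GWp(\hat\mu_n,\nu_{\hat\theta_n})\ge\GWp(\mu,\nu_{\hat\theta_n})-\GWp(\hat\mu_n,\mu)$, the near-optimality of $\hat\theta_n$, and consistency (which eventually confines $\hat\theta_n$ to the said neighborhood) gives $\hat t_n:=\sqrt n(\hat\theta_n-\theta\opt)=O_{\mathbb{P}}(1)$.

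\textbf{The crux: a uniform local expansion.}
The heart of the argument is to show that, for every $M>0$,
\[
\sup_{|t|\le M}\Big|\sqrt n\,\GWp\big(\hat\mu_n,\nu_{\theta\opt+t/\sqrt n}\big)-\big\|\mathbb{G}_n^{(\sigma)}-\inp{t}{\mathfrak{D}}\big\|_{\dot H^{-1,p}(\mu*\gamma_{\sigma})}\Big|\stackrel{\Prob}{\to}0,
\]
noting that for $n$ large $\theta\opt+t/\sqrt n\in N_0\cap\interior\Theta$ whenever $|t|\le M$. By \cref{as:MSE}(vi), $\sup_{|t|\le M}\sqrt n\,\big\|e(\theta\opt+t/\sqrt n)-\inp{t/\sqrt n}{\mathfrak{D}}\big\|_{\dot H^{-1,p}(\mu*\gamma_{\sigma})}=o(1)$, so it suffices to compare $\sqrt n\,\Phi\big(h_n,e(\theta\opt+t/\sqrt n)\big)$ with $\big\|\sqrt n\,h_n-\sqrt n\,e(\theta\opt+t/\sqrt n)\big\|_{\dot H^{-1,p}(\mu*\gamma_{\sigma})}$ uniformly in $|t|\le M$. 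Since $\|h_n\|_{\dot H^{-1,p}(\mu*\gamma_{\sigma})}=O_{\mathbb{P}}(n^{-1/2})$ (as $\mathbb{G}_n^{(\sigma)}$ is weakly convergent, hence $O_{\mathbb{P}}(1)$) and $\sup_{|t|\le M}\|e(\theta\opt+t/\sqrt n)\|_{\dot H^{-1,p}(\mu*\gamma_{\sigma})}=O(n^{-1/2})$, both arguments of $\Phi$ are uniformly of order $n^{-1/2}$; running the Benamou--Brenier argument behind \cref{prop:duality-in-Wp} along the linear interpolation $s\mapsto(1-s)\big(\mu*\gamma_{\sigma}+h_n\big)+s\big(\mu*\gamma_{\sigma}+e(\theta\opt+t/\sqrt n)\big)$, which stays within $O_{\mathbb{P}}(n^{-1/2})$ of $\mu*\gamma_{\sigma}$ in the dual Sobolev norm, the density ratios of this path with respect to $\mu*\gamma_{\sigma}$ tend to $1$ uniformly over $s\in[0,1]$ and $|t|\le M$, which removes the spurious factor $p$ from the resulting upper bound; the matching lower bound uses the exactness of the G\^ateaux derivative in \cref{lem: Haradard derivative null}, again made uniform. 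I expect this uniform density-ratio control — upgrading the pointwise differentiability results of \cref{sec: main results} to an estimate uniform over a $\sqrt n$-shrinking neighborhood of $\theta\opt$ while retaining the sharp constant $1$ — to be the main obstacle.

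\textbf{Conclusion via the argmin theorem.}
Fix a compact $K\subset\R^{d_0}$ and set $Z_n(t):=\sqrt n\,\GWp\big(\hat\mu_n,\nu_{\theta\opt+t/\sqrt n}\big)$ (with the convention $Z_n(t):=+\infty$ when $\theta\opt+t/\sqrt n\notin\Theta$, which is vacuous on $K$ for large $n$) and $Z(t):=\|G_{\mu}-\inp{t}{\mathfrak{D}}\|_{\dot H^{-1,p}(\mu*\gamma_{\sigma})}$. Because the map $\dot H^{-1,p}(\mu*\gamma_{\sigma})\ni\ell\mapsto\big(t\mapsto\|\ell-\inp{t}{\mathfrak{D}}\|_{\dot H^{-1,p}(\mu*\gamma_{\sigma})}\big)\in\ell^{\infty}(K)$ is $1$-Lipschitz, \cref{thm:limit-distribution} and the continuous mapping theorem give $t\mapsto\|\mathbb{G}_n^{(\sigma)}-\inp{t}{\mathfrak{D}}\|_{\dot H^{-1,p}(\mu*\gamma_{\sigma})}\stackrel{d}{\to}Z$ in $\ell^{\infty}(K)$; together with the uniform expansion above and Slutsky's lemma, $Z_n\stackrel{d}{\to}Z$ in $\ell^{\infty}(K)$ for every compact $K$. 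Since $t\mapsto Z(t)$ is convex, continuous, and coercive (by non-singularity of $\mathfrak{D}$), $\inf_{\R^{d_0}}Z$ is attained; Part~(i) then follows by applying the continuous infimum functional on $\ell^{\infty}(K)$ along growing compacts $K$, using $\hat t_n=O_{\mathbb{P}}(1)$ to ensure that, with probability tending to one, $\inf_{\theta\in\Theta}\sqrt n\,\GWp(\hat\mu_n,\nu_{\theta})$ coincides with the infimum of $Z_n$ over a fixed ball. For Part~(ii), $\hat t_n$ nearly minimizes $Z_n$ in the sense $Z_n(\hat t_n)\le\inf_t Z_n(t)+o_{\mathbb{P}}(1)$ (from the near-optimality of $\hat\theta_n$); since $Z$ has continuous sample paths and, by hypothesis, an almost surely unique minimizer, the argmin continuous mapping theorem (Theorem~3.2.2 in \cite{vandervaart1996book}) yields $\hat t_n=\sqrt n(\hat\theta_n-\theta\opt)\stackrel{d}{\to}\argmin_{t\in\R^{d_0}}\|G_{\mu}-\inp{t}{\mathfrak{D}}\|_{\dot H^{-1,p}(\mu*\gamma_{\sigma})}$, and the limit in Part~(i) is identified as $\inf_{t\in\R^{d_0}}\|G_{\mu}-\inp{t}{\mathfrak{D}}\|_{\dot H^{-1,p}(\mu*\gamma_{\sigma})}$.
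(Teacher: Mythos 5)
Your proposal is correct and follows the same overall architecture as the paper (consistency and $\sqrt n$-rate via the local lower bound from \cref{as:MSE}(vi), a uniform linearization of $\sqrt n\,\GWp(\hat\mu_n,\nu_{\theta\opt+t/\sqrt n})$ over $\sqrt n$-shrinking neighborhoods, and an argmin continuous mapping step), but it differs in how the two key steps are executed. For the uniform expansion — which you correctly flag as the crux — you propose re-running the Benamou--Brenier estimates behind \cref{prop:duality-in-Wp} and \cref{lem: Haradard derivative null} along the interpolating path, with uniform control of the density ratios to recover the sharp constant; this would work, but the paper sidesteps it entirely: it composes the norm-differentiable map $\theta\mapsto(\nu_\theta-\nu_{\theta\opt})*\gamma_\sigma$ with the Hadamard directionally differentiable functional $\Phi$ of \cref{prop: Haradard derivative null} via the chain rule, and then invokes the standard fact (compact directional differentiability, \cite{shapiro1990}) that the remainder in a Hadamard directional expansion is automatically $o(t)$ \emph{uniformly} over compact sets of directions — here $(h,u)$ ranging over $K_\epsilon\times\{|u|\le M_\epsilon\}$ with $K_\epsilon$ a tightness compact for $\mathbb{G}_n^{(\sigma)}$ (\cref{lem:HDD,lem:errprop}). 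This buys the uniformity for free and avoids redoing the density-ratio analysis. For Part (ii), you invoke the argmax/argmin theorem of \cite{vandervaart1996book} after establishing process convergence in $\ell^{\infty}(K)$, whereas the paper exploits convexity of $t\mapsto\|\mathbb{G}_n^{(\sigma)}-\inp{t}{\mathfrak{D}}\|_{\dot H^{-1,p}(\mu*\gamma_\sigma)}$ and applies the convexity-based argmin theorem of \cite{kato2009asymptotics}, which needs only finite-dimensional convergence; both routes are valid here since your $\ell^{\infty}(K)$ convergence does hold (via the $1$-Lipschitz embedding you describe) and the limiting minimizer is tight by coercivity. The only caveat is that your Benamou--Brenier step is left as a sketch; if you pursue it, the lower bound (``exactness of the G\^ateaux derivative made uniform'') is the delicate part, and the compact-differentiability shortcut is the cleaner way to close it.
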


In general, it is nontrivial to verify that $\argmin_{t\in\R^{d_0}}\norm{G_{\mu}-\inp{t}{\mathfrak{D}}}_{\dot H^{-1,p}(\mu*\gamma_{\sigma})}$ is almost surely unique. However, for $p=2$, the Hilbertian structure of $\dot H^{-1,2}(\mu*\gamma_{\sigma})$ guarantees this uniqueness. %
Let $E$ be an isometric isomorphism between $\dot H^{-1,2}(\mu*\gamma_{\sigma})$ and a closed subspace of $L^2(\mu*\gamma_\sigma;\R^{d})$; cf. \cref{lem:vector-field}. Setting $\bunderline{G}_{\mu}:=E(G_{\mu})$ and $\bunderline{\mathfrak{D}}=\left(    \bunderline{\mathfrak{D}}_1,\dots,\bunderline{\mathfrak{D}}_{d_0}\right):=\big(E(\mathfrak{D}_1),\dots,E(\mathfrak{D}_{d_0})\big)$, we have
\[
\norm{G_{\mu}-\inp{t}{\mathfrak{D}}}_{\dot H^{-1,2}(\mu*\gamma_{\sigma})}=\norm{\bunderline G_{\mu}-\inp{t}{\bunderline{\mathfrak{D}}}}_{L^{2}(\mu*\gamma_{\sigma};\R^{d})}.
\]
The unique minimizer in $t$ of the above display is given by
\begin{equation}
    \hat{t}_\mu = \left [ \big (\inp{\bunderline{\mathfrak{D}}_{j}}{\bunderline{\mathfrak{D}}_{k}}_{L^2(\mu*\gamma_{\sigma};\R^{d})} \big )_{1 \le j,k \le d_0} \right ]^{-1} \big ( \inp{G_{\mu}}{\bunderline{\mathfrak{D}}_{j}}_{L^2(\mu*\gamma_{\sigma};\R^{d})} \big)_{j=1}^{d_0}.
    \label{eq:optMWSEp2}
\end{equation}
Since $\bunderline{G}_{\mu}$ is a centered Gaussian random variable in $L^2(\mu*\gamma_{\sigma};\R^d)$, $\hat t_{\mu}$ is a mean--zero Gaussian vector in $\R^{d_0}$.  

\begin{corollary}[Asymptotic normality for MDE solutions when $p=2$]
\label{cor:simpMSWE}
Consider the setting of \cref{thm: MDE limit} Part (ii) and let $p=2$. 
Then $\sqrt n(\hat \theta_n-\theta\opt) \stackrel{d}{\to} \hat t_{\mu}$, the mean--zero Gaussian vector in \eqref{eq:optMWSEp2}.  
\end{corollary}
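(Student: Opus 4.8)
The plan is to obtain the corollary directly from \cref{thm: MDE limit}~(ii) once we verify that, for $p=2$, the limiting $\argmin_{t\in\R^{d_0}}\norm{G_\mu-\inp{t}{\mathfrak{D}}}_{\dot H^{-1,2}(\mu*\gamma_\sigma)}$ is almost surely unique and coincides with the Gaussian vector $\hat t_\mu$ of \eqref{eq:optMWSEp2}. Both facts are consequences of the Hilbertian structure of $\dot H^{-1,2}(\mu*\gamma_\sigma)$.

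First I would transport the minimization to $L^2(\mu*\gamma_\sigma;\R^d)$ via the isometric isomorphism $E$ of \cref{lem:vector-field}, giving $\norm{G_\mu-\inp{t}{\mathfrak{D}}}_{\dot H^{-1,2}(\mu*\gamma_\sigma)}=\norm{\bunderline{G}_\mu-\inp{t}{\bunderline{\mathfrak{D}}}}_{L^2(\mu*\gamma_\sigma;\R^d)}$ for all $t\in\R^{d_0}$, where $\bunderline{G}_\mu=E(G_\mu)$ and $\bunderline{\mathfrak{D}}_j=E(\mathfrak{D}_j)$. Since $E$ is a linear isomorphism and $\mathfrak{D}_1,\dots,\mathfrak{D}_{d_0}$ are linearly independent by \cref{as:MSE}~(vi), so are $\bunderline{\mathfrak{D}}_1,\dots,\bunderline{\mathfrak{D}}_{d_0}$; hence $t\mapsto\inp{t}{\bunderline{\mathfrak{D}}}$ is a linear bijection onto the finite-dimensional (thus closed) subspace $V:=\lin\{\bunderline{\mathfrak{D}}_1,\dots,\bunderline{\mathfrak{D}}_{d_0}\}$. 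By the Hilbert projection theorem, $\bunderline{G}_\mu$ has a unique nearest point $P_V\bunderline{G}_\mu$ in $V$, and by injectivity of $t\mapsto\inp{t}{\bunderline{\mathfrak{D}}}$ this determines a unique minimizing $t$, so the $\argmin$ is deterministically unique and the hypothesis of \cref{thm: MDE limit}~(ii) holds. Writing the orthogonality condition $\bunderline{G}_\mu-\inp{t}{\bunderline{\mathfrak{D}}}\perp V$ as the normal equations $\sum_j t_j\inp{\bunderline{\mathfrak{D}}_j}{\bunderline{\mathfrak{D}}_k}_{L^2(\mu*\gamma_\sigma;\R^d)}=\inp{\bunderline{G}_\mu}{\bunderline{\mathfrak{D}}_k}_{L^2(\mu*\gamma_\sigma;\R^d)}$ for $k=1,\dots,d_0$, and inverting the (invertible) Gram matrix, I obtain exactly the expression \eqref{eq:optMWSEp2}; that is, the $\argmin$ equals $\hat t_\mu$.

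It then remains to note that $\hat t_\mu$ is centered Gaussian: $\bunderline{G}_\mu=E(G_\mu)$ is a centered Gaussian random element of $L^2(\mu*\gamma_\sigma;\R^d)$, being the image of the centered Gaussian $G_\mu$ under the continuous linear map $E$, so $\big(\inp{\bunderline{G}_\mu}{\bunderline{\mathfrak{D}}_k}_{L^2(\mu*\gamma_\sigma;\R^d)}\big)_{k=1}^{d_0}$ is a centered Gaussian vector, and so is its image under the fixed linear map given by the inverse Gram matrix. Invoking \cref{thm: MDE limit}~(ii) then yields $\sqrt n(\hat\theta_n-\theta\opt)\stackrel{d}{\to}\hat t_\mu$. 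I do not expect a real obstacle; the argument is Hilbert-space least squares. The only points needing care are the legitimacy of the reduction through $E$ (exactly the content of \cref{lem:vector-field}) and the measurability of the $\argmin$ map, which follows from continuity of the metric projection $P_V$ and bijectivity of $t\mapsto\inp{t}{\bunderline{\mathfrak{D}}}$, and is needed only to apply \cref{thm: MDE limit}~(ii) verbatim.
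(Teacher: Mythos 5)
Your proposal is correct and follows essentially the same route as the paper: reduce to $L^{2}(\mu*\gamma_{\sigma};\R^{d})$ via the linear isometry $E$ of \cref{lem:vector-field}, solve the resulting finite-dimensional least-squares problem via the normal equations (uniqueness from linear independence of the $\mathfrak{D}_j$ and the Hilbert projection theorem), observe that $\hat t_\mu$ is a centered Gaussian vector as a linear image of the centered Gaussian $\bunderline{G}_\mu$, and invoke \cref{thm: MDE limit}~(ii). The paper presents exactly this argument in the text immediately preceding the corollary rather than as a separate proof, so nothing further is needed.
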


Without assuming the uniqueness of $\argmin_{t\in\R^{d_0}}\norm{G_{\mu}-\inp{t}{\mathfrak{D}}}_{\dot H^{-1,p}(\mu*\gamma_{\sigma})}$, limit distributions for MDE solutions can be stated in terms of set-valued random variables.  Consider the set of approximate minimizers
\begin{equation}
    \hat \Theta_n:= \left\{\theta\in \Theta: {\sf W}_p^{(\sigma)}(\hat\mu_n,\nu_{\theta})\leq \inf_{\theta'\in \Theta} {\sf W}_p^{(\sigma)}(\hat\mu_n,\nu_{\theta'})+n^{-1/2}\lambda_n \right\},
    \label{eq:appmin}
\end{equation}
where $\lambda_n$ is any nonnegative sequence with $\lambda_n = o_{\Prob}(1)$.
We will show that $\hat \Theta_n\subset \theta\opt+n^{-1/2}K_n$ with inner probability approaching one for some sequence $K_n$ of random, convex, and compact sets; cf.  \cite[Section 2]{pollard_min_dist_1980}. 
To describe the sets $K_n$, for any $\beta \ge 0$ and $h \in \Hu$, define 
{\small
\[
    K(h,\beta):=\left\{ t\in\R^{d_0}:\norm{h-\inp{t}{\mathfrak{D}}}_{\dot H^{-1,p}(\mu*\gamma_{\sigma})}\leq \inf_{t'\in\R^{d_0}}\norm{h-\inp{t'}{\mathfrak{D}}}_{\dot H^{-1,p}(\mu*\gamma_{\sigma})}+\beta \right\}\in\mathfrak{K},
\]
}
\noindent where $\mathfrak{K}$ is the class of compact, convex, and nonempty subsets of $\R^{d_0}$ endowed~with~the Hausdorff topology. That is, the topology induced by the Hausdorff metric $d_H(K_1,K_2):=\inf\left\{\delta>0:K_2\subset K_1^{\delta},K_1\subset K_2^{\delta}  \right\}$, where $K^{\delta}:=\bigcup_{x\in K}\left\{ y\in\R^{d_0}:\norm{x-y}\leq\delta \right\}$. Lemma~7.1 in \cite{pollard_min_dist_1980} shows that the map $h\mapsto K(h,\beta)$ is measurable from $\Hu$ into $\mathfrak K$ for any $\beta\geq0$.

\begin{proposition}[Limit distribution for set of approximate minimizers]
\label{prop: MDE limit}
Under \cref{as:MSE},   there exists a sequence of nonnegative real numbers $\beta_n\dn 0$ such that (i) $\mathbb P_*\big(\hat \Theta_n\subset \theta\opt + n^{-1/2}K(\mathbb{G}_n^{(\sigma)},\beta_n)\big)\to 1$, where $\mathbb{P}_*$ denotes inner probability;  and (ii) $K\big(\mathbb{G}_n^{(\sigma)},\beta_n\big) \stackrel{d}{\to} K(G_{\mu},0)$ as $\mathfrak{K}$--valued random variables.    
\end{proposition}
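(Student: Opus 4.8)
\noindent\textit{Plan.} I would run the minimum‑distance‑estimation scheme of \cite[Section~2]{pollard_min_dist_1980} (adapted, as in \cite{bernton2019,goldfeld2020asymptotic}, to an infinite‑dimensional parameter), with the whole analysis taking place inside the dual Sobolev space $D_\mu:=\dot H^{-1,p}(\mu*\gamma_\sigma)$. The argument needs three inputs: (a) $\sqrt n$‑tightness of the near‑minimizers, i.e.\ $\Prob\big(\hat\Theta_n\subset B(\theta\opt,M_n n^{-1/2})\big)\to1$ for a suitable $M_n\uparrow\infty$; (b) a \emph{uniform local expansion} of the rescaled objective,
\[
\sup_{|t|\le M}\Big|\sqrt n\,\GWp\big(\hat\mu_n,\nu_{\theta\opt+n^{-1/2}t}\big)-\big\|\mathbb{G}_n^{(\sigma)}-\inp{t}{\mathfrak D}\big\|_{D_\mu}\Big|\stackrel{\Prob}{\to}0,\qquad\forall\,M<\infty;
\]
and (c) Pollard's set‑valued argmin bookkeeping, which converts (a)--(b) into the conclusion. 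Much of this is already available: \cite{nietert21} proved consistency and the parametric rate for $\GWp$‑MDE, which yields (a) after a routine upgrade from a single estimator to the set $\hat\Theta_n$, and the proof of \cref{thm: MDE limit} contains the analysis behind (b)--(c).

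\noindent\textit{Establishing (b).} I would write $\hat\mu_n*\gamma_\sigma-\nu_{\theta\opt+n^{-1/2}t}*\gamma_\sigma=n^{-1/2}\big(\mathbb{G}_n^{(\sigma)}-\inp{t}{\mathfrak D}\big)-r_n(t)$, where \cref{as:MSE}(vi) forces $\sqrt n\sup_{|t|\le M}\|r_n(t)\|_{D_\mu}\to0$ for each $M$. Since $\nu_{\theta\opt}=\mu$, one has $\GWp(\hat\mu_n,\nu_\theta)=\Phi\big((\hat\mu_n-\mu)*\gamma_\sigma,(\nu_\theta-\mu)*\gamma_\sigma\big)$ for the functional $\Phi$ of \cref{prop: Haradard derivative null}, so the target (b) is exactly a locally uniform version of the derivative formula $\Phi'_{(0,0)}(h_1,h_2)=\|h_1-h_2\|_{D_\mu}$. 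I would obtain it from the pointwise statement by a standard subsequence argument — passing, if convenient, to an almost‑sure representation of the weakly convergent sequence $\mathbb{G}_n^{(\sigma)}$ (\cref{thm:limit-distribution}), using continuity of $\|\cdot\|_{D_\mu}$ and the uniform bound on $r_n$, with the Lipschitz property of $\Phi$ in $\|\cdot\|_{D_\mu}$ (\cref{prop:duality-in-Wp}) keeping the relevant directions confined to a compact set so that the subsequence argument closes.

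\noindent\textit{Assembling the proof.} For part (i) I would take $M=M_n\uparrow\infty$ slowly enough that (a) and (b) hold together, set $\beta_n:=2\eps_n+\lambda_n$ with $\eps_n$ the supremum in (b) (replaced by a deterministic majorant so that $\beta_n\dn0$), and then check that, on the event in (a), the $n^{-1/2}\lambda_n$‑optimality of each $\theta\in\hat\Theta_n$ together with the expansion (b) — applied both at $t=\sqrt n(\theta-\theta\opt)$ and at a minimizer of $t'\mapsto\|\mathbb{G}_n^{(\sigma)}-\inp{t'}{\mathfrak D}\|_{D_\mu}$, which sits in $B(0,M_n)$ by coercivity (a consequence of the linear independence of $\mathfrak D_1,\dots,\mathfrak D_{d_0}$) — forces $t\in K(\mathbb{G}_n^{(\sigma)},\beta_n)$; this gives $\Prob_*\big(\hat\Theta_n\subset\theta\opt+n^{-1/2}K(\mathbb{G}_n^{(\sigma)},\beta_n)\big)\to1$. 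For part (ii), since $\mathbb{G}_n^{(\sigma)}\stackrel{d}{\to}G_\mu$ in the separable Banach space $D_\mu$ (\cref{thm:limit-distribution}) and $h\mapsto K(h,\beta)$ is Borel into $\mathfrak K$ (\cite[Lemma~7.1]{pollard_min_dist_1980}), it suffices — via the extended continuous mapping theorem — to show that $h_n\to h$ in $D_\mu$ and $\beta_n\dn0$ imply Hausdorff convergence $K(h_n,\beta_n)\to K(h,0)$. This follows from convexity and uniform‑on‑bounded‑sets coercivity of $t\mapsto\|h-\inp{t}{\mathfrak D}\|_{D_\mu}$: the near‑minimizer sets are equibounded, the value function $h\mapsto\inf_t\|h-\inp{t}{\mathfrak D}\|_{D_\mu}$ is Lipschitz, and $K(h,\beta)\dn K(h,0)$ as $\beta\dn0$; the only delicate point, lower semicontinuity of the set map, dictates (and is compatible with) letting $\beta_n$ vanish slowly in part (i).

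\noindent\textit{Main obstacle.} The hard part will be (b), and beneath it the quantitative comparison of $\Wp$ with the dual Sobolev norm that (a) also uses. \cref{prop:duality-in-Wp} only gives $\Wp\le p\,c^{-1/q}\|\cdot\|_{\dot H^{-1,p}(\rho)}$, and its hypothesis that some density be bounded below by $c>0$ is \emph{not} preserved by Gaussian convolution; so I would work with a carefully chosen reference measure — e.g.\ $\gamma_{\sigma/\sqrt2}$ — and verify that $c$ stays bounded away from $0$ along $\hat\mu_n$ and uniformly for $\theta$ near $\theta\opt$, complementing it with the reverse bound $\Wp\gtrsim\|\cdot\|_{\dot H^{-1,p}(\rho)}$, which holds when densities are bounded \emph{above} (automatic after convolution), cf.\ \cite{peyre2018,ledoux2019,weed2019}. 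Getting these two regimes to mesh uniformly in $\theta$ and in the data is the real work; once the continuity properties of $K(\cdot,\cdot)$ are nailed down, the set‑valued part (ii) is comparatively routine.
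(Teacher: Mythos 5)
Your overall architecture --- $\sqrt n$-localization of $\hat\Theta_n$, a locally uniform first-order expansion of $\sqrt n\,\GWp(\hat\mu_n,\nu_{\theta\opt+n^{-1/2}t})$ as $\|\mathbb{G}_n^{(\sigma)}-\inp{t}{\mathfrak D}\|_{\Hu}$, and Pollard's set-valued bookkeeping with a slowly vanishing $\beta_n$ --- is exactly the paper's (which routes the expansion through \cref{lem:HDD} and \cref{lem:errprop} and then adapts \cite[Theorem~7.2]{pollard_min_dist_1980}). Two points deserve correction.

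First, the deterministic continuity claim you lean on for part (ii) --- that $h_n\to h$ and $\beta_n\dn0$ alone imply $d_H\big(K(h_n,\beta_n),K(h,0)\big)\to0$ --- is false: the argmin set map is only upper semicontinuous, and if $\beta_n$ vanishes faster than $\|h_n-h\|$ the lower containment $K(h,0)\subset K(h_n,\beta_n)^{\alpha}$ can fail (already for $d_0=1$ with a flat minimum tilted by an $O(\|h_n-h\|)$ perturbation). You flag lower semicontinuity as the delicate point, but the fix must be made quantitative: $\beta_n$ has to dominate $2\|h_n-h\|_{\Hu}$, which in the stochastic setting means passing to a Skorohod--Dudley--Wichura coupling $\tilde{\mathbb{G}}_n^{(\sigma)}\to\tilde G_\mu$ and building a term $2\epsilon_n$ with $\Prob(\|\tilde{\mathbb{G}}_n^{(\sigma)}-\tilde G_\mu\|_{\Hu}>\epsilon_n)\to0$ into $\beta_n$ explicitly (the paper takes $\beta_n=\max\{2\gamma_n+\delta_n,\,2\epsilon_n\}$, with $\gamma_n,\delta_n$ deterministic majorants of the expansion error and of $\lambda_n$; note $\beta_n$ must be deterministic, so $\lambda_n$ itself cannot appear in it). Second, your ``main obstacle'' is a non-issue and your proposed remedy would not work: a two-sided comparison $c_1\|\cdot\|\le\Wp\le c_2\|\cdot\|$ with constants cannot yield the exact expansion (b), which requires the Hadamard derivative to equal the dual Sobolev norm with constant one. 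That exact statement is \cref{prop: Haradard derivative null} (whose lower bound comes from testing against $C_0^\infty$ functions via duality, not from a reverse Peyr\'e-type inequality), and the applications of \cref{prop:duality-in-Wp} along the way always use the reference measure $\mu*\gamma_\sigma$ itself with convex-combination perturbations, so the density lower bound is $1-ct_n\ge 1/2$ trivially --- no auxiliary reference measure such as $\gamma_{\sigma/\sqrt2}$ is needed here. Since you also (correctly) propose deriving (b) from the pointwise Hadamard derivative plus tightness, dropping the reverse-inequality detour leaves a proof that matches the paper's.
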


The proof of this proposition is an adaptation of that of Theorem 7.2 in \cite{pollard_min_dist_1980}. For completeness, a self-contained argument is provided in Appendix \ref{sec: MDE additional proof}. 

\section{Remaining proofs}
\label{sec: proofs}

\subsection{Proofs for \cref{sec: weak convergence}}

We fix some notation. For a nonempty set $S$, let $\ell^\infty(S)$ denote the space of bounded real functions on $S$ endowed with the sup-norm $\| \cdot \|_{\infty,S} = \sup_{s \in S}| \cdot |$. The space $(\ell^\infty(S),\| \cdot \|_{\infty,S})$ is a Banach space. 

\subsubsection{Proof of \cref{thm:limit-distribution}}
We divide the proof into three steps. In Steps 1 and 2, we will establish weak convergence of  $\sqrt{n}(\hat{\mu}_n-\mu)*\gamma_{\sigma}$ in $\dot{H}^{-1,p}(\gamma_\sigma)$. Step 3 is devoted to weak convergence of  $\sqrt{n}(\hat{\mu}_n-\mu)*\gamma_{\sigma}$ in $\dot{H}^{-1,p}(\mu*\gamma_\sigma)$.

\smallskip

\underline{Step 1}.  Observe that
\begin{equation}
\big ((\hat{\mu}_n - \mu)*\gamma_{\sigma} \big) (f) = (\hat{\mu}_n - \mu)(f*\phi_{\sigma}). 
\label{eq: convolution identity}
\end{equation}
Consider the function classes
\[
\label{eq:sobolev-ball}
\cF =\big \{ f \in \dot{C}_{0}^{\infty} :  \| f \|_{\dot{H}^{1,q}(\gamma_{\sigma})} \le 1 \big\} \quad \text{and} \quad \cF*\phi_{\sigma} = \big \{ f*\phi_{\sigma} : f \in \cF \big \}. 
\]
The proof of Theorem 3 in \cite{nietert21} shows that the function class $\cF*\phi_{\sigma}$ is $\mu$-Donsker. For completeness, we provide an outline of the argument. Since for any constant $a \in \R$ and any function $f \in \cF$, $(\hat{\mu}_n - \mu)(f * \phi_{\sigma}) = (\hat{\mu}_n - \mu)\big((f-a)* \phi_{\sigma} \big)$, it suffices to show that $\cF_0*\phi_{\sigma}$ with $\cF_0:= \{ f \in \cF: \gamma_{\sigma}(f) = 0 \}$ is $\mu$-Donsker. To this end, we will apply Theorem 1 in \cite{vanderVaart1996} or its simple adaptation, Lemma 8 in \cite{nietert21}. 

Fix any $\eta \in (0,1)$.
We first observe that, for any $f \in \cF_0$ and any multi-index $k = (k_1,\dots,k_d) \in \N_{0}^{d}$, we have
\begin{equation}
\big|\partial^{k} (f*\phi_\sigma)(x)\big| \lesssim (\mathsf{C}_q(\gamma_{\sigma}) \vee \sigma^{-\bar{k}+1})
\exp \left ( \frac{(p-1)|x|^2}{2\sigma^2(1-\eta)} \right)
\label{eq: derivative}
\end{equation}
up to constants independent of $f, x$, and $\sigma$, where $\bar{k} = \sum_{j=1}^{d}k_j$. Here $\partial^k = \partial_1^{k_1} \cdots \partial_d^{k_d}$ is the differential operator  and $\mathsf{C}_q(\gamma_\sigma)$ is the $q$-Poincar\'{e} constant for the Gaussian measure $\gamma_\sigma$. To see this, observe that
\[
(f*\phi_\sigma)(x)=  \int_{\R^d} \frac{\phi_{\sigma}(x-y)}{\phi_{\sigma}(y)} f(y) \phi_{\sigma}(y) d y. 
\]
Applying H\"{o}lder's inequality and using the fact that $\| f \|_{L^q(\gamma_{\sigma})} \le \mathsf{C}_q(\gamma_{\sigma}) \| f \|_{\dot{H}^{1,q}(\gamma_\sigma)} \le \mathsf{C}_q(\gamma_{\sigma})$ (recall that $\gamma_\sigma (f) = 0$), we obtain 
\[
|(f*\phi_\sigma)(x)| \le \mathsf{C}_q(\gamma_{\sigma}) \left [\int_{\R^d} \frac{\phi_{\sigma}^{p}(x-y)}{\phi_{\sigma}^{p-1}(y)} d y \right ]^{1/p}.
\]
A direct calculation further shows that
\[
\int_{\R^d} \frac{\phi_{\sigma}^{p}(x-y)}{\phi^{p-1}_{\sigma}(y)} d y 
=  \exp \left ( \frac{p(p-1)|x|^2}{2\sigma^2} \right ),
\]
which implies
\[
|(f*\phi_\sigma)(x)|\le  \mathsf{C}_q(\gamma_{\sigma}) \exp \left ( \frac{(p-1)|x|^2}{2\sigma^2} \right ),
\]
establishing (\ref{eq: derivative}) when $\bar{k} = 0$. Derivative bounds follow similarly; see \cite{nietert21} for details.

Next, we construct a cover $\{ \cX_j \}_{j=1}^{\infty}$ of $\R^d$. Let $B_r = B(0,r)$. For $\delta > 0$ fixed and $r =2,3,\dots$, let $\{ x_{1}^{(r)},\dots,x_{N_{r}}^{(r)} \}$ be a minimal $\delta$-net of $B_{r\delta} \setminus B_{(r-1)\delta}$.
Set $x_{1}^{(1)} = 0$ with $N_1 = 1$. It is not difficult to see from a volumetric argument that $N_{r} = O(r^{d-1})$.
Set $\cX_j = B(x_j^{(r)},\delta)$ for $j=\sum_{k=1}^{r-1}N_{k}+1,\dots,\sum_{k=1}^{r}N_{k}$. 
By construction, $\{ \cX_j \}_{j=1}^{\infty}$ forms a cover of $\R^{d}$ with diameter $2\delta$.
Set $\alpha = \lfloor d/2 \rfloor +1$ and $M_j = \sup_{f \in \cF_0} \max_{\bar{k} \le \alpha} \sup_{x \in \mathrm{int}(\cX_j)} |\partial^{k}(f*\phi_{\sigma})(x)|$. By Theorem 1 in \cite{vanderVaart1996} combined with Theorem 2.7.1 in \cite{vandervaart1996book} (or their simple adaptation; cf. Lemma 8 in \cite{nietert21}), $\cF_0*\phi_{\sigma}$ is $\mu$-Donsker if $\sum_{j=1}^\infty M_j \mu(\mathcal{X}_j)^{1/2} < \infty$.
By inequality (\ref{eq: derivative}), 
\[
\max_{\sum_{k=1}^{r-1}N_{k}+1 \le j \le \sum_{k=1}^{r} N_{j}} M_j \lesssim \sigma^{-\lfloor d/2 \rfloor}  \exp \left ( \frac{(p-1)r^2 \delta^2}{2\sigma^2 (1-\eta)} \right )
\]
up to constants independent of $r$ and $\sigma$. 
Hence, $\sum_{j=1}^\infty M_j \mu(\mathcal{X}_j)^{1/2}$ is finite if
\[
\sum_{r=1}^{\infty} r^{d-1}\exp \left ( \frac{(p-1)r^2 \delta^2}{2\sigma^2 (1-\eta)} \right ) \sqrt{\Prob(|X| > (r-1)\delta)} <\infty.
\]
By Riemann approximation, the sum on the the left-hand side above can be bounded by 
\[
\delta^{-d-1} \int_{1}^{\infty}t^{d-1}\exp \left ( \frac{(p-1)t^2}{2\sigma^2 (1-\eta)} \right ) \sqrt{\Prob(|X| > t-2\delta)} d t,
\]
which is finite under our assumption by choosing $\eta$ and $\delta$ sufficiently small, and absorbing $t^{d-1}$ into the exponential function.

\smallskip

\underline{Step 2}. 
Let $\cU = \{ f \in \dot{H}^{1,q}(\gamma_{\sigma}) : \| f \|_{\dot{H}^{1,q}(\gamma_{\sigma})} \le 1 \}$.
Recall from \cref{rem:explicit-construction} that $\dot{H}^{1,q}(\gamma_\sigma) \subset L^q(\gamma_\sigma)$. From Step 1, we know that $\cF*\phi_{\sigma}$ is $\mu$-Donsker.
The same conclusion holds with $\cF$ replaced by $\cU$. This can be verified as follows. From the proof of (\ref{eq: derivative}) when $\bar{k}=0$, we see that for $f_1,f_2 \in \dot{H}^{1,q}(\gamma_{\sigma})$ with $\gamma_{\sigma}$-mean zero, 
\[
\big|(f_1*\phi_{\sigma})(x) - (f_2*\phi_{\sigma})(x)\big| \le  \mathsf{C}_q(\gamma_{\sigma})\| f_1-f_2 \|_{\dot{H}^{1,q}(\gamma_{\sigma})} \exp \left ( \frac{(p-1)|x|^2}{2\sigma^2} \right), \quad \forall\, x \in \R^d. 
\]
Since the exponential function on the right-hand side is square-integrable w.r.t. $\mu$ under Condition (\ref{eq:moment-condition}) and $\cF_0$ is dense in $\cU_0 := \{ f \in \cU : \gamma_{\sigma}(f) = 0 \}$ for $\| \cdot \|_{\dot{H}^{1,q}(\gamma_\sigma)}$ by construction (cf. Remark \ref{rem:explicit-construction}), we see that 
\[
\cU_0*\phi_\sigma \subset \big \{ g : \exists\, g_m \in \cF_0*\phi_\sigma \ \text{such that $g_m \to g$ poinwise and in $L^2(\mu)$}\big \}.
\]
Thus, by Theorem 2.10.2 in \cite{vanderVaart1996}, $\cU_0*\phi_\sigma$ (or equivalently, $\cU*\phi_\sigma$) is $\mu$-Donsker. 
Since the map $ \ell^{\infty}(\cU*\phi_{\sigma}) \ni L \mapsto (L(f*\phi_{\sigma}))_{f \in \cU} \in \ell^{\infty}(\cU)$ is isometrically isomorphic, in view of (\ref{eq: convolution identity}), we have $\sqrt{n}(\hat{\mu}_n-\mu)*\gamma_{\sigma} \stackrel{d}{\to} G_{\mu}^{\circ}$ in $\ell^{\infty}(\cU)$ for some tight Gaussian process~$G_{\mu}^{\circ}$. %

Let $\lin^{\infty}(\cU)$ denote all bounded real functionals $L$ on $\cU$, such that $L(0)=0$ and %
\[
L(\alpha f + (1-\alpha) g) = \alpha L(f)+(1-\alpha)L(g), \quad 0 \le \alpha \le 1, \ f,g \in \cU. 
\]
Equip $\lin^{\infty}(\cU)$ with the norm $\| \cdot \|_{\infty,\cU} = \sup_{f \in \cU}| \cdot |$.  
Each element in $\lin^{\infty}(\cU)$ extends uniquely to  the corresponding element in $\dot{H}^{-1,p}(\gamma_{\sigma})$, and the extension, denoted by  $\iota: \lin^{\infty}(\cU) \to \dot{H}^{-1,p}(\gamma_{\sigma})$, is isometrically isomorphic. 
This follows from the same argument as the proof of Lemma 1 in \cite{nickl2009}. We omit the details. 

Since $\lin^{\infty}(\cU)$ is a closed subspace of $\ell^{\infty}(\cU)$ and $\sqrt{n}(\hat{\mu}_n-\mu)*\gamma_{\sigma}$ has paths in $\lin^{\infty}(\cU)$, we see that $G_{\mu}^{\circ} \in \lin^{\infty}(\cU)$ with probability one by the portmanteau theorem and $\sqrt{n}(\hat{\mu}_n-\mu)*\gamma_{\sigma} \stackrel{d}{\to} G_{\mu}^{\circ}$ in $\lin^{\infty}(\cU)$.  Now, since $\sqrt{n} (\hat{\mu}_n-\mu)*\gamma_\sigma$ is a (random) signed measure that is bounded on $\cU$ with probability one, we can regard  $\sqrt{n}(\hat{\mu}_n-\mu)*\gamma_\sigma$ as a random variable with values in $\dot{H}^{-1,p}(\gamma_\sigma)$. Conclude that $\sqrt{n}(\hat{\mu}_n-\mu)*\gamma_{\sigma} \stackrel{d}{\to} \iota\circ G_{\mu}^{\circ}$ in $\dot{H}^{-1,p}(\gamma_{\sigma})$ by the continuous mapping theorem. For notational convenience, redefine $G_{\mu}^{\circ}$ by $\iota \circ G_{\mu}^{\circ}$. The limit variable $G_\mu^{\circ} = (G_\mu^{\circ}(f))_{f \in \dot{H}^{1,q}(\gamma_\sigma)}$ is a centered Gaussian process with covariance function $\Cov(G^{\circ}_{\mu}(f),G^{\circ}_{\mu}(g) \big) = \Cov_{\mu}(f*\phi_{\sigma},g*\phi_{\sigma})$.

\smallskip

\underline{Step 3}. 
We will show that $\sqrt{n}(\hat{\mu}_n-\mu)*\gamma_{\sigma}$ converges in distribution to a centered Gaussian process in $\dot{H}^{-1,p}(\mu*\gamma_{\sigma})$. 
For $X \sim \mu$ with $a = \E[X]$, let $\mu^{-a}$ denote the distribution of $X-a$, and let $\hat{\mu}_{n}^{-a} = n^{-1}\sum_{i=1}^n \delta_{(X_i-a)}$. It is not difficult to see that $\mu^{-a}$ satisfies Condition (\ref{eq:moment-condition}).
Applying the result of Step 2 with $\mu$ replaced by $\mu^{-a}$, we have $\sqrt{n}(\hat{\mu}_n^{-a}-\mu^{-a})*\gamma_{\sigma} \stackrel{d}{\to} G_{\mu^{-a}}^{\circ}$ in $\dot{H}^{-1,p}(\gamma_{\sigma})$. 
Since $\| \cdot \|_{\dot{H}^{1,q}(\gamma_{\sigma})} \lesssim \| \cdot \|_{\dot{H}^{1,q}(\mu^{-a}*\gamma_{\sigma})}$ (as $d(\mu^{-a}*\gamma_{\sigma})/d\gamma_{\sigma} \ge e^{-\E_{\mu}[|X-a|^2]/(2\sigma^2)}$ by Jensen's inequality), we have $\| \cdot \|_{\dot{H}^{-1,p}(\mu^{-a}*\gamma_{\sigma})} \lesssim \| \cdot \|_{\dot{H}^{-1,p}(\gamma_{\sigma})}$, i.e., the continuous embedding $\dot{H}^{-1,p}(\gamma_{\sigma}) \hookrightarrow \dot{H}^{-1,p}(\mu^{-a}*\gamma_{\sigma})$ holds. Thus $\sqrt{n}(\hat{\mu}_n^{-a}-\mu^{-a})*\gamma_{\sigma} \stackrel{d}{\to} (G_{\mu^{-a}}^{\circ}(f))_{f \in \dot{H}^{1,q}(\mu^{-a}*\gamma_{\sigma})}$ in $\dot{H}^{-1,p}(\mu^{-a}*\gamma_{\sigma})$. 

Observe that for $\varphi \in C_0^\infty$, 
\[
\begin{split}
 \| \varphi (\cdot+a) \|_{\dot{H}^{1,q}(\mu^{-a}*\gamma_{\sigma})}^q &= \int_{\R^d} |\nabla \varphi(\cdot+a)|^q d(\mu^{-a}*\gamma_{\sigma}) \\
 &= \int_{\R^d} |\nabla \varphi|^q d(\mu*\gamma_{\sigma}) = \| \varphi \|_{\dot{H}^{1,q}(\mu*\gamma_{\sigma})}^q.
 \end{split}
\]
Thus, the map $\tau_a: \dot{H}^{-1,p}(\mu^{-a}*\gamma_{\sigma}) \to \dot{H}^{-1,p}(\mu*\gamma)$, defined by $\tau_a(h)(f) = h(f(\cdot+a))$, is continuous (indeed, isometrically isomorphic). 
Conclude that 
\[
\sqrt{n}(\hat{\mu}_n-\mu)*\gamma_{\sigma} = \tau_{a}\big (\sqrt{n}(\hat{\mu}_n^{-a}-\mu^{-a})*\gamma_{\sigma} \big) \stackrel{d}{\to} \tau_aG^{\circ}_{\mu^{-a}}=:G_{\mu} \quad \text{in} \ \dot{H}^{-1,p}(\mu*\gamma_{\sigma}).
\] 
The limit variable $G_\mu = (G_\mu(f))_{f \in \dot{H}^{1,q}(\mu*\gamma_\sigma)} = (G_{\mu^{-a}}^{\circ}(f(\cdot+a)))_{f \in \dot{H}^{1,q}(\mu*\gamma_\sigma)}$ is a centered Gaussian process with covariance function
\[
\begin{split}
\Cov(G_\mu(f),G_\mu(g)) &= \Cov\big (G^{\circ}_{\mu^{-a}}(f(\cdot+a),G^{\circ}_{\mu^{-a}}(g(\cdot+a)) \big) \\
&= \Cov_{\mu^{-a}}\big (f(\cdot+a)*\phi_{\sigma},g(\cdot+a)*\phi_{\sigma}\big) \\
&=\Cov_{\mu^{-a}}\big (f*\phi_{\sigma}(\cdot+a),g*\phi_{\sigma}(\cdot+a) \big) \\
&= \Cov_{\mu}(f*\phi_{\sigma},g*\phi_{\sigma}). 
\end{split}
\]
This completes the proof. \qed

\begin{remark}[Alternative proof for $p=2$]
\label{rem: alternative}
Observe that $(\hat{\mu}_n-\mu)*\gamma_\sigma = n^{-1} \sum_{i=1}^n (\delta_{X_i}-\mu)*\gamma_\sigma = n^{-1}\sum_{i=1}^n Z_i$ with $Z_i = (\delta_{X_i}-\mu)*\gamma_\sigma$, and that $Z_1,Z_2,\dots$ are i.i.d. random variables with values in $\dot{H}^{-1,p}(\gamma_\sigma)$ (cf. (\ref{eq: derivative})). Since $\dot{H}^{-1,2}(\gamma_\sigma)$ is isometrically isomorphic to a closed subspace of $L^2(\gamma_\sigma;\R^d)$ (see \cref{lem:vector-field} ahead), we may apply the CLT in the Hilbert space to derive a limit distribution for $\sqrt{n}(\hat{\mu}_n-\mu)*\gamma_\sigma = n^{-1/2}\sum_{i=1}^n Z_i$ in $\dot{H}^{-1,2}(\gamma_\sigma)$. Let $E: \dot{H}^{-1,2}(\gamma_\sigma) \to L^p(\gamma_\sigma;\R^d)$ be the linear isometry given in \cref{lem:vector-field} ahead and $\underline{Z}_i = E(Z_i)$ be the corresponding  $L^2(\gamma_\sigma;\R^d)$-valued random variables. Since $L^2(\gamma_\sigma;\R^d)$ is a Hilbert space, $n^{-1/2}\sum_{i=1}^nZ_i$ obeys the CLT if $\E\big[\| \underline{Z}_1 \|^2_{L^2(\gamma_\sigma;\R^d)}\big] = \E\big[\|Z_1\|_{\dot{H}^{-1,2}(\gamma_\sigma)}^2\big] < \infty$, which is satisfied under Condition (\ref{eq:moment-condition}). Indeed, for $p=2$, it is not difficult to see that the CLT in $\dot{H}^{-1,2}(\gamma_\sigma)$ holds for $n^{-1/2}\sum_{i=1}^nZ_i$ under a slightly weaker moment condition, namely, $\int_{\R^d} e^{|x|^2/\sigma^2} d\mu(x) < \infty$. 
\end{remark}

\subsubsection{Proof of Proposition \ref{prop:glivenko-cantelli}}

\underline{Part (i)}. 
Let
\[
\mathfrak{F} = \{ f*\phi_\sigma : f \in \dot{H}^{1,q}(\gamma_{\sigma}), \| f \|_{L^{q}(\gamma_{\sigma})} \le 1,  \|f\|_{\dot{H}^{1,q}(\gamma_{\sigma})}  \le C \}
\]
for some sufficiently large but fixed constant $C$. It is not difficult to see that $\lim_{n \to \infty} \| (\hat{\mu}_n-\mu)*\gamma_{\sigma}\|_{\dot{H}^{-1,p}(\gamma_{\sigma})} = 0$ a.s. if and only if $\mathfrak{F}$ is $\mu$-Glivenko-Cantelli. 

Suppose first that $\mathfrak{F}$ is $\mu$-Glivenko-Cantelli. Let $F_{\mathfrak{F}}$ denote the minimal envelope for $\mathfrak{F}$, i.e., $F_{\mathfrak{F}}(x) = \sup_{f \in \mathfrak{F}}|f(x)|$. 
By Theorem 3.7.14 in \cite{gine2016}, $F_{\mathfrak{F}}$ must be $\mu$-integrable. We shall bound $F_{\mathfrak{F}}$ from below. Fix any $x \in \R^d$. Consider 
\[
\varphi_{x}(y)=  \frac{g_{x}^{p-1}(y)}{\|g_{x}\|_{L^p(\gamma_{\sigma})}^{p-1}} \quad \text{with} \quad 
g_{x}(y) = \frac{\phi_{\sigma}(x-y)}{\phi_{\sigma}(y)} = e^{-|x|^2/(2\sigma^2)+\langle x,y \rangle/\sigma^2}, \quad y \in \R^d. 
\]
Observe that $\nabla_y \varphi_x (y) = \big((p-1)x/\sigma^2\big) \varphi_{x}(y)$ and thus $\| \varphi_x \|_{\dot{H}^{1,q}(\gamma_{\sigma})} = (p-1)|x|/\sigma^2$.
Thus, for $\tilde{\varphi}_x = \varphi_x/(1+|x|)$, we have $\| \tilde{\varphi}_x \|_{L^q(\gamma_{\sigma})} \le 1, \| \tilde{\varphi}_x \|_{\dot{H}^{1,q}(\gamma_{\sigma})} \le (p-1)/\sigma^2$, and 
\[
(\tilde{\varphi}_x*\phi_\sigma) (x) = \frac{1}{1+|x|} \| g_x \|_{L^p(\gamma_{\sigma})} = \frac{1}{1+|x|} e^{(p-1)|x|^2/(2\sigma^2)}.
\]
Also, from Proposition 1.5.2 in \cite{bogachev1998}, we see that $\tilde{\varphi}_x \in \dot{H}^{1,q}(\gamma_\sigma)$.  Conclude that, as long as $C \ge (p-1)/\sigma^2$, 
\[
F_{\mathfrak{F}}(x) \ge \frac{1}{1+|x|} e^{(p-1)|x|^2/(2\sigma^2)}. 
\]
Now, the left-hand side is $\mu$-integrable, so that $\int_{\R^d} e^{\theta |x|^2/(2\sigma^2)} d \mu(x) < \infty$ for any $\theta < p-1$. 

\underline{Part (ii)}. 
Conversely, suppose that $\int_{\R^d} e^{(p-1) |x|^2/(2\sigma^2)} d \mu(x) < \infty$, which ensures that $F_{\mathfrak{F}}$ is $\mu$-integrable from (\ref{eq: derivative}). From the proof of \cref{thm:limit-distribution}, for any $M > 0$, we see that the restricted function class $\{ f\mathbbm{1}_{F_{\mathfrak{F}} \le M} : f \in \mathfrak{F} \}$ is $\mu$-Donsker and thus $\mu$-Glivenko-Cantelli (cf. Theorem 3.7.14 in \cite{gine2016}). Since the envelope function $F_{\mathfrak{F}}$ is $\mu$-integrable, we conclude that $\mathfrak{F}$ is $\mu$-Glivenko-Cantelli; cf. the proof of Theorem 3.7.14 in \cite{gine2016}.
\qed

\subsection{Proofs for Section \ref{sec: null case}}

Recall that $1 < p < \infty$ and $q$ is its conjugate index, i.e., $1/p+1/q=1$. 

\subsubsection{Proof of Lemma \ref{lem: Haradard derivative null}}
One of the main ingredients of the proof of Lemma \ref{lem: Haradard derivative null} is Theorem 8.3.1 in \cite{ambrosio2005}, which is stated next (see also the Benamou-Brenier formula \cite{benamou2000}). 

\begin{theorem}[Theorem 8.3.1 in \cite{ambrosio2005}]
\label{thm:continuity}
Let $I$ be an open interval, and let $I \ni t \mapsto \mu_{t}$ be a continuous curve in $\cP_{p}(\R^{d})$ (equipped with $\Wp$) such that for some Borel vector field $\R^{d} \times I \ni (x,t) \mapsto v_{t}(x) \in \R^{d}$, the continuity equation
\begin{equation}
\label{eq:continuity}
\partial_{t} \mu_{t} + \nabla \cdot (v_{t} \mu_{t}) = 0 \quad \text{in} \ \  \R^{d} \times I
\end{equation}
holds in the distributional sense, i.e.,
\[
\int_{I} \int_{\R^{d}} (\partial_{t} \varphi (x,t) + \langle v_{t}(x),\nabla_{x} \varphi(x,t) \rangle ) d\mu_{t}(x) d t = 0, \quad \forall \varphi \in C_{0}^{\infty}(\R^{d} \times I). 
\]
If $\| v_{t} \|_{L^{p}(\mu_{t};\R^d)} \in L^1 (I)$, then $\Wp(\mu_{a},\mu_{b}) \le \int_{a}^{b} \| v_{t} \|_{L^{p}(\mu_{t};\R^d)} d t$ for all $a <b$ with $a,b \in I$.
\end{theorem}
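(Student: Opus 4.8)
The plan is to prove the inequality $\Wp(\mu_a,\mu_b)\le \int_a^b \|v_t\|_{L^p(\mu_t;\R^d)}\,dt$ by exhibiting an explicit coupling of $\mu_a$ and $\mu_b$ built from the trajectories of the velocity field $v_t$, and then bounding its transport cost via the integral (continuous triangle-inequality) form of Minkowski's inequality. Without loss of generality we may take $[a,b]\subset I$ compact and assume $L:=\int_a^b\|v_t\|_{L^p(\mu_t;\R^d)}\,dt<\infty$, since otherwise there is nothing to prove; note also that then $\int_a^b\int_{\R^d}|v_t|\,d\mu_t\,dt\le L<\infty$ by H\"older's inequality.

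\textbf{Lagrangian representation of the curve.}
First I would invoke the superposition principle for distributional solutions of the continuity equation (Theorem 8.2.1 in \cite{ambrosio2005}), whose integrability hypothesis is met because $t\mapsto\mu_t$ is narrowly continuous and $\int_a^b\int_{\R^d}|v_t|\,d\mu_t\,dt<\infty$. This produces a Borel probability measure $\eta$ on $C([a,b];\R^d)$ that is concentrated on absolutely continuous curves $\gamma$ solving $\dot\gamma(t)=v_t(\gamma(t))$ for a.e.\ $t\in[a,b]$ and satisfies $(e_t)_{\#}\eta=\mu_t$ for every $t\in[a,b]$, where $e_t(\gamma):=\gamma(t)$. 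In particular $\int_a^b|v_t(\gamma(t))|\,dt<\infty$ for $\eta$-a.e.\ $\gamma$, so the fundamental theorem of calculus gives $\gamma(b)-\gamma(a)=\int_a^b\dot\gamma(t)\,dt$ for $\eta$-a.e.\ $\gamma$.

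\textbf{Coupling and Minkowski estimate.}
The measure $\pi:=(e_a,e_b)_{\#}\eta$ is a coupling of $\mu_a$ and $\mu_b$, hence
\[
\Wp(\mu_a,\mu_b)\le\left(\int_{C([a,b];\R^d)}|\gamma(b)-\gamma(a)|^p\,d\eta(\gamma)\right)^{1/p}
=\left\|\int_a^b\dot\gamma(t)\,dt\right\|_{L^p(\eta;\R^d)}.
\]
Applying Minkowski's integral inequality to the $\R^d$-valued map $(\gamma,t)\mapsto\dot\gamma(t)$ on the product of $\eta$ and Lebesgue measure on $[a,b]$, and then using $\dot\gamma(t)=v_t(\gamma(t))$ together with $(e_t)_{\#}\eta=\mu_t$, I obtain
\[
\left\|\int_a^b\dot\gamma(t)\,dt\right\|_{L^p(\eta;\R^d)}
\le\int_a^b\left(\int|\dot\gamma(t)|^p\,d\eta(\gamma)\right)^{1/p}dt
=\int_a^b\left(\int_{\R^d}|v_t|^p\,d\mu_t\right)^{1/p}dt
=\int_a^b\|v_t\|_{L^p(\mu_t;\R^d)}\,dt,
\]
which is the claim; all intermediate quantities are finite precisely because $L<\infty$.

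\textbf{Main obstacle and an alternative route.}
I expect the delicate point to be the clean invocation of the superposition principle: verifying its measurability and integrability hypotheses at the present level of generality and confirming that $\eta$ is genuinely carried by solutions of $\dot\gamma=v_t\circ\gamma$ rather than by limits of approximate solutions. A more self-contained route avoids Theorem 8.2.1 by first regularizing in space: set $\mu_t^{\eps}:=\mu_t*\phi_\eps$ and $v_t^{\eps}:=\big[(v_t\mu_t)*\phi_\eps\big]/\big[\mu_t*\phi_\eps\big]$, observe that $(\mu_t^{\eps},v_t^{\eps})$ solves the continuity equation classically in $x$, that $\|v_t^{\eps}\|_{L^p(\mu_t^{\eps};\R^d)}\le\|v_t\|_{L^p(\mu_t;\R^d)}$ by Jensen's inequality (since $v_t^{\eps}(y)$ is a $\phi_\eps(y-\cdot)\,d\mu_t$-average of $v_t$), and that $v_t^{\eps}$ is smooth in $x$ with ball-wise Lipschitz and sup bounds of the form $C_{\eps,R}\|v_t\|_{L^p(\mu_t;\R^d)}\in L^1([a,b])$ (using uniform tightness of $\{\mu_t\}_{t\in[a,b]}$, which follows from $\Wp$-continuity on the compact interval). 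One then runs the Cauchy--Lipschitz flow $X_t^{\eps}$, identifies $\mu_t^{\eps}=(X_t^{\eps})_{\#}\mu_a^{\eps}$, repeats the Minkowski estimate with the coupling $(\mathrm{id},X_b^{\eps})_{\#}\mu_a^{\eps}$, and lets $\eps\downarrow0$ using $\mu_a^{\eps}\to\mu_a$, $\mu_b^{\eps}\to\mu_b$ in $\Wp$ and lower semicontinuity of $\Wp$. In that route the subtle step is controlling possible finite-time blow-up of the trajectories without an a priori growth bound on $v_t^{\eps}$, which is handled by a cutoff-and-remove argument on large balls.
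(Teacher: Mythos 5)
This statement is not proved in the paper at all: it is quoted verbatim as Theorem 8.3.1 of \cite{ambrosio2005} and used as an imported ingredient in the proof of Lemma 5.1, so the only meaningful comparison is with the proof in that reference. Your primary argument is correct but follows a genuinely different route from the cited one: you invoke the superposition principle (Theorem 8.2.1 in \cite{ambrosio2005}) to lift the Eulerian solution $(\mu_t,v_t)$ to a measure $\eta$ on characteristics, take the coupling $(e_a,e_b)_{\#}\eta$, and conclude by Minkowski's integral inequality; this is clean, and there is no circularity since the superposition theorem is established independently of (and before) Theorem 8.3.1. Two caveats are worth recording: the superposition principle in \cite{ambrosio2005} is stated under the integrability $\int_I\|v_t\|_{L^p(\mu_t;\R^d)}\,dt<\infty$ with $p>1$ (its tightness argument uses superlinearity of the action), so your route covers exactly the regime $1<p<\infty$ in which the paper applies the theorem, but not $p=1$ as literally stated; and you should note that $\Wp$-continuity gives narrow continuity, which is the form of continuity the superposition theorem requires. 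The proof actually given in \cite{ambrosio2005} is your ``alternative route'': spatial mollification of $\mu_t$ and of the momentum $v_t\mu_t$, the Jensen-type contraction $\|v_t^{\eps}\|_{L^p(\mu_t^{\eps};\R^d)}\le\|v_t\|_{L^p(\mu_t;\R^d)}$, representation of $\mu_t^{\eps}$ by the Cauchy--Lipschitz flow of $v_t^{\eps}$, the same Minkowski estimate along the flow, and a limit $\eps\downarrow 0$ using lower semicontinuity of $\Wp$. The trade-off is as you describe: the superposition route offloads all the measure-theoretic work onto one black-box theorem and avoids regularization entirely, while the mollification route is more self-contained but requires the local Lipschitz and growth bounds for $v_t^{\eps}$ that you correctly identify as the delicate step.
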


For a vector field $v: \R^d \to \R^d$, define
\[
j_{p}(v) := 
\begin{cases}
|v|^{p-2} v & \text{if} \ v \neq 0 \\
0 & \text{otherwise}
\end{cases}.
\]
Observe that $w = j_p(v)$ if and only if $v = j_{q}(w)$, and for any $\rho \in \cP$, 
\[
\| j_{p}(v) \|_{L^{q}(\rho;\R^d)}^{q} =  \| v \|_{L^p(\rho;\R^d)}^p = \int_{\R^d} \langle j_p(v), v \rangle d\rho.
\]
We will also use the following lemma.

\begin{lemma}
\label{lem:vector-field}
Let $\rho \in \cP$ be a reference measure.  For any $h \in \dot{H}^{-1,p}(\rho)$, there exists a unique  vector field $E = E(h)\in L^{p}(\rho;\R^{d})$ such that 
\begin{equation}
\begin{cases}
&\int_{\R^{d}} \langle \nabla \varphi,E \rangle  d \rho =h(\varphi), \quad \forall \varphi \in C_{0}^{\infty}, \\
&j_p(E) \in \overline{\{ \nabla \varphi : \varphi \in C_{0}^{\infty} \}}^{L^q(\rho;\R^d)}.
\end{cases}
\label{eq: vector field}
\end{equation}
The map $h \mapsto E(h)$ is homogeneous (i.e., $E(ah) = a E(h)$ for all $a \in \R$ and $h \in \dot{H}^{-1,p}(\rho)$) and such that $\| E(h) \|_{L^p(\rho;\R^d)} = \| h \|_{\dot{H}^{-1,p}(\rho)}$ for all $h \in \dot{H}^{-1,p}(\rho)$. If $p=2$, then the map $h \mapsto E(h)$ is a linear isometry from $\dot{H}^{-1,2}(\rho)$ into $L^2(\rho;\R^d)$. 
\end{lemma}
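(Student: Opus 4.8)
The plan is to transfer $h$ to a bounded linear functional on a closed subspace of $L^{q}(\rho;\R^{d})$, and then realize $E(h)$ as a minimal-norm representative in the predual $L^{p}(\rho;\R^{d})$, the second line of \eqref{eq: vector field} being exactly the first-order optimality condition for that representative.

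First I would note that, by the very definition of the seminorm $\|\cdot\|_{\dot H^{1,q}(\rho)}$, the gradient map $\varphi\mapsto\nabla\varphi$ extends to an isometric isomorphism $\iota$ from $\dot H^{1,q}(\rho)$ onto the closed subspace $W:=\overline{\{\nabla\varphi:\varphi\in C_{0}^{\infty}\}}^{L^{q}(\rho;\R^{d})}$. Hence $h\in\dot H^{-1,p}(\rho)=(\dot H^{1,q}(\rho))^{*}$ is identified with the bounded functional $\tilde h:=h\circ\iota^{-1}$ on $W$, and $\|\tilde h\|_{W^{*}}=\|h\|_{\dot H^{-1,p}(\rho)}$. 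Extending $\tilde h$ to $L^{q}(\rho;\R^{d})$ by Hahn--Banach without increasing the norm, and using that $(L^{q}(\rho;\R^{d}))^{*}\cong L^{p}(\rho;\R^{d})$ since $1<q<\infty$, I obtain some $E_{0}\in L^{p}(\rho;\R^{d})$ with $\int_{\R^{d}}\langle\nabla\varphi,E_{0}\rangle\,d\rho=h(\varphi)$ for all $\varphi\in C_{0}^{\infty}$ and $\|E_{0}\|_{L^{p}(\rho;\R^{d})}=\|h\|_{\dot H^{-1,p}(\rho)}$; this already furnishes a vector field satisfying the first line of \eqref{eq: vector field}.

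Next I would single out the right representative. The set $\cA$ of all $E\in L^{p}(\rho;\R^{d})$ obeying the first line of \eqref{eq: vector field} equals $E_{0}+W^{\circ}$, where $W^{\circ}=\{v\in L^{p}(\rho;\R^{d}):\langle w,v\rangle=0\ \forall w\in W\}$ is the annihilator of $W$ (use density of $\{\nabla\varphi\}$ in $W$); it is a nonempty, closed, convex subset of the uniformly convex space $L^{p}(\rho;\R^{d})$ ($1<p<\infty$), hence has a unique element $E^{\star}$ of minimal norm. Since $E\mapsto\frac1p\|E\|_{L^{p}(\rho;\R^{d})}^{p}$ is convex with G\^ateaux gradient $j_{p}(E)$, optimality of $E^{\star}$ over the affine set $E_{0}+W^{\circ}$ is equivalent to $\langle j_{p}(E^{\star}),v\rangle=0$ for all $v\in W^{\circ}$, i.e.\ to $j_{p}(E^{\star})\in(W^{\circ})^{\circ}$; by the bipolar theorem in the reflexive space $L^{q}(\rho;\R^{d})$ one has $(W^{\circ})^{\circ}=W$, so $E^{\star}$ satisfies both lines of \eqref{eq: vector field}. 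Conversely, convexity makes this first-order condition sufficient, so any $E$ obeying both lines of \eqref{eq: vector field} minimizes the norm over $\cA$ and therefore equals $E^{\star}$; this gives existence and uniqueness of $E(h):=E^{\star}$. For the norm identity, $\|E(h)\|_{L^{p}(\rho;\R^{d})}\le\|E_{0}\|_{L^{p}(\rho;\R^{d})}=\|h\|_{\dot H^{-1,p}(\rho)}$, while Hölder applied to $h(\varphi)=\int\langle\nabla\varphi,E(h)\rangle\,d\rho$ yields the reverse bound.

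Finally, homogeneity follows since $aE(h)$ solves the first line of \eqref{eq: vector field} for $ah$ and $j_{p}(aE(h))=a|a|^{p-2}j_{p}(E(h))\in W$ ($W$ being a subspace), so uniqueness forces $E(ah)=aE(h)$; and when $p=2$, so that $j_{2}$ is the identity, the conditions \eqref{eq: vector field} reduce to $E\in W$ together with $\langle w,E\rangle_{L^{2}(\rho;\R^{d})}=h(\iota^{-1}w)$ for all $w\in W$, i.e.\ $E(h)$ is the Riesz representative of $\tilde h$ in the Hilbert space $W$, which is visibly linear in $h$ (and isometric, as already shown). I expect the main obstacle to be this third step: the first equation determines $E$ only modulo the typically infinite-dimensional annihilator $W^{\circ}$, and proving that the constraint $j_{p}(E)\in W$ pins down precisely the norm-minimizer requires uniform convexity of $L^{p}(\rho;\R^{d})$ and the bipolar identity $(W^{\circ})^{\circ}=W$; the remaining steps are routine.
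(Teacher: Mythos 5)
Your proof is correct, and it takes the dual route to the paper's primal one. The paper works in $L^q(\rho;\R^d)$: it minimizes the strictly convex functional $J(v)=\tfrac1q\|v\|_{L^q(\rho;\R^d)}^q-L(v)$ over the closed subspace $V=\overline{\{\nabla\varphi:\varphi\in C_0^\infty\}}^{L^q(\rho;\R^d)}$ (existence via reflexivity, coercivity, and weak lower semicontinuity, i.e.\ \cref{lem:optimization}), sets $E=j_q(v_0)$ for the minimizer $v_0$, reads off both lines of \eqref{eq: vector field} from the Euler--Lagrange equation, gets uniqueness from strict convexity of $J$, and only at the end recovers $\|E(h)\|_{L^p(\rho;\R^d)}=\|h\|_{\dot H^{-1,p}(\rho)}$ from the membership $j_p(E)\in V$. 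You instead work in $L^p(\rho;\R^d)$: Hahn--Banach plus $L^q$--$L^p$ duality produce a representative $E_0$ of norm exactly $\|h\|_{\dot H^{-1,p}(\rho)}$; the solution set of the first equation is the affine set $E_0+W^{\circ}$; uniform convexity gives a unique minimal-norm element; and the second line of \eqref{eq: vector field} is identified, via the bipolar identity $(W^{\circ})^{\circ}=W$ for the closed subspace $W$, as the necessary and sufficient first-order condition for that minimizer. The two optimization problems are Fenchel duals of one another, so the arguments are close cousins, but the ingredients differ: you avoid the direct-method lemma and the explicit functional $J$, at the cost of invoking Hahn--Banach, the bipolar theorem, and uniqueness of minimal-norm elements in uniformly convex spaces; you also obtain the norm identity essentially for free from the norm-preserving extension, whereas the paper derives it afterwards. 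Both treatments of homogeneity and of the $p=2$ linearity (Riesz representation on the Hilbert subspace $W$) coincide. The only cosmetic caveat is your opening identification of $\dot H^{1,q}(\rho)$ with $W$ via $\iota$: since $\|\cdot\|_{\dot H^{1,q}(\rho)}$ is a priori only a seminorm, this holds after the usual quotient by its null space, but nothing in your argument actually depends on it beyond the bound $|h(\varphi)|\le\|\nabla\varphi\|_{L^q(\rho;\R^d)}\|h\|_{\dot H^{-1,p}(\rho)}$, which is immediate from the definition of the dual norm.
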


The proof of Lemma \ref{lem:vector-field} in turn relies on the following existence result of optimal solutions in Banach spaces. We provide its proof for the sake of completeness.

\begin{lemma}
\label{lem:optimization}
Let $(V,\| \cdot \|)$ be a reflexive real Banach space, and let $J: V \to \R \cup \{ +\infty \}$ ($J \not  \equiv +\infty$) be weakly lower semicontinuous (i.e., $J(v) \le \liminf_{n} J(v_n) $ for any $v_n \to v$ weakly) and coercive (i.e., $J(v) \to \infty$ as $\| v \| \to \infty$).
Then there exists $v_0 \in V$ such that $J(v_0) = \inf_{v \in V} J(v)$. 
\end{lemma}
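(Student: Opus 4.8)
The plan is to use the direct method of the calculus of variations. First I would set $m := \inf_{v \in V} J(v)$; since $J \not\equiv +\infty$, we have $m < +\infty$, and the goal is to show that $m > -\infty$ and that the infimum is attained. I would then choose a minimizing sequence $(v_n)_{n \in \N} \subset V$ with $J(v_n) \to m$, discarding finitely many terms so that $J(v_n) < +\infty$ for all $n$.

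The next step is to show that $(v_n)_n$ is bounded in $V$. If it were not, there would be a subsequence $(v_{n_k})_k$ with $\|v_{n_k}\| \to \infty$, and coercivity of $J$ would force $J(v_{n_k}) \to +\infty$, contradicting $J(v_n) \to m < +\infty$. Hence $\sup_n \|v_n\| < \infty$.

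The following step, which is where reflexivity enters essentially, is to extract a weakly convergent subsequence. Since $V$ is reflexive, bounded sequences admit weakly convergent subsequences (e.g., by the Banach--Alaoglu theorem together with the Eberlein--\v{S}mulian theorem; alternatively, one may pass to the closed subspace $W := \overline{\mathrm{span}}\,\{v_n : n \in \N\}$, which is separable and reflexive, so that the weak topology on bounded subsets of $W$ is metrizable and bounded sequences are weakly sequentially compact). Thus, after relabeling, $v_{n} \rightharpoonup v_0$ weakly in $V$ for some $v_0 \in V$.

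Finally, weak lower semicontinuity of $J$ gives $J(v_0) \le \liminf_{n \to \infty} J(v_n) = m$, and since $m$ is the infimum of $J$ over $V$ this forces $J(v_0) = m$; in particular $m = J(v_0) > -\infty$ because $J$ takes values in $\R \cup \{+\infty\}$, so $v_0$ is the desired minimizer. The argument is essentially routine; the only point requiring genuine care is the functional-analytic extraction of a weakly convergent subsequence from a bounded one, which is exactly the place where reflexivity of $V$ is used and cannot be omitted.
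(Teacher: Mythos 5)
Your proof is correct and follows essentially the same route as the paper: a minimizing sequence is bounded by coercivity, reflexivity plus Banach--Alaoglu yields a weakly convergent subsequence, and weak lower semicontinuity closes the argument. Your version is slightly more careful in justifying the passage from weak-$*$ compactness to weak sequential compactness (via Eberlein--\v{S}mulian or the separable-subspace reduction), a point the paper glosses over.
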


\begin{proof}[Proof of \cref{lem:optimization}]
Let $v_n \in V$ be such that $J(v_n) \to \inf_{v \in V}J(v) =: \underline{J}$.
By coercivity, $v_n$ is bounded, so by reflexivity and the Banach-Alaoglu theorem, there exists a weakly convergent subsequence $v_{n_k}$ such that $v_{n_k} \to v_0$ weakly.
Since $J$ is weakly lower semicontinuous, we conclude $J(v_0) \le \liminf_{k} J(v_{n_k}) = \underline{J}$. 
\end{proof}

We turn to the proof of  \cref{lem:vector-field}, which is inspired by the first part of the proof of Theorem 8.3.1 in \cite{ambrosio2005}.

\begin{proof}[Proof of Lemma \ref{lem:vector-field}]

 Let $V$ denote the closure in $L^{q}(\rho;\R^{d})$ of the subspace $V_0 = \{ \nabla \varphi : \varphi \in C_{0}^{\infty} \}$. Endowing $V$ with $\| \cdot \|_{L^q(\rho;\R^d)}$ gives a reflexive Banach space because any closed subspace of a reflexive Banach space is reflexive. Define the linear functional $L: V_0 \to \R$ by 
$L(\nabla \varphi):=h(\varphi)$. 
To see that $L$ is well-defined, observe that 
\[
\begin{split}
|h(\varphi)|&\le \| \varphi \|_{\dot{H}^{1,q}(\rho)} \| h \|_{\dot{H}^{-1,p}(\rho)} \\
&=\| \nabla \varphi \|_{L^q(\rho;\R^d)} \| h \|_{\dot{H}^{-1,p}(\rho)}.
\end{split}
\]
This also shows that $L$ can be extended to a bounded linear functional on $V$.

Consider the optimization problem
\begin{equation}
\label{eq:optimization}
\min_{v \in V} J(v) \quad \text{with} \ \ J(v) := \frac{1}{q} \int_{\R^{d}} | v |^{q}d\rho - L(v).
\end{equation}
The functional $J$ is finite, weakly lower semicontinuous, and coercive. By \cref{lem:optimization} there exists a solution $v_0$ to the optimization problem \eqref{eq:optimization}.
Further, the functional $J$ is G\^{a}teaux differentiable with derivative
\[
J'(v;w) := \lim_{t \to 0} \frac{J(v+tw) - J(v)}{t} = \int_{\R^{d}} \langle w,j_{q}(v)  \rangle d\rho - L(w). 
\]
Thus, for $E=j_{q}(v_0)$, we have $\int_{\R^{d}} \langle \nabla \varphi,E  \rangle d\rho = L(\nabla \varphi)$ for all $\varphi \in C_{0}^{\infty}$ and $j_p(E) = v_0 \in V$.

To show uniqueness of $E$, pick another vector field $E' \in L^p(\rho;\R^d)$ satisfying (\ref{eq: vector field}). Then, $j_p(E') \in V$ satisfies $J'(j_p(E');w) = 0$ for all $w \in V$, so by convexity of $J$, $j_p(E')$ is another optimal solution to (\ref{eq:optimization}). However, since $J$ is strictly convex, the optimal solution to (\ref{eq:optimization}) is unique, so that $j_p(E') = j_p(E)$, i.e., $E'=E$.

Now, the map $h \mapsto E(h)$ is homogeneous, as $aE(h)$ clearly satisfies the first equation in (\ref{eq: vector field}) for $h$ replaced with $ah$ and $j_p(aE(h)) = |a|^{p-2}aj_p(E(h)) \in V$. Further, as $j_p(E(h)) \in \overline{\{ \nabla \varphi : \varphi \in C_{0}^{\infty} \}}^{L^q(\rho;\R^d)}$ by construction, it also satisfies
\[
\| E(h) \|_{L^p(\rho;\R^d)} = \sup \left \{ \int_{\R^d} \langle \nabla \varphi, E(h) \rangle d\rho : \varphi \in C_0^\infty, \| \nabla \varphi \|_{L^q(\rho;\R^d)} \le 1 \right \} = \| h \|_{\dot{H}^{-1,p}(\rho)}.
\]
Finally, if $p=2$, then $j_2(v) = v$, so it is clear that the map $h\mapsto E(h)$ is linear. 
\end{proof}

We are now ready to prove Lemma \ref{lem: Haradard derivative null}.

\begin{proof}[Proof of Lemma \ref{lem: Haradard derivative null}]
Let $\mu_t = \mu + th_1$ and $\nu_t = \mu + th_2$ for $t \in [0,1]$. For notational convenience, let $h=h_1-h_2 \in D_\mu \cap \{ \text{finite signed Borel measures} \}$. We will first show that 
\[
\liminf_{t \downarrow 0} \frac{\Wp(\mu_{t},\nu_t)}{t} \ge \| h_1-h_2 \|_{\dot{H}^{-1,p}(\mu_0)}. 
\]
The proof is inspired by Theorem 7.26 in \cite{villani2003}.  Observe that for any $\varphi \in C_0^{\infty}$ and $t>0$,
\[
h(\varphi) = \int_{\R^d} \varphi dh = \int_{\R^d} \varphi d \left ( \frac{\mu_t - \nu_t}{t} \right) = \frac{1}{t} \int_{\R^d} \varphi d(\mu_t-\nu_t). 
\]
Let $\pi_t \in \Pi (\mu_t,\nu_t)$ be an optimal coupling for $\Wp^p (\mu_t,\nu_t)$, i.e., $\Wp^p(\mu_t,\nu_t) = \iint |x-y|^p d\pi_t(x,y)$. Then
\[
\frac{1}{t} \int_{\R^d} \varphi d(\mu_t-\nu_t) = \frac{1}{t} \iint_{\R^d \times \R^d} \{ \varphi (x)-\varphi(y) \} d\pi_t(x,y). 
\]
Since $\varphi$ is smooth and compactly supported, there exists a constant $C = C_{\varphi,p} < \infty$ such that 
\[
\varphi (x) - \varphi (y) \le \langle \nabla \varphi (y), x-y \rangle + C |x-y|^{2 \wedge p}, \quad \forall x,y \in \R^d.
\]
Indeed, for $p \ge 2$,  we can take $C = C_1 := \sup_{x \in \R^d} \| \nabla^2 \varphi(x) \|_{\mathrm{op}}/2$ (here $\| \cdot \|_{\mathrm{op}}$ denotes the operator norm for matrices). For  $1 < p < 2$, we have
\[
\varphi (x) - \varphi (y) \le \langle \nabla \varphi (y), x-y \rangle + C_1 C_2^{2-p} |x-y|^{p}, \ \forall x,y \in S := \supp(\varphi)
\]
with $C_2 := \sup\{ |x-y| : x,y \in S \}$. Here $\supp (\varphi)$ denotes the support  of $\varphi$, $\supp (\varphi) := \overline{\{ \varphi \ne 0 \}}$. 
If $x \in S$ and $d(y,S) := \inf \{ |y-z| : z \in S \} > 1$, then $\varphi(x)/|x-y|^{p}  \le \| \varphi \|_{\infty}$, so that we have 
\[
\varphi(x) - \varphi(y) = \varphi(x) \le (\| \varphi \|_{\infty} \vee C_1 C_3^{2-p})   |x-y|^p, \ \forall x \in S, y \in S^c
\]
with $C_3 := \sup \{ |x-y| : x\in S, d(y,S) \le 1 \} < \infty$. 
Finally, if $d(x,S) > 1$ and $y \in S$, then 
\[
\frac{-\varphi(y) - \langle \nabla \varphi(y),x-y \rangle}{|x-y|^p} \le \| \varphi \|_{\infty} |x-y|^{-p}+ \| \nabla \varphi \|_{\infty} |x-y|^{1-p} \le \| \varphi \|_{\infty} + \| \nabla \varphi \|_{\infty}, 
\]
so that  we have 
\[
\begin{split}
&\varphi(x) - \varphi(y) = -\varphi(y) \\
&\qquad \le \langle \nabla \varphi (y), x-y \rangle + \left((\| \varphi \|_{\infty} +  \| \nabla \varphi \|_{\infty}) \bigvee C_1 C_4^{2-p} \right ) |x-y|^p, \ \forall x \in S^c,y\in S
\end{split}
\]
with  $C_4 := \sup\{|x-y| : d(x,S) \le 1, y \in S \} < \infty$.

Now, we have
\[
\begin{split}
&\frac{1}{t} \iint_{\R^d \times \R^d} \{ \varphi (x)-\varphi(y) \} d\pi_t(x,y) \\
&\le \frac{1}{t} \left \{ \iint_{\R^d \times \R^d} \langle \nabla \varphi (y), x-y \rangle d\pi_{t}(x,y) + C \iint_{\R^d \times \R^d} |x-y|^{2 \wedge p} d\pi_t(x,y) \right \} \\
&\le \frac{1}{t} \left [ \iint_{\R^d \times \R^d} \langle \nabla \varphi (y), x-y \rangle d\pi_{t}(x,y) + C \left \{ \iint_{\R^d \times \R^d} |x-y|^p d\pi_t(x,y) \right \}^{2/(2 \vee p)}\right ] \\
&= \frac{1}{t} \left \{\iint_{\R^d \times \R^d} \langle \nabla \varphi (y), x-y \rangle d\pi_{t}(x,y) + C \Wp^{2 \wedge p}(\mu_t,\nu_t) \right \}. 
\end{split}
\]
Applying  Proposition \ref{prop:duality-in-Wp} with $\rho = \mu$, we know that $\Wp (\mu_t,\nu_t) \le \Wp(\mu_t,\mu) + \Wp(\mu,\nu_t) \le p t (\| h_1 \|_{\dot{H}^{-1,p}(\mu)} + \| h_2 \|_{\dot{H}^{-1,p}(\mu)})=  O(t)$ as $t \downarrow 0$, so that $\Wp^{2 \wedge p}(\mu_t,\nu_t) = O(t^{2 \wedge p}) = o(t)$ as $t \downarrow 0$. Further, by H\"{o}lder's inequality, with $q$ being the conjugate index of $p$, we have
\[
\iint_{\R^d \times \R^d} \langle \nabla \varphi (y), x-y \rangle d\pi_{t}(x,y) \le \| \nabla \varphi  \|_{L^q(\nu_t;\R^d)}\underbrace{\left \{ \iint_{\R^d \times \R^d} |x-y|^p d\pi_t(x,y) \right \}^{1/p}}_{=\Wp(\mu_t,\nu_t)}. 
\]
Here 
\[
\| \nabla \varphi  \|_{L^q(\nu_t;\R^d)}^q = \int_{\R^d} |\nabla \varphi |^q d\mu + t\int_{\R^d} | \nabla \varphi |^q dh_2 = \| \nabla \varphi  \|_{L^q(\mu;\R^d)}^q + O(t), \ t \downarrow 0. 
\]
Conclude that 
\[
h(\varphi) \le \| \nabla \varphi \|_{L^q(\mu;\R^d)} \liminf_{t \downarrow 0} \frac{\Wp(\mu_{t},\nu_t)}{t} ,
\]
that is,
\[
\liminf_{t \downarrow 0} \frac{\Wp(\mu_{t},\nu_t)}{t}  \ge \sup \left \{ h(\varphi) : \varphi \in C_0^{\infty}, \| \nabla \varphi \|_{L^q(\mu;\R^d)} \le 1 \right \} = \| h \|_{\dot{H}^{-1,p}(\mu)}. 
\]

To prove the reverse inequality,  let $\mathfrak{h} \mapsto E(\mathfrak{h})$ be the map from $\dot{H}^{-1,p}(\mu)$ into $L^p(\mu;\R^d)$ given in Lemma \ref{lem:vector-field}. 
Let $f_t^1 = d\mu_t/d\mu = 1+t dh_1/d\mu$.
Since $\mu_1 = \mu + h_1$ is a probability measure, we have $1+dh_1/d\mu \ge 0$, i.e., $dh_1/d\mu \ge -1$, so that $f_t^1 \ge 1/2$ for $t \in [0,1/2]$. Likewise, $f_t^2 := d\nu_t/d\mu \ge 1/2$ for $t \in [0,1/2]$. 

Fix $t \in [0,1/2]$ and consider the curve $\rho_s =(1-s) \mu_t + s\nu_t = \mu_t - sth$ for $s \in [0,1]$. Then $\rho_s$ satisfies the continuity equation (\ref{eq:continuity}) with $v_s = E(-th)/((1-s)f_t^1+sf_t^2))$. By Theorem \ref{thm:continuity} (Theorem 8.3.1 in \cite{ambrosio2005}), we have
\[
\Wp(\mu_t,\nu_t) \le \int_{0}^1 \| v_s \|_{L^p(\rho_s;\R^d)} ds = \int_0^1 \left ( \int_{\R^d} \frac{|E(-th)|^p}{\big [(1-s)f_t^1+sf_t^2\big]^{p-1}} d\mu \right)^{1/p} ds.
\]
Since $E(-th)=-tE(h)$ by homogeneity and $1/2 \le f_t^i \to 1$ as $t \downarrow 0$, the dominated convergence theorem yields that, as $t \downarrow 0$, 
\[
\begin{split}
\frac{\Wp(\mu_t,\nu_t)}{t} &\le \int_0^1 \left ( \int_{\R^d} \frac{|E(h)|^p}{\big [(1-s)f_t^1+sf_t^2\big]^{p-1}} d\mu \right)^{1/p} ds \\
&=\| E(h) \|_{L^p(\mu;\R^d)} + o(1) \\
&=\| h \|_{\dot{H}^{-1,p}(\mu)} + o(1).
\end{split}
\]
This completes the proof. 
\end{proof}

\subsubsection{Proof of Proposition \ref{prop: Haradard derivative null}}

Pick arbitrary %
$(h_1,h_2) \in T_{\Xi_{\mu} \times \Xi_\mu}(0,0)$, $t_n \downarrow 0$ and $(h_{n,1},h_{n,2})\to (h_1,h_2)$ in $D_{\mu} \times D_\mu$ such that $(t_n h_{n,1},t_nh_{n,2}) \in \Xi_{\mu} \times \Xi_\mu$.
By density, for any $\epsilon > 0$, there exist $c > 0$ and $\rho_i \in \cP_p$ for $i=1,2$ such that $\| h_i-\tilde{h}_i \|_{\dot{H}^{-1,p}(\mu*\gamma_{\sigma})} < \epsilon$ for $\tilde{h}_i = c (\rho_i - \mu)*\gamma_{\sigma}$. By scaling, Lemma \ref{lem: Haradard derivative null} holds with $(h_1,h_2)$ replaced by $(\tilde{h}_1,\tilde{h}_2)$.
Assume without loss of generality that  $n$ is large enough such that $\| h_{n,i} - h_i \|_{\dot{H}^{-1,p}(\mu*\gamma_{\sigma})} < \epsilon$ for $i=1,2$ and $c t_n \le 1/2$. The density of $\mu*\gamma_{\sigma}+t_n \tilde{h}_i = \big ((1-ct_n) \mu + ct_n \rho_i\big )*\gamma_{\sigma}$ w.r.t. $\mu*\gamma_{\sigma}$ is 
\[
\frac{d(\mu*\gamma_{\sigma}+t_n \tilde{h}_i)}{d(\mu*\gamma_{\sigma})} \ge (1-ct_n) \ge \frac{1}{2}, \ i=1,2.
\]
Thus, by Proposition \ref{prop:duality-in-Wp}, we have
\[
\begin{split}
&\left | \frac{\Phi(t_n h_{n,1},t_nh_{n,2})}{t_n} - \frac{\Phi(t_n \tilde{h}_1,t_n\tilde{h}_2)}{t_n} \right| \\
&\le \sum_{i=1}^2 \frac{\Wp(\mu*\gamma_{\sigma}+t_n h_{n,i},\mu*\gamma_{\sigma}+t_n\tilde{h}_i)}{t_n} \\
&\lesssim \sum_{i=1}^2 \| h_{n,i} - \tilde{h}_i \|_{\dot{H}^{-1,p}(\mu*\gamma_{\sigma})} \\
&\le \sum_{i=1}^2 \big ( \| h_{n,i} - h_i \|_{\dot{H}^{-1,p}(\mu*\gamma_{\sigma})} + \| h_i-\tilde{h}_i \|_{\dot{H}^{-1,p}(\mu*\gamma_{\sigma})} \big)\\
&< 4\epsilon.
\end{split}
\]
Further, 
\[
\big| \| h_1-h_2 \|_{\dot{H}^{-1,p}(\mu*\gamma_{\sigma})} - \| \tilde{h}_1-\tilde{h}_2 \|_{\dot{H}^{-1,p}(\mu*\gamma_{\sigma})}\big| \le \sum_{i=1}^2 \| h_i-\tilde{h}_i \|_{\dot{H}^{-1,p}(\mu*\gamma_{\sigma})} < 2\epsilon. 
\]
Thus, using the result of Lemma \ref{lem: Haradard derivative null}, we conclude that
\[
\limsup_{n \to \infty} \left | \frac{\Phi(t_n h_{n,1},t_nh_{n,2})}{t_n} - \| h_1-h_2 \|_{\dot{H}^{-1,p}(\mu*\gamma_{\sigma})}\right | \lesssim  \epsilon. 
\]
Since $\epsilon > 0$ is arbitrary, we obtain the desired conclusion. 
\qed

\subsection{Proofs for Section \ref{sec: alternative}}
\subsubsection{Proof of Lemma \ref{lem: Hadamard derivative alternative}}

The proof of Lemma \ref{lem: Hadamard derivative alternative} relies on the following technical lemma concerning regularity of optimal transport potentials.  
Recall that any locally Lipschitz function on $\R^d$ is differentiable a.e. by the Rademacher theorem (cf. \cite{evans2018measure}). Here and in what follows a.e. is taken w.r.t. the Lebesgue measure. 

\begin{lemma}[Regularity of optimal transport potential]
\label{lem: potential}
Let $1 < p < \infty$. 
Suppose that $\mu \in \cP_p$ and $\nu \in \cP$ is $\beta$-sub-Weibull for some $\beta \in (0,2]$. Let $g$ be an optimal transport potential from $\mu*\gamma_{\sigma}$ to $\nu$ for $\Wp^p$. Then there exists a constant $C$ that depends only on $p,d,\sigma,\beta$, upper bounds on $\E_{\mu}[|X|]$ and $\| |Y| \|_{\psi_{\beta}}$ for $Y \sim \nu$, and a lower bound on $\int \phi_{\sigma} d\mu$, such that 
 \[
 \begin{cases}
&\text{$g$ is locally Lipschitz}, \\
&| g (x) - g(0) | \le C (1+|x|^{\frac{2p}{\beta}})|x|, \ \forall x \in \R^d,  \\
&| \nabla g(x) | \le C (1+|x|^{\frac{2p}{\beta}}) \ \text{for a.e.} \ x \in \R^d.
 \end{cases}
 \]
\end{lemma}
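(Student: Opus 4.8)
The plan is to exploit the $c$-concavity of $g$, Rademacher differentiability, and a mass-transport comparison between the Gaussian lower tail of $\mu*\gamma_\sigma$ and the sub-Weibull upper tail of $\nu$. Write $\rho := \mu*\gamma_\sigma$ and $c(x,y)=|x-y|^p$. Since $\nu$ is $\beta$-sub-Weibull it has a finite $p$th moment, so $\Wp^p(\rho,\nu)<\infty$ and, by \cref{lem: duality}(i), I may assume $g$ is $c$-concave with $g\in L^1(\rho)$ (replace $g$ by $g^{cc}$). First I would record that $\rho$ has a density bounded below by a Gaussian: writing $\phi_\sigma(x-z)=\phi_\sigma(z)\exp(\langle x,z\rangle/\sigma^2-|x|^2/(2\sigma^2))$ and applying Jensen's inequality to the probability measure $(\int\phi_\sigma\,d\mu)^{-1}\phi_\sigma\,d\mu$,
\[
\frac{d\rho}{dx}(x)=\int_{\R^d}\phi_\sigma(x-z)\,d\mu(z)\ \ge\ c_1\,e^{-c_2|x|^2},\qquad x\in\R^d,
\]
where $c_1,c_2>0$ depend only on $\sigma$, a lower bound for $\int\phi_\sigma\,d\mu$, and an upper bound for $\E_\mu[|X|]$ (the latter controlling the mean of the tilted measure). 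In particular $\rho$ has full support and $\rho(B(x,1))\ge c_1'e^{-c_2'|x|^2}$.

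Since $g\in L^1(\rho)$ and $d\rho/dx>0$ a.e., the set $\{g>-\infty\}$ has full $\rho$-measure, hence is dense; its convex hull is then a dense convex subset of $\R^d$, hence equals $\R^d$. By \cref{lem: duality}(ii), $g$ is therefore locally Lipschitz on $\R^d$ — in particular finite everywhere — and by Rademacher's theorem it is differentiable a.e. Next, because $\rho\ll dx$ and $p>1$, there is a $\rho$-a.e.\ unique optimal transport map $T$ from $\rho$ to $\nu$ for the cost $c$, and for $\rho$-a.e.\ $x$ (equivalently, a.e.\ $x$) the infimum in $g(x)=\inf_y\{|x-y|^p-g^c(y)\}$ is attained at $y=T(x)$; differentiating, $\nabla g(x)=p|x-T(x)|^{p-2}(x-T(x))$, so $|\nabla g(x)|=p|x-T(x)|^{p-1}$. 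It thus suffices to bound $|T(x)|$ by $C(1+|x|^{2/\beta})$ for a.e.\ $x$: then, since $\beta\le2$ gives $2/\beta\ge1$ and $(p-1)\tfrac2\beta\le\tfrac{2p}\beta$, we obtain $|\nabla g(x)|\le p(|x|+C(1+|x|^{2/\beta}))^{p-1}\le C'(1+|x|^{2p/\beta})$ a.e.

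The heart of the argument is the bound on $|T|$, whose point is that the optimal coupling cannot move a non-negligible amount of $\rho$-mass into the far tail of $\nu$. Since $T_\#\rho=\nu$, for every $R>0$,
\[
\rho\bigl(\{z:|T(z)|>R\}\bigr)=\nu\bigl(B(0,R)^c\bigr)\le 2\exp\bigl(-(R/A)^\beta\bigr),\qquad A:=\bigl\|\,|Y|\,\bigr\|_{\psi_\beta},
\]
whereas $\rho(B(x,1))\ge c_1'e^{-c_2'|x|^2}$; hence once $R\ge C_0(1+|x|^{2/\beta})$ the left side drops below $\rho(B(x,1))$, so some point of $B(x,1)$ is mapped into $B(0,R)$. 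To upgrade this "good point in every unit ball'' into an a.e.\ pointwise bound I would invoke $c$-cyclical monotonicity of the support of the optimal plan: if $|x_\star-T(x_\star)|$ exceeded $C_0(1+|x_\star|^{2/\beta})$ on a set of positive $\rho$-measure, monotonicity would force $|x-T(x)|$ to stay large along an unbounded region issuing from such points — for $p=2$, where $T$ is the gradient of a convex potential, a tube around the ray $\{x_\star+t(x_\star-T(x_\star)):t\ge0\}$, along which $\langle T(\cdot)-T(x_\star),\,\cdot-x_\star\rangle\ge0$; for general $p$, the analogous set obtained from $|x_0-y_0|^p+|x_1-y_1|^p\le|x_0-y_1|^p+|x_1-y_0|^p$. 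Integrating the Gaussian lower bound on $d\rho/dx$ over such a region shows it carries $\rho$-mass at least $c\,e^{-c_2'(1+|x_\star|)^2}$, which cannot exceed $\nu(B(0,R)^c)\le 2e^{-(R/A)^\beta}$; this forces $R\lesssim 1+|x_\star|^{2/\beta}$, a contradiction. Hence $|T(x)|\le C(1+|x|^{2/\beta})$ a.e., and the gradient bound follows.

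Finally, to obtain $|g(x)-g(0)|\le C(1+|x|^{2p/\beta})|x|$: $g$ is locally Lipschitz, hence absolutely continuous on line segments, and by Fubini the a.e.\ gradient bound holds a.e.\ along a.e.\ segment $[0,x]$, so $|g(x)-g(0)|\le|x|\int_0^1|\nabla g(tx)|\,dt\le|x|\int_0^1 C'(1+|tx|^{2p/\beta})\,dt\le C''(1+|x|^{2p/\beta})|x|$ for a.e.\ $x$, extending to all $x$ by continuity of $g$. Tracking constants through the above yields the stated dependence of $C$ on $p,d,\sigma,\beta$, on an upper bound for $\E_\mu[|X|]$, on an upper bound for $\|\,|Y|\,\|_{\psi_\beta}$, and on a lower bound for $\int\phi_\sigma\,d\mu$. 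I expect the quantitative $|T|$-bound — the propagation step upgrading "a good point in each ball'' to a pointwise a.e.\ estimate — to be the main obstacle, since it rests on a careful use of the monotonicity of optimal couplings that is transparent for $p=2$ but delicate for general $p$.
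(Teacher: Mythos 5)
Your overall architecture matches the paper's: a Gaussian lower bound on the density of $\mu*\gamma_\sigma$ (the paper gets it from the $(c_1,c_2)$-regularity result of Polyanskiy--Wu rather than Jensen, but the conclusion is the same), local Lipschitzness of $g$ from $c$-concavity via \cref{lem: duality}(ii), and then a quantitative bound on how far the optimal coupling can displace mass, obtained by playing the Gaussian lower tail of $\mu*\gamma_\sigma$ against the sub-Weibull upper tail of $\nu$. The paper also derives the modulus of continuity more directly than you do: for $y\in\partial^c g(x)$ one has $g(x')-g(x)\le c(x',y)-c(x,y)\le p(|x-y|\vee|x'-y|)^{p-1}|x-x'|$, which yields both the bound on $|g(x)-g(0)|$ and (after Rademacher) the gradient bound in one stroke, without invoking the optimal map $T$, the envelope identity $\nabla g=p|x-T|^{p-2}(x-T)$, or the Fubini-along-segments step.

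The genuine gap is exactly where you flag it: the upgrade from ``some point of each unit ball is mapped into $B(0,R)$'' to a pointwise a.e.\ (indeed everywhere, on the superdifferential) bound on the displacement. Your measure comparison $\rho(\{|T|>R\})=\nu(B(0,R)^c)<\rho(B(x,1))$ says nothing about $T$ at the specific point $x$, and the $c$-cyclical-monotonicity propagation you sketch — a ``tube around the ray'' carrying Gaussian-lower-bounded mass — is only transparent for $p=2$; for general $p$ the two-point inequality $|x-y|^p+|x_0-y_0|^p\le|x-y_0|^p+|x_0-y|^p$ does not directly force the displacement to stay large on a fixed-volume region, and making this work is a substantive argument. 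This is precisely the content of Theorem 11 (and Lemmas 9--10) in \cite{manole2021}, building on \cite{gangbo1996,colombo2021}, which the paper cites rather than reproves; the statement there bounds $\sup_{y\in\partial^c g(x)}|y|$ by $C(1+|x|^{2p/(\beta(p-1))})$, not by your claimed $C(1+|x|^{2/\beta})$ — the extra factor $p/(p-1)$ in the exponent is an artifact of the general-$p$ monotonicity geometry you are glossing over. Fortunately this weaker exponent still yields $|y|^{p-1}\lesssim 1+|x|^{2p/\beta}$ and hence the stated conclusion, so your plan is salvageable, but as written the central estimate is asserted rather than proved (and with an exponent you have not justified).
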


The proof of Lemma \ref{lem: potential} borrows ideas from Lemmas 9 and 10 and Theorem 11 in the recent work by \cite{manole2021}, which in turn build on \cite{gangbo1996,colombo2021}.

\begin{proof}[Proof of \cref{lem: potential}]
By Theorem 11 in \cite{manole2021}, there exists a constant $C_1$ depending only on $p,d,\beta$ and an upper bound on $\| |Y| \|_{\psi_{\beta}}$ for $Y \sim \nu$, such that 
\[
\sup_{y \in \partial^c g(x)} |y| \le C_1 \left \{ (|x|+1)^{\frac{p}{p-1}} \bigvee \sup_{y: |x-y| \le 2} \left [ \log \left (\frac{1}{(\mu*\gamma_{\sigma})(B_y)} \right)\right]^{\frac{p}{\beta (p-1)}} \right\}, \quad x \in \R^d, 
\]
where $\partial^c g (x) = \{ y \in \R^d : c(z,y) - g(z) \ge c(x,y) - g(x), \ \forall z \in \R^d \}$ is the $c$--superdifferential of $g$ at $x$ for the cost function $c(x,y) = |x-y|^p$, and $B_y = B(y,1) = \{ x \in \R^d: |x-y| \le 1 \}$.

Next, by Proposition 2 in \cite{polyanskiy2016},  $\mu*\gamma_{\sigma}$ has Lebesgue density $f_{\mu}$ that is $(c_1,c_2)$-regular with $c_1 = 3/\sigma^2$ and $c_2 = 4\E_{\mu}[|X|]/\sigma^2$, i.e.,
\[
\big | \nabla \log f_{\mu}(x) \big| \le c_1 |x| + c_2, \quad \forall x \in \R^d. 
\]
From the proof of Lemma 10 in \cite{manole2021}, we have 
\begin{equation}
f_{\mu}(x) \ge e^{-c_2^2}f_{\mu}(0)e^{-(1+c_1)|x|^2}, \quad \forall x \in \R^d. 
\label{eq: density lower bound}
\end{equation}
Thus, whenever $|x-y| \le 2$, 
\[
(\mu*\gamma_{\sigma})(B_y) = \int_{B_y} f_{\mu}(z) dz  \ge \inf_{z \in B_y} f_\mu(z) \times \int_{B_y} dz \ge c_3 e^{-c_2^2}f_{\mu}(0)e^{-2(1+c_1)(|x|^2+9)},
\]
where $c_3$ is a constant that depends only on $d$. Conclude that there exists a  constant $C_2$ depending only on $p,d,\sigma,\beta$, upper bounds on $\E_{\mu}[|X|]$ and $\| |Y| \|_{\psi_{\beta}}$ for $Y \sim \nu$, and a lower bound on $f_{\mu}(0)$, such that
\[
\sup_{y \in \partial^c g(x)} |y| \le C_2 (1+|x|^{\frac{2p}{\beta(p-1)}}), \quad \forall x \in \R^d. 
\]

The rest of the proof  mirrors the latter half of the proof of Lemma 9 in \cite{manole2021}. Since $g \in L^1(\mu*\gamma_\sigma)$ and $\mu*\gamma_{\sigma}$ is equivalent to the Lebesgue measure (i.e., $\mu*\gamma_\sigma \ll dx$ and $dx \ll \mu*\gamma_\sigma$), $g(x) > -\infty$ for a.e. $x \in \R^d$. Since any open convex set in $\R^d$ agrees with the interior of its closure (cf. Proposition 6.2.10 in \cite{dudley2002}), the convex hull of $\{ x : g(x) > -\infty \}$ agrees with $\R^d$. Thus, by Lemma \ref{lem: duality} (ii) (Theorem 3.3 in \cite{gangbo1996}), $g$ is locally Lipschitz on $\R^d$.
Further, by Propositon C.4 in \cite{gangbo1996}, 
$\partial^c g(x)$ is nonempty for all $x \in \R^d$. For any $x \in \R^d$ and $y \in \partial^c g(x)$,  
\[
g(x) = c(x,y) - g^c (y).
\]
Thus, for any $x' \in \R^d$, 
\[
\begin{split}
g(x') - g(x) &\le c(x',y) - g^c(y) - [c(x,y) - g^c(y)] \\
&=c(x',y) - c(x,y) \\
&=|x'-y|^p - |x-y|^p \\
&\le p (|x-y|^{p-1} \vee |x'-y|^{p-1})|x-x'| \\
&\le C_3 \big [ 1+(|x|\vee |x'|)^{\frac{2p}{\beta}} \big ] |x-x'|,
\end{split}
\]
where  $C_3$ depends only on $p,\beta,C_2$. Interchanging $x$ and $x'$, we conclude that
\begin{equation}
\label{eq: local modulus}
|g(x)-g(x')| \le C_3 \big [ 1+(|x|\vee |x'|)^{\frac{2p}{\beta}} \big ] |x-x'|, \ x,x' \in \R^d,
\end{equation}
which implies the desired conclusion. 
\end{proof}

\begin{proof}[Proof of \cref{lem: Hadamard derivative alternative}]
 Let $\mu_t = (\mu+t(\rho-\mu))*\gamma_{\sigma} = (1-t)\mu*\gamma_{\sigma} + t\rho*\gamma_{\sigma}$ for $t \in [0,1]$, and let $g_t$ be an optimal transport potential from $\mu_t$ to $\nu$. Without loss of generality, we may normalize $g_t$ in such a way that $g_t (0) = 0$ for $t \in [0,1]$. 
 
We will apply Lemma \ref{lem: potential} with $(\mu,\nu)$ replaced with $\big ((1-t)\mu+t\rho,\nu \big)$ for $t \in [0,1/2]$. It is not difficult to see that, as long as $t \in [0,1/2]$,
\[
 \E_{(1-t) \mu + t \rho}[|X|] \le \E_{\mu}[|X|] + \E_{\rho}[|X|] \quad \text{and} \quad \int_{\R^d} \phi_{\sigma} d\big((1-t) \mu + t \rho \big) \ge \frac{1}{2} \int_{\R^d} \phi_{\sigma} d\mu. 
 \]
 Thus, by Lemma \ref{lem: potential}, there exist constants $C$ and $K$ independent of $t$ such that for every $t \in [0,1/2]$,
 \[
 \begin{cases}
&\text{$g_t$ is locally Lipschitz}, \\
&| g_t (x) | \le C (1+|x|^{K})|x|, \ \forall x \in \R^d,  \\
&| \nabla g_t(x) | \le C (1+|x|^{K}) \ \text{for a.e.} \ x \in \R^d.
 \end{cases}
 \]

 By duality (Lemma \ref{lem: duality} (i)), we have with $h=(\rho-\mu)*\gamma_\sigma$,
 \[
 \begin{split}
 \Wp^p (\mu_t,\nu) 
 &\ge \int_{\R^d} g_0 d\mu_t + \int_{\R^d} g_0^c d\nu \\
 &=\int_{\R^d} g_0 d\mu_0 + \int_{\R^d} g_0^c d\nu + t \int_{\R^d} g_0 dh \\
 &=\Wp^p(\mu_0,\nu) + t \int_{\R^d} g_0 dh, 
 \end{split}
 \]
 so that 
 \[
 \liminf_{t \downarrow 0}\frac{\Wp^p (\mu_{t},\nu) - \Wp^p(\mu_0,\nu)}{t} \ge \int_{\R^d} g_0 dh.
 \]
 
 Second, by construction, 
 \[
  \begin{split}
 \Wp^p (\mu_t,\nu) &= \int_{\R^d} g_t d\mu_t + \int_{\R^d} g_t^c d\nu \\
 &=\int_{\R^d} g_t d\mu_0 + \int_{\R^d} g_t^c d\nu + t \int_{\R^d} g_t dh \\
 &\le \int_{\R^d} g_0 d\mu_0 + \int_{\R^d} g_0^c d\nu + t \int_{\R^d} g_t dh \\
 &=\Wp^p(\mu_0,\nu) + t \int_{\R^d} g_t dh.
 \end{split}
 \]
 Pick any $t_n \downarrow 0$. Since $\mu_0 = \mu*\gamma_{\sigma} \ll dx$, $\mu_0$ has full support $\R^d$, and $\mu_{t_n} \stackrel{w}{\to} \mu_0$, we have by Theorem 3.4 in \cite{delbarrio2021} that there exists some sequence of constants $a_n$ such that $
 g_{t_n} -a_n \to g_0$ pointwise. 
 Since we have normalized $g_t$ in such a way that $g_t(0)=0$, we have $a_n \to 0$, i.e., $g_{t_n} \to g_0$ pointwise. 
 Further, since $|g_t(x)| \le C(1+|x|^K)|x|$ for all $t \in [0,1/2]$, the dominated convergence theorem yields that 
 \[
 \int_{\R^d} g_{t_n} dh \to  \int_{\R^d} g_0 dh. 
 \]
 Conclude that 
 \[
  \limsup_{n \to \infty}\frac{\Wp^p (\mu_{t_n},\nu) - \Wp^p(\mu_0,\nu)}{t_n} \le \int_{\R^d} g_0 dh.
 \]
 This completes the proof
\end{proof}

\subsubsection{Proof of Proposition \ref{prop: Hadamard derivative alternative}}
\underline{Part (i)}. We first note that $\dot{H}^{1,q}(\mu*\gamma_\sigma)$ is a function space over $\R^d$. To see this, observe that if we choose a reference measure $\kappa$ to be an isotropic Gaussian distribution with sufficiently small variance parameter, then the relative density $d(\mu*\gamma_\sigma)/d\kappa$ is bounded away from zero. Indeed, for $\kappa = \gamma_{\sigma/\sqrt{2}}$, we have 
\[
\begin{split}
\frac{d(\mu*\gamma_\sigma)}{d\gamma_{\sigma/\sqrt{2}}}(x) &= 2^{-d/2} \int_{\R^d} e^{-|x-y|^2/(2\sigma^2) +|x|^2/\sigma^2} d\mu(y) \\
&=2^{-d/2} \int_{\R^d} e^{|x+y|^2/(2\sigma^2) -|y|^2/\sigma^2} d\mu(y) \\
&\ge 2^{-d/2} e^{-\E_{\mu}[|X|^2]/\sigma^2}
\end{split}
\]
by Jensen's inequality, 
which guarantees that $\dot{H}^{1,q}(\mu*\gamma_\sigma)$ is a function space over $\R^d$ in view of \cref{rem:explicit-construction}. 

By regularity of $g$ from Lemma \ref{lem: potential}, we know that $g$ is locally Lipschitz and $\| g \|_{L^q(\mu*\gamma_\sigma)} \vee \| \nabla g \|_{L^{q}(\mu*\gamma_\sigma;\R^d)} < \infty$ (the latter alone does not automatically guarantee $g \in \dot{H}^{1,q}(\mu*\gamma_\sigma)$). As in Proposition 1.5.2 in \cite{bogachev1998}, choose a sequence $\zeta_j \in C_0^\infty$ with the following property: 
\[
0 \le \zeta_j \le 1, \ \zeta_j(x)=1 \ \text{if $|x| \le j$}, \ \sup_{j,x}|\nabla \zeta_j(x)| < \infty.
\]
Let $\varphi_j = \zeta_j g$.  Each $\varphi_j$ belongs to the ordinary Sobolev $(1,q)$-space w.r.t. the Lebesgue measure, so $\nabla \varphi_j$ can be approximated by gradients of $C_0^\infty$ functions under $\| \cdot \|_{L^q(dx;\R^d)}$ (cf. \cite{adams1975}, Corollary 3.23).  Since $\mu*\gamma_{\sigma}$ has a bounded Lebesgue density, this shows that $\varphi_j \in \dot{H}^{1,q}(\mu*\gamma_\sigma)$. 
Now, 
\[
\| \nabla \varphi_j - \nabla g \|_{L^p(\mu*\gamma_\sigma;\R^d)} \le \| (\nabla \zeta_j) g \|_{L^q(\mu*\gamma_{\sigma};\R^d)} + \| (\zeta_j-1) \nabla g \|_{L^q(\mu*\gamma_{\sigma};\R^d)} \to 0
\]
as $j \to \infty$, implying that $g \in \dot{H}^{1,q}(\mu*\gamma_\sigma)$.

\underline{Part (ii)}. 
Pick any $h \in T_{\Lambda_{\mu}}(0), t_n \downarrow 0$, and $h_n \to h$ in $D_{\mu}$ such that $t_n h_n \in \Lambda_{\mu}$. For any $\epsilon > 0$, there exist some constant $c>0$ and sub-Weibull $\rho \in \cP$ such that $\| h-\tilde{h} \|_{\dot{H}^{-1,p}(\mu*\gamma_{\sigma})} < \epsilon$ for $\tilde{h}=c(\rho-\mu)*\gamma_{\sigma}$.

Observe that 
\[
\begin{split}
&|\Psi (t_n h_n) - \Psi(t_n\tilde{h})| = |\Wp^p(\mu*\gamma_{\sigma}+t_nh_n,\nu*\gamma_\sigma) - \Wp^p(\mu*\gamma_{\sigma}+t_n\tilde{h},\nu*\gamma_\sigma)| \\
&\le p \big (\Wp^{p-1}(\mu*\gamma_{\sigma}+t_nh_n,\nu*\gamma_\sigma) \vee \Wp^{p-1}(\mu*\gamma_{\sigma}+t_n\tilde{h},\nu*\gamma_\sigma) \big)  \\ &\quad \times \Wp(\mu*\gamma_{\sigma}+t_nh_n,\mu*\gamma_{\sigma}+t_n\tilde{h}).
\end{split}
\]
Assume that $n$ is large enough so that $ct_n \le 1/2$ and $\| h_n - h \|_{\dot{H}^{-1,p}(\mu*\gamma_\sigma)} < \epsilon$. The density of $\mu*\gamma_{\sigma}+t_n\tilde{h} = \big ((1-ct_n)\mu + ct_n \rho\big)*\gamma_\sigma$ w.r.t. $\mu*\gamma_{\sigma}$ is 
\[
\frac{d(\mu*\gamma_{\sigma}+t_n\tilde{h})}{d(\mu*\gamma_{\sigma})} \ge 1-ct_n  \ge \frac{1}{2}.
\]
Thus, by Proposition \ref{prop:duality-in-Wp}, 
\[
 \Wp(\mu*\gamma_{\sigma}+t_nh_n,\mu*\gamma_{\sigma}+t_n\tilde{h}) \lesssim t_n \| h_n - \tilde{h} \|_{\dot{H}^{-1,p}(\mu*\gamma_\sigma)} < 2t_n \epsilon. 
\]
Also, by Proposition \ref{prop:duality-in-Wp}, 
\[
\begin{split}
\Wp(\mu*\gamma_{\sigma}+t_n\tilde{h},\nu*\gamma_\sigma) &\le \Wp(\mu*\gamma_{\sigma}+t_n\tilde{h},\mu*\gamma_\sigma) + \Wp(\mu*\gamma_\sigma,\nu*\gamma_\sigma)\\
&\lesssim t_n \| \tilde{h} \|_{\dot{H}^{-1,p}(\mu*\gamma_\sigma)} + \Wp(\mu*\gamma_\sigma,\nu*\gamma_\sigma) = O(1).
\end{split}
\]
Likewise, $\Wp(\mu*\gamma_{\sigma}+t_nh_n,\nu*\gamma_\sigma) = O(1)$. Conclude that 
\[
\limsup_{n \to \infty} |\Psi (t_n h_n) - \Psi(t_n\tilde{h})|/t_n \lesssim \epsilon. 
\]
Further, $|h(g)-\tilde{h}(g)| \le \| g \|_{\dot{H}^{1,q}(\mu*\gamma_\sigma)} \| h-\tilde{h} \|_{\dot{H}^{-1,p}(\mu*\gamma_\sigma)} \lesssim \epsilon$. Combining Lemma \ref{lem: Hadamard derivative alternative}, we conclude that 
\[
\limsup_{n \to \infty} \left |\frac{\Psi (t_n h_n) - \Psi(0)}{t_n} - h(g) \right | \lesssim \epsilon. 
\]
This completes the proof. 
\qed

\subsection{Proofs for Section \ref{sec: bootstrap}}

\subsubsection{Proof of \cref{prop: bootstrap}}
We first prove the following lemma. We note that the empirical distributions $\hat{\mu}_n^B$ and $\hat{\mu}_n$ are finitely discrete, so $\sqrt{n}(\hat{\mu}_n^B-\hat{\mu}_n)*\gamma_\sigma$ defines a random variable with values in $\dot{H}^{-1,p}(\mu*\gamma_\sigma)$  (cf. (\ref{eq: derivative}) and Step 3 of the proof of \cref{thm:limit-distribution}). Let $\mathfrak{L}_n^B = \mathfrak{L}_n^B(X_1,\dots,X_n)$ denote  its (regular) conditional law given the data (which exists as $\dot{H}^{-1,p}(\mu*\gamma_\sigma)$ is a separable Banach space; cf. Chapter 11 in \cite{dudley2002}). 
\begin{lemma}
\label{lem: bootstrap}
Suppose that $\mu$ satisfies Condition (\ref{eq:moment-condition}). Then,  we have $\mathfrak{L}_n^B  \stackrel{w}{\to} \Prob \circ G_\mu^{-1}$
almost surely. 
\end{lemma}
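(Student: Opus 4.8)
The plan is to reduce the statement to an almost-sure bootstrap functional central limit theorem for the $\mu$-Donsker class $\cU_0*\phi_\sigma$ identified in the proof of \cref{thm:limit-distribution}, and then transport the resulting conditional weak convergence through the same chain of isometries and continuous embeddings used there. Recall from \eqref{eq: convolution identity} that $\big((\hat{\mu}_n^B-\hat{\mu}_n)*\gamma_\sigma\big)(f)=(\hat{\mu}_n^B-\hat{\mu}_n)(f*\phi_\sigma)$, and that $(\hat{\mu}_n^B-\hat{\mu}_n)\big((f-a)*\phi_\sigma\big)=(\hat{\mu}_n^B-\hat{\mu}_n)(f*\phi_\sigma)$ for every $a\in\R$, so it suffices to control the bootstrap empirical process indexed by the centered class $\cU_0*\phi_\sigma$. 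Steps~1--2 of the proof of \cref{thm:limit-distribution} show that $\cU_0*\phi_\sigma$ is $\mu$-Donsker, and, specialized to $f_2=0$, the Lipschitz-in-$f$ bound in Step~2 there shows $|(f*\phi_\sigma)(x)|\le\mathsf{C}_q(\gamma_\sigma)e^{(p-1)|x|^2/(2\sigma^2)}$ for all $f\in\cU_0$, so $\cU_0*\phi_\sigma$ admits the envelope $F(x)=\mathsf{C}_q(\gamma_\sigma)e^{(p-1)|x|^2/(2\sigma^2)}$. A change of variables identifies the Lorentz-type integral $\int_0^\infty\sqrt{\Prob(F(X)>t)}\,dt$ with a constant plus a constant multiple of $\int_0^\infty r\,e^{(p-1)r^2/(2\sigma^2)}\sqrt{\Prob(|X|>r)}\,dr$, which is finite under Condition \eqref{eq:moment-condition} since $\theta>p-1$ lets one absorb the polynomial factor $r$ into the exponential; hence $F\in L_{2,1}(\mu)$.

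First I would invoke the almost-sure consistency of the empirical (nonparametric) bootstrap for Donsker classes with $L_{2,1}$-integrable envelope (see \cite{vandervaart1996book}, Section~3.6; see also \cite{gine2016}): since $\cU_0*\phi_\sigma$ is $\mu$-Donsker with envelope $F\in L_{2,1}(\mu)$, the conditional law of the bootstrap empirical process $\sqrt n(\hat{\mu}_n^B-\hat{\mu}_n)$, viewed in $\ell^\infty(\cU_0*\phi_\sigma)$, converges weakly to the law of the tight centered Gaussian process with covariance $(f_1,f_2)\mapsto\Cov_\mu(f_1*\phi_\sigma,f_2*\phi_\sigma)$, for $\Prob$-almost every realization of $X_1,X_2,\dots$. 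Via the isometric isomorphism $\ell^\infty(\cU_0*\phi_\sigma)\ni L\mapsto\big(L(f*\phi_\sigma)\big)_{f\in\cU_0}\in\ell^\infty(\cU_0)$, the conditional law of $\sqrt n(\hat{\mu}_n^B-\hat{\mu}_n)*\gamma_\sigma$ converges weakly in $\ell^\infty(\cU_0)$, and, its paths lying in the closed subspace $\lin^\infty(\cU_0)$, also in $\lin^\infty(\cU_0)$, to the process $G_\mu^\circ$ from Step~2, almost surely. Applying the isometric isomorphism $\iota:\lin^\infty(\cU_0)\to\dot{H}^{-1,p}(\gamma_\sigma)$ together with the continuous mapping theorem, the conditional law of $\sqrt n(\hat{\mu}_n^B-\hat{\mu}_n)*\gamma_\sigma$ converges weakly in $\dot{H}^{-1,p}(\gamma_\sigma)$ to $\Prob\circ(G_\mu^\circ)^{-1}$, almost surely.

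Next I would pass from $\dot{H}^{-1,p}(\gamma_\sigma)$ to $\dot{H}^{-1,p}(\mu*\gamma_\sigma)$ exactly as in Step~3 of the proof of \cref{thm:limit-distribution}. Writing $a=\E_\mu[X]$ and letting $\mu^{-a}$ denote the law of $X-a$, the nonparametric bootstrap commutes with the translation $x\mapsto x-a$ (resampling from $\hat{\mu}_n$ and shifting by $-a$ produces an i.i.d.\ sample from $\hat{\mu}_n^{-a}$), so the previous paragraph applied to $\mu^{-a}$, which again satisfies Condition \eqref{eq:moment-condition}, yields conditional weak convergence of $\sqrt n(\hat{\mu}_n^{-a,B}-\hat{\mu}_n^{-a})*\gamma_\sigma$ in $\dot{H}^{-1,p}(\gamma_\sigma)$ to $G_{\mu^{-a}}^\circ$, almost surely. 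The continuous embedding $\dot{H}^{-1,p}(\gamma_\sigma)\hookrightarrow\dot{H}^{-1,p}(\mu^{-a}*\gamma_\sigma)$, followed by the isometric isomorphism $\tau_a:\dot{H}^{-1,p}(\mu^{-a}*\gamma_\sigma)\to\dot{H}^{-1,p}(\mu*\gamma_\sigma)$ and another application of the continuous mapping theorem, then gives $\mathfrak{L}_n^B\stackrel{w}{\to}\Prob\circ(\tau_aG_{\mu^{-a}}^\circ)^{-1}=\Prob\circ G_\mu^{-1}$ almost surely, which is the claim. Measurability of $\sqrt n(\hat{\mu}_n^B-\hat{\mu}_n)*\gamma_\sigma$ as a $\dot{H}^{-1,p}(\mu*\gamma_\sigma)$-valued random element (so that the regular conditional law $\mathfrak{L}_n^B$ is well defined and the statements above are meaningful) follows since $\hat{\mu}_n^B$ and $\hat{\mu}_n$ are finitely discrete, its paths are bounded linear functionals by \eqref{eq: derivative}, and the Borel and cylinder $\sigma$-fields on $\dot{H}^{-1,p}(\mu*\gamma_\sigma)$ coincide by \cref{lem: dual space}.

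The main obstacle I anticipate is correctly invoking the almost-sure (rather than in-probability) bootstrap CLT: identifying the envelope of $\cU_0*\phi_\sigma$ and recognizing that Condition \eqref{eq:moment-condition}, in the form it is stated, is precisely the $L_{2,1}(\mu)$-integrability of that envelope needed for almost-sure bootstrap consistency of a Donsker class. The remaining work---the transport of conditional weak convergence through $\iota$, $\tau_a$, and the embedding above, together with the translation reduction---is essentially identical to portions of the proof of \cref{thm:limit-distribution} and uses only routine applications of the continuous mapping theorem.
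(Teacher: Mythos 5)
Your proposal is correct and follows essentially the same route as the paper: the paper's proof likewise observes that $\cU*\phi_\sigma$ is $\mu$-Donsker with a square-integrable envelope (your $L_{2,1}$ verification is a slightly stronger but equally valid form of this envelope condition under Condition \eqref{eq:moment-condition}), invokes the Gin\'e--Zinn almost-sure bootstrap theorem (Theorem 3.6.2 in \cite{vandervaart1996book}), and then repeats Steps 2 and 3 of the proof of \cref{thm:limit-distribution} to transport the conditional weak convergence into $\dot{H}^{-1,p}(\mu*\gamma_\sigma)$. Your write-up simply makes explicit the details (envelope identification, translation reduction, measurability) that the paper leaves to the reader.
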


\begin{proof}[Proof of \cref{lem: bootstrap}]
From the proof of \cref{thm:limit-distribution}, the function class $\mathcal{U}*\phi_{\sigma}$ is $\mu$-Donsker with a $\mu$-square integrable envelope. The rest of the proof follows from the Gin\'e-Zinn theorem for the bootstrap (cf. Theorem 3.6.2 in \cite{vandervaart1996book}) and repeating the arguments in Steps 2 and 3 in the proof of \cref{thm:limit-distribution}. 
\end{proof}

\begin{proof}[Proof of \cref{prop: bootstrap}]
\underline{Part (i)}. Assume without loss of generality that $\mu$ is not a point mass.
We first note that the limit variable $\| G_\mu \|_{\dot{H}^{-1,p}(\mu*\gamma)}$ has  a continuous distribution function. This can be verified, e.g., analogously to the proof of Lemma 1 in \cite{sadhu2021}. Thus, it suffices to prove the convergence in probability (\ref{eq: bootstrap consistency null}) for each fixed $t \ge 0$ (cf. Problem 23.1 in \cite{vandervaart1998}).

Let $T_n = (\hat{\mu}_n-\mu)*\gamma_\sigma$ and $T_n^B = (\hat{\mu}^B_n - \mu)*\gamma_\sigma$. By \cref{thm:limit-distribution} and Lemma \ref{lem: bootstrap}, we know that 
\[
\begin{split}
\big(\sqrt{n}T_n^B,\sqrt{n}T_n\big) &= \big(\sqrt{n}(\hat{\mu}_n^B - \hat{\mu}_n)*\gamma_\sigma + \sqrt{n}(\hat{\mu}_n - \mu)*\gamma_\sigma, \sqrt{n}(\hat{\mu}_n - \mu)*\gamma_\sigma \big)\\
&\stackrel{d}{\to} \big(G_{\mu}'+G_{\mu},G_\mu \big) \quad \text{in} \quad \dot{H}^{-1,p}(\mu*\gamma_\sigma) \times \dot{H}^{-1,p}(\mu*\gamma_\sigma)
\end{split}
\]
unconditionally, where $G_\mu'$ is an independent copy of $G_\mu$ (cf. Theorem 2.2 in \cite{kosorok2008}).  Thus, by \cref{prop: Hadamard derivative alternative} and the second claim of the functional delta method (Lemma \ref{lem: functional delta method}), we see that  
\[
\begin{split}
\sqrt{n}\GWp(\hat{\mu}^B_n,\hat{\mu}_n) &= \sqrt{n}\Phi (T_n^B,T_n) = \Phi_{(0,0)}'(\sqrt{n}T_n^B,\sqrt{n}T_n) + R_n \\
&=\| \sqrt{n}(T_n^B-T_n) \|_{\dot{H}^{-1,p}(\mu*\gamma_\sigma)} + R_n \\
&=\| \sqrt{n}(\hat{\mu}_n^B - \hat{\mu}_n)*\gamma_\sigma \|_{\dot{H}^{-1,p}(\mu*\gamma_\sigma)} + R_n.
\end{split}
\]
Here $R_n = o_{\Prob}(1)$ unconditonally. Choose $\epsilon_n \to 0$ such that $\Prob(|R_n| > \epsilon_n) \to 0$. 
By Markov's inequality, we have $\Prob^B(|R_n| > \epsilon_n) \stackrel{\Prob}{\to} 0$. By \cref{lem: bootstrap} and the continuous mapping theorem, we also have 
\[
\sup_{t \ge 0} \left | \Prob^B \left (\| \sqrt{n}(\hat{\mu}_n^B - \hat{\mu}_n)*\gamma_\sigma \|_{\dot{H}^{-1,p}(\mu*\gamma_\sigma)} \le t \right ) - \Prob \left ( \| G_\mu \|_{\dot{H}^{-1,p}(\mu*\gamma_{\sigma})}\le t \right ) \right | \stackrel{\Prob}{\to} 0. 
\]
 Thus, for each $t \ge 0$, 
\[
\begin{split}
&\Prob^B\left ( \sqrt{n}\GWp(\hat{\mu}^B_n,\hat{\mu}_n) \le t \right ) \\
&\le \Prob^B\left (\| \sqrt{n}(\hat{\mu}_n^B - \hat{\mu}_n)*\gamma_\sigma \|_{\dot{H}^{-1,p}(\mu*\gamma_\sigma)} \le t + \epsilon_n \right) + \Prob^B(|R_n| > \epsilon_n) \\
&= \Prob\left (\| G_\mu \|_{\dot{H}^{-1,p}(\mu*\gamma)} \le t+\epsilon_n \right) + o_{\Prob}(1) \\
&=\Prob\left (\| G_\mu \|_{\dot{H}^{-1,p}(\mu*\gamma)} \le t\right) + o_{\Prob}(1).
\end{split}
\]
The reverse inequality follows similarly. 

\underline{Part (ii)}. The argument is analogous to Part (i). Observe that, by \cref{prop: Hadamard derivative alternative}, 
\[
\begin{split}
\sqrt{n}\big (\SWp(\hat{\mu}^B_n,\nu)-\SWp(\hat{\mu}_n,\nu) \big)&= \sqrt{n}\big(\Psi (T_n^B) - \Psi (T_n) \big)\\ &= \Psi_{0}'(\sqrt{n}T_n^B) - \Psi_0'(\sqrt{n}T_n) + o_{\Prob}(1) \\
&=\sqrt{n}(T_n^B-T_n)(g) + o_{\Prob}(1) \\
&=\sqrt{n}\big((\hat{\mu}_n^B - \hat{\mu}_n)*\gamma_\sigma\big)(g) + o_{\Prob}(1).
\end{split}
\]
Taking $p$th root and applying the delta method, we have 
\[
\sqrt{n}\big (\GWp(\hat{\mu}^B_n,\nu)-\GWp(\hat{\mu}_n,\nu) \big)
=\frac{1}{p[\GWp(\mu,\nu)]^{p-1}} \cdot \sqrt{n}\big((\hat{\mu}_n^B - \hat{\mu}_n)*\gamma_\sigma\big)(g) + o_{\Prob}(1).
\]
The rest of the proof is completely analogous to Part (i). 
\end{proof}

\begin{proof}[Proof of \cref{prop: bootstrap case 2}]
By Lemma \ref{lem: bootstrap} and Example 1.4.6 in \cite{vandervaart1996book}, the conditional law of $\big(\sqrt{n}(\hat{\mu}_n^B-\hat{\mu}_n)*\gamma_\sigma,\sqrt{n}(\hat{\nu}_n^B-\hat{\nu}_n)*\gamma_\sigma\big)$ given the data converges weakly to the law of $(G_\mu,G_\nu)$ in $\dot{H}^{-1,p}(\mu*\gamma_\sigma) \times \dot{H}^{-1,p}(\nu*\gamma_\sigma)$ almost surely, where $G_\mu$ and $G_\nu$ are independent. By Theorem 2.2 in \cite{kosorok2008}, for  $T_{n,1}^B = (\hat{\mu}_n^B-\mu)*\gamma_\sigma$ and $T_{n,2}^B = (\hat{\nu}_n^B - \nu)*\gamma_\sigma$, we have
\[
\begin{split}
&\big(\sqrt{n}T_{n,1}^B,\sqrt{n}T_{n,2}^B\big) \\
&= \big(\sqrt{n}(\hat{\mu}_n^B - \hat{\mu}_n)*\gamma_\sigma + \sqrt{n}(\hat{\mu}_n - \mu)*\gamma_\sigma, \sqrt{n}(\hat{\nu}_n^B - \hat{\nu}_n)*\gamma_\sigma + \sqrt{n}(\hat{\nu}_n - \nu)*\gamma_\sigma \big)\\
&\stackrel{d}{\to} \big(G_\mu+G_\mu',G_\nu+G_\nu' \big) \quad \text{in} \quad \dot{H}^{-1,p}(\mu*\gamma_\sigma) \times \dot{H}^{-1,p}(\nu*\gamma_\sigma)
\end{split}
\]
unconditionally, where $G_\mu',G_\nu'$ are copies of $G_\mu,G_\nu$, respectively, and $G_\mu,G_\mu',G_\nu,G_\nu'$ are independent. Thus, by \cref{prop: Hadamard derivative alternative two sample} and \cref{lem: functional delta method}, for $T_{n,1} = (\hat{\mu}_n-\mu)*\gamma_\sigma$ and $T_{n,2} = (\hat{\nu}_n - \nu)*\gamma_\sigma$, we have
\[
\begin{split}
&\sqrt{n}\big (\SWp(\hat{\mu}^B_n,\hat{\nu}_n^B)-\SWp(\hat{\mu}_n,\hat{\nu}_n) \big)\\
&= \sqrt{n}\big(\Upsilon (T_{n,1}^B,T_{n,2}^B) - \Upsilon (T_{n,1},T_{n,2}) \big) \\
&= \Upsilon_{(0,0)}'(\sqrt{n}T_{n,1}^B,\sqrt{n}T_{n,2}^B) - \Upsilon_{(0,0)}'(\sqrt{n}T_{n,1},\sqrt{n}T_{n,2}) + o_{\Prob}(1) \\
&=\sqrt{n}(T_{n,1}^B-T_{n,1})(g) + \sqrt{n}(T_{n,2}^B-T_{n,2})(g^c) + o_{\Prob}(1) \\
&=\sqrt{n}\big((\hat{\mu}_n^B - \hat{\mu}_n)*\gamma_\sigma\big)(g) +  \sqrt{n}\big((\hat{\nu}_n^B - \hat{\nu}_n)*\gamma_\sigma\big)(g^c) + o_{\Prob}(1).
\end{split}
\]
The rest of the proof is analogous to \cref{prop: bootstrap} Part (ii). 
\end{proof}

\begin{proof}[Proof of \cref{prop: bootstrap case 3}]
It is not difficult to see that $\rho$ satisfies Condition (\ref{eq:moment-condition}) and  $\sqrt{2n}(\hat{\rho}_n-\rho)*\gamma_\sigma \stackrel{d}{\to} G_{\rho}$ in $\dot{H}^{-1,p}(\rho*\gamma_\sigma)$. 
By Theorem 3.7.7 and  Example 1.4.6 in \cite{vandervaart1996book}, the conditional law of $\big (\sqrt{n}(\hat{\rho}_{n,1}^B-\hat{\rho}_n)*\gamma_\sigma,\sqrt{n}(\hat{\rho}_{n,2}^B-\hat{\rho}_n)*\gamma_\sigma \big)$ given the data converges weakly to the law of $(G_\rho,G_\rho')$ in $\dot{H}^{-1,p}(\rho*\gamma_\sigma) \times \dot{H}^{-1,p}(\rho*\gamma_\sigma)$ almost surely, where $G_\rho'$ is an independent copy of $G_\rho$. Thus, arguing as in the proof of \cref{prop: bootstrap case 2}, for $T_{n,j}^B = (\hat{\rho}_{n,j}^B-\rho)*\gamma_\sigma$ ($j=1,2$), we have 
\[
(\sqrt{n}T_{n,1}^B,\sqrt{n}T_{n,2}^B) \stackrel{d}{\to} (G_\rho^1+G_\rho^2/\sqrt{2},G_\rho^3+G_\rho^4/\sqrt{2}) \quad \text{in} \quad \dot{H}^{-1,p}(\rho*\gamma_\sigma) \times \dot{H}^{-1,p}(\rho*\gamma_\sigma)
\]
unconditionally, where $G_\rho^1,\dots,G_\rho^4$ are independent copies of $G_\rho$.
Define $\Phi$ by replacing $\mu$ with $\rho$ in Section \ref{sec: null case}. 
Then,  by \cref{prop: Haradard derivative null} and the second claim of the functional delta method (Lemma \ref{lem: functional delta method}), we see that  
\[
\begin{split}
\sqrt{n}\GWp(\hat{\rho}^B_{n,1},\hat{\rho}_{n,2}^B) &= \sqrt{n}\Phi (T_{n,1}^B,T_{n,2}^B) = \Phi_{(0,0)}'(\sqrt{n}T_{n,1}^B,\sqrt{n}T_{n,2}^B) + o_{\Prob}(1) \\
&=\| \sqrt{n}(T_{n,1}^B-T_{n,2}^B) \|_{\dot{H}^{-1,p}(\rho*\gamma_\sigma)} + o_{\Prob}(1) \\
&=\| \sqrt{n}(\hat{\rho}_{n,1}^B-\hat{\rho}_n)*\gamma_\sigma - \sqrt{n}(\hat{\rho}_{n,2}^B-\hat{\rho}_n)*\gamma_\sigma \|_{\dot{H}^{-1,p}(\rho*\gamma_\sigma)} + o_{\Prob}(1).
\end{split}
\]
The rest of the proof is analogous to \cref{prop: bootstrap} Part (i). 
\end{proof} 

\subsection{Proof of \cref{thm: MDE limit}}
\subsubsection{Preliminary lemmas}

Recall the notation $\Xi_\mu$ and $D_\mu$ that appeared in \cref{sec: null case}.
\begin{lemma}
Let $\mu\in\cP_p$ for $1<p<\infty$. Under Assumption \ref{as:MSE}, the map 
    \[
        (h,\theta)\in \Xi_{\mu}\times N_0\mapsto{\sf W}_p\left(\mu*\gamma_{\sigma}+h,\nu_{\theta}*\gamma_{\sigma}\right)
    \]
    is Hadamard directionally differentiable at $(0,\theta\opt)$ with derivative
    \[
        (h,\theta)\in \Xi_{\mu}\times N_0\mapsto \norm{h-\inp{\theta}{\mathfrak{D}}}_{\Hu}.
    \]
    Furthermore, the expansion
    \[
     {\sf W}_p(\mu*\gamma_{\sigma}+h,\nu_{\theta}*\gamma_{\sigma})=\norm{h-\inp{\theta-\theta\opt}{\mathfrak{D}}}_{\Hu}+r(h,\theta-\theta\opt),   
 \]
 holds, with remainder $r$ satisfying $r(th,t(\theta-\theta\opt)) = o(t)$ as $t\dn 0$ uniformly w.r.t. $(h,\theta)$ varying in $K\subset \Xi_{\mu}\times N_0$, a compact subset of $D_{\mu}\times \R^{d_0}$. 
\label{lem:HDD}
\end{lemma}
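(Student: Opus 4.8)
The result combines the null-case Hadamard differentiability of $\Wp$ (\cref{prop: Haradard derivative null}, more precisely its one-variable specialization via \cref{lem: Haradard derivative null}) with the norm differentiability of $\theta\mapsto(\nu_\theta-\nu_{\theta\opt})*\gamma_\sigma$ assumed in \cref{as:MSE}(vi), and promotes everything to a uniform (over compacta) expansion. The plan is to reduce the two-argument map $(h,\theta)\mapsto\Wp(\mu*\gamma_\sigma+h,\nu_\theta*\gamma_\sigma)$ to the already-understood map $(h_1,h_2)\mapsto\Wp(\mu*\gamma_\sigma+h_1,\mu*\gamma_\sigma+h_2)$ by writing $\nu_\theta*\gamma_\sigma = \mu*\gamma_\sigma + (\nu_\theta-\nu_{\theta\opt})*\gamma_\sigma$ (using $\nu_{\theta\opt}=\mu$ from \cref{as:MSE}(iv)) and replacing $(\nu_\theta-\nu_{\theta\opt})*\gamma_\sigma$ by its linearization $\inp{\theta-\theta\opt}{\mathfrak{D}}$, absorbing the error through the Lipschitz bound of \cref{prop:duality-in-Wp}.

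\textbf{Key steps.} First, I would set $h_2(\theta):=(\nu_\theta-\nu_{\theta\opt})*\gamma_\sigma\in\Hu$ and record that by \cref{as:MSE}(vi), $\|h_2(\theta)-\inp{\theta-\theta\opt}{\mathfrak{D}}\|_{\Hu}=o(|\theta-\theta\opt|)$ as $\theta\to\theta\opt$; in particular $h_2(\theta)\to0$ in $\Hu$. Second, along any sequence $t_n\dn0$, $(h_n,\theta_n)\to(h,\theta)$ in $D_\mu\times\R^{d_0}$ with $(t_nh_n,\theta\opt+t_n(\theta_n-\theta\opt))\in\Xi_\mu\times N_0$, I would compare $\Wp(\mu*\gamma_\sigma+t_nh_n,\mu*\gamma_\sigma+h_2(\theta\opt+t_n(\theta_n-\theta\opt)))$ with $\Wp(\mu*\gamma_\sigma+t_nh_n,\mu*\gamma_\sigma+t_n\inp{\theta_n-\theta\opt}{\mathfrak{D}})$: the triangle inequality for $\Wp$ together with \cref{prop:duality-in-Wp} (with $\rho=\mu*\gamma_\sigma$, noting $\mu*\gamma_\sigma+t_nh_n$ has density bounded below by $1/2$ relative to $\mu*\gamma_\sigma$ for large $n$, as in the proof of \cref{prop: Haradard derivative null}) bounds the difference by $p\,2^{1/q}\|h_2(\theta\opt+t_n(\theta_n-\theta\opt))-t_n\inp{\theta_n-\theta\opt}{\mathfrak{D}}\|_{\Hu}$. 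By \cref{as:MSE}(vi) this is $o(t_n)$ since $|\theta\opt+t_n(\theta_n-\theta\opt)-\theta\opt|=t_n|\theta_n-\theta\opt|=O(t_n)$. Third, the second quantity equals $\Phi(t_nh_n, t_n\inp{\theta_n-\theta\opt}{\mathfrak{D}})$ in the notation of \cref{sec: null case} with $h_{n,1}=h_n\to h$ and $h_{n,2}=\inp{\theta_n-\theta\opt}{\mathfrak{D}}\to\inp{\theta-\theta\opt}{\mathfrak{D}}$ in $D_\mu$ (here I use that $\mathfrak{D}_1,\dots,\mathfrak{D}_{d_0}\in\Hu$ and that $(h_{n,1},h_{n,2})$ stays in the convex cone $\Xi_\mu\times\Xi_\mu$, which requires checking $t_n\inp{\theta_n-\theta\opt}{\mathfrak D}$ lies in $\Xi_\mu$ — this holds because $h_2(\theta)\in\Xi_\mu$ for $\theta\in N_0$ by \cref{as:MSE}(v) and $t_n\inp{\theta_n-\theta\opt}{\mathfrak D}$ differs from such an element by an $o(t_n)$ term, so one argues via the tangent-cone characterization rather than pointwise membership — alternatively enlarge the domain as in the Remark after \cref{lem: functional delta method}). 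Then \cref{prop: Haradard derivative null} gives $\Phi(t_nh_n,t_n\inp{\theta_n-\theta\opt}{\mathfrak{D}})/t_n\to\|h-\inp{\theta-\theta\opt}{\mathfrak{D}}\|_{\Hu}$, which proves the stated Hadamard directional derivative. For the uniform expansion, I would replay the estimate keeping $(h,\theta)$ in a fixed compact $K\subset\Xi_\mu\times N_0$: the error from the linearization is $\sup_{(h,\theta)\in K}o(t|\theta-\theta\opt|)=o(t)$ by \cref{as:MSE}(vi), and the remainder in the expansion of $\Phi$ itself is controlled uniformly over compacta because the Lipschitz constant in \cref{prop:duality-in-Wp} and the $\Hu$-norm are jointly continuous — more carefully, one uses that for convex $\Xi_\mu$, the quantity $|\Wp(\mu*\gamma_\sigma+th_1,\mu*\gamma_\sigma+th_2)/t - \|h_1-h_2\|_{\Hu}|$ is bounded by a term of order $t^{(2\wedge p)-1}$ uniformly for $(h_1,h_2)$ in a bounded set (extracting this from the proof of \cref{lem: Haradard derivative null}, where the only non-uniform ingredient is the $C_{\varphi,p}$ constant, which can be controlled by covering $K$ finitely and using the dual-norm density with a uniform modulus).

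\textbf{Main obstacle.} The delicate point is the \emph{uniform} remainder bound. Hadamard directional differentiability at a single point follows cleanly from \cref{prop: Haradard derivative null} and \cref{as:MSE}(vi), but upgrading to $r(th,t(\theta-\theta\opt))=o(t)$ uniformly over a compact $K$ requires a quantitative version of \cref{lem: Haradard derivative null}: one must show the convergence rate in that lemma is locally uniform. The lower-bound half of \cref{lem: Haradard derivative null} uses test functions $\varphi$ whose constant $C_{\varphi,p}$ depends on $\varphi$, so a naive bound is not uniform; the fix is to note that for the linear interpolation the upper bound (via the Benamou--Brenier argument with the explicit velocity field $v_s$) already gives $\Wp(\mu_t,\nu_t)/t\le\|E(h)\|_{L^p}+o(1)$ with the $o(1)$ term depending only on $\sup\|f_t^i-1\|_\infty$-type quantities, hence uniform for $h$ in a bounded set and $t$ small; and for the matching lower bound one can instead invoke the dual characterization directly with a fixed finite net of test functions approximating the unit ball of $\dot H^{1,q}$, whose constants are then uniformly bounded. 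I expect assembling this uniformity carefully — and verifying the domain/tangent-cone bookkeeping so that \cref{lem: functional delta method} applies with the remainder claim — to be the bulk of the work, whereas the structural identity $\Wp(\mu*\gamma_\sigma+h,\nu_\theta*\gamma_\sigma)=\Phi(h,h_2(\theta))$ and the reduction to \cref{prop: Haradard derivative null} are essentially immediate.
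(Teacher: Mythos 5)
Your first part---the pointwise Hadamard directional differentiability at $(0,\theta\opt)$---is correct and is in substance the paper's argument with the chain rule unrolled by hand: the paper composes $\Phi$ with the map $\psi:(h,\theta)\mapsto\big(h,(\nu_\theta-\nu_{\theta\opt})*\gamma_\sigma\big)$, which is Fr\'echet (hence Hadamard directionally) differentiable at $(0,\theta\opt)$ by \cref{as:MSE}(vi), and applies the chain rule for Hadamard directional derivatives together with \cref{prop: Haradard derivative null}; your explicit comparison via \cref{prop:duality-in-Wp} is just a direct verification of that composition and is fine.

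Where you go astray is in the ``main obstacle.'' The uniform remainder bound $r(th,t(\theta-\theta\opt))=o(t)$ over compact $K$ does \emph{not} require any quantitative or locally uniform version of \cref{lem: Haradard derivative null}. It is an automatic consequence of Hadamard directional differentiability in the sequential form used throughout the paper (directions $h_n\to h$ are allowed to vary): if uniformity failed on a compact $K$, one could extract $t_n\downarrow 0$ and $(h_n,\theta_n)\in K$ with $|r(t_nh_n,t_n(\theta_n-\theta\opt))|\ge\epsilon t_n$, pass to a convergent subsequence by compactness, and contradict the definition of the derivative together with continuity of the derivative map. This equivalence between Hadamard and ``compact'' directional differentiability is exactly what the paper invokes by citing \cite{shapiro1990} in the last line of its proof. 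Consequently the machinery you propose---extracting a $t^{(2\wedge p)-1}$ rate from the Benamou--Brenier upper bound, controlling $C_{\varphi,p}$, and especially replacing the dual lower bound by ``a fixed finite net of test functions approximating the unit ball of $\dot H^{1,q}$''---is unnecessary, and the last idea is not sound as stated, since the unit ball of $\dot H^{1,q}(\mu*\gamma_\sigma)$ is not compact and admits no finite net that approximates it uniformly in the relevant sense. The correct and short route is: establish the (sequential) Hadamard directional derivative as you did, then cite the compact-differentiability equivalence for the uniform expansion.
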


\begin{proof}
    Consider the map $\psi:(h,\theta)\in\Xi_{\mu}\times N_0\mapsto\big(h,(\nu_{\theta}-\nu_{\theta\opt})*\gamma_{\sigma}\big)\in \Xi_{\mu}\times \Xi_{\mu}$.
    The norm differentiability condition, Assumption \ref{as:MSE} (vi), establishes Fr\'echet (hence Hadamard) directional differentiability of $\psi$ at $(0,\theta\opt)$ with
    \[
        \psi'_{(0,\theta\opt)}(h,\theta)=(h,\inp{\theta}{\mathfrak{D}})\in T_{\Xi_{\mu}\times \Xi_{\mu}}(0,0).
    \]
    The chain rule for Hadamard directional derivatives paired with \cref{prop: Haradard derivative null} yields
    \[
        (\Phi\circ \psi)'_{(0,\theta\opt)}(h,\theta)=\Phi'_{\psi(0,\theta\opt)}\circ\psi'_{(0,\theta\opt)}(h,\theta)=\Phi'_{(0,0)}(h,\inp{\theta}{\mathfrak{D}})=\norm{h-\inp{\theta}{\mathfrak{D}}}_{\Hu}.    
    \]
    The final assertion follows from compact directional differentiability of the composition \cite{shapiro1990}.
\end{proof}

\begin{lemma}
    Assume the setting of Lemma \ref{lem:HDD}.
    \begin{enumerate}
        \item[(i)] There exists a neighborhood $N_1$ of $\theta\opt$ with $\overline{N_1}\subset N_0$ such that 
           \[
               {\sf W}_p^{(\sigma)}(\hat \mu_n,\nu_{\theta})\geq \frac{C}{2}\abs{\theta-\theta\opt}-{\sf W}_p^{(\sigma)}(\hat \mu_n,\mu),\quad \forall \theta\in \overline{N_1},
            \]
            where $C>0$ is such that $\norm{\inp{t}{\mathfrak{D}}}_{\Hu}\geq C\abs{t}$ for every $t\in\R^{d_0}$.
        \item[(ii)] Let $\xi_n=O_{\mathbb{P}}(1)$ and $\Theta_n:=\left\{ \theta\in \overline{N_1}: \sqrt n \abs{\theta-\theta\opt}\leq \xi_n\right\}$; then, uniformly in $\theta \in \Theta_n$,   
    \[
        \sqrt n{\sf W}_p^{(\sigma)}\left(\hat\mu_n,\nu_{\theta}\right)=\big \| \mathbb{G}_n^{(\sigma)}-\sqrt n\inp{\theta-\theta\opt}{\mathfrak{D}}\big\| _{\Hu}+o_{\mathbb{P}}(1).  
    \]
\end{enumerate}
\label{lem:errprop}
\end{lemma}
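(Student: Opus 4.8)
The plan is to deduce both parts directly from the expansion with compact-uniform remainder established in \cref{lem:HDD}, combined with the triangle inequality and the non-degeneracy of the derivative $\mathfrak{D}$.

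For part (i), I would first record that, since $\mathfrak{D}_1,\dots,\mathfrak{D}_{d_0}$ are linearly independent elements of $\Hu$, the map $t\mapsto \norm{\inp{t}{\mathfrak{D}}}_{\Hu}$ is a norm on $\R^{d_0}$ and hence equivalent to $\abs{\cdot}$; this furnishes the constant $C>0$ with $\norm{\inp{t}{\mathfrak{D}}}_{\Hu}\ge C\abs{t}$ for all $t\in\R^{d_0}$. Specializing \cref{lem:HDD} to $h\equiv 0$ shows that $\theta\mapsto \GWp(\mu,\nu_{\theta})=\Wp(\mu*\gamma_{\sigma},\nu_{\theta}*\gamma_{\sigma})$ is Hadamard directionally differentiable at $\theta\opt$ with derivative $v\mapsto\norm{\inp{v}{\mathfrak{D}}}_{\Hu}$; since the domain is finite-dimensional, this upgrades to the first-order expansion $\GWp(\mu,\nu_{\theta})=\norm{\inp{\theta-\theta\opt}{\mathfrak{D}}}_{\Hu}+o(\abs{\theta-\theta\opt})$ as $\theta\to\theta\opt$, with the $o(\cdot)$ uniform over directions (equivalently, this is the $h=0$ instance of the remainder bound in \cref{lem:HDD}). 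I would then pick a neighborhood $N_1\ni\theta\opt$ with $\overline{N_1}\subset N_0$ small enough that this remainder is at most $\tfrac{C}{2}\abs{\theta-\theta\opt}$ on $\overline{N_1}$, so that $\GWp(\mu,\nu_{\theta})\ge\tfrac{C}{2}\abs{\theta-\theta\opt}$ there; part (i) then follows from the triangle inequality $\GWp(\hat\mu_n,\nu_{\theta})\ge\GWp(\mu,\nu_{\theta})-\GWp(\hat\mu_n,\mu)$.

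For part (ii), write $h_n:=(\hat\mu_n-\mu)*\gamma_{\sigma}$, so that $\mathbb{G}_n^{(\sigma)}=\sqrt n\,h_n$ and $\GWp(\hat\mu_n,\nu_{\theta})=\Wp(\mu*\gamma_{\sigma}+h_n,\nu_{\theta}*\gamma_{\sigma})$. Inserting the expansion of \cref{lem:HDD} and multiplying by $\sqrt n$ (using homogeneity of $h\mapsto\inp{h}{\mathfrak{D}}$) gives
\[
\sqrt n\,\GWp(\hat\mu_n,\nu_{\theta})=\big\|\mathbb{G}_n^{(\sigma)}-\sqrt n\inp{\theta-\theta\opt}{\mathfrak{D}}\big\|_{\Hu}+\sqrt n\,r\!\left(h_n,\theta-\theta\opt\right),
\]
so it remains to prove $\sup_{\theta\in\Theta_n}\sqrt n\,\abs{r(h_n,\theta-\theta\opt)}=o_{\Prob}(1)$. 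Setting $t_n=n^{-1/2}$ and $s_n:=\sqrt n(\theta-\theta\opt)$ we have $(h_n,\theta-\theta\opt)=(t_n\mathbb{G}_n^{(\sigma)},t_n s_n)$ with $\abs{s_n}\le\xi_n=O_{\Prob}(1)$. Given $\eps>0$, tightness of $\{\mathbb{G}_n^{(\sigma)}\}$ (which follows from \cref{thm:limit-distribution} together with Prokhorov's theorem, $\Hu$ being a separable Banach space) yields a compact $K_1\subset\Hu$ with $\Prob(\mathbb{G}_n^{(\sigma)}\in K_1)\ge 1-\eps$ for all $n$, and $\xi_n=O_{\Prob}(1)$ yields $M<\infty$ with $\liminf_n\Prob(\xi_n\le M)\ge 1-\eps$. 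On the event $\{\mathbb{G}_n^{(\sigma)}\in K_1,\ \xi_n\le M\}$ the pair $(\mathbb{G}_n^{(\sigma)},s_n)$ lies in the fixed compact set $K:=K_1\times\{s\in\R^{d_0}:\abs{s}\le M\}$, so the compact-uniform remainder estimate of \cref{lem:HDD}, applied with $t=t_n\dn 0$, gives
\[
\sup_{\theta\in\Theta_n}\sqrt n\,\abs{r(h_n,\theta-\theta\opt)}\le t_n^{-1}\sup_{(h,s)\in K}\abs{r(t_n h,t_n s)}\longrightarrow 0.
\]
Hence $\Prob\big(\sup_{\theta\in\Theta_n}\sqrt n\,\abs{r(h_n,\theta-\theta\opt)}>\delta\big)\le 2\eps+o(1)$ for every $\delta>0$, and letting $\eps\dn 0$ completes the argument.

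The main obstacle is the last display of part (ii): transferring the deterministic ``uniform over compacts'' remainder bound of \cref{lem:HDD} to the \emph{random} argument $(\mathbb{G}_n^{(\sigma)},s_n)$. This requires localizing $\mathbb{G}_n^{(\sigma)}$ and $\xi_n$ onto one fixed compact set via tightness and stochastic boundedness, and it relies on $t_n\dn 0$ so that the rescaled iterates $\mu*\gamma_{\sigma}+t_n h$ and $\theta\opt+t_n s$ remain admissible uniformly over $(h,s)\in K$ — a feature already built into the compact directional differentiability of the composition underlying \cref{lem:HDD}. By contrast, part (i) and the reduction of part (ii) to the remainder estimate are routine.
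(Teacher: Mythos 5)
Your proposal is correct and follows essentially the same route as the paper: part (i) via the $h=0$ instance of the expansion in \cref{lem:HDD}, the norm-equivalence constant $C$ from linear independence of $\mathfrak{D}_1,\dots,\mathfrak{D}_{d_0}$, a neighborhood on which the remainder is at most $\tfrac{C}{2}\abs{\theta-\theta\opt}$, and the triangle inequality; part (ii) by localizing $(\mathbb{G}_n^{(\sigma)},\sqrt n(\theta-\theta\opt))$ onto a fixed compact set via uniform tightness and stochastic boundedness and then invoking the compact-uniform remainder estimate with $t_n=n^{-1/2}$. No gaps beyond those the paper itself glosses over.
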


\begin{proof} \underline{Part (i)}. 
    Assumption \ref{as:MSE} (vi) guarantees that there exists a constant $C>0$ such that $\norm{\inp{\theta-\theta\opt}{\mathfrak{D}}}_{\Hu}\geq C\abs{\theta-\theta\opt}$ for every $\theta\in N_0$. Let $N_1$ be an open ball of radius $\bar r$ centered at $\theta\opt$ whose closure is contained in $N_0$; then there exists $t_0>0$ such that, for every $t\leq t_0$, the remainder term $r$ of Lemma \ref{lem:HDD} satisfies $t^{-1}\abs{r(0,t(\theta-{\theta}\opt))}\leq C\bar r/2$ for every
    $\theta\in \partial N_1$. Hence,
    $\abs{r(0,\theta-\theta\opt)}\leq (C/2)\abs{\theta-\theta\opt}$ for every $\theta\in \overline{N_1}$. The triangle inequality yields, for any $\theta\in \overline N_1$,  
    \begin{align*}
        {\sf W}_{p}^{(\sigma)}(\hat \mu_n,\nu_{\theta})&\geq {\sf W}_{p}^{(\sigma)}(\mu,\nu_{\theta})-{\sf W}_{p}^{(\sigma)}(\hat \mu_n,\mu),
\\
&=\norm{\inp{\theta-\theta\opt}{\mathfrak{D}}}_{\Hu}+r(0,\theta-\theta\opt)-{\sf W}_{p}^{(\sigma)}(\hat \mu_n,\mu),
\\
&\geq  \frac{C}{2}\abs{\theta-\theta\opt}-{\sf W}_{p}^{(\sigma)}(\hat \mu_n,\mu).
\end{align*}
 
\underline{Part (ii)}.  Since $\mathbb{G}_n^{(\sigma)} \stackrel{d}{\to} G_{\mu}$ in $\Hu$ and $G_{\mu}$ is tight, the sequence $\mathbb{G}_n^{(\sigma)}$ is uniformly tight. Pick any $\epsilon,\delta>0$. By uniform tightness, there exists a compact set $K_{\epsilon}\subset \Hu$ such that $\Prob(\mathbb{G}_n^{(\sigma)}\in K_{\epsilon})\geq 1-\epsilon/2$ for every $n\in\N$. Further, since $\xi_n=O_{\mathbb{P}}(1)$, there exists $M_{\epsilon}>0$ such that $\Prob(\abs{\xi_n}\leq M_{\epsilon})\geq
1-\epsilon/2$ for every $n\in\N$. Define the event $A_{n,\epsilon}=\big\{ \mathbb{G}_n^{(\sigma)}\in K_{\epsilon} \big\}\cap \big\{ \abs{\xi_n}\leq M_{\epsilon} \big\}$. Observe that $\Prob(A_{n,\epsilon}) \ge 1-\epsilon$ for every $n \in \N$. Then, on this event $A_{n,\epsilon}$, it holds that $\Xi_n\subset \Theta_{n,\epsilon}:=\left\{ \theta\in \overline{N_1}:\sqrt n\abs{\theta-\theta\opt}\leq M_{\epsilon} \right\}$. Since $\Theta_{n,\epsilon}$ is compact,  we have, for every $\theta\in\Theta_{n,\epsilon}$,
 \[
     \sqrt n {\sf W}_p^{(\sigma)}(\hat\mu_n,\nu_{\theta})=\big \| \mathbb{G}_n^{(\sigma)}-\sqrt n\inp{\theta-\theta\opt}{\mathfrak{D}}\big \|_{\Hu}+\sqrt nr(n^{-1/2}\mathbb{G}_n^{(\sigma)},\theta-\theta\opt). %
 \]
Set $\zeta_n:=\sup_{\theta \in \Xi_{n,\epsilon}}|\sqrt nr(n^{-1/2}\mathbb{G}_n^{(\sigma)},\theta_n-\theta\opt)|$. Then, on the event $A_{n,\epsilon}$, 
 \[
     \abs{\zeta_n}\leq \sup_{h \in K_\epsilon, |u| \le M_\epsilon} \sqrt n\big | r(n^{-1/2}h,n^{-1/2}u)\big |,
     \]
     and the right hand side can be made less than $\delta$ for $n$ sufficiently large. 
    Hence, for every sufficiently large $n$,
     \begin{align*}
         \Prob\left( \abs{\zeta_n}>\delta  \right)&=\Prob\left( \left\{\abs{\zeta_n}>\delta\right\}\cap A_{n,\epsilon}  \right)+\Prob\left( \left\{\abs{\zeta_n}>\delta\right\}\cap A_{n,\epsilon}^c  \right)
         \\
         &\leq \Prob\left( \left\{\abs{\zeta_n}>\delta\right\}\cap A_{n,\epsilon}  \right)+\Prob\left(A_{n,\epsilon}^c  \right)
     \\
     &\leq \Prob\left(\abs{\xi_n}>M_{\epsilon}\right)
     \\
     &\leq \epsilon,
     \end{align*}
    that is, $\zeta_n =  o_{\mathbb{P}}(1)$. This implies the desired result. 
\end{proof}

\subsubsection{Proof of \cref{thm: MDE limit}}

\underline{Part (i)}.  Given the above lemmas, the proof follows closely \cite[Theorem 4.2]{pollard_min_dist_1980} or \cite[Appendix B.4]{goldfeld2020asymptotic}.  We first note that, under \cref{as:MSE}, there exists a sequence of  measurable estimators $\hat{\theta}_n$ such that $\GWp(\hat{\mu}_n,\nu_{\hat{\theta}_n}) = \inf_{\theta \in \Theta}\GWp(\hat{\mu}_n,\nu_\theta)$ and  $\hat{\theta}_n \stackrel{\Prob}{\to} \theta^\star$. This follows from a small modification to the proof of Theorems 2 and 3 in \cite{goldfeld2020asymptotic}. Thus, for any neighborhood $N$ of $\theta^\star$, 
    \[
        \inf_{\theta\in \Theta}{\sf W}_p^{(\sigma)}(\hat \mu_n,\nu_{\theta})=\inf_{\theta\in N}{\sf W}_p^{(\sigma)}(\hat \mu_n,\nu_{\theta})
    \]
with probability approaching one.

By Assumption \ref{as:MSE} (vi), there exists $C>0$ such that $\norm{\inp{t}{\mathfrak{D}}}_{\Hu}\geq C\abs{t}$ for every $t\in\R^{d_0}$. Thus, by Lemma \ref{lem:errprop} (i),  there exists a neighborhood $N_1$ of $\theta\opt$ with $\overline{N_1}\subset N_0$ such that
           \[
               {\sf W}_p^{(\sigma)}(\hat \mu_n,\nu_{\theta})\geq \frac{C}{2}\abs{\theta-\theta\opt}-{\sf W}_p^{(\sigma)}(\hat \mu_n,\mu),\quad \forall \theta\in \overline{N_1}. 
           \]
Set $\Theta_n:=\left\{ \theta\in \Theta:\sqrt n\abs{\theta-\theta\opt}\leq \xi_n \right\}$ with $\xi_n:=(4/C)\big \| \mathbb{G}_n^{(\sigma)} \big \|_{\Hu} = O_{\Prob}(1)$. By Lemma \ref{lem:errprop} (ii), the expansion    
\begin{equation}
        \sqrt n{\sf W}_p^{(\sigma)}\left(\hat\mu_n,\nu_{\theta}\right)=\big \| \mathbb{G}_n^{(\sigma)}-\sqrt n\inp{\theta-\theta\opt}{\mathfrak{D}}\big \| _{\Hu}+o_{\mathbb{P}}(1),
        \label{eq: expansion 2}
\end{equation}
    holds uniformly in $\theta\in N_1\cap \Theta_n$. Then, for arbitrary $\theta\in N_1\cap \Theta_n^c$,
    \begin{align*}
        {\sf W}_p^{(\sigma)}(\hat \mu_n,\nu_{\theta})&> \frac{C}{2}\frac{\xi_n}{\sqrt n}-{\sf W}_p^{(\sigma)}(\hat \mu_n,\mu)
        \\&= {\sf W}_p^{(\sigma)}(\hat \mu_n,\mu) + o_{\mathbb{P}}(n^{-1/2}),
\end{align*}
so that
\[
\begin{split}
    \inf_{\theta\in N_1\cap \Theta_n^c}{\sf W}_p^{(\sigma)}(\hat \mu_n,\nu_{\theta})&>{\sf W}_p^{(\sigma)}(\hat \mu_n,\mu)+o_{\mathbb P}(n^{-1/2})\\
    &\geq\inf_{\theta\in N_1\cap \Theta_n}{\sf W}_p^{(\sigma)}(\hat \mu_n,\nu_\theta)+o_{\mathbb P}(n^{-1/2}).
    \end{split}
\]
This shows that $\inf_{\theta\in \Theta}{\sf W}_p^{(\sigma)}(\hat \mu_n,\nu_{\theta}) = \inf_{\theta\in N_1\cap \Theta_n}{\sf W}_p^{(\sigma)}(\hat \mu_n,\nu_\theta)+o_{\mathbb P}(n^{-1/2})$.

Now, reparametrizing by $t=\sqrt n(\theta-\theta\opt)$ and setting $T_n:=\big \{ t\in\R^{d_0}:\abs{t}\leq \xi_n,\theta\opt+t/\sqrt n\in N_1 \big\}$ in (\ref{eq: expansion 2}),  we have
\[
    \inf_{\theta\in N_1\cap \Theta_n}\sqrt n{\sf W}_p^{(\sigma)}(\hat\mu_n,\nu_{\theta})=\inf_{t\in T_n}\big \| \mathbb{G}_n^{(\sigma)}-\inp{t}{\mathfrak{D}}\big \|_{\dot H^{-1,p}(\mu*\gamma_{\sigma})}+o_{\mathbb P}(1).
\]
Set $\mathfrak{g}_n:=\big \| \mathbb{G}_n^{(\sigma)}-\inp{\cdot}{\mathfrak{D}}\big \|_{\Hu}$. For any $t \in \R^{d_0}$ such that $\abs{t}>\xi_n$, we have
\[
    \mathfrak{g}_n(t)\geq C\abs{t}-\big \| \mathbb{G}_n^{(\sigma)}\big \|_{\Hu}>3\mathfrak{g}_n(0)\geq 3\inf_{\abs{t'}\leq \xi_n}\mathfrak{g}_n(t').
\]
Since $\left\{t\in\R^{d_0}:\abs{t}\leq \xi_n  \right\}\subset T_n$ with probability approaching one (as $\xi_n=O_{\mathbb P}(1)$), we have $\inf_{t \in T_n}\mathfrak{g}_n(t)=\inf_{t \in \R^{d_0}} \mathfrak{g}_n(t)$ with probability approaching one.  Conclude that
\[
    \inf_{\theta\in \Theta}\sqrt n{\sf W}_p^{(\sigma)}(\hat\mu_n,\nu_{\theta})=\inf_{t\in \R^{d_0}}\big \| \mathbb{G}_n^{(\sigma)}-\inp{t}{\mathfrak{D}}\big \|_{\dot H^{-1,p}(\mu*\gamma_{\sigma})}+o_{\mathbb P}(1).
\]
Finally, since the map $h\in\Hu\mapsto \inf_{t\in\R^{d_0}}\norm{h-\inp{t}{\mathfrak{D}}}_{\Hu}$ is continuous, the continuous mapping theorem yields
\[
\inf_{\theta\in \Theta}\sqrt n{\sf W}_p^{(\sigma)}(\hat\mu_n,\nu_{\theta})\stackrel{d}{\to}\inf_{t\in \R^{d_0}}\norm{G_{\mu}-\inp{t}{\mathfrak{D}}}_{\dot H^{-1,p}(\mu*\gamma_{\sigma})}.
\]
This completes the proof of Part (i). 

\underline{Part (ii)}.  From the proof of Theorem 3 in \cite{goldfeld2020asymptotic}, it is not difficult to show that $\hat \theta_n \stackrel{\Prob}{\to} \theta\opt$. Thus, $\hat \theta_n\in N_1$ with probability approaching one, where $N_1$ is the neighborhood of $\theta^\star$ given in the proof of Part (i), so that by the definition of $\hat{\theta}_n$ and Lemma \ref{lem:errprop} (ii),
\begin{equation}
        \underbrace{\inf_{\theta\in \Theta}\sqrt n {\sf W}_p^{(\sigma)}(\hat\mu_n,\nu_{\theta})}_{=O_{\mathbb{P}}(1)}+o_{\mathbb{P}}(1)\geq \sqrt n {\sf W}_p^{(\sigma)}(\hat \mu_n,\nu_{\hat {\theta}_n})\geq \frac{C}{2}\sqrt n |\hat \theta_n-\theta\opt|-\underbrace{\sqrt n {\sf W}_p^{(\sigma)}(\hat \mu_n,\mu)}_{=O_{\mathbb{P}}(1)},
        \label{eq: expansion}
\end{equation}
    with probability tending to one. This implies that $\sqrt n |\hat \theta_n-\theta\opt|=O_{\mathbb{P}}(1)$. Let $\mathbb{M}_n(t):=\big\|\mathbb{G}_n^{(\sigma)}-\inp{t}{\mathfrak{D}}\big \|_{\Hu}$ and $\mathbb{M}(t):=\norm{G_{\mu}-\inp{t}{\mathfrak{D}}}_{\Hu}$. Observe that $\mathbb{M}_n$ and $\mathbb{M}$ are convex in $t$. Again, from the proof of Part (i), since $\hat t_n:=\sqrt n(\hat \theta_n-\theta\opt)=O_{\mathbb{P}}(1)$, we have
    \[
        \sqrt{n}{\sf W}_p^{(\sigma)}(\hat \mu_n,\nu_{\hat \theta_n})=\mathbb{M}_n(\hat t_n)+o_{\mathbb{P}}(1).
    \]
 Hence,
    \[
    \begin{split}
        \mathbb{M}_n(\hat t_n)&=\sqrt{n}{\sf W}_p^{(\sigma)}(\hat \mu_n,\nu_{\hat \theta_n})+o_{\mathbb{P}}(1)\\
        &\leq \inf_{\theta\in \Theta}\sqrt{n}{\sf W}_p^{(\sigma)}(\hat \mu_n,\nu_\theta)+o_{\mathbb{P}}(1)\\
        &=\inf_{t \in \R^{d_0}}\mathbb{M}_n(t)+o_{\mathbb{P}}(1).
        \end{split}
    \]
    Since $\mathbb{G}_n^{(\sigma)}\stackrel{d}{\to} G_{\mu}$ in $\Hu$, $(\mathbb{M}_n(t_1),\dots,\mathbb{M}_n(t_k))\stackrel{d}{\to} (\mathbb{M}(t_1),\dots,\mathbb{M}(t_k))$ for any finite set of points $(t_i)_{i=1}^k\subset \R^{d_0}$ by the continuous mapping theorem. 
    Applying Theorem 1 in \cite{kato2009asymptotics} (or Lemma 6 in \cite{goldfeld2020asymptotic}) yields $\hat t_n \stackrel{d}{\to} \argmin_{t \in \R^{d_0}}\mathbb{M}(t)$.  
\qed

\section{Concluding remarks}
\label{sec:summary}

In this paper, we have developed a comprehensive limit distribution theory for empirical $\GWp$ that covers general $1 < p< \infty$ and $d \ge 1$, under both the null and the alternative. Our proof technique leveraged the extended functional delta method, which required two main ingredients: (i) convergence of the smooth empirical process in an appropriate normed vector space; and (ii) characterization of the Hadamard directional derivative of $\GWp$ w.r.t. the norm. We have identified the dual Sobolev space $\dot{H}^{-1,p}(\mu*\gamma)$ as the normed space of interest and established the items above to obtain the limit distribution results. Linearity of the Hadamard directional derivative under the alternative enabled establishing the asymptotic normality of the empirical (scaled) $\GWp$. 

To facilitate statistical inference using $\GWp$, we have established the consistency of the nonparametric bootstrap. 
The limit distribution theory was used to study generative modeling via $\GWp$ MDE. We have derived limit distributions for the optimal solutions and the corresponding smooth Wasserstein error, and obtained Gaussian limits when $p=2$ by leveraging the Hilbertian structure of the corresponding dual Sobolev space. Our statistical study, together with the appealing metric and topological structure of $\GWp$ \cite{Goldfeld2020GOT,nietert21}, 
suggest that the smooth Wasserstein framework is compatible with  high-dimensional learning and inference.

An important direction for future research is the efficient computation of $\GWp$. While standard methods for computing $\Wp$ are applicable in the smooth case (by sampling the Gaussian noise), it is desirable to find computational techniques that make use of the structure induced by the convolution with a known smooth kernel. Another appealing direction is to establish Berry-Esseen type bounds for the limit distributions in \cref{thm: main theorem}. Of particular interest is to explore how parameters such as $d$ and $\sigma$ affect the accuracy of the limit distributions in \cref{thm: main theorem}. \cite{sadhu2021} addressed a similar problem for empirical $\mathsf{W}_1^{(\sigma)}$ under the one-sample null case, but their proof relies substantially on the IPM structure of $\mathsf{W}_1$ and finite sample Gaussian approximation techniques developed by \cite{chernozhukov2014,chernozhukov2016}. These techniques do not apply to $p>1$, and thus new ideas, such as the linearization arguments herein, are required to develop Berry-Esseen type bounds for $p > 1$.

\appendix 

\section{Additional proofs}

\subsection{Proof of \cref{lem: dual space}}
\label{sec: dual space}
Completeness of $\dot{H}^{-1,p}(\rho)$ is immediate. To prove separability, let $h \mapsto E(h)$ be the map from $\dot{H}^{-1,p}(\rho)$ into $L^p(\rho;\R^d)$ given in  Lemma \ref{lem:vector-field}. Then, from the first equation in (\ref{eq: vector field}) and the H\"{o}lder's inequality, it holds that $\| h_1-h_2 \|_{\dot{H}^{-1,p}(\rho)} \le \| E(h_1)-E(h_2) \|_{L^p(\rho;\R^d)}$ for all $h_1,h_2 \in \dot{H}^{-1,p}(\rho)$. Since $L^p(\rho;\R^d)$ is separable, we can choose a countable dense subset $W_0$ of the range of $E$. 
As the map $h \mapsto E(h)$ is injective, the set $\{ E^{-1}(w) : w \in W_0 \}$ is (countable and) dense in $\dot{H}^{-1,p}(\rho)$.

Finally, the Borel $\sigma$-field contains the cylinder $\sigma$-field as the coordinate projections are Borel measurable. By separability of $\dot{H}^{-1,p}(\rho)$, to show that the cylinder $\sigma$-field contains the Borel $\sigma$-field, it suffices to show that any closed ball in $\dot{H}^{-1,p}(\rho)$ is measurable relative to the cylinder $\sigma$-field. For any $\ell \in \dot{H}^{-1,p}(\rho)$ and $r > 0$, we have 
\[
\big\{ h \in \dot{H}^{-1,p}(\rho) : \| h-\ell \|_{\dot{H}^{-1,p}(\rho)} \le r \big\} = \bigcap_{\substack{\varphi \in C_0^{\infty}: \\\| \varphi \|_{\dot{H}^{1,q}(\rho)} \le 1}} \big\{ h \in \dot{H}^{-1,p}(\rho) : |(h-\ell)(\varphi)| \le r \big\}.
\]
Since $\dot{H}^{1,q}(\rho)$ (which is isometrically isomorphic to a closed subspace of $L^q(\rho;\R^d)$) is separable, the intersection on the right-hand side can be replaced with the intersection over countably many functions. This shows that the $\dot{H}^{-1,p}(\rho)$-ball on the left-hand side is measurable relative to the cylinder $\sigma$-field.
\qed

\subsection{Proof of \cref{prop:duality-in-Wp}}
\label{app: proof of comparison}
It suffices to prove the proposition when $\| \mu_1 - \mu_0 \|_{\dot{H}^{-1,p}(\rho)} < \infty$. 
Consider the curve $\mu_{t} = (1-t)\mu_{0} +  t\mu_{1}$ for $t \in [0,1]$, and let $E:\R^{d} \to \R^{d}$ be the Borel vector field given in \cref{lem:vector-field} for $h=\mu_1-\mu_0$.
Let $f_{t}$ denote the density of $\mu_{t}$ w.r.t. $\rho$, i.e., $f_{t} = (1-t) f_0 + t f_1$.
By our lower bound of $c$ on one of $f_0$ or $f_1$, we have $f_t > 0$ for all $t$ in the open interval $I = (0,1)$, so the vector field $E/f_t$ is well-defined for $t \in I$. Then the continuity equation \eqref{eq:continuity} holds with $v_{t} = E/f_{t}$ and this choice of $I$.

Now suppose that $f_1 \ge c$ ($f_0 \ge c$ case is similar). Then,
\[
\| v_{t} \|_{L^{p}(\mu_{t};\R^d)}^{p}= \int_{\R^{d}} \frac{|E|^{p}}{f_{t}^{p-1}} d\rho \le c^{-p+1} t^{-p+1} \| E \|_{L^{p}(\rho;\R^d)}^{p},
\]
so that 
$\Wp(\mu_{a},\mu_{b}) \le  c^{-1/q} \| E \|_{L^{p}(\rho;\R^d)} \int_{a}^{b} t^{-1+1/p} d t$ for $0 < a < b < 1$. 
Taking $a \to 0$ and $b \to 1$, we conclude that $\Wp(\mu_{0},\mu_{1}) \le c^{-1/q} p \, \| E \|_{L^{p}(\rho;\R^d)}$.

In view of \eqref{eq: vector field}, the desired conclusion follows once we verify
\[
\| E \|_{L^{p}(\rho;\R^d)} = \sup \left \{ \int_{\R^{d}} \langle \nabla \varphi,E \rangle  d\rho : \varphi \in C_{0}^{\infty}, \| \nabla \varphi \|_{L^{q}(\rho)} \le 1 \right \}, 
\]
but this follows from the fact that $j_p(E) \in \overline{\{ \nabla \varphi : \varphi \in C_{0}^{\infty} \}}^{L^{q}(\rho;\R^d)}$.
\qed

\subsection{Proof of Proposition \ref{prop: MDE limit}}
\label{sec: MDE additional proof}
The proof adopts the approach of \cite[Theorem 7.2]{pollard_min_dist_1980}. 
Let $N_1$ be the neighborhood of $\theta\opt$ appearing in the proof of \cref{thm: MDE limit} Part (i). Set 
        \[
        T_n:=\left\{ t\in \R^{d_0}:\abs{t}\leq C^{-1}(4\big \| \mathbb{G}_n^{(\sigma)}\big \|_{\dot H^{-1,p}(\mu*\gamma_{\sigma})}+2\lambda_n)\right\}.
        \]
        Since $\big \| \mathbb{G}_n^{(\sigma)}\big \|_{\Hu}=\sqrt n {\sf W}_{p}^{(\sigma)}(\hat\mu_n,\mu)+o_{\mathbb{P}}(1)$ (cf. Lemma \ref{lem:errprop}), $\hat\Theta_n\subset \theta\opt+n^{-1/2}T_n$ with inner probability tending to one (cf. equation (\ref{eq: expansion})). 

        Let $\Gamma_n:=\sup_{t\in T_n}\big | \sqrt n{\sf W}_p^{(\sigma)}(\hat \mu_n,\nu_{\theta\opt+n^{-1/2}t})-\big \| \mathbb{G}_n^{(\sigma)}-\inp{t}{\mathfrak{D}}\big \|_{\dot H^{-1,p}(\mu*\gamma_{\sigma})} \big |$. From the proof of \cref{thm: MDE limit} Part (i), we see that $\Gamma_n = o_{\Prob}(1)$. Also, since $\mathbb{G}_n^{(\sigma)} \stackrel{d}{\to} G_{\mu}$ in $\Hu$, by the Skorohod-Dudley-Wichura construction, there exist versions $\tilde{\mathbb{G}}_n^{(\sigma)}$ and $\tilde{G}_\mu$ of $\mathbb{G}_n^{(\sigma)}$ and $G_\mu$ (i.e., $\tilde{\mathbb{G}}_n^{(\sigma)}$ and $\tilde{G}_\mu$ have the same distributions as $\mathbb{G}_n^{(\sigma)}$ and $G_\mu$, respectively, as $\Hu$-valued random variables) such that $\tilde{\mathbb{G}}_n^{(\sigma)} \to \tilde{G}_\mu$ in $\Hu$ almost surely. 
        Choose $\delta_n,\epsilon_n,\gamma_n\dn 0$ in such a way that
        \[
        \mathbb P(\lambda_n>\delta_n)\to 0,\quad \mathbb P\left(\big \| \tilde{\mathbb{G}}_n^{(\sigma)}-\tilde{G}_{\mu}\big \|_{\dot H^{-1,p}(\mu*\gamma_{\sigma})}>\epsilon_n\right)\to 0,\quad \mathbb P(\Gamma_n>\gamma_n)\to 0.
        \]
        Set $\beta_n:=\max\left\{ 2\gamma_n
        +\delta_n,2\epsilon_n\right\}$. 

       \underline{Part (ii).} We first show that $K(\mathbb{G}_n^{(\sigma)},\beta_n) \stackrel{d}{\to} K(G_{\mu},0)$. To this end, it suffices to show that $K(\tilde{\mathbb{G}}_n^{(\sigma)},\beta_n) \stackrel{\Prob}{\to} K(\tilde{G}_\mu,0)$. Observe that for any $h,h'\in \dot H^{-1,p}(\mu*\gamma_{\sigma})$, 
\begin{align*}
    &\abs{\inf_{t\in\R^{d_0}}\norm{h-\inp{t}{\mathfrak{D}}}_{\dot H^{-1,p}(\mu*\gamma_{\sigma})}-\inf_{t\in\R^{d_0}}\norm{h'-\inp{t}{\mathfrak{D}}}_{\dot H^{-1,p}(\mu*\gamma_{\sigma})}}\\&\leq 
 \sup_{t\in\R^{d_0}}\abs{\norm{h-\inp{t}{\mathfrak{D}}}_{\dot H^{-1,p}(\mu*\gamma_{\sigma})}-\norm{h'-\inp{t}{\mathfrak{D}}}_{\dot H^{-1,p}(\mu*\gamma_{\sigma})}}  
 \\&\leq{\norm{h-h'}_{\dot H^{-1,p}(\mu*\gamma_{\sigma})}}.
\end{align*}
Thus, when $\big \| \tilde{\mathbb{G}}_n^{(\sigma)}-\tilde{G}_\mu \big \|_{\dot H^{-1,p}(\mu*\gamma_{\sigma})}\leq \epsilon_n$ and $t\in K(G_{\mu},0)$,
\begin{align*}
    &\big \| \tilde{\mathbb{G}}_n^{(\sigma)}-\inp{t}{\mathfrak{D}}\big \|_{\dot H^{-1,p}(\mu*\gamma_{\sigma})}\leq \norm{\tilde{G}_\mu-\inp{t}{\mathfrak{D}}}_{\dot H^{-1,p}(\mu*\gamma_{\sigma})}+\big \| \tilde{\mathbb{G}}_n^{(\sigma)}-\tilde{G}_\mu\big \|_{\dot H^{-1,p}(\mu*\gamma_{\sigma})} \\
\\
    &\qquad = \inf_{t'\in\R^{d_0}}\big \| \tilde{G}_\mu-\inp{t'}{\mathfrak{D}}\big \|_{\dot H^{-1,p}(\mu*\gamma_{\sigma})} +\big \| \tilde{\mathbb{G}}_n^{(\sigma)}-\tilde{G}_\mu\big \|_{\dot H^{-1,p}(\mu*\gamma_{\sigma})}
\\
&\qquad \leq \inf_{t'\in\R^{d_0}}\big \| \tilde{\mathbb{G}}_n^{(\sigma)}-\inp{t'}{\mathfrak{D}}\big \|_{\dot H^{-1,p}(\mu*\gamma_{\sigma})}+2\epsilon_n.
\end{align*}
This implies that, when $\big \| \tilde{\mathbb{G}}_n^{(\sigma)}-\tilde{G}_\mu \big \|_{\Hu} \leq \epsilon_n$, which occurs with probability approaching one, it holds that $K(\tilde{G}_\mu,0)\subset K(\tilde{\mathbb{G}}_n^{(\sigma)},\beta_n)$.

Now, for any $\alpha>0$ and $\omega \in \Omega$, there exists $\lambda_0(\omega)$ such that $K(\tilde{G}_\mu(\omega),\lambda)\subset K(\tilde{G}_\mu(\omega),0)^{\alpha}$ for every $\lambda\leq \lambda_0(\omega)$, since for any sequence $\kappa_n\dn 0$, $K(\tilde{G}_\mu(\omega),\kappa_n)$ is a decreasing sequence of compact sets whose intersection is contained in the open set $K(\tilde{G}_\mu(\omega),0)^{\alpha}$, and hence $K(\tilde{G}_\mu(\omega),\kappa_n)\subset K(\tilde{G}_\mu(\omega),0)^{\alpha}$ for $n$
sufficiently large. It follows that if $\kappa_n\to0$ almost surely, 
then    $\Prob(K(\tilde{G}_\mu,\kappa_n)\subset K(\tilde{G}_\mu,0)^{\alpha})\to 1$. Applying this result with $\kappa_n=\beta_n+2\big \| \tilde{\mathbb{G}}_n^{(\sigma)}-\tilde{G}_\mu\big \|_{\dot H^{-1,p}(\mu*\gamma_{\sigma})}$, we have that $K(\tilde{\mathbb{G}}_n^{(\sigma)},\beta_n)$ is contained in
\begin{align*}
&\Big\{ t:\big \| \tilde{\mathbb{G}}_n^{(\sigma)}-\inp{t}{\mathfrak{D}}\big \|_{\dot H^{-1,p}(\mu*\gamma_{\sigma})}\leq \inf_{t\in\R^{d_0}}\big \| \tilde{\mathbb{G}}_n^{(\sigma)}-\inp{t}{\mathfrak{D}}\big \|_{\dot H^{-1,p}(\mu*\gamma_{\sigma})}+\lambda_n\Big\}\\
&\subset K(\tilde{G}_{\mu},0)^{\alpha},
\end{align*}
where the final inclusion holds with probability tending to one. Conclude that $K(\tilde{\mathbb{G}}_n^{(\sigma)},\beta_n) \stackrel{\Prob}{\to} K(\tilde{G}_{\mu},0)$ as desired. 

\underline{Part (i).} Finally, suppose that $\Gamma_n\leq \gamma_n$ and $\lambda_n\leq \delta_n$. Observe that if $t\in T_n^c$, then
\begin{align*}
    \big \|\mathbb{G}_n^{(\sigma)}-\inp{t}{\mathfrak{D}} \big \|_{\dot H^{-1,p}(\mu*\gamma_{\sigma})}&
    \geq C\abs{t}-\big \| \mathbb{G}_n^{(\sigma)}\big \|_{\Hu}
    \\
    &>3\big \| \mathbb{G}_n^{(\sigma)}\big \|_{\Hu}+\lambda_n
    \\
    &\geq \big \| \mathbb{G}_n^{(\sigma)}\big \|_{\Hu},  
\end{align*}
so that $\inf_{t\in\R^{d_0}}\big \| \mathbb{G}_n^{(\sigma)}-\inp{t}{\mathfrak{D}} \big \| _{\dot H^{-1,p}(\mu*\gamma_{\sigma})}
= \inf_{t\in T_n}\big \| \mathbb{G}_n^{(\sigma)}-\inp{t}{\mathfrak{D}} \big \|_{\dot H^{-1,p}(\mu*\gamma_{\sigma})}
$.
By repeated applications of the triangle inequality, for $t\in T_n \cap \sqrt n(\hat\Theta_n-\theta\opt)$, we have
\begin{align*}
\inf_{t'\in T_n}\big \| \mathbb{G}_n^{(\sigma)}-\inp{t'}{\mathfrak{D}}\big \|_{\dot H^{-1,p}(\mu*\gamma_{\sigma})}
&\geq \inf_{t'\in T_n}\sqrt{n}{\sf W}_{p}^{(\sigma)}(\hat \mu_n,\nu_{\theta\opt+n^{-1/2}t'})-\gamma_n
\\
&\geq \sqrt{n}{\sf W}_{p}^{(\sigma)}(\hat \mu_n,\nu_{\theta\opt+n^{-1/2}t})-\delta_n-\gamma_n
\\
&\geq \big \| \mathbb{G}_n^{(\sigma)}-\inp{t}{\mathfrak{D}}\big \|_{\dot H^{-1,p}(\mu*\gamma_{\sigma})}-\beta_n.
\end{align*}
Conclude that 
\[
\Prob_{*}\Big( \big [T_n \cap \sqrt n(\hat\Theta_n-\theta\opt)\big] \subset K(\mathbb{G}_n^{(\sigma)},\beta_n) \Big) \to 1.
\]
Since $\hat\Theta_n\subset \theta\opt+n^{-1/2}T_n$ with inner probability tending to one, we obtain the desired conclusion. This completes the proof.  \qed

\bibliographystyle{amsalpha}
\bibliography{references}

\end{document}